\ifdefined\XeTeXrevision\else
  \ifdefined\pdfoutput\pdfoutput=1\fi
\fi
\documentclass[preprint,11pt]{elsarticle}

\usepackage[T1]{fontenc}
\usepackage{lmodern}
\usepackage{amsmath,amssymb,amsthm,mathtools}
\usepackage{mathrsfs}
\usepackage[expansion=false]{microtype}
\usepackage{enumitem}
\usepackage[margin=1.15in]{geometry}
\usepackage[hyphens]{url}
\usepackage[hidelinks]{hyperref}
\hypersetup{
  pdftitle={Cotangent cohomology and higher-order obstructions of matroids},
  pdfauthor={Qihang Wang},
  pdfsubject={Cotangent cohomology and deformation theory of matroids},
  pdfkeywords={Matroid, Stanley--Reisner ring, Cotangent cohomology, Deformation theory, Obstruction, Cup product, Cocircuit}
}
\journal{Journal of Algebra}

\allowdisplaybreaks
\setlist[enumerate]{leftmargin=2.2em}

\newtheorem{theorem}{Theorem}[section]
\newtheorem{proposition}[theorem]{Proposition}
\newtheorem{lemma}[theorem]{Lemma}
\newtheorem{corollary}[theorem]{Corollary}
\theoremstyle{definition}
\newtheorem{definition}[theorem]{Definition}
\newtheorem{example}[theorem]{Example}
\theoremstyle{remark}
\newtheorem{remark}[theorem]{Remark}

\DeclareMathOperator{\rk}{rk}
\DeclareMathOperator{\corank}{corank}
\DeclareMathOperator{\cl}{cl}
\DeclareMathOperator{\supp}{supp}
\DeclareMathOperator{\lk}{lk}
\DeclareMathOperator{\Hilb}{Hilb}
\DeclareMathOperator{\im}{im}
\newcommand{\one}{\mathbf 1}
\newcommand{\ee}{\mathbf e}
\newcommand{\NN}{\mathbb N}
\newcommand{\ZZ}{\mathbb Z}
\newcommand{\wtH}{\widetilde H}
\newcommand{\Deltao}{\Delta}

\begin{document}
\raggedbottom

\begin{frontmatter}

\title{Cotangent cohomology and higher-order obstructions of matroids}

\author{Qihang Wang\corref{cor1}}
\ead{2501110049@stu.pku.edu.cn}
\cortext[cor1]{Corresponding author.}

\affiliation{organization={School of Mathematical Sciences, Peking University},
  addressline={No.~5 Yiheyuan Road, Haidian District},
  city={Beijing},
  postcode={100871},
  country={China}}

\begin{abstract}
Matroid Stanley--Reisner rings admit essential deformation obstructions of arbitrarily high order over every field, already at corank three. For each $m\ge3$, we construct a connected cosimple matroid and $m$ fine-homogeneous first-order classes whose proper subcollections integrate jointly and whose pairwise primary cup products vanish. A specified joint $(m-1)$-jet nevertheless has no lift to order $m$, even without preserving the grading. The construction rests on formulas for every multigraded component of the second cotangent cohomology $T^2$ and of its primary cup image, expressed through flats and parallel classes of the dual matroid. These formulas show that $T^2$ vanishes exactly when every connected component has corank at most two, or equivalently when there is no $U_{1,4}$-minor. This proves a conjecture of Constantinescu, Klein, Nguyen, Singh, and Venturello (2025) and characterizes formal smoothness of the affine deformation functor. For the duals of rank-three truncations of $U_{2,m}\oplus U_{2,n}$, $m,n\ge3$, each nonzero homogeneous class modulo the primary cup image is realized as a higher-order obstruction, and the quotient has an explicit presentation with $m+n-2$ minimal homogeneous generators.

The main results of this paper were obtained through a generative-AI workflow. The models used in this work are OpenAI \mbox{GPT-5.6} Sol, Anthropic Claude Fable~5, Grok~4.6, OpenAI \mbox{GPT-6} Astra, Anthropic Claude Fable~5.1, OpenAI \mbox{GPT-6.1} Sol, and Anthropic Claude Opus~5.5; details appear in the declaration at the end of the paper.
\end{abstract}

\begin{keyword}
Matroid \sep Stanley--Reisner ring \sep Cotangent cohomology \sep Deformation theory \sep Obstruction \sep Cup product \sep Cocircuit
\MSC[2020] Primary 13D03; Secondary 05B35, 13F55, 05C40, 14B12
\end{keyword}

\end{frontmatter}

\section{Introduction}

When do compatible infinitesimal deformations extend to a joint flat family? The primary cup product detects quadratic obstructions, but its vanishing need not settle the higher-order lifting problem. We exhibit this distinction for matroid Stanley--Reisner rings at fixed corank three. For arbitrarily many fine-homogeneous first-order classes, every proper subcollection integrates and every pairwise primary product vanishes, yet an explicitly constructed joint jet has an essential obstruction at an arbitrarily late order. The nonliftability persists when nonhomogeneous correction terms are allowed. The construction rests on a computation of every fine-graded component of the obstruction module and of the part detected by primary cup products.

Our starting point is the relative-cohomology description and cup-product formula of Altmann and Christophersen \cite[Proposition~11, Theorem~13, and Section~6]{AltmannChristophersen}. Brehm and Constantinescu computed the first cotangent cohomology of matroids \cite[Theorem~4.7]{BrehmConstantinescu}; after dualization, its nonzero negative supports have rank one. For second cotangent cohomology, Constantinescu, Klein, Nguyen, Singh, and Venturello treated uniform matroids and proved vanishing in corank at most two. They conjectured the converse: a connected matroid with vanishing $T^2$ has corank at most two \cite[Proposition~21 and Conjecture~22]{CKNSV}. We determine the full module's fine components and its vanishing locus, then use explicit cup products and lifting calculations to identify actual obstructions. The latter step is necessary: a nonzero obstruction space alone does not imply that a deformation is obstructed.

Explicit deformation bases for Stanley--Reisner rings were constructed by Altmann and Christophersen for specified combinatorial surfaces, together with normal forms for polygon cones \cite[Theorem~6.4 and Proposition~6.6]{AltmannChristophersenDeforming}. Our examples instead keep matroid corank fixed while making the first failed extension of a specified jet arbitrarily late. A cyclic relation among circuit generators produces the obstruction; integral flatness and a socle calculation show that no correction removes it. This higher-order phenomenon concerns lifting order, not the index of cotangent cohomology. The distinction separates it from the characteristic-zero link-localization theorem of Ilten, Meazzini, and Petracci \cite[Theorem~4.1]{IltenMeazziniPetracci}, which applies to higher cotangent-cohomology groups and yields descriptions of $T^3$ and $T^4$ for flag complexes.

\begin{theorem}[Unbounded obstruction order at corank three]\label{thm:intro-higher}
For every $m\ge3$ there is a connected cosimple matroid $M$ of rank $m$ and corank three with nonzero classes $\alpha_i\in T^1_{-\ee_i}(M)$, $0\le i<m$, over every field, such that:
\begin{enumerate}[label=\textup{(\arabic*)}]
\item every proper subcollection admits a joint deformation over its polynomial parameter ring;
\item all the classes admit a flat joint deformation over $k[t_0,\ldots,t_{m-1}]/(t_0\cdots t_{m-1})$;
\item the resulting deformation modulo $(t_0,\ldots,t_{m-1})^m$ cannot lift modulo $(t_0,\ldots,t_{m-1})^{m+1}$, even in the ungraded category.
\end{enumerate}
Within the fine-multigraded category, with $\deg t_i=\ee_i$, the jet in \textup{(3)} is uniquely determined by its first-order classes. Its first essential obstruction has order $m$, although all pairwise primary cup products vanish.
\end{theorem}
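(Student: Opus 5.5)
The construction will be made explicit first, and everything else checked against it. Rather than working in $M$ directly, I will work in the dual $M^*$, which has rank three and $m+3$ elements; the abstract says the relevant examples are duals of truncations. Each class $\alpha_i\in T^1_{-\ee_i}(M)$ has support of rank one after dualization, so by the Brehm--Constantinescu description \cite[Theorem~4.7]{BrehmConstantinescu} it corresponds to a single element $i$ together with a degree-$\ee_i$ perturbation of the circuit generators of the Stanley--Reisner ideal $I_M$. I will choose $M^*$ so that the circuits of $M$ containing the elements $0,\dots,m-1$ form a cyclic chain: perturbation $i$ rewrites a monomial $x_i\cdot u$ as $t_i\cdot u'$ and feeds into perturbation $i+1$. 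Concretely, I would try a rank-three matroid on $\{0,\dots,m-1,a,b,c\}$ in which $\{i,i+1\}$ (indices mod $m$) together with auxiliary elements produce the needed dependent sets. I would then check cosimplicity and connectedness from the parallel classes and flats of $M^*$.

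For (1) and (2), I will write the deformed generators $f_C+\sum_i t_i g_{C,i}$ with the fine grading $\deg t_i=\ee_i$. Flatness is verified by lifting the first syzygies of $I_M$, which are the relations between circuit monomials. For a proper subcollection $S\subsetneq\{0,\dots,m-1\}$, the cyclic chain is broken. I therefore expect an ordering in which each successive perturbation lifts all syzygies with no correction, so the family is flat over $k[t_i:i\in S]$; equivalently, I will exhibit a Gröbner basis whose initial ideal is $I_M$. For (2), every product term in which all $t_i$ appear is killed by $t_0\cdots t_{m-1}$. This gives flatness over that quotient ring, and integrally over $\ZZ$, which handles every field at once. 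Vanishing of the pairwise primary cup products follows from (1) applied to pairs, or alternatively from the paper's cup-image formula computed in the degree $\ee_i+\ee_j$ components.

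Uniqueness of the jet in the fine-graded category comes from a degree count. The component of $T^1$ in degree $-\sum_{i\in S}\ee_i$, for a multi-index supported on $S$ with $|S|\ge2$, vanishes because nonzero negative supports have rank one. Hence there are no graded choices for the higher-order corrections, and the jet modulo $(t)^m$ is forced. The obstruction to lifting it to order $m+1$ lives in degree $-\sum_i\ee_i$. It is a class in $T^2_{-\one_{[m]}}(M)$, produced by the cyclic syzygy that closes the chain. Using the paper's formula for that component of $T^2$ and of the primary cup image, I will show this class is nonzero modulo the cup image. That makes the obstruction essential and of order exactly $m$.

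The main obstacle is the ungraded claim in (3): arbitrary nonhomogeneous corrections of lower order must not be able to kill the obstruction. My plan is to take any ungraded lift and project it to its $\ee$-isotypic part under the torus action on the $t_i$. Here I use that the obstruction map is equivariant, so it suffices to rule out corrections of total degree $-\one$. Any remaining freedom comes from $T^1$ elements in other degrees, whose products cannot land in degree $-\one$ except through the fixed classes. If the equivariance reduction fails in small characteristic, I would instead do the socle computation directly. I would reduce the would-be lift modulo $(x)$ and look at the coefficient of $t_0\cdots t_{m-1}$ times a socle monomial of $k[x]/I_M$. Integral flatness should show that this coefficient is a unit that no correction can change, and therefore that no lift exists.
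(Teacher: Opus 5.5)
You have the right skeleton: an explicit family over $k[t]/(P)$, integral flatness by lifting syzygies, $T^1_{-\mathbf u}=0$ for $|\mathbf u|\ge2$ in the simple dual for graded uniqueness, and an obstruction in $T^2_{-\one_L}$. The uniqueness part is fine. But the step that carries the theorem is only promised. You never fix the matroid or the family, and ``I will show this class is nonzero'' cannot come from the paper's formulas. Those formulas give the ambient space $T^2_{-\one_L}(M)\cong k^H/k\one$ and $\Omega_{-\one_L}=0$; they say nothing about which class a given jet produces.

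The missing idea is that the cyclic syzygy closing the chain does \emph{not} by itself produce a nonzero obstruction. In the paper the matroid is the fully symmetric $M_{m,3}=\operatorname{Tr}_3(U_{2,m}\oplus U_{2,3})^*$. The cyclic order lives only in the polynomials $F_j$, which satisfy $x_jF_j+t_{j-1}F_{j-1}=X+\sigma P$. The obstruction is then the function on $H$ recording which cross blocks carry the cyclic perturbation, modulo constants. If every block $(X/x_j)W_h$ were deformed to $F_jW_h$, the residual would be absorbed by replacing $X$ with $X+\sigma P$, and the family would integrate over all of $k[t]$. That is why only the block $h_0$ is deformed, and the proof must use this asymmetry. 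Let $g_X,g_{j,h}$ be arbitrary, possibly inhomogeneous, $P$-coefficients of order-$m$ corrections. Flatness forces
\[
\sigma W_{h_0}+x_jg_{j,h_0}-W_{h_0}g_X=0,\qquad x_jg_{j,h}-W_hg_X=0\quad(h\ne h_0)
\]
in $k[M]$. Projecting to the fine degrees of $W_{h_0}$ and $W_h$ makes the constant term of $g_X$ equal to both $\sigma$ and $0$.

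Your fallback does not supply this. The ring $k[x]/I_M$ has positive Krull dimension and no socle; the socle that matters is $\mathfrak m^m/\mathfrak m^{m+1}$ in the base, which makes $\mathfrak m^m\otimes k[M]$ inject into the lifted algebra. The contradiction comes from two incompatible relations, not from one coefficient being a unit. Your worry about lower-order ungraded corrections is also misplaced. The jet is fixed and its obstruction across the small extension is canonical, so once that class is shown nonzero, no lift of any kind exists.

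Two further claims in your plan would fail. First, the expectation that syzygies of proper subcollections lift ``with no correction'' is false. For $m=3$, the linear truncations satisfy
\[
x_0(x_1x_2-t_2x_1)+t_2(x_0x_1-t_1x_0)=X-t_1t_2x_0 .
\]
Here $x_0W_{h_0}\ne0$ in $k[M]$, because the complement $\{1,2,h_0\}$ of its support is a basis of $N_{3,3}$. So the second-order term $t_2t_1$ of $F_0$ is unavoidable, even for the pair $\{t_1,t_2\}$; in general the jet has forced terms in every order up to $m-1$. Second, the cup-image formula cannot replace integration of pairs. Pure-pair surjectivity gives $\Omega_{-\ee_i-\ee_j}=T^2_{-\ee_i-\ee_j}\ne0$ for $i,j\in L$. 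Only polynomial integration of each pair, together with the vanishing self-obstructions and the polarization lemma, shows $\alpha_i\cup\alpha_j=0$.
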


The matroid in Theorem~\ref{thm:intro-higher} is dual to the rank-three truncation of $U_{2,m}\oplus U_{2,3}$. The family is governed by explicit cyclic polynomials and a telescoping identity whose residual is the product of all parameters. Two circuit relations force contradictory corrections of its coefficient, proving nonliftability. Section~\ref{sec:higher-obstructions} gives the construction and its proof. Here an essential obstruction of order $m$ means a nonzero obstruction to extending the specified $(m-1)$-jet to order $m$. The uniqueness statement is graded; we do not identify all ungraded higher jets with the same tangent classes or a full miniversal base.

Theorem~\ref{thm:homogeneous-realization} strengthens this construction
to every nonzero homogeneous class of the complete primary quotient
$T^2(M_{m,n})/\Omega$, including degrees with positive coordinates.
Here $M_{m,n}$ is dual to $\operatorname{Tr}_3(U_{2,m}\oplus U_{2,n})$,
whose two disjoint lines $L,H$ have sizes $m,n$; put $R=k[M_{m,n}]$,
and let $\Omega$ be the image of $T^1\otimes_R T^1\to T^2$.
A prescribed class whose negative support has size $r$ occurs as the
only nonzero order-$r$ obstruction coefficient of a specified joint jet
in $r$ parameters.
All its selected tangent classes have zero primary products, and every
proper subcollection integrates formally. For purely negative degrees
the stronger polynomial integration holds
(Theorem~\ref{thm:negative-realization}); the purely negative quotient
has exponential dimension. Thus these dimensions count actual higher
obstruction values, not merely an ambient obstruction space. Classical
link reduction computes the positive-supported component, but constructing
a deformation of the original ring requires the variable-weight cyclic
family and Artinian relation-lifting argument below. The realizations are
degree by degree, not simultaneous realizations of sums.

For $M_{m,n}$ the quotient has a more economical algebraic
description than its negative dimension suggests. Theorem~\ref{thm:primary-quotient-module}
gives an explicit fine-graded $R$-module presentation with exactly
$m+n-2$ minimal generators, concentrated in the two degrees
$-\one_L$ and $-\one_H$. It also determines the radical annihilator:
every irreducible component of the support of this quotient has codimension
two in $\operatorname{Spec}R$. The decisive step is a Taylor coboundary
identity describing multiplication between negative degrees; the graph
dimensions alone do not determine it. The coordinate modules in the
presentation also record the positive-support restriction maps.

Our structural starting point reduces the relative-cohomology calculation to connected components of explicit graphs. Positive support is removed by passing to a link; duality then organizes the remaining relative complex by hyperplanes. This reduction is the common source of the formula, the vanishing classification, and the cup-product coordinates below.

Let $N$ have rank $s$, let $\varnothing\ne B\subseteq E(N)$, and let $H$ be a hyperplane disjoint from $B$.  Define $\mathcal G_B(H)$ to be the rank-$(s-2)$ flats $G\subset H$ such that $\rk_N(G\cup B)=s$.  Make this set the vertex set of a graph $\Gamma_B(H)$ by joining distinct $G,G'$ when
\[
\rk_N\bigl((G\cap G')\cup B\bigr)=s.
\]
Empty graphs contribute zero to reduced zeroth cohomology.

\begin{theorem}[Complete hyperplane-graph formula]\label{thm:intro-graph}
Write a fine degree uniquely as $\mathbf c=\mathbf a-\mathbf b$, where $\mathbf a,\mathbf b\in\NN^{E(M)}$ have disjoint supports.  A component $T^2_{\mathbf c}(M)$ vanishes unless $\mathbf b=\one_B$ is nonzero and squarefree, $A=\supp\mathbf a$ is independent, and $B$ consists of actual vertices of $L=\lk_M A$.  Under these conditions, set $N=L^*$.  Then, over every field,
\[
T^2_{\mathbf c}(M)
\cong
\bigoplus_{\substack{H\text{ hyperplane of }N\\H\cap B=\varnothing}}
\wtH^0(\Gamma_B(H);k).
\]
The analogous formula for $T^1$, which recovers in dual form the description of Brehm and Constantinescu \cite{BrehmConstantinescu}, is given in Theorem~\ref{thm:universal-graph}.
\end{theorem}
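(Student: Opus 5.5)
We follow the Altmann--Christophersen relative-cohomology description \cite[Proposition~11, Theorem~13]{AltmannChristophersen}. For a simplicial complex $K$ on $E$ and $\mathbf c=\mathbf a-\mathbf b$, it gives $T^i_{\mathbf c}(K)=0$ unless $\mathbf b\in\{0,1\}^E$ and $A=\supp\mathbf a\in K$. In that case $T^i_{\mathbf a-\mathbf b}(K)\cong T^i_{\emptyset-\mathbf b}(\lk_K A)$, provided $B$ consists of vertices of the link; otherwise it vanishes. For $\mathbf b\ne0$ this group is a relative cohomology $H^{i-1}(\langle U_B\rangle,\langle\tilde U_B\rangle;k)$ of complexes built from faces $f$ of the link with $f\cap B=\varnothing$ and $f\cup B\notin$ link. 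When $\mathbf b=0$, the degree-$\mathbf a$ part vanishes for $i=2$ by the same reference.

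The first step records the reduction in the matroid setting. The independent sets of $M$ form its complex, so $A$ must be independent and $L=\lk_M A$ is the matroid contraction $M/A$, again a matroid. This gives the vanishing clauses of the statement, and it remains to treat $A=\varnothing$ for the matroid $L$, with $N=L^*$ of rank $s$. The second step dualizes. A set $F$ is independent in $L$ exactly when $E\setminus F$ spans $N$. The condition that $F\cup B$ is dependent becomes $\rk_N(E\setminus(F\cup B))<s$, so the complement lies in a hyperplane. The non-faces containing $B$ are therefore organized by the hyperplanes $H$ of $N$ disjoint from $B$ that contain $E\setminus(F\cup B)$. We then use a nerve/Alexander-duality argument to rewrite $H^1(\langle U_B\rangle,\langle\tilde U_B\rangle)$ as a direct sum over such $H$.

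The decomposition rests on a covering claim. Two distinct hyperplanes have intersection of rank at most $s-2$. Faces whose complement-data lie in two different hyperplanes therefore fall into the "tilde" part: their complements already lose rank after $B$ is removed. Hence the relative complex splits as a wedge over $H$ in the relevant degree. Inside a fixed $H$, the relevant faces of $L$ correspond to subsets $E\setminus(G\cup B)$ with $G\subseteq H$ and $\rk(G\cup B)=s$. Up to homotopy it suffices to take $G$ to be a rank-$(s-2)$ flat, which gives the vertices of $\Gamma_B(H)$. Two such $G,G'$ lie in a common face exactly when $\rk_N((G\cap G')\cup B)=s$, which gives the edges. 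The relative $H^1$ is then the cokernel of $H^0$ of the ambient (contractible) piece into $H^0$ of this graph, namely $\wtH^0(\Gamma_B(H);k)$. Empty graphs contribute zero, matching the relative-cohomology convention.

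We expect the main obstacle to be this combinatorial homotopy step: showing that the relevant subcomplex for fixed $H$ has $H^0$ computed by the graph on corank-one flats of $H$. Concretely, one must show that every relevant face is connected, within the complex, to some rank-$(s-2)$ flat. One must also show that adjacency through arbitrary faces reduces to the intersection criterion. Our first attempt would be a closure argument. Enlarge $G$ to its closure in $H$ and descend along flats, using submodularity: $\rk(G\cup B)=s$ with $G\subseteq H$ and $H\cap B=\varnothing$ forces $\rk G\ge s-2$ once we add at most $\rk B$ elements. This shows that maximal elements are rank-$(s-2)$ flats and that the poset is connected exactly through the stated edges. Independence of the field comes for free, since only $\wtH^0$ appears.
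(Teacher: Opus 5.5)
Your proposal is correct and follows essentially the paper's route. After dualizing via $X=E\setminus(F\cup B)$, the relative complex splits over the hyperplanes $H$ of $N$ avoiding $B$ into pairs: a cone with apex $H$, and a base consisting of the qualifying subsets of $H$ of rank at most $s-2$. The $\wtH^0$ of each base is then read off from the rank-$(s-2)$ flats in $\mathcal G_B(H)$ with the intersection adjacency. The paper first compresses to flats with the closure operator $X\mapsto\cl_N(X)\setminus B$ and then applies the nerve lemma to the cones below each $G$. Your direct argument through maximal elements and common lower bounds already suffices, because only $\wtH^0$ is involved.

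One correction to your last paragraph. The condition $\rk_N(G\cup B)=s$ only yields $\rk_N G\ge s-\rk_N B$. When $\rk_N B\ge3$, such lower-rank qualifying sets genuinely occur; they are exactly the common lower bounds that produce the edges of $\Gamma_B(H)$. So the step does not rest on a rank bound. What you need is only that each qualifying subset of $H$ enlarges inside $H$ to a rank-$(s-2)$ flat, the spanning condition being preserved under enlargement.
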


In fact these graphs are empty, discrete, or connected according as the negative support has dual rank at most one, two, or at least three (Lemma~\ref{lem:graph-trichotomy}). Hence only rank-two supports contribute to $T^2$, and its dimensions are flat counts rather than connectivity computations. The dual rank of the negative support is unchanged by passing to the relevant link. This gives the complete fine Hilbert series over every field (Theorem~\ref{thm:rank-two-support}).

When $B=\{p,q\}$, the graph $\Gamma_B(H)$ is discrete.  This special case detects vanishing.

\begin{theorem}[Vanishing and the excluded minor]\label{thm:intro-vanishing}
For a finite matroid $M$, the following are equivalent:
\begin{enumerate}[label=\textup{(\arabic*)}]
\item $T^2(M)=0$;
\item $T^2_{-\ee_p-\ee_q}(M)=0$ for all distinct $p,q\in E(M)$;
\item every connected component of $M$ has corank at most two;
\item every cocircuit of $M$ has size at most three;
\item $M$ has no minor isomorphic to $U_{1,4}$.
\end{enumerate}
\end{theorem}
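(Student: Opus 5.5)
We prove the cycle $(1)\Rightarrow(2)\Rightarrow(3)\Rightarrow(1)$ and treat $(3)\Leftrightarrow(4)\Leftrightarrow(5)$ as pure matroid theory. The implication $(1)\Rightarrow(2)$ is trivial.

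\emph{The matroid equivalences.} A connected matroid of corank $c$ has every cocircuit of size at most $c+1$. Indeed, a cocircuit $D$ is a circuit of $M^*$, which has rank $c$, so $|D|\le c+1$. A cocircuit of a direct sum lies in a single component, so $(3)\Rightarrow(4)$. For $(4)\Rightarrow(5)$, note that $U_{1,4}$ is itself a cocircuit of size four. A cocircuit of a minor $M/X\setminus Y$ is a cocircuit of $M\setminus Y$, and those contain, and are unions from, cocircuits of $M$ of at least the same size. So a $U_{1,4}$-minor produces a cocircuit of size at least four; equivalently, $M^*$ has a $U_{3,4}$-minor and hence a circuit of size at least four. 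For $(5)\Rightarrow(3)$, take a connected component $C$ of corank at least three. Then $C^*$ is connected of rank at least three. In a connected matroid any two elements lie on a common circuit, and a connected matroid of rank $r\ge3$ has a circuit of size at least $4$: if all circuits had size at most $3$, then contracting and deleting to rank three would give a connected rank-three matroid with only triangles, but such a matroid still contains a $4$-circuit, namely the union of two triangles meeting in a point minus that point. Hence $C^*$ has a $U_{3,4}$-minor, and $C$ has a $U_{1,4}$-minor. This step can alternatively be quoted as the standard fact that $U_{1,4}$-free connected matroids are exactly those of corank at most two (the circumference bound), and I would cite Oxley for it.

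\emph{From vanishing of $T^2_{-\ee_p-\ee_q}$ to corank, $(2)\Rightarrow(3)$.} Suppose some component of $M$ has corank at least three. Since $T^2$ of a direct sum contains summands coming from a single factor (the Stanley--Reisner ring is a tensor product, and degrees supported on one factor see only that factor), we may assume $M$ is connected with $s=\rk M^*\ge3$. We apply Theorem~\ref{thm:intro-graph} with $\mathbf a=0$, $A=\varnothing$, $L=M$, $N=M^*$ and $B=\{p,q\}$. The vertices $p,q$ are actual vertices as long as $M$ has no loops, which holds because $M$ is connected of rank at least one. By the remark preceding the theorem, $\Gamma_B(H)$ is discrete, so it contributes $|\mathcal G_B(H)|-1$ whenever it is nonempty. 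It therefore suffices to find $p,q$ and a hyperplane $H$ of $N$ avoiding $p,q$ that contains two distinct rank-$(s-2)$ flats $G$ with $\rk(G\cup\{p,q\})=s$.

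To find them, choose $p,q$ so that $\{p,q\}$ is independent and non-parallel in $N$; this is possible since $N$ is connected of rank at least two. Then pick a flat $F$ of rank $s-2$ with $\cl(F\cup\{p,q\})=E$ and $p,q\notin\cl F$. The hyperplanes $\cl(F\cup p)$ and $\cl(F\cup q)$ contain $p$ and $q$ respectively, but we need $H$ to avoid both. Here I would use the assumption $s\ge3$ together with connectivity to find a hyperplane $H$ missing $p,q$ whose restriction $N|H$ has at least two hyperplanes not lying in the line-span directions of $p$ or $q$. This flat-counting step is the main obstacle, and I would settle it by a careful choice within a $U_{3,4}$-type configuration. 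If a direct choice proves awkward, I would first reduce, via the invariance of the dual rank of the support under links (Lemma~\ref{lem:graph-trichotomy} context), to a rank-three minor of $N$ containing a $4$-circuit through $p,q$, where the count is explicit.

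\emph{The converse, $(3)\Rightarrow(1)$.} Assume every component has corank at most two. By Lemma~\ref{lem:graph-trichotomy}, only supports $B$ of dual rank exactly two contribute. The dual rank of $B$ is unchanged on passing to the link, and the Künneth-type splitting over components lets us assume that $N$ is connected of rank $s\le2$ in each relevant link. If $s\le1$ there are no hyperplanes with the needed flats. If $s=2$, a hyperplane $H$ is a parallel class, the rank-$0$ flats consist of the single flat $\cl\varnothing$, and so $\Gamma_B(H)$ has at most one vertex and $\wtH^0=0$. Hence every component vanishes by Theorem~\ref{thm:intro-graph}. This is consistent with the known corank-$\le2$ vanishing of \cite{CKNSV}.
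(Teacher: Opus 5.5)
Your arguments for $(1)\Rightarrow(2)$, for $(3)\Rightarrow(4)\Rightarrow(5)$, and for $(3)\Rightarrow(1)$ are fine. In $(4)\Rightarrow(5)$ the wording needs fixing: cocircuits of $M\setminus Y$ are \emph{contained in} cocircuits of $M$, not the reverse. Your $(3)\Rightarrow(1)$ even rederives the corank-two vanishing from the graph formula: after splitting off components, every $(\lk_MA)^*$ has rank at most two, so $\Gamma_B(H)$ has at most one vertex.

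The genuine gap is $(2)\Rightarrow(3)$. You correctly reduce to finding a hyperplane $H$ of $N=M^*$ avoiding $p,q$ with $|\mathcal G_{\{p,q\}}(H)|\ge2$. But you leave this as ``the main obstacle'', and the choice of pair you do make fails: an independent, non-parallel pair is not enough. Take $N=M(K_4\setminus 34)$, the cycle matroid of $K_4$ on $\{1,2,3,4\}$ with the edge $34$ removed. Then $M=N^*$ is connected of corank three. For $p=13$, $q=12$, the only hyperplanes of $N$ avoiding both are $\{14,23\}$ and $\{23,24\}$. Each has exactly one admissible rank-one flat, namely $\{14\}$ and $\{24\}$, so $T^2_{-\ee_{12}-\ee_{13}}(M)=0$.

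The pair must lie in a circuit $D$ of $N$ with $|D|\ge4$, and the count then needs an explicit construction. Put $I=D\setminus\{p,q\}$ and extend $I\cup\{p\}$ to a basis $I\cup\{p\}\cup Z$. The hyperplane $H=\cl_N(I\cup Z)$ misses $p$, and it misses $q$ because the fundamental circuit of $q$ is $D\ni p$. The flats $\cl_N((I\setminus\{x\})\cup Z)$, $x\in I$, are then $|D|-2\ge2$ distinct vertices of $\Gamma_{\{p,q\}}(H)$; this is Proposition~\ref{prop:pure-cocircuit}.

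Your fallback cannot replace this construction. Passing to a link $\lk_MA$ moves the degree to $\one_A-\ee_p-\ee_q$, so it could only show $T^2(M)\ne0$, not that a purely negative pair component is nonzero. Lowering the rank of $N$ by contraction corresponds to deleting elements of $M$, and nothing available compares $T^2$ under deletion.

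The second gap is that the existence of such a $D$ is the combinatorial heart of the theorem: a connected matroid of corank at least three has a cocircuit of size at least four. Your $(5)\Rightarrow(3)$ sketch does not establish it. You do not show that a connected rank-three matroid with all circuits of size at most three must contain two triangles with distinct spans through a common point. A priori its three-point lines could be pairwise disjoint, and parallel pairs must also be handled. Even producing a connected rank-three minor needs an argument such as Tutte's theorem that $M\setminus e$ or $M/e$ stays connected. Quoting the equivalence as a standard fact amounts to quoting part of what is being proved.

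The paper proves it in Theorem~\ref{thm:large-cocircuit}. Lemma~\ref{lem:bipartite}, applied to the fundamental graph of a basis $B$, yields $x,y\in B$ such that contracting $B\setminus\{x,y\}$ leaves a connected rank-two matroid on at least five elements with at least three parallel classes. Adjoining to $B\setminus\{x,y\}$ an element of a smallest class gives an independent set whose closure is a hyperplane with complement of size at least four. With these two ingredients your cycle of implications would close.
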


We next identify actual quadratic obstructions rather than merely nonzero ambient obstruction spaces.  The singleton part of the graph model identifies $T^1_{-\ee_p}(M)$ with functions on certain dual hyperplanes, modulo constants.  Substitution in the Altmann--Christophersen cup formula gives explicit block coordinates and a universal surjectivity statement.

\begin{theorem}[Universal pure-pair cup product]\label{thm:intro-cup}
For every matroid $M$, every pair of distinct actual vertices $p,q$, and every field $k$, the cup product
\[
T^1_{-\ee_p}(M)\otimes_k T^1_{-\ee_q}(M)
\longrightarrow T^2_{-\ee_p-\ee_q}(M)
\]
is surjective.
\end{theorem}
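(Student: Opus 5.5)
The plan is to compute both sides in the graph model and to reduce surjectivity to a statement about constant functions on pieces of a discrete graph. Fix distinct actual vertices $p,q$, put $B=\{p,q\}$ and $N=M^*$. By Theorem~\ref{thm:intro-graph} with $\mathbf a=0$ (so $L=M$), the target is
\[
T^2_{-\ee_p-\ee_q}(M)\cong\bigoplus_{H\cap B=\varnothing}\wtH^0(\Gamma_B(H);k).
\]
Each $\Gamma_B(H)$ is discrete, since $B$ has dual rank at most two (Lemma~\ref{lem:graph-trichotomy}). So each summand is the space of functions on $\mathcal G_B(H)$ modulo constants. Its vertices are the corank-one flats $G\subset H$ with $\rk_N(G\cup\{p,q\})=s$. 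Equivalently, $\cl_N(G\cup p)$ and $\cl_N(G\cup q)$ are distinct hyperplanes, both different from $H$.

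On the source side, the singleton case of Theorem~\ref{thm:universal-graph} identifies $T^1_{-\ee_p}(M)$ with functions, modulo constants, on the vertex sets $\mathcal G_{\{p\}}(H')$ of the hyperplanes $H'$ avoiding $p$. Here $\mathcal G_{\{p\}}(H')$ consists of all corank-one flats $G\subset H'$ with $p\notin\cl(G)$. The same holds for $q$.

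The first step is to substitute these cocycle representatives into the Altmann--Christophersen cup formula \cite[Theorem~13]{AltmannChristophersen} and compute the result in block coordinates. I expect the product of $f\in T^1_{-\ee_p}$ and $g\in T^1_{-\ee_q}$ to have the following form on the block of $H$ and the vertex $G$:
\[
(f\smile g)_H(G)=\pm\bigl(f_{\cl(G\cup q)}(G)\,g_H(G)-f_H(G)\,g_{\cl(G\cup p)}(G)\bigr).
\]
This is a bilinear expression in values of $f$ and $g$ at the hyperplanes through $G$, read modulo constants on each $\Gamma_B(H)$. Getting this formula right, including its well-definedness modulo constants and the relative-complex bookkeeping, is the main obstacle. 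I would first verify it on $U_{1,4}$, where $T^2$ is one-dimensional in this degree, and then check it in general by tracing the link and duality reductions used for Theorem~\ref{thm:intro-graph}.

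Granting the formula, surjectivity follows by producing indicator-type preimages. Fix $H$ and a vertex $G\in\mathcal G_B(H)$. Take $g$ supported on the single block $H$, as the indicator of $G$ in $\mathcal G_{\{q\}}(H)$; this is a vertex there because $q\notin\cl G$. Take $f$ to be the indicator of $G$ in the block $H'=\cl(G\cup q)$, which avoids $p$. The product then has a single nonzero coordinate, namely $\pm\delta_G$ in the $H$-block. The key checks use only that $G$ determines the hyperplanes $\cl(G\cup q)$ and $\cl(G\cup p)$, together with modularity of corank-one flats in $N$:
\begin{enumerate}
\item no other pair (block, vertex) picks up a nonzero term;
\item in particular the term $f_H g_{\cl(G\cup p)}$ vanishes, because $f$ is supported only on the block $\cl(G\cup q)\neq H$.
\end{enumerate}
If a stray term does appear at some other vertex $G'$ of the same block, I would correct by subtracting the product attached to $G'$, inducting on an ordering of the vertices. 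The classes $\delta_G$ span $\wtH^0(\Gamma_B(H))$ over every field, since all coefficients are $\pm1$. Hence the images span the whole direct sum, which proves surjectivity independently of the characteristic.
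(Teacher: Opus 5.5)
\textbf{There is a genuine gap.} Your target description and your overall strategy are the same as the paper's: hit each coordinate delta of an $H$-block with one decomposable product of indicator functions, so that only coefficients $\pm1$ occur. However, the step the whole argument rests on is missing, and both inputs you feed into it are wrong.

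First, the singleton formula \eqref{eq:singleton-formula} gives $T^1_{-\ee_p}(M)\cong\wtH^0(\mathscr H_p(N);k)=k^{\mathscr H_p}/k\one$. That is one coordinate for each hyperplane of $N$ avoiding $p$ (equivalently, each circuit of $M$ through $p$), modulo a single global constant. There are no flats $G$ inside these blocks. For $U_{1,4}$ this space is $k^3/k\one$, of dimension two. So ``the indicator of $G$ in $\mathcal G_{\{q\}}(H)$'' is not an element of the source.

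Second, the cup formula cannot be guessed and spot-checked; it has to be derived. The paper does this in Lemma~\ref{lem:cup-coordinate} by substituting into the Altmann--Christophersen presentation formula along the relative edge $E\setminus(H\cup\{p,q\})\subset E\setminus(G\cup\{p,q\})$. The result is $\bigl(\phi(Q_G)-\phi(H)\bigr)\bigl(\psi(P_G)-\psi(H)\bigr)$ modulo $k\one$, with $P_G=\cl_N(G\cup\{p\})$ and $Q_G=\cl_N(G\cup\{q\})$. It is characteristic-free because the monomial Taylor presentation uses only coefficients $0,\pm1$. Your expression $\phi(Q_G)\psi(H)-\phi(H)\psi(P_G)$ is not even well defined on classes. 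Replacing $\phi$ by $\phi+c$ changes the $G$-coordinate by $c\bigl(\psi(H)-\psi(P_G)\bigr)$, which varies with $G$ and so is not a constant vector in the block. Your planned $U_{1,4}$ check would expose this, but no single example can supply the general formula.

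Once the correct model and formula are in place, your choice of preimages does work after transcription, and more directly than the paper's. Take $\phi=\delta_{Q_0}$ on $\mathscr H_p$ and $\psi=\delta_{H_0}$ on $\mathscr H_q$; this is legitimate since $p\notin Q_0$ and $q\notin H_0$. We have $q\in Q_0$ but $q\notin H$, so $\phi(H)=0$. We have $p\in P_G$ but $p\notin H_0$, so $\psi(P_G)=0$. Hence the $(H,G)$-coordinate is $-\delta_{Q_0}(Q_G)\,\delta_{H_0}(H)$. Moreover $H_0\cap Q_G=G$, because two distinct hyperplanes through a rank-$(s-2)$ flat meet exactly in it. So the product is $\pm$ the coordinate delta at $(H_0,G_0)$, and no inductive correction is needed. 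The paper instead uses $\psi=\delta_{P_0}$ and takes $\phi$ equal to $1$ on $Q_0$ and on the hyperplanes other than $H_0$ that avoid $p,q$ and contain $G_0$.

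Either way, what your proposal lacks is a proof of the coordinate cup formula, built on the correct description of $T^1_{-\ee_p}(M)$ as functions on hyperplanes modulo constants.
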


The proof constructs every coordinate delta vector as a decomposable cup product.  It uses only coefficients $0,\pm1$ in the monomial presentation, and therefore works in every characteristic.  Iterated link localization transports the statement to arbitrary positive support.

\begin{theorem}[Degree-zero obstructions in every rank]\label{thm:intro-all-rank}
Let $M$ be connected with $\rk M\ge2$ and $\corank M\ge3$.  There are distinct $e,p,q\in E(M)$ for which
\[
T^1_{\ee_e-\ee_p}(M)\otimes_kT^1_{\ee_e-\ee_q}(M)
\twoheadrightarrow
T^2_{2\ee_e-\ee_p-\ee_q}(M)\ne0.
\]
All three fine degrees have standard total degree zero.  Over every field, some standard-degree-zero first-order deformation fails to lift from $k[\epsilon]/(\epsilon^2)$ to $k[\epsilon]/(\epsilon^3)$.
\end{theorem}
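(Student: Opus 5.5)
The plan is to place a positive-support element $e$ in the link and reduce to the pure-pair case, Theorem~\ref{thm:intro-cup}. First choose $e$. Since $M$ is connected with $\rk M\ge2$ and $\corank M\ge3$, I will find $e$ such that the link $L=\lk_M\{e\}$ is the contraction $M/e$ and still has corank at least three. Contraction preserves corank, since $\corank(M/e)=|E|-1-(\rk M-1)$, provided $e$ is not a loop. A connected matroid of rank at least two has no loops, so every $e$ qualifies. I also want $M/e$ to contain a $U_{1,4}$-minor, or equivalently a cocircuit of size at least four in some connected component.

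Connectedness of $M/e$ may fail, so I will pick $e$ carefully. The aim is that some connected component of $M/e$ has corank at least three. By Theorem~\ref{thm:intro-vanishing}, applied to $M$ itself, $M$ has a cocircuit $C^*$ with $|C^*|\ge4$. I take $e\notin C^*$ with $e$ not in the closure issue, so that $C^*$ remains a cocircuit of $M/e$. Cocircuits of $M/e$ are exactly the cocircuits of $M$ avoiding $e$. Such an $e$ exists whenever $C^*\ne E(M)$. If $C^*=E(M)$, then $M$ has rank one, which is excluded.

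With this choice, $M/e$ has a cocircuit of size at least four, so Theorem~\ref{thm:intro-vanishing} gives $T^2(M/e)\ne0$. Its implication (2)$\Rightarrow$(1), read contrapositively, gives distinct $p,q$ with $T^2_{-\ee_p-\ee_q}(M/e)\ne0$. I must check that $p,q$ are actual vertices of the link and differ from $e$. Both hold because $\lk_M\{e\}$ has ground set $E\setminus e$ and $M/e$ has no loops in the relevant component.

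Next I transport to $M$ by link localization. Theorem~\ref{thm:intro-graph} computes $T^2_{\mathbf c}(M)$ for $\mathbf c=\mathbf a-\one_B$ through $\lk_M A$ alone. Taking $\mathbf a=2\ee_e$ with $A=\{e\}$ independent, and $B=\{p,q\}$, gives
\[
T^2_{2\ee_e-\ee_p-\ee_q}(M)\cong T^2_{-\ee_p-\ee_q}(M/e)\ne0 .
\]
This isomorphism holds for all positive multiplicities on $A$. Similarly, Theorem~\ref{thm:universal-graph} gives $T^1_{\ee_e-\ee_p}(M)\cong T^1_{-\ee_p}(M/e)$. The surjectivity then follows from Theorem~\ref{thm:intro-cup} for $M/e$, once the cup product commutes with these link isomorphisms. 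This is the ``iterated link localization'' mentioned after Theorem~\ref{thm:intro-cup}, and I expect it to be the main technical point. I would verify it through the Altmann--Christophersen cup formula, where multiplying by the monomial $x_e$ maps the relative complex of the link isomorphically onto the relevant graded pieces. The only subtlety is the exponent $2$ versus $1$, and I would check that the formula depends only on $\supp\mathbf a$.

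Finally I produce the nonliftable deformation. Since the cup product is surjective onto a nonzero space, the symmetric square is the issue: I need a single class $\xi$ whose cup square is nonzero. I take $\xi=\alpha+\beta$ with $\alpha\in T^1_{\ee_e-\ee_p}$ and $\beta\in T^1_{\ee_e-\ee_q}$. Then $\xi\cup\xi$ has component $2\alpha\cup\beta$ in degree $2\ee_e-\ee_p-\ee_q$, which fails in characteristic two. To avoid this, I would instead use the explicit $0,\pm1$ coordinate description behind Theorem~\ref{thm:intro-cup} to compute $\xi\cup\xi$ directly, treating the pure-degree and mixed terms separately. The primary obstruction to lifting $\xi$ over $k[\epsilon]/(\epsilon^3)$ is the Massey-type square $\tfrac12[\xi,\xi]$, which the Altmann--Christophersen formula defines integrally. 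I would show that its component in degree $2\ee_e-\ee_p-\ee_q$ equals the integral cup coordinate of $\alpha,\beta$. It is therefore nonzero for a suitable choice of $\alpha,\beta$ delta vectors, so $\xi$ fails to lift. All the degrees involved have total degree $1-1=0$ or $2-2=0$.
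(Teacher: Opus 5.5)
Your argument is correct and has the same skeleton as the paper's proof (Theorem~\ref{thm:degree-zero-cup} followed by Corollary~\ref{cor:actual-obstructions}):
\begin{itemize}
\item pass to the link of one element $e$ and find a nonzero pure-pair target there;
\item transport pure-pair surjectivity back through the cup-compatible localization at $x_e$ (Lemma~\ref{lem:localization});
\item detect nonliftability of $\alpha+\beta$ through the mixed term of the primary obstruction.
\end{itemize}

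The genuine difference is how you choose $e$. The paper uses Lemma~\ref{lem:one-contraction}, via the rank-two reduction, to get a connected component of corank at least three in $M/e$. It then applies Theorem~\ref{thm:large-cocircuit} inside that component and embeds the target by the direct-sum formula. You instead take a cocircuit $C^*$ of $M$ with $|C^*|\ge4$ and $e$ in the complementary hyperplane, which is nonempty since $\rk M\ge2$. Because $(M/e)^*=M^*\setminus e$, $C^*$ remains a cocircuit of $M/e$. Proposition~\ref{prop:pure-cocircuit} then gives $T^2_{-\ee_p-\ee_q}(M/e)\ne0$ directly for any distinct $p,q\in C^*$; you do not need the detour through the vanishing theorem. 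This is shorter and removes both the one-contraction lemma and the direct-sum step. In fact Theorem~\ref{thm:large-cocircuit} already supplies $C^*=E\setminus\cl_MA$, so any $e\in A$ works.

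Two small corrections.
\begin{itemize}
\item $\lk_M\{e\}$ is $M/e$ with its loops deleted; these are the elements parallel to $e$. So its corank can drop, contrary to your opening sentence. This is harmless: loops lie in no cocircuit, so $C^*$ survives the deletion, and $p,q\in C^*$ are automatically actual vertices distinct from $e$.
\item In the last step you need neither a halved square $\tfrac12[\xi,\xi]$ nor $\operatorname{ob}(\alpha)=\operatorname{ob}(\beta)=0$. Those two self-terms lie in degrees $2\ee_e-2\ee_p$ and $2\ee_e-2\ee_q$. Hence the $2\ee_e-\ee_p-\ee_q$ component of $\operatorname{ob}(\alpha+\beta)$ is exactly the polarization term. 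The characteristic-free statement you must prove is that this term equals $\alpha\cup\beta$ at the presentation level. That is precisely Lemma~\ref{lem:graded-polarization}(2).
\end{itemize}
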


No characteristic restriction is needed.  The primary obstruction map is quadratic with the cup product as polarization, and it doubles fine degrees.  Every nonzero fine component of $T^1$ has nonzero negative support, so the doubled degree fails the squarefree support criterion and the self-obstruction of every fine-homogeneous class vanishes.  The mixed cup term is therefore the entire primary obstruction, in every characteristic, including two (Lemma~\ref{lem:graded-polarization}).  Combining this with the vanishing classification yields a complete smoothness criterion.

\begin{corollary}[Smoothness dichotomy]\label{cor:intro-dichotomy}
For every field $k$ and every matroid $M$, the affine deformation functor of $k[M]$ is formally smooth if and only if every connected component of $M$ has corank at most two, if and only if $M$ has no $U_{1,4}$-minor.
\end{corollary}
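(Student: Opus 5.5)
The corollary has two directions, and I would derive both from results already stated, with Theorem~\ref{thm:intro-vanishing} supplying the equivalence of the two combinatorial conditions. That theorem gives (3)$\Leftrightarrow$(5), so ``every connected component has corank at most two'' and ``no $U_{1,4}$-minor'' are interchangeable. It therefore suffices to show that the affine deformation functor $\mathrm{Def}_{k[M]}$ is formally smooth if and only if $T^2(M)=0$.

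\emph{Sufficiency.} I would use the standard fact that obstructions for $\mathrm{Def}_{A}$ of a finitely generated $k$-algebra $A$ live in $T^2_A$. Concretely, for every small extension $0\to(\epsilon)\to A'\to A''\to0$ of Artinian local $k$-algebras, a deformation over $A''$ has an obstruction to lifting to $A'$ that lies in $T^2\otimes_k(\epsilon)$. If $T^2(M)=0$, which holds by Theorem~\ref{thm:intro-vanishing} whenever all components have corank at most two, every such obstruction vanishes. Hence the functor is formally smooth, with no dependence on the field.

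\emph{Necessity.} Suppose some connected component $M_0$ has corank at least three. I first dispose of small rank. If $\rk M_0\le1$, then $M_0$ connected of rank one is $U_{1,n}$ with $n\ge4$ (corank $n-1\ge3$), and rank zero cannot occur for a connected component with positive corank.
\begin{itemize}[label={}]
\item In the case $\rk M_0=1$, I would exhibit the obstruction directly. One option is Theorem~\ref{thm:intro-cup} with distinct $p,q$, combined with nonvanishing of $T^2_{-\ee_p-\ee_q}$, which follows from (2)$\Rightarrow$(1) failing. A surjective cup product onto a nonzero space yields $\alpha,\beta$ with $\alpha\cup\beta\ne0$. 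Then the first-order deformation $\alpha t_1+\beta t_2$ has mixed primary obstruction $\alpha\cup\beta$, and its self-obstructions vanish by Lemma~\ref{lem:graded-polarization}. Restricting to the line $t_1=t_2=\epsilon$ gives a first-order deformation over $k[\epsilon]/(\epsilon^2)$ that does not lift to $k[\epsilon]/(\epsilon^3)$.
\item In the case $\rk M_0\ge2$, Theorem~\ref{thm:intro-all-rank} applies directly to $M_0$ and gives a non-liftable first-order deformation of $k[M_0]$.
\end{itemize}
The uniform route is the one above: Theorem~\ref{thm:intro-cup} plus $T^2_{-\ee_p-\ee_q}(M)\ne0$ for some pair $p,q$. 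By (2)$\Leftrightarrow$(3) of Theorem~\ref{thm:intro-vanishing}, such a pair exists for $M$ itself, so I can work with $M$ and avoid passing to a component.

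\emph{Step I expect to be the main obstacle.} The delicate point is checking that the mixed cup product really is the full primary obstruction of $\alpha t_1+\beta t_2$ restricted to the diagonal, in all characteristics. The obstruction of $(\alpha+\beta)\epsilon$ is $o(\alpha)+o(\beta)+\alpha\cup\beta$. By Lemma~\ref{lem:graded-polarization} applied to the fine-homogeneous classes $\alpha$ and $\beta$, we have $o(\alpha)=o(\beta)=0$, because their doubled degrees $-2\ee_p$ and $-2\ee_q$ fail the squarefree criterion of Theorem~\ref{thm:intro-graph}. The obstruction is therefore $\alpha\cup\beta\ne0$, including in characteristic two.

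Nonliftability to second order contradicts formal smoothness, which requires surjectivity along $k[\epsilon]/(\epsilon^3)\to k[\epsilon]/(\epsilon^2)$. This completes the equivalence.
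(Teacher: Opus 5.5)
Your proposal is correct and follows the paper's route. Sufficiency holds because all obstructions lie in $T^2(M)$. For necessity, you take a pair $p,q$ with $T^2_{-\ee_p-\ee_q}(M)\ne0$, realize a nonzero target element as a decomposable cup $\alpha\cup\beta$ via pure-pair surjectivity, and apply the graded polarization lemma to get $\operatorname{ob}(\alpha+\beta)=\alpha\cup\beta\ne0$; the paper packages exactly this as Corollary~\ref{cor:actual-obstructions}(1).

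Drop the preliminary case split by $\rk M_0$. As written, the rank-$\ge2$ bullet gives an obstructed deformation of $k[M_0]$ rather than of $k[M]$, so your ``uniform route'' on $M$ itself is the one to keep.
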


Formal smoothness is in principle sensitive to the ground field; for matroids, Corollary~\ref{cor:intro-dichotomy} decides it by one excluded minor, uniformly in the characteristic.

These primary results coexist with the higher obstructions of Theorem~\ref{thm:intro-higher}. Every fine-homogeneous class has an all-order formal representative. If two classes have disjoint negative supports and positive supports avoiding their union, their sum admits an all-order formal arc whenever their cup product vanishes (Theorem~\ref{thm:arcs}). This one-parameter assertion is distinct from joint integration over independent parameters. The cyclic family shows that pairwise vanishing does not guarantee extension of a specified joint jet. Theorem~\ref{thm:full-primary-image} determines the primary image in every fine degree: it fills supports meeting exactly two dual parallel classes, and misses every support meeting at least three, regardless of the positive exponents. The lifting argument uses the circuit description of $T^1$ from \cite{BrehmConstantinescu} to show that first cohomology is generated in purely negative degrees. Corollary~\ref{cor:primary-quotient} then gives the degreewise quotient by the primary image and its characteristic-independent dimensions. For a connected matroid, the standard-degree-preserving functor is formally smooth precisely when $\rk M\le1$ or $\corank M\le2$ (Corollary~\ref{cor:graded-dichotomy}).

The vanishing theorem also gives explicit certificates. If $D$ is a cocircuit of size at least four and $A$ is a basis of its complementary hyperplane, then $\dim_kT^2_{\one_A-\ee_p-\ee_q}(M)=|D|-3$ for distinct $p,q\in D$. These witnesses can be checked with a rank oracle, and together they recover all cocircuits of size at least four. Their total dimension $\Theta(M)$ has sharp uniform-matroid bounds. The pure-pair invariant $\mathfrak q(M)=\sum_{\{p,q\}\subseteq E(M)}\dim_kT^2_{-\ee_p-\ee_q}(M)$ instead measures the target dimension of an integrally split direct sum of cup maps, not the entire quadratic obstruction map; its direct-sum and rank-two extremal properties are treated in Section~\ref{sec:pair-rank}.

The combinatorial applications include a formula for $\dim_kT^2(M)_0$ in rank three, reconstruction of simple rank-three matroids on at least seven points from labeled nonvanishing data, and unique minimization of the degree-zero dimension by the near-pencil on $n\ge8$ points. The weighted cocircuit profile underlying $\Theta$ is a linear specialization of the Derksen--Fink $\mathcal G$-invariant but is not determined by the Tutte polynomial. In contrast, $\mathfrak q$ is neither valuative nor determined by the Tutte polynomial. These results describe information in the obstruction module beyond its vanishing locus.

There are two reading routes through the paper. For deformation theory, Sections~\ref{sec:preliminaries} and \ref{sec:graph-model} give the support criterion, graph formula, and singleton coordinates for $T^1$; Section~\ref{sec:cup} computes the pair products. Section~\ref{sec:primary-image} then determines the full primary image, leading to the cyclic construction in Section~\ref{sec:higher-obstructions}, homogeneous-class realization in Sections~\ref{sec:negative-realization} and \ref{sec:positive-realization}, and the quotient-module presentation in Section~\ref{sec:primary-quotient-module}. The intervening extremal and valuative results are not prerequisites for this route.

For the combinatorial consequences, Sections~\ref{sec:pairs} and \ref{sec:vanishing} prove the pair formula and vanishing classification. Sections~\ref{sec:theta} and \ref{sec:rank-two} compute positive-support strands, while Sections~\ref{sec:algorithms}--\ref{sec:valuative-profile} develop constructive witnesses, extremal bounds, rank-three reconstruction, the near-pencil theorem, and valuative distinctions. Section~\ref{sec:verification} describes the finite checks that accompany the proofs. The questions in Section~\ref{sec:questions} concern the remaining module and lifting information.

\section{Preliminaries}\label{sec:preliminaries}

\subsection{Matroids and links}

Let $M$ be a finite matroid on ground set $E$. We write $(u)_+=\max\{u,0\}$ and let $\kappa(\Gamma)$ denote the number of connected components of a graph $\Gamma$. For the empty graph, both $\kappa$ and reduced zeroth cohomology are zero. We identify $M$ with its independence complex. Its rank and corank are
\[
\rk M=\max\{|I|:I\in M\},\qquad \corank M=|E|-\rk M.
\]
The circuits are the minimal nonfaces. We write $\mathcal L(M)$ for the lattice of flats of $M$. The dual matroid is denoted $M^*$; the cocircuits of $M$ are the circuits of $M^*$, equivalently the complements of the hyperplanes of $M$.

For $A\in M$, the simplicial link is
\[
\lk_M A=\{F\subseteq E\setminus A:A\cup F\in M\}.
\]
It is the independence complex of $M/A$ after deleting contraction loops. We write $V(\Delta)$ for the actual vertex set of a simplicial complex and always use this convention for links. If $A$ is a basis of a hyperplane $H$ and $D=E\setminus H$, then $\lk_M A$ is the zero-dimensional complex $Z_D$ on the vertex set $D$.

Fix a basis $B$ of $M$.  For $e\notin B$, write $C(e;B)$ for the fundamental circuit in $B\cup\{e\}$.  The fundamental graph $G_B(M)$ is bipartite on $B\sqcup(E\setminus B)$, with $b$ adjacent to $e$ precisely when $b\in C(e;B)$.  We use the standard facts
\[
M\text{ connected}\iff G_B(M)\text{ connected}
\]
and, for $A_0\subseteq B$ and every nonloop $e$ of $M/A_0$,
\[
C_{M/A_0}(e;B\setminus A_0)=C_M(e;B)\setminus A_0.
\]

\subsection{Cotangent cohomology and the support criterion}

Let $S=k[x_e:e\in E]$ and let $I_M\subset S$ be the Stanley--Reisner ideal generated by the squarefree circuit monomials. Put $R=k[M]=S/I_M$ and $T^i(M)=\operatorname{Ext}^i_R(\mathbb L_{R/k},R)$, where $\mathbb L_{R/k}$ is the cotangent complex. The fine $\ZZ^E$-grading induces
\[
T^i(M)=\bigoplus_{\mathbf c\in\ZZ^E}T^i_{\mathbf c}(M).
\]
For a scalar subscript, $T^i(M)_d$ denotes the standard total-degree
component $\bigoplus_{\sum_e c_e=d}T^i_{\mathbf c}(M)$.
Write each degree uniquely as $\mathbf c=\mathbf a-\mathbf b$, with $\mathbf a,\mathbf b\in\NN^E$ and disjoint supports $A,B$.  The Altmann--Christophersen support criterion \cite[Proposition~11 and Theorem~13]{AltmannChristophersen} gives
\[
T^i_{\mathbf a-\mathbf b}(M)=0\quad(i=1,2)
\]
unless $\mathbf b$ is nonzero and squarefree, $A\in M$, and $B$ is contained in the actual vertex set of $\lk_M A$.  When these conditions hold,
\begin{equation}\label{eq:link-formula}
T^i_{\mathbf a-\one_B}(M)\cong T^i_{-\one_B}(\lk_M A),\qquad i=1,2.
\end{equation}
The isomorphism depends on the support $A$, not on the positive exponents.

For a simplicial complex $\Delta$ and nonempty $B$, define
\[
N_B(\Delta)=\{F\in\Delta:F\cap B=\varnothing,\ F\cup B\notin\Delta\}
\]
and
\[
\widetilde N_B(\Delta)=
\{F\in N_B(\Delta):F\cup B'\notin\Delta
\text{ for some }B'\subsetneq B\}.
\]
If $|B|>1$, then \cite[Theorem~9]{AltmannChristophersen}
\begin{equation}\label{eq:AC-relative}
T^i_{-\one_B}(\Delta)
\cong
H^{i-1}(|N_B(\Delta)'|,|\widetilde N_B(\Delta)'|;k),
\qquad i=1,2.
\end{equation}
Here a family of faces is ordered by inclusion, and the prime denotes
its order complex. We also write $\Deltao P$ for the order complex of a
finite poset $P$. Reduced cohomology is augmented, so
$\wtH^{-1}(\varnothing;k)=k$ and $\wtH^0(\varnothing;k)=0$.
The original formulation in \cite{AltmannChristophersen} is over
$\mathbb C$; the same combinatorial cochain presentation with coefficients
in an arbitrary field is recorded in \cite[Sections~2.2--2.3]{CKNSV}.
The singleton formula uses the reduced version of the same complex;
we record its matroid form in Theorem~\ref{thm:universal-graph}.

\begin{lemma}[Localization and cup products]\label{lem:localization}
Let $R$ be a finitely presented $k$-algebra and $U\subset R$ a multiplicative set.  For $i=1,2$, flat localization gives natural isomorphisms
\[
U^{-1}T^i(R/k;R)\cong T^i(U^{-1}R/k;U^{-1}R),
\]
compatible with cup products.  For a vertex $v$ of a simplicial complex $\Delta$,
\[
k[\Delta]_{x_v}\cong k[\lk_\Delta v][x_v^{\pm1}]
\]
after deleting ghost vertices, and the corresponding positive cotangent cohomology is the Laurent extension of that of the link. If a fine degree $\mathbf a-\one_B$ satisfies the support criterion and $v\in\supp\mathbf a$, localization followed by setting $x_v=1$ identifies this component with the component of the link obtained by deleting its $v$-coordinate. Iteration over $\supp\mathbf a$ gives \eqref{eq:link-formula}.
\end{lemma}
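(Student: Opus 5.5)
We prove the three assertions of Lemma~\ref{lem:localization} in turn, using only standard properties of the cotangent complex.

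\textbf{Localization.} Since $R$ is finitely presented over $k$, the cotangent complex $\mathbb L_{R/k}$ is represented by a complex of finitely generated free $R$-modules in each degree. A polynomial presentation of $R$ gives such a representative in degrees $0,-1,-2$, which is all that is needed to compute $T^1$ and $T^2$.

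Flat base change gives $\mathbb L_{U^{-1}R/k}\simeq U^{-1}R\otimes_R\mathbb L_{R/k}$. Here we use that $\mathbb L_{U^{-1}R/R}\simeq 0$ for a localization, together with the transitivity triangle for $k\to R\to U^{-1}R$.

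For a bounded-above complex $P$ of finitely generated free modules, $\operatorname{Ext}$ commutes with flat localization in the relevant range. Indeed, $\operatorname{Hom}_R(P_j,R)$ localizes termwise, and localization is exact. This yields
\[
U^{-1}T^i(R/k;R)\cong T^i(U^{-1}R/k;U^{-1}R),\qquad i=1,2.
\]

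Compatibility with cup products follows from naturality. The cup product is induced by a morphism of (simplicial) $R$-algebra data, such as the Lichtenbaum--Schlessinger construction or the Yoneda product on $\mathbb L$. Applying the exact functor $U^{-1}$ commutes with that construction.

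\textbf{The link identification.} Inverting $x_v$ kills every monomial $x_F$ with $F\cup\{v\}\notin\Delta$. To see this, take a minimal nonface $N\ni v$ with $N\setminus\{v\}\subseteq F$; then $x_{N\setminus v}x_v=0$, so $x_{N\setminus v}=0$ after inverting $x_v$.

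The remaining relations among variables other than $x_v$ are exactly the nonfaces of $\lk_\Delta v$. Variables $x_u$ with $\{u,v\}\notin\Delta$ become zero; these are the ghost vertices. Hence
\[
k[\Delta]_{x_v}\cong k[\lk_\Delta v]\otimes_k k[x_v^{\pm1}].
\]

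Next we apply base change for $T^i$ along the smooth extension $A\to A[x^{\pm1}]$. Since $\mathbb L_{A[x^{\pm1}]/A}$ is free of rank one in degree $0$, transitivity gives
\[
T^i\bigl(A[x^{\pm1}]\bigr)\cong T^i(A)\otimes_k k[x^{\pm1}]\qquad(i\ge1).
\]
The extra summand $\operatorname{Der}$ in the $x$-direction appears only in $T^0$. Here $A=k[\lk v]$, which is a flat $k$-algebra, so base change commutes with $\operatorname{Ext}$.

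\textbf{Fine degrees.} The fine grading is preserved throughout, and $x_v$ is homogeneous of degree $\ee_v$. Fix a degree $\mathbf a-\one_B$ with $a_v>0$.

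The support criterion shows that $T^i(k[\Delta])$ has no $x_v$-torsion in degrees with positive $v$-coordinate. We deduce this from the Altmann--Christophersen description: multiplication by $x_v$ is an isomorphism between such components whenever $v\in A$. The key point is that the component depends only on the support. Consequently $T^i_{\mathbf a-\one_B}(k[\Delta])\to T^i(k[\Delta])_{x_v}$ is injective, with image the corresponding graded piece.

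Setting $x_v=1$ identifies that graded piece with the degree of $T^i(\lk v)$ obtained by deleting the $v$-coordinate. Iterating over the elements of $\supp\mathbf a$, and using $\lk_{\lk_\Delta v}w=\lk_\Delta\{v,w\}$, yields \eqref{eq:link-formula}.

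\textbf{Main obstacle.} The main obstacle is the injectivity and stabilization step, namely that $x_v$ acts bijectively between components with $a_v\ge1$. If one does not want to cite Altmann--Christophersen for this, the fallback is a direct argument. Compute $T^i$ degreewise from the Hom-complex of the presentation. Then observe that for $a_v\ge1$ the relevant combinatorial sets $N_B$ depend only on $\supp\mathbf a$, so multiplication by $x_v$ is an isomorphism of those cochain complexes.
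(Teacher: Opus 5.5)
Your proposal is correct and follows essentially the same route as the paper: a finite-free model of the cotangent complex gives flat localization and cup-compatibility. The transitivity triangle for the Laurent extension splits, and the $dx_v$ summand contributes nothing in positive cohomological degree. Finally, for $a_v\ge1$ the Altmann--Christophersen relative complex depends only on the supports, so multiplication by $x_v$ is bijective between these components and the localized component is already the original one before you set $x_v=1$. One small correction: $T^i(A[x^{\pm1}])\cong T^i(A)\otimes_k k[x^{\pm1}]$ holds because the finite-free model lets $\operatorname{Ext}_A(\mathbb L_{A/k},-)$ commute with the direct sum $A[x^{\pm1}]=\bigoplus_n Ax^n$, not because $A$ is flat over $k$, which is automatic over a field and does no work here.
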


\begin{proof}
Choose a finite-free model for the cotangent complex through the degrees needed for $T^1$ and $T^2$.  Flat localization commutes with its dual and with cohomology.  The cup product is induced by composition of cochain-level lifts, so it also commutes with localization.

Inverting $x_v$ kills variables which are not vertices of the link and turns the remaining monomial relations into those of $\lk_\Delta v$. The transitivity triangle for the Laurent extension splits into the base-changed cotangent complex of the link and the free summand generated by $dx_v$. The free summand has no positive cotangent cohomology. If $v\in\supp\mathbf a$, increasing its positive exponent does not change the relative complex in \eqref{eq:AC-relative}; multiplication by $x_v$ is the identity under these identifications. Thus the localization direct limit is already represented by the original fine component. Removing the Laurent monomial by setting $x_v=1$ deletes that coordinate. The positive-support hypothesis is essential: localization need not preserve a component with zero $v$-coordinate.
\end{proof}

\begin{lemma}[Zero-dimensional calculation]\label{lem:zero-dimensional}
Let $Z_D$ be the zero-dimensional complex on $d=|D|$ vertices.  For a nonzero squarefree $\mathbf b$,
\[
\dim_kT^2_{-\mathbf b}(Z_D)=
\begin{cases}
(d-3)_+,&\supp\mathbf b=\{p,q\},\ p\ne q,\\
0,&\text{otherwise}.
\end{cases}
\]
All components with nonempty positive support vanish.  Thus $T^2(Z_D)=0$ if and only if $d\le3$.
\end{lemma}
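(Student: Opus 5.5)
The plan is to split by the shape of the degree and then apply the relative formula \eqref{eq:AC-relative}. First I handle positive support. The complex $Z_D$ has only vertices, so $\lk_{Z_D}A$ is nonempty only for $A=\varnothing$ or $A=\{v\}$. In the second case the link is $\{\varnothing\}$, which has no actual vertices. So the support criterion, with $B$ required to lie in the vertex set of the link, kills every component with $A\ne\varnothing$. Degrees with $\mathbf b$ not squarefree are excluded by the same criterion. This leaves $\mathbf b=\one_B$ with $B\subseteq D$ nonempty.

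Next I take $|B|\ge2$ and compute $N_B$ and $\widetilde N_B$ directly. Here $B\notin Z_D$, so every face disjoint from $B$ lies in $N_B$. These faces are $\varnothing$ and the singletons $\{v\}$ with $v\in D\setminus B$. For $\widetilde N_B$ we need a proper subset $B'$ of $B$ with $F\cup B'\notin Z_D$.
\begin{itemize}
\item If $|B|\ge3$, take $|B'|=2$. Then $F\cup B'$ is never a face, so $\widetilde N_B=N_B$ and the relative cohomology vanishes.
\item If $B=\{p,q\}$, then $\varnothing\notin\widetilde N_B$, because $\varnothing\cup B'$ has size at most one. Each singleton $\{v\}$ does lie in $\widetilde N_B$, taking $B'=\{p\}$.
\end{itemize}
So in the pair case $|N_B'|$ is a cone with apex $\varnothing$ over the $d-2$ points $\{v\}$, and $|\widetilde N_B'|$ is that discrete set of $d-2$ points. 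Hence
\[
H^1(|N_B'|,|\widetilde N_B'|)\cong\wtH^0(\text{$d-2$ points})\cong k^{(d-3)_+}.
\]
When $d=2$ the subcomplex is empty and the pair is a point, so $H^1$ vanishes, which agrees with the formula.

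It remains to treat the singleton case $B=\{p\}$, which \eqref{eq:AC-relative} does not cover. I would use the reduced version of the complex mentioned after that formula. Alternatively I would argue directly: $x_p$ times every other variable lies in $I$, and one can check that $T^2_{-\ee_p}=0$. A concrete route is to localize at some $x_v$ with $v\ne p$. That kills $x_p$, though it only controls components with positive $v$-coordinate, so I prefer the reduced complex. In it $N_{\{p\}}$ consists of the faces $F$ disjoint from $p$ with $F\cup\{p\}\notin Z_D$, namely the singletons $\{v\}$ with $v\ne p$. The reduced cohomology in degree $i-1=1$ of a discrete set is zero. This singleton case is the step I expect to be the main obstacle, because the precise form of the singleton formula (Theorem~\ref{thm:universal-graph}) has not yet been stated at this point.

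The final claim follows from the computation: $(d-3)_+=0$ exactly when $d\le3$, and for $d\ge4$ the pair components are nonzero.
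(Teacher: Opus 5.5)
Your proposal is correct and follows essentially the same route as the paper. It uses the same case split by $|B|$ and the same identification of $\widetilde N_{\{p,q\}}$ with the $d-2$ points of $D\setminus\{p,q\}$, so that the relative $H^1$ is their reduced $H^0$. It likewise uses coincidence of the pair when $|B|\ge3$, the reduced singleton complex with vanishing first cohomology, and link-based vanishing for nonempty positive support; the paper phrases that last step as ``a simplex as its link'' where you invoke the empty actual vertex set, which amounts to the same observation.
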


\begin{proof}
For $B=\{p,q\}$, the support is a minimal nonface and $\widetilde N_B$ is the discrete set indexed by $D\setminus\{p,q\}$.  The relevant group is its reduced zeroth cohomology, of dimension $(d-3)_+$.  Singleton support has vanishing first cohomology.  A support of size at least three is a nonminimal nonface, so the relative pair agrees and contributes zero.  A nonempty positive support has a simplex as its link.
\end{proof}

We also use two standard facts. First, matroids of corank at most two have vanishing $T^2$ \cite[Proposition~21]{CKNSV}. Second, for matroids on disjoint ground sets,
\begin{equation}\label{eq:direct-sum-T2}
T^2(M_1\oplus M_2)\cong
\bigl(T^2(M_1)\otimes_k k[M_2]\bigr)
\oplus
\bigl(k[M_1]\otimes_kT^2(M_2)\bigr).
\end{equation}
This follows from the split exact sequences for the cotangent cohomology of a tensor product of algebras over a field \cite{ChristophersenIlten}, since $k[M_1\oplus M_2]=k[M_1]\otimes_kk[M_2]$.  In particular, $T^2(M_1\oplus M_2)=0$ exactly when both summands vanish.

\section{The complete hyperplane-graph model}\label{sec:graph-model}

Let $N$ be a matroid of rank $s$ on $E$ and let $\varnothing\ne B\subseteq E$.  For a hyperplane $H$ of $N$ with $H\cap B=\varnothing$, put
\[
\mathcal G_B(H)=
\{G\in\mathcal L(N):\rk_NG=s-2,\ G\subset H,\ \rk_N(G\cup B)=s\}.
\]
If $s<2$, this set is empty. Define a graph $\Gamma_B(H)$ on $\mathcal G_B(H)$ by joining distinct vertices according to
\begin{equation}\label{eq:graph-adjacency}
G\sim G'
\quad\Longleftrightarrow\quad
\rk_N((G\cap G')\cup B)=s.
\end{equation}
An empty graph contributes zero to $\wtH^0$.

\begin{theorem}[Universal hyperplane-graph formula]\label{thm:universal-graph}
Let $M$ be a matroid, write $\mathbf c=\mathbf a-\mathbf b$ with disjoint nonnegative parts, and put $A=\supp\mathbf a$.  If $\mathbf b=0$, if $\mathbf b$ is not squarefree, or if $A\notin M$, then $T^i_{\mathbf c}(M)=0$ for $i=1,2$.  Suppose $\mathbf b=\one_B$ is nonzero, set $L=\lk_MA$, and assume that $B$ lies in the actual vertex set of $L$.  Put $N=L^*$.

If $|B|\ge2$, then
\begin{align}
T^1_{\mathbf c}(M)
&\cong
\bigoplus_{\substack{H\text{ hyperplane of }N\\
H\cap B=\varnothing\\\mathcal G_B(H)=\varnothing}}k,
\label{eq:universal-T1}\\
T^2_{\mathbf c}(M)
&\cong
\bigoplus_{\substack{H\text{ hyperplane of }N\\H\cap B=\varnothing}}
\wtH^0(\Gamma_B(H);k).
\label{eq:universal-T2}
\end{align}
Consequently,
\begin{equation}\label{eq:universal-dimension}
\dim_kT^2_{\mathbf c}(M)=
\sum_{\substack{H\text{ hyperplane of }N\\H\cap B=\varnothing}}
\bigl(\kappa(\Gamma_B(H))-1\bigr)_+.
\end{equation}
If $B=\{p\}$ and $\mathscr H_p(N)$ is the discrete set of hyperplanes avoiding $p$, then
\begin{equation}\label{eq:singleton-formula}
T^1_{\mathbf c}(M)\cong\wtH^0(\mathscr H_p(N);k),
\qquad T^2_{\mathbf c}(M)=0.
\end{equation}
\end{theorem}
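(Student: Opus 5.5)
The plan is to reduce to purely negative degrees of a matroid complex and then compute the Altmann--Christophersen relative cohomology \eqref{eq:AC-relative} by dualizing the face posets into flats of $N=L^*$. The vanishing statements are exactly the support criterion. When $\mathbf b=\one_B$ is admissible, \eqref{eq:link-formula} (via Lemma~\ref{lem:localization}) identifies $T^i_{\mathbf c}(M)$ with $T^i_{-\one_B}(L)$. Since $L$ is again a matroid complex after discarding ghost vertices, it suffices to treat $\mathbf a=0$, $M=L$, $N=L^*$.

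For $|B|\ge2$ I would rewrite the posets $N_B(L)$ and $\widetilde N_B(L)$ through complements. A face $F$ of $L$ is independent, so $E\setminus F$ is spanning in $N$. The condition $F\cup B\notin L$ says $E\setminus(F\cup B)$ is not spanning in $N$. Hence $F\in N_B$ exactly when $E\setminus F\supseteq B$ is spanning but $(E\setminus F)\setminus B$ is not.

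The next step is to replace this poset by a homotopy-equivalent one indexed by closures. I would use a Quillen fiber-lemma retraction $F\mapsto$ a maximal face with the same closure data: send $X=E\setminus F$ to $\cl_N(X\setminus B)$, a proper flat whose union with $B$ spans. This order-reversing map should have contractible fibers, as cones over a maximal element. It carries $N_B$ onto the poset of proper flats $G$ with $G\cap B=\varnothing$-compatible spanning, and carries $\widetilde N_B$ to those flats for which some proper subset $B'\subsetneq B$ already fails.

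Then I would filter by hyperplanes. Each such flat lies in hyperplanes $H$ avoiding $B$, and I would show the relative pair decomposes, up to homotopy, as a wedge over hyperplanes $H$ with $H\cap B=\varnothing$. Each wedge summand is the pair (cone on $H$, the subposet of flats inside $H$ relevant to $\widetilde N_B$). Passing to the long exact sequence then gives $H^0$ of the relative pair as the set of $H$ for which the relative part is empty, which is \eqref{eq:universal-T1}. It gives $H^1$ as $\wtH^0$ of the lower subposet. That subposet retracts onto its rank-$(s-2)$ elements, connected through common lower flats whose union with $B$ still spans. This is exactly $\Gamma_B(H)$ with adjacency \eqref{eq:graph-adjacency}, and it yields \eqref{eq:universal-T2}. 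The dimension formula \eqref{eq:universal-dimension} is then immediate.

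For the singleton case I would use the reduced version: the complex becomes the discrete set of maximal faces up to closure, i.e.\ hyperplanes of $N$ avoiding $p$. This gives \eqref{eq:singleton-formula}, and $T^2$ vanishes since a discrete set has no $\wtH^1$.

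The main obstacle is the middle homotopy step. One must show that the lower subposet inside a fixed hyperplane has the homotopy type, at least through $\pi_0$ and $H^1$ relative data, of the graph on corank-two flats. One must also show that distinct hyperplanes do not interact. I would first try proving that every element of $N_B$ lies below a unique hyperplane-cone after the closure retraction, using submodularity of rank to get uniqueness. If uniqueness fails, I would fall back to a Mayer--Vietoris argument over the hyperplane cover, with intersections lying in $\widetilde N_B$.
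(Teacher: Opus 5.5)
Your outline follows the paper's proof. Complementation $X=E\setminus(F\cup B)$ turns $(N_B,\widetilde N_B)$ into the sets $X\subseteq E\setminus B$ with $\rk_NX<s=\rk_N(X\cup B)$, and the subfamily with $\rk_N(X\cup\{b\})<s$ for some $b\in B$. A closure retraction with cone fibers, plus the five lemma, passes to flats. The relative part then splits over hyperplanes $H$ avoiding $B$ into the pairs $(\operatorname{cone}\Deltao\mathcal F_B(H),\Deltao\mathcal F_B(H))$, where $\mathcal F_B(H)$ consists of the flats $F\subsetneq H$ with $\rk_N(F\cup B)=s$. Your direct $\pi_0$ argument replaces the paper's nerve-lemma step and suffices, since only $\wtH^0$ is needed. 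It uses that every member of $\mathcal F_B(H)$ lies under a member of $\mathcal G_B(H)$, and that two such members $G,G'$ share a lower element iff $\rk_N((G\cap G')\cup B)=s$. For \eqref{eq:universal-T1} you should also note that $\mathcal F_B(H)=\varnothing$ iff $\mathcal G_B(H)=\varnothing$.

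The step you flag as the main obstacle should not go through the uniqueness you propose, because it fails in general. A member $G\in\mathcal G_B(H)$ lies in the block of every hyperplane $G\cup P$, where $P$ is a parallel class of the rank-two contraction $N/G$ disjoint from $B$, and there can be several such classes. What is unique is the hyperplane in each \emph{relative chain}. After closure, the elements outside the image of $\widetilde N_B$ are exactly the hyperplanes avoiding $B$: if $\rk_N(X\cup\{b\})=s$ for all $b\in B$, then $\rk_NX=s-1$ and $\cl_NX\cap B=\varnothing$. These hyperplanes are pairwise incomparable and maximal. So every chain not contained in the subcomplex contains exactly one of them, as its top element. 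Deleting that top vertex lands in the subcomplex, while deleting any other vertex keeps it. Hence the relative cochain complex is a direct sum over $H$ already at chain level. This is also why your Mayer--Vietoris fallback works: distinct cones meet only inside the subcomplex.

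Finally, $\cl_N(X\setminus B)$ may meet $B$, so the target of your retraction cannot be a poset of flats avoiding $B$. You can keep all flats $G$ with $\rk_NG<s=\rk_N(G\cup B)$: those meeting $B$ lie in the subcomplex, and each fiber is a cone on the element corresponding to $G\setminus B$. Alternatively, use $\cl_N(X)\setminus B$, as the paper does.
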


\begin{proof}
The support criterion and \eqref{eq:link-formula} reduce the problem to $T^i_{-\one_B}(N^*)$.  Assume first that $|B|\ge2$.  For $F\subseteq E\setminus B$, set
\[
X=E\setminus(F\cup B).
\]
Complementation and the dual-rank identity identify the Altmann--Christophersen pair, with its order reversed, with
\[
P_B=\{X\subseteq E\setminus B:\rk_NX<s,\ \rk_N(X\cup B)=s\}
\]
and
\[
Q_B=\{X\in P_B:\rk_N(X\cup\{b\})<s
\text{ for some }b\in B\}.
\]
Indeed, if $B'\subsetneq B$ and $D=B\setminus B'$, then
\[
E\setminus(F\cup B')=X\cup D.
\]
The condition $\rk_N(X\cup D)<s$ implies the singleton condition for every $b\in D$; conversely a singleton condition is obtained from $B'=B\setminus\{b\}$.  Thus
\begin{equation}\label{eq:PB-QB-relative}
T^i_{-\one_B}(N^*)\cong
H^{i-1}(|P_B'|,|Q_B'|;k),\qquad i=1,2.
\end{equation}

Define
\[
c(X)=\cl_N(X)\setminus B.
\]
Because $X\subseteq c(X)\subseteq\cl_NX$, the sets $X$ and $c(X)$ have the same closure.  Hence $c$ preserves every rank condition defining $P_B$ and $Q_B$; it is an ascending idempotent on $P_B$ and restricts to one on $Q_B$.  The closure-operator lemma \cite[Corollary~10.12, p.~1852]{Bjorner} gives homotopy equivalences on both spaces.  Applying the five lemma to the two long exact sequences of pairs yields
\begin{equation}\label{eq:relative-closure}
H^\bullet(|P_B'|,|Q_B'|;k)
\cong H^\bullet(|c(P_B)'|,|c(Q_B)'|;k).
\end{equation}

If $X\in P_B\setminus Q_B$, then $\rk_N(X\cup\{b\})=s$ for every $b\in B$.  A single element raises rank by at most one, so $\rk_NX=s-1$, and no element of $B$ lies in $\cl_NX$.  Conversely, every hyperplane disjoint from $B$ has these properties.  Since $c$ preserves $Q_B$,
\begin{equation}\label{eq:relative-hyperplanes}
c(P_B)\setminus c(Q_B)
=\{H:H\text{ is a hyperplane of }N,\ H\cap B=\varnothing\}.
\end{equation}

Fix such an $H$ and set
\[
\mathcal F_B(H)=
\{F\in\mathcal L(N):F\subsetneq H,\ \rk_N(F\cup B)=s\}.
\]
The elements of $c(P_B)$ strictly below $H$ are exactly the members of $\mathcal F_B(H)$.  Every relative chain contains a unique hyperplane from \eqref{eq:relative-hyperplanes}, and that hyperplane is its maximum.  Deleting it produces a chain in $c(Q_B)$, while deleting another vertex remains in the same block.  Thus the relative cochain complex splits, already at chain level, into the blocks
\[
\bigl(\operatorname{cone}\Deltao\mathcal F_B(H),
\Deltao\mathcal F_B(H)\bigr).
\]
It follows that
\begin{align}
T^1_{-\one_B}(N^*)&\cong
\bigoplus_{H\cap B=\varnothing}
\wtH^{-1}(\Deltao\mathcal F_B(H);k),
\label{eq:block-T1-flat}\\
T^2_{-\one_B}(N^*)&\cong
\bigoplus_{H\cap B=\varnothing}
\wtH^0(\Deltao\mathcal F_B(H);k).
\label{eq:block-T2-flat}
\end{align}

For $G\in\mathcal G_B(H)$, let
\[
\mathcal U_G=\Deltao\{F\in\mathcal F_B(H):F\subseteq G\}.
\]
Each $\mathcal U_G$ is a cone with apex $G$.  These cones cover $\Deltao\mathcal F_B(H)$: any qualifying flat extends within $H$ to a rank-$(s-2)$ flat, and the spanning condition is preserved under enlargement.  For $G_1,\ldots,G_m$, put $K=\bigcap_iG_i$.  The corresponding intersection is nonempty exactly when $\rk_N(K\cup B)=s$; in that case $K$ is its maximum.  Hence all nonempty finite intersections are contractible, and the finite nerve lemma \cite[Theorem~10.6(i), p.~1850]{Bjorner} applies.  The one-skeleton of the nerve is precisely $\Gamma_B(H)$.  Reduced $H^0$ depends only on this one-skeleton, so
\[
\wtH^0(\Deltao\mathcal F_B(H);k)
\cong\wtH^0(\Gamma_B(H);k).
\]
Moreover $\mathcal F_B(H)$ is empty exactly when $\mathcal G_B(H)$ is empty.  Equations \eqref{eq:block-T1-flat} and \eqref{eq:block-T2-flat} now prove \eqref{eq:universal-T1}--\eqref{eq:universal-dimension}.  The low-rank cases are included by the empty-graph convention.

If $B=\{p\}$, adding $p$ raises rank by at most one, so every member of the complementary poset has rank $s-1$ and has closure avoiding $p$.  Closure retracts this poset to the antichain of hyperplanes avoiding $p$.  The reduced singleton form of the Altmann--Christophersen formula gives \eqref{eq:singleton-formula}.  Finally, \eqref{eq:link-formula} transports all formulas back to $M$.
\end{proof}

\begin{remark}[Relation to the known $T^1$ description]\label{rem:T1-attribution}
For $T^1$, the formulas \eqref{eq:universal-T1} and \eqref{eq:singleton-formula} are, after dualization and \eqref{eq:link-formula}, a reformulation of the combinatorial description of the first cotangent cohomology of matroids obtained by Brehm and Constantinescu \cite{BrehmConstantinescu}.  We include the $T^1$ statements because the proof above yields both cohomological degrees simultaneously and because the singleton identification \eqref{eq:singleton-formula} supplies the coordinates for the cup products of Section~\ref{sec:cup}.  The graph $\Gamma_B(H)$ and the resulting $T^2$ statements are the contributions developed here.
\end{remark}

\begin{corollary}[Complete fine Hilbert series]\label{cor:complete-Hilbert}
For $A\in M$ and $B\subseteq V(\lk_MA)$ with $|B|\ge2$, write $\Gamma^A_B(H)$ for the graph of \eqref{eq:graph-adjacency} associated with the dual matroid $(\lk_MA)^*$ and the set $B$, and define
\[
\omega_M(A,B)=
\sum_{\substack{H\text{ hyperplane of }(\lk_MA)^*\\H\cap B=\varnothing}}
\bigl(\kappa(\Gamma^A_B(H))-1\bigr)_+.
\]
Then
\begin{equation}\label{eq:complete-Hilbert}
\Hilb_{T^2(M)}(\mathbf t)=
\sum_{A\in M}
\left(\prod_{a\in A}\frac{t_a}{1-t_a}\right)
\left(
\sum_{\substack{B\subseteq V(\lk_MA)\\|B|\ge2}}
\omega_M(A,B)\prod_{b\in B}t_b^{-1}
\right).
\end{equation}
In particular, every multigraded dimension of $T^2(M)$ is independent of $\operatorname{char}k$.
\end{corollary}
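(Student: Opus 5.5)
The plan is to assemble the Hilbert series degree by degree from Theorem~\ref{thm:universal-graph}, grouping fine degrees $\mathbf c=\mathbf a-\mathbf b$ by the pair $(A,B)$ with $A=\supp\mathbf a$ and $B=\supp\mathbf b$.

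First, by the vanishing part of Theorem~\ref{thm:universal-graph}, $T^2_{\mathbf c}(M)=0$ unless $\mathbf b=\one_B$ is nonzero and squarefree, $A\in M$, and $B\subseteq V(\lk_MA)$. Under these conditions, the singleton case \eqref{eq:singleton-formula} gives $T^2_{\mathbf c}(M)=0$ when $|B|=1$. For $|B|\ge2$, formula \eqref{eq:universal-dimension} gives $\dim_kT^2_{\mathbf c}(M)=\omega_M(A,B)$, with $N=(\lk_MA)^*$. The key observation is that this dimension depends only on the supports $(A,B)$ and not on the positive exponents of $\mathbf a$, as already noted after \eqref{eq:link-formula}. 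Consequently the only degrees contributing are those with $|B|\ge2$, which is exactly the range of the inner sum in \eqref{eq:complete-Hilbert}.

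Second, I sum over all admissible degrees. For fixed $A\in M$, the vectors $\mathbf a\in\NN^E$ with $\supp\mathbf a=A$ are those with every $a_e\ge1$ for $e\in A$ and $a_e=0$ otherwise. Their generating function is $\prod_{a\in A}t_a/(1-t_a)=\prod_{a\in A}\sum_{j\ge1}t_a^j$. Each such $\mathbf a$ pairs with each admissible $B$; the supports are automatically disjoint because $V(\lk_MA)\cap A=\varnothing$. Each pair contributes $\omega_M(A,B)\prod_{a}t_a^{a_a}\prod_{b\in B}t_b^{-1}$. Because the product of finitely many geometric series is a well-defined formal series and the decomposition $\mathbf c=\mathbf a-\mathbf b$ is unique, each monomial $\mathbf t^{\mathbf c}$ appears exactly once. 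Summing over $A\in M$ then yields \eqref{eq:complete-Hilbert}.

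Finally, characteristic independence follows because $\omega_M(A,B)$ is defined purely combinatorially, as a sum of $(\kappa-1)_+$ over graph components, and $\dim_k\wtH^0(\Gamma;k)=(\kappa(\Gamma)-1)_+$ holds over every field. There is no real obstacle in this argument. The only point needing care is confirming that the bookkeeping covers every degree exactly once. That reduces to the uniqueness of the decomposition $\mathbf c=\mathbf a-\mathbf b$ and to the fact that the isomorphism \eqref{eq:link-formula} depends only on $A$.
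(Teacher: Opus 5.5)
Your proposal is correct and follows essentially the same argument as the paper's proof. Both use the support criterion and Theorem~\ref{thm:universal-graph} to reduce to admissible pairs $(A,B)$ with $|B|\ge2$. Both then let the positive exponents on $A$ contribute $\prod_{a\in A}t_a/(1-t_a)$, and note that singleton negative supports contribute nothing to $T^2$.
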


\begin{proof}
The support criterion leaves precisely the displayed supports.  For fixed $A$, positive exponents contribute
\[
\prod_{a\in A}(t_a+t_a^2+\cdots)=
\prod_{a\in A}\frac{t_a}{1-t_a}.
\]
The graph formula supplies the coefficient of every squarefree negative support.  Singleton supports contribute nothing to $T^2$.
\end{proof}

The formula admits a useful closure compression.  For a flat $K$ of $N$ and a hyperplane $H$, define $\Gamma_K(H)$ by replacing $B$ with $K$ in all rank conditions in \eqref{eq:graph-adjacency}, and put
\[
\omega_N(K,H)=\dim_k\wtH^0(\Gamma_K(H);k).
\]
Let
\[
\Sigma_{K,H}(\mathbf z)=
\sum_{\substack{B\subseteq K\setminus H,\ |B|\ge2\\\cl_NB=K}}
\prod_{b\in B}z_b.
\]

\begin{corollary}[Closure-stratified series]\label{cor:closure-strata}
For $M=N^*$,
\begin{equation}\label{eq:closure-stratified}
\sum_{\substack{B\subseteq V(M)\\|B|\ge2}}
\dim_kT^2_{-\one_B}(M)\prod_{b\in B}z_b
=
\sum_{K\in\mathcal L(N)}\sum_{H\text{ hyperplane}}
\omega_N(K,H)\Sigma_{K,H}(\mathbf z).
\end{equation}
If $I\subseteq B$ and $\cl_NI=\cl_NB$, then
\[
\dim_kT^2_{-\one_B}(M)\le
\dim_kT^2_{-\one_I}(M).
\]
If $B$ spans $N$, then $T^2_{-\one_B}(M)=0$.
\end{corollary}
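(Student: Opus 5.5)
The plan is to derive the statement from Theorem~\ref{thm:universal-graph} with $A=\varnothing$, so that $L=M$ (after removing ghost vertices) and $N=M^*$. I will first show that the graph $\Gamma_B(H)$ depends on $B$ only through its closure $K=\cl_NB$. In every rank condition of \eqref{eq:graph-adjacency} and in the definition of $\mathcal G_B(H)$, the set $B$ appears only inside $\rk_N(X\cup B)$, and $\rk_N(X\cup B)=\rk_N(X\cup\cl_NB)$. Hence $\Gamma_B(H)=\Gamma_K(H)$ as graphs, with identical vertex sets and adjacency.

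The one remaining dependence on $B$ is the index condition $H\cap B=\varnothing$. For a given $K$, the hyperplanes $H$ that are admissible for some $B$ with $\cl_NB=K$ are exactly those with $B\subseteq K\setminus H$. Grouping the sum \eqref{eq:universal-dimension} by the closure $K$ then gives
\[
\dim_kT^2_{-\one_B}(M)=\sum_{H\cap B=\varnothing}\omega_N(\cl_NB,H).
\]
Multiplying by $\prod_{b\in B}z_b$ and summing over $B$ with $|B|\ge2$, I interchange the sum so that it runs over pairs $(K,H)$ with inner sum over $B\subseteq K\setminus H$ and $\cl_NB=K$. This inner sum is exactly $\Sigma_{K,H}$, which yields \eqref{eq:closure-stratified}.

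For the monotonicity claim I need to handle $|I|=1$ separately. When $|I|\ge2$, both $I$ and $B$ have closure $K$ and the summands agree. The index set for $B$, namely hyperplanes avoiding $B$, is contained in the index set for $I$, because $I\subseteq B$. All summands are nonnegative, so the inequality follows. When $|I|=1$, $I=\{p\}$ and $K=\cl_N\{p\}$. Then every $B$ with $\cl_NB=K$ lies in a single dual parallel class. In that case I will check directly that $\mathcal G_B(H)$ is nonempty and connected, or empty, so that $\omega=0$ and the left side vanishes, as it must since $T^2$ vanishes on singleton support.

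This connectivity check is the main obstacle. Here $\rk_N(X\cup B)=\rk_N(X\cup p)$, and for a flat $G$ of rank $s-2$ this rank is at most $s-1<s$. So $\mathcal G_B(H)$ is empty and the claim holds. The same argument covers the case where $B$ contains a loop of $N$, which would make $K$ have rank $0$; but $B$ consists of actual vertices, which correspond to nonloops of $M$ and hence to noncoloops of $N$, so I will only need to confirm that the rank-$0$ case does not arise or is handled by the same emptiness.

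Finally, suppose $B$ spans $N$. Then no hyperplane $H$ can avoid $B$ entirely. Otherwise $B\subseteq E\setminus H$, so $K=E$, and every $G$ of rank $s-2$ satisfies the spanning condition. I must then show that $\Gamma_E(H)$ is connected, namely that any two rank-$(s-2)$ flats $G,G'$ inside $H$ are adjacent. Adjacency holds because $\rk_N((G\cap G')\cup B)=\rk_N(E)=s$ automatically. So $\Gamma_E(H)$ is complete, hence connected or empty, and $\omega_N(E,H)=0$. By the stratified formula every spanning $B$ therefore contributes $0$, which gives $T^2_{-\one_B}(M)=0$.
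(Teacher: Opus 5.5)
Your proof is correct and follows the paper's route: you use closure-invariance of $\Gamma_B(H)$, regroup the sum \eqref{eq:universal-dimension} by $(K,H)$, and get monotonicity from the inclusion of the hyperplane index sets; for spanning $B$, your observation that $\Gamma_B(H)$ is complete is the graph-level form of the paper's remark that the bottom flat is a cone point of $\mathcal F_B(H)$, and your separate treatment of $|I|=1$, where \eqref{eq:universal-dimension} does not apply, makes explicit a case the paper's proof leaves implicit. Two slips are harmless: first, a spanning $B$ can be avoided by hyperplanes (in $U_{2,4}$ the hyperplane $\{3\}$ avoids $\{1,2\}$), but your completeness step handles exactly that case; second, loops of $N$ are coloops of $M$ and so can lie in $V(M)$, but a rank-zero $B$ is avoided by no hyperplane, so emptiness covers it as you suggest.
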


\begin{proof}
Every graph rank condition depends on $B$ only through $\cl_NB$.  For fixed $K,H$, the supports contributing to the block are precisely the subsets of $K\setminus H$ which span $K$, proving \eqref{eq:closure-stratified}.  If $I\subseteq B$ has the same closure, the graphs agree and the hyperplanes avoiding $B$ form a subcollection of those avoiding $I$.  If $B$ spans, the bottom flat is a maximum-compatible cone point in every nonempty flat block, so each block is contractible.
\end{proof}

\section{The pair slice and cocircuit witnesses}\label{sec:pairs}

Suppose $B=\{p,q\}$.  Distinct rank-$(s-2)$ flats $G,G'$ satisfy $\rk(G\cap G')\le s-3$, and adjoining two elements raises rank by at most two.  Therefore $\Gamma_{\{p,q\}}(H)$ is discrete.  We obtain the following exact pair formula.

\begin{corollary}[Two-element block formula]\label{cor:pair-block}
Let $M=N^*$ and let $p,q$ be distinct.  Then
\begin{equation}\label{eq:pair-block}
T^2_{-\ee_p-\ee_q}(M)
\cong
\bigoplus_{\substack{H\text{ hyperplane of }N\\H\cap\{p,q\}=\varnothing}}
k^{\mathcal G_{\{p,q\}}(H)}/k\one,
\end{equation}
and
\begin{equation}\label{eq:pair-dim}
\dim_kT^2_{-\ee_p-\ee_q}(M)=
\sum_{H\cap\{p,q\}=\varnothing}
(|\mathcal G_{\{p,q\}}(H)|-1)_+.
\end{equation}
If $\{p,q\}$ is dependent in $N$, this component vanishes.
\end{corollary}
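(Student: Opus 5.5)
We specialize Theorem~\ref{thm:universal-graph} to $\mathbf c=-\ee_p-\ee_q$, so that $A=\varnothing$ and $L=\lk_M\varnothing=M$. We assume $p,q$ are actual vertices; otherwise the support criterion already gives zero. Then $N=L^*=M^*$, since $M=N^*$, and \eqref{eq:universal-T2} reads
\[
T^2_{-\ee_p-\ee_q}(M)\cong\bigoplus_{H\cap\{p,q\}=\varnothing}\wtH^0(\Gamma_{\{p,q\}}(H);k).
\]
The main step is the discreteness observation made just before the statement. Take distinct vertices $G\ne G'$ of $\Gamma_{\{p,q\}}(H)$. They are distinct flats of the same rank $s-2$, so $G\cap G'$ is a flat properly contained in $G$, and hence $\rk_N(G\cap G')\le s-3$. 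Adjoining two elements raises rank by at most two, so $\rk_N((G\cap G')\cup\{p,q\})\le s-1<s$. Thus no edge of \eqref{eq:graph-adjacency} is ever present, and $\Gamma_{\{p,q\}}(H)$ is the discrete graph on $\mathcal G_{\{p,q\}}(H)$.

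For a discrete graph on a vertex set $V$, we have $H^0=k^V$, and reduced $H^0$ is the cokernel of the augmentation $k\to k^V$, namely $k^V/k\one$. When $V=\varnothing$ this is zero by the stated convention. Substituting gives \eqref{eq:pair-block}. Taking dimensions, $\kappa=|\mathcal G_{\{p,q\}}(H)|$, and \eqref{eq:universal-dimension} becomes \eqref{eq:pair-dim}. The $(\cdot)_+$ handles the empty case.

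For the final claim, suppose $\{p,q\}$ is dependent in $N$. Then $\rk_N(G\cup\{p,q\})\le\rk_NG+1=s-1$ for every $G$. One way to see this is that $q\in\cl_N\{p\}$, or $p$ or $q$ is a loop; in each case adjoining the pair adds at most one to the rank. Hence every $\mathcal G_{\{p,q\}}(H)$ is empty and every summand vanishes. Alternatively, this follows from Corollary~\ref{cor:closure-strata}: if $\cl\{p\}=\cl\{p,q\}$, we compare with the singleton $I=\{p\}$, whose $T^2$ is zero by \eqref{eq:singleton-formula}.

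The only delicate point is the rank bound on $G\cap G'$, which uses that intersections of flats are flats. There is no real obstacle beyond that.
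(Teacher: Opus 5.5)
Your argument is correct and is essentially the paper's: specialize the universal graph formula at $A=\varnothing$, use $\rk_N(G\cap G')\le s-3$ to see that $\Gamma_{\{p,q\}}(H)$ is discrete, and for dependent $\{p,q\}$ observe that every $\mathcal G_{\{p,q\}}(H)$ is empty (the paper instead treats the loop case by noting that no hyperplane avoids a loop). One point is left implicit by you, as by the paper: when $p$ is not an actual vertex (a loop of $M$), you check only that the left side vanishes, but the right side vanishes too, because $p$ is then a coloop of $N$, so the only hyperplane avoiding $p$ is $E\setminus\{p\}$, which contains $q$.
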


\begin{proof}
Only the last assertion needs comment.  If $p,q$ are parallel nonloops, a rank-$(s-2)$ flat together with $\{p,q\}$ cannot span rank $s$, so all vertex sets are empty.  If one is a loop, no hyperplane avoids the pair.
\end{proof}

\begin{proposition}[Purely negative cocircuit witnesses]\label{prop:pure-cocircuit}
Let $D$ be a cocircuit of $M$, let $d=|D|\ge4$, and let $p,q\in D$ be distinct.  Then
\[
\dim_kT^2_{-\ee_p-\ee_q}(M)\ge d-3.
\]
\end{proposition}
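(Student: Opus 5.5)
The plan is to apply the pair formula of Corollary~\ref{cor:pair-block} with $N=M^*$ and to exhibit a single hyperplane $H$ of $N$, avoiding $\{p,q\}$, with $|\mathcal G_{\{p,q\}}(H)|\ge d-2$. Since every summand in \eqref{eq:pair-dim} is nonnegative, that hyperplane alone gives the bound $d-3$.

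Since $D$ is a cocircuit of $M$, it is a circuit of $N$; write $s=\rk N$. The natural choice is a hyperplane $H$ of $N$ containing $D\setminus\{p,q\}$ but meeting $D$ in no other element. Such an $H$ exists. The set $D\setminus\{p,q\}$ is independent of size $d-2$, so its closure $F=\cl_N(D\setminus\{p,q\})$ has rank $d-2$. Because $D$ is a circuit, $p\notin F$ and $q\notin F$; and $\cl_N(F\cup p)$ contains $q$. Extend $D\setminus\{p,q\}$ to a basis $I\cup J$ of $N$ that omits $p$ and $q$, which requires $\rk(E\setminus\{p,q\})=s$.

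The case where $\{p,q\}$ contains a coloop of $N$ needs care. It cannot occur: $p$ lies in the circuit $D$, so $p$ is not a coloop, and the same holds for $q$. Moreover $D\setminus\{p\}$ spans $p$, so $\cl_N(E\setminus\{p,q\})\supseteq\cl_N(E\setminus\{p\})\ni p$. Hence $E\setminus\{p,q\}$ spans $N$, and a basis $\beta\supseteq D\setminus\{p,q\}$ avoiding $p,q$ exists, with $|\beta|=s\ge d-1$.

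Now consider the rank-$(s-1)$ set $\beta\setminus\{x\}$. Choose $x\in\beta$ so that the hyperplane $\cl_N(\beta\setminus\{x\})$ avoids both $p$ and $q$. Write the fundamental circuits $C(p;\beta)$ and $C(q;\beta)$. Since $D\setminus\{p\}\cup\{p\}$ is dependent inside $\beta\cup p$ together with $q$, I expect to use circuit elimination to find a single $x$ lying in both fundamental circuits. Removing such an $x$ leaves a hyperplane $H$ avoiding $p$ and $q$. If $x$ must lie outside $D$, this is the main obstacle; if it fails, I would instead take $H$ to be a hyperplane containing $D\setminus\{p,q,r\}$ for some $r$ and count flats differently.

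Given $H$, the vertices to produce are, for each $r\in D\setminus\{p,q\}\cap H$, the rank-$(s-2)$ flat $G_r=\cl_N(\beta\setminus\{x,r\})$. Then $\rk(G_r\cup\{p,q\})=s$ should follow from the circuit $D$: it requires the independence of $(\beta\setminus\{x,r\})\cup\{p,q\}$, obtained by exchange using $r\in C(p;\beta)\cap C(q;\beta)$-type arguments. Distinctness of the $G_r$ holds because $r\notin G_r$ while $r'\in G_r$ for $r'\ne r$. To reach $d-2$ vertices rather than $d-3$, I would also add a flat of the form $\cl_N(\beta\setminus\{x,y\})$ for a suitable $y$, or else use $H$ containing all of $D\setminus\{p,q\}$ so that all $d-2$ indices $r$ are available. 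Verifying the spanning condition and that $H$ avoids $p,q$ simultaneously is where I expect the real work to lie. A cleaner fallback is to contract a basis of $E\setminus(H_0\cup D)$ so that $D$ becomes the whole ground set of a $U_{d-1,d}$-like minor, and then invoke Lemma~\ref{lem:zero-dimensional} via the link formula.
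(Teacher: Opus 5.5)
\textbf{There is a genuine gap in how you construct the hyperplane $H$.} Your overall plan matches the right idea: use one hyperplane $H$ of $N=M^*$ that contains $I=D\setminus\{p,q\}$ and avoids $p,q$, and take as vertices the flats spanned by a spanning set of $H$ minus one element of $I$. The construction of $H$, however, rests on a false step. The inclusion $\cl_N(E\setminus\{p,q\})\supseteq\cl_N(E\setminus\{p\})$ goes the wrong way, and $E\setminus\{p,q\}$ need not span $N$. Since neither $p$ nor $q$ is a coloop of $N$, it fails to span exactly when $\{p,q\}$ is a cocircuit of $N$, that is, when $p,q$ are parallel in $M$. This genuinely occurs: for $M=U_{1,4}$ and $D=E$ one has $N=U_{3,4}$, and no basis of $N$ avoids both $p$ and $q$. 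So the basis $\beta$ you start from need not exist.

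Even when $\beta$ does exist, two steps are left unproved. First, you leave open the existence of $x\in\bigl(C(p;\beta)\cap C(q;\beta)\bigr)\setminus D$. It does exist, because $p,q$ are parallel in $N/I$, but you do not establish it. Second, the spanning condition $\rk_N(G_r\cup\{p,q\})=s$ is only gestured at. Your fallback does not close the gap either. Lemma~\ref{lem:zero-dimensional} combined with \eqref{eq:link-formula} computes $T^2_{\one_A-\ee_p-\ee_q}(M)$, a component with positive support; that is Proposition~\ref{prop:exact-cocircuit}. It does not compute the purely negative component, and no comparison between the two is available.

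\textbf{The missing idea is to build the basis around $p$ rather than avoiding it.}
\begin{itemize}
\item The set $I\cup\{p\}$ is independent, being a proper subset of the circuit $D$. Extend it to a basis $I\cup\{p\}\cup Z$ and put $H=\cl_N(I\cup Z)$.
\item Then $p\notin H$ by independence.
\item The fundamental circuit of $q$ with respect to this basis is $D$, since $D$ is a circuit inside the basis together with $q$. Because $D$ contains $p$, it follows that $q\notin H$.
\item For each $x\in I$, put $G_x=\cl_N((I\setminus\{x\})\cup Z)$, which has rank $s-2$. The set $(I\setminus\{x\})\cup Z\cup\{p,q\}$ is a basis by fundamental exchange at $x\in D$, so $\rk_N(G_x\cup\{p,q\})=s$.
\item The $G_x$ are pairwise distinct, because $x\notin G_x$ while $x\in G_y$ for $y\ne x$.
\end{itemize}
This gives $d-2$ vertices of $\Gamma_{\{p,q\}}(H)$ in every case, including the case where $H=E\setminus\{p,q\}$ is forced. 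The bound $d-3$ then follows from \eqref{eq:pair-dim}.
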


\begin{proof}
In $N=M^*$, the set $D$ is a circuit.  Put $I=D\setminus\{p,q\}$ and extend $I\cup\{p\}$ to a basis $I\cup\{p\}\cup Z$.  Let
\[
H=\cl_N(I\cup Z),\qquad
G_x=\cl_N((I\setminus\{x\})\cup Z),\quad x\in I.
\]
The fundamental circuit of $q$ with respect to this basis is $D$. Removing
$p$ from the basis gives the independent set $I\cup Z$ of rank $\rk N-1$;
neither $p$ nor $q$ belongs to its closure, the latter because its fundamental
circuit contains $p$. For $x\in I$, the set $(I\setminus\{x\})\cup Z$ has
rank $\rk N-2$, while adjoining $p,q$ gives a basis: this is fundamental
exchange at $x\in D$. Thus $G_x$ is a vertex of
$\Gamma_{\{p,q\}}(H)$. If $x\ne y$, then $x\in G_y$ but $x\notin G_x$,
so these vertices are distinct. There are $d-2$ of them, and
\eqref{eq:pair-dim} gives the bound.
\end{proof}

\begin{proposition}[Exact rank-one-link component]\label{prop:exact-cocircuit}
Let $D$ be a cocircuit of $M$, let $A$ be a basis of the complementary hyperplane, and put $d=|D|$.  For distinct $p,q\in D$,
\begin{equation}\label{eq:exact-cocircuit}
\dim_kT^2_{\one_A-\ee_p-\ee_q}(M)=(d-3)_+.
\end{equation}
\end{proposition}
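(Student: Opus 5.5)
The plan is to reduce everything to the zero-dimensional computation via the link formula \eqref{eq:link-formula}. Let $H_0=E\setminus D$ be the complementary hyperplane of $M$ and $A$ a basis of $H_0$. Then $A\in M$, so the support criterion is satisfied by the degree $\one_A-\ee_p-\ee_q$ provided $p,q$ are actual vertices of $\lk_MA$. As recorded in the preliminaries, $\lk_MA$ is the zero-dimensional complex $Z_D$ on vertex set $D$. I will recheck this directly: for $e\in D$ we have $e\notin\cl_M A=H_0$, so $A\cup\{e\}$ is independent. For distinct $e,f\in D$, the set $A\cup\{e\}$ is a basis of $M$, since $H_0$ is a hyperplane, and hence $A\cup\{e,f\}$ is dependent. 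Elements of $H_0\setminus A$ lie in $\cl_MA$ and are therefore not vertices. Thus $V(\lk_MA)=D\ni p,q$, and the nonzero part $\mathbf b=\ee_p+\ee_q$ is squarefree, so the criterion holds.

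Applying \eqref{eq:link-formula} with $i=2$ gives
\[
T^2_{\one_A-\ee_p-\ee_q}(M)\cong T^2_{-\ee_p-\ee_q}(Z_D).
\]
This isomorphism is independent of the positive exponents, so the choice $\mathbf a=\one_A$ is harmless. Lemma~\ref{lem:zero-dimensional} with $\supp\mathbf b=\{p,q\}$, $p\ne q$, then yields dimension $(d-3)_+$, which is \eqref{eq:exact-cocircuit}.

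As a consistency check, one could also run Theorem~\ref{thm:universal-graph} with $N=Z_D^*=U_{|D|-1,|D|}$. There the hyperplanes avoiding $\{p,q\}$ are the complements of two-element sets containing both $p$ and $q$, hence the single hyperplane $D\setminus\{p,q\}$. The rank-$(s-2)$ flats inside it are the sets $D\setminus\{p,q,x\}$ for $x\in D\setminus\{p,q\}$, and each of these spans together with $p,q$. This gives $d-2$ isolated vertices and dimension $(d-3)_+$ by \eqref{eq:pair-dim}.

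The only step needing care is the identification of the actual vertex set of the link. Ghost vertices from $H_0\setminus A$ must be excluded, because the support criterion and the lemma refer to $Z_D$ on exactly the vertex set $D$. Everything else is a direct citation.
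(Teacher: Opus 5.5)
Your proof is correct and follows the paper's argument: the paper identifies $\lk_MA$ with the zero-dimensional complex on $D$ and applies \eqref{eq:link-formula} together with Lemma~\ref{lem:zero-dimensional}. Your explicit check that the actual vertex set is exactly $D$, which excludes the ghost vertices in $H_0\setminus A$, and your cross-check via Theorem~\ref{thm:universal-graph} with $N=U_{d-1,d}$ are both sound, though the paper does not spell them out.
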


\begin{proof}
The link $\lk_MA$ is the zero-dimensional complex on $D$.  Apply \eqref{eq:link-formula} and Lemma~\ref{lem:zero-dimensional}.
\end{proof}

\section{Large cocircuits and the vanishing classification}\label{sec:vanishing}

\begin{lemma}[A bipartite graph lemma]\label{lem:bipartite}
Let $G$ be a connected bipartite graph with bipartition $L\sqcup R$, where $|L|\ge2$ and $|R|\ge3$.  There are distinct $x,y\in L$ such that
\[
N_G(x)\cap N_G(y)\ne\varnothing,
\qquad |N_G(x)\cup N_G(y)|\ge3.
\]
\end{lemma}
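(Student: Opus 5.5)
The argument is a direct case analysis on the degrees in $L$, using only that $G$ is connected. Connectivity with $|L|\ge2$ forces every vertex to have degree at least one. Since $|R|\ge3$ and $|L|\ge2$, any two vertices of $L$ are joined by a path alternating between $L$ and $R$. We look for a pair $x,y\in L$ at distance two, that is, with a common neighbour, whose neighbourhoods together cover at least three vertices of $R$.

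\emph{Case 1: some $x\in L$ has $|N_G(x)|\ge2$.} Since $|L|\ge2$ and $G$ is connected, some other vertex of $L$ shares a neighbour with $x$. Indeed, the set of vertices of $L$ reachable from $x$ by $L$--$R$--$L$ steps is all of $L$, so some first step from $x$ reaches a $y\ne x$ with $N(x)\cap N(y)\ne\varnothing$. If $|N(x)\cup N(y)|\ge3$ we are done. Otherwise $N(x)\cup N(y)=N(x)=\{r_1,r_2\}$ and $N(y)\subseteq\{r_1,r_2\}$.

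In that situation, consider $W=\{z\in L: N(z)\subseteq\{r_1,r_2\}\}$. Since $|R|\ge3$, some $r_3\notin\{r_1,r_2\}$ exists. By connectivity, some $z\in L\setminus W$ shares a neighbour $r_i\in\{r_1,r_2\}$ with a vertex of $W$. We may take that vertex of $W$ to be $x$ itself, because $x$ is adjacent to both $r_1$ and $r_2$. Then $N(x)\cap N(z)\ni r_i$, and $N(x)\cup N(z)\supseteq\{r_1,r_2,r'\}$ for some $r'\in N(z)\setminus\{r_1,r_2\}$, so the pair $(x,z)$ works.

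\emph{Case 2: every vertex of $L$ has exactly one neighbour.} Then each vertex of $R$ with a neighbour is joined only to leaves. A connected component containing some $r\in R$ is therefore the star at $r$, so it contains no other vertex of $R$. This contradicts connectivity with $|R|\ge3$, so this case cannot occur.

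The only delicate point is the degenerate subcase of Case 1, where the neighbourhoods of the first chosen pair are nested inside a two-element set. The set $W$ and the choice of $x$ as the $W$-partner are exactly what handle it, since $x$ dominates all of $W$'s possible neighbours. Everything else is routine.
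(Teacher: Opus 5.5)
Your proof is correct, but it takes a different route from the paper's.

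\textbf{The paper's route.} It fixes three vertices of $R$ and takes the minimal subtree $T$ of a spanning tree containing them, so every leaf of $T$ lies in $R$. It then splits on the degrees of $R$-vertices in $T$.
\begin{enumerate}
\item If some $z\in R$ has two $T$-neighbours $x,y\in L$, neither is a leaf, so each has a further $R$-neighbour. Acyclicity of $T$ forces these two further neighbours to be distinct, giving three neighbours in total with $z$ common.
\item Otherwise $T$ is a star centred at some $x\in L$ with three $R$-neighbours. The first $L$-vertex on a path from $x$ to another $L$-vertex then serves as $y$.
\end{enumerate}

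\textbf{Your route.} You split on the degrees of $L$-vertices in $G$ itself, and you discard the all-degree-one case as incompatible with connectivity. The only delicate situation is $N(x)=\{r_1,r_2\}$ with the first partner's neighbourhood inside it. You handle it with a cut argument. The set $W\cup\{r_1,r_2\}$ misses $r_3$, and every edge from $W$ stays inside it. So connectivity forces an edge from some $r_i$ to $L\setminus W$, and $x$ is adjacent to that $r_i$. Your phrase ``by connectivity'' only implicitly contains this justification, and it deserves an explicit sentence.

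\textbf{Comparison.} The paper's version is shorter and uniform: acyclicity of $T$ does the work that your set $W$ does, and there is no contradiction case. Yours avoids spanning trees entirely. It reads directly as a simple search procedure: scan $L$ for a vertex of degree at least two, test one partner, and otherwise search outward from $\{r_1,r_2\}$. That fits equally well with the certificate algorithm, where the lemma is applied to the fundamental graph.
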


\begin{proof}
Choose three vertices of $R$, a spanning tree of $G$, and the minimal subtree $T$ containing the chosen vertices.  Every leaf of $T$ is one of the chosen vertices and hence lies in $R$.

If some $z\in R\cap V(T)$ has degree at least two in $T$, choose distinct neighbors $x,y\in L$.  Neither is a leaf, so each has a further neighbor in $R$; the two further neighbors are distinct because $T$ has no cycle.  Otherwise every $R$-vertex of $T$ has degree one.  Bipartiteness then forces $T$ to be a star with center $x\in L$, adjacent to all three chosen vertices.  Since $|L|\ge2$ and $G$ is connected, a path from $x$ to another $L$-vertex begins $x,z,y$.  Thus $x,y$ have a common neighbor, while $x$ already has the three chosen neighbors.
\end{proof}

\begin{proposition}[Rank-two reduction]\label{prop:rank-two-reduction}
Let $M$ be connected of rank $r\ge2$ and corank at least three.  From any basis $B$ one can find distinct $x,y\in B$ such that, after contracting $A_0=B\setminus\{x,y\}$ and deleting loops, the resulting rank-two matroid has at least five elements and at least three parallel classes; in particular, it is connected.
\end{proposition}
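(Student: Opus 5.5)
The plan is to apply Lemma~\ref{lem:bipartite} to the fundamental graph $G_B(M)$ and then read off the contraction using the fundamental-circuit identity from the preliminaries.

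\emph{Setting up the lemma.} Take $L=B$ and $R=E\setminus B$. Then $|L|=r\ge2$ and $|R|=\corank M\ge3$. Since $M$ is connected, $G_B(M)$ is connected. Lemma~\ref{lem:bipartite} then supplies distinct $x,y\in B$ with a common neighbor $z\in E\setminus B$ and with $|N(x)\cup N(y)|\ge3$. Here $e\in N(x)$ means $x\in C(e;B)$.

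\emph{Identifying the contraction.} Put $A_0=B\setminus\{x,y\}$. Since $A_0$ is independent, $M/A_0$ has rank $r-|A_0|=2$, with basis $\{x,y\}$. Consider $e\in E\setminus B$.
\begin{itemize}
\item If $C(e;B)$ misses $\{x,y\}$, then $C(e;B)\subseteq A_0\cup\{e\}$, so $e$ is a loop of $M/A_0$ and is deleted.
\item Otherwise $e$ is a nonloop, and $C_{M/A_0}(e;\{x,y\})=C(e;B)\cap\{x,y,e\}$.
\end{itemize}
The surviving ground set is therefore $\{x,y\}\cup N(x)\cup N(y)$, which has at least $2+3=5$ elements. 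A connected matroid on at least two elements has no loops, so $x$ and $y$ survive as nonloops.

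\emph{Counting parallel classes.} In rank two, nonloops are parallel exactly when they span rank one, and the fundamental circuits determine the classes:
\begin{itemize}
\item an element $e$ with $C'=\{x,e\}$ is parallel to $x$;
\item an element with $C'=\{y,e\}$ is parallel to $y$;
\item an element with $C'=\{x,y,e\}$ is parallel to neither.
\end{itemize}
The common neighbor $z$ is of the third type. Hence $x$, $y$, $z$ lie in three distinct parallel classes.

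\emph{Connectivity.} A loopless rank-two matroid with at least three parallel classes has simplification $U_{2,k}$ with $k\ge3$. This is connected, and parallel extensions preserve connectivity. So the resulting matroid is connected.

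I expect no real obstacle beyond this bookkeeping. The one point to state carefully is that loops of $M/A_0$ arise only from the elements of $E\setminus B$ outside $N(x)\cup N(y)$, and never from $x$ or $y$.
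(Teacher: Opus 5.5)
Your proof is correct and follows the paper's route: you apply Lemma~\ref{lem:bipartite} to $G_B(M)$, use the fundamental-circuit identity under contraction of $A_0$ to see that the $\geq 3$ nonbasis neighbours survive, and use the common neighbour to get a third parallel class; your explicit description of the surviving ground set as $\{x,y\}\cup N(x)\cup N(y)$ is slightly more detailed than the paper's. One small fix: $x$ and $y$ are nonloops of $M/A_0$ because $\{x,y\}$ is a basis of $M/A_0$, which you already noted, not because $M$ itself is loopless, since contraction can create loops.
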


\begin{proof}
Apply Lemma~\ref{lem:bipartite} to the connected fundamental graph $G_B(M)$, with sides $B$ and $E\setminus B$.  The chosen $x,y$ have a common nonbasis neighbor and at least three nonbasis neighbors in total.  After contracting $A_0$, all these neighbors remain nonloops, so together with $x,y$ there are at least five elements.  If $e$ is a common neighbor, then
\[
C_{M/A_0}(e;\{x,y\})=\{x,y,e\},
\]
so $x,y,e$ lie in three different parallel classes.  Finally, a loopless rank-two matroid with at least three parallel classes is connected: a disconnected one is a direct sum of two nonempty loopless matroids of rank one and has exactly two parallel classes.
\end{proof}

\begin{theorem}[Constructive large-cocircuit lemma]\label{thm:large-cocircuit}
Every connected matroid of corank at least three has a cocircuit of size at least four.  If $r=\rk M\ge2$, there is an independent $(r-1)$-set $A$ such that
\[
D=E(M)\setminus\cl_MA
\]
is such a cocircuit.
\end{theorem}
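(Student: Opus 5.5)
The plan is to reduce to Proposition~\ref{prop:rank-two-reduction} and then pull back a large cocircuit of the rank-two contraction to $M$. First dispose of rank at most one. A connected matroid of rank zero is a single loop and has corank one, so it is excluded. A connected matroid of rank one on $n$ elements is $U_{1,n}$, possibly with no loops since connectedness with $n\ge2$ forbids loops. Corank at least three then means $n\ge4$, and the whole ground set is a cocircuit of size at least four. So assume $r=\rk M\ge2$.

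Fix a basis $B$ and take $x,y\in B$ and $A_0=B\setminus\{x,y\}$ as in Proposition~\ref{prop:rank-two-reduction}. The contraction $M'=M/A_0$ with loops deleted is then a loopless rank-two matroid with at least five elements and at least three parallel classes. Its ground set is $E'=E\setminus\cl_MA_0$, since the loops of $M/A_0$ are exactly the elements of $\cl_MA_0\setminus A_0$. In a loopless rank-two matroid the hyperplanes are the parallel classes, so the cocircuits are the complements of parallel classes. Choose a parallel class $P$ of $M'$ of minimum size. Because there are at least three classes and at least five elements, $|E'\setminus P|\ge|E'|-\lfloor|E'|/3\rfloor$. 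This is at least four when $|E'|\ge5$: for $|E'|=5$ the minimum class has size at most $1$, giving $4$, and the bound grows afterwards.

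Now lift. Pick $z\in P$ and set $A=A_0\cup\{z\}$. This set is independent of size $r-1$, because $z$ is a nonloop of $M/A_0$. The closure $\cl_MA$ equals $\cl_MA_0\cup P$: an element $e\notin\cl_MA_0$ lies in $\cl_M(A_0\cup\{z\})$ iff $e$ lies in the closure of $z$ in $M/A_0$, i.e.\ iff $e$ is parallel to $z$ there. Hence $\cl_MA$ is a hyperplane of $M$, having rank $r-1$. Its complement $D=E\setminus\cl_MA=E'\setminus P$ is therefore a cocircuit of $M$ of size at least four, as required.

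The only delicate points are the loop and closure bookkeeping in identifying $E\setminus\cl_MA$ with $E'\setminus P$, and the counting step ensuring that the complement of the smallest class has at least four elements. Everything else is supplied by Proposition~\ref{prop:rank-two-reduction}.
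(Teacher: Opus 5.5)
Your proposal is correct and is essentially the paper's proof: you handle ranks zero and one the same way, apply Proposition~\ref{prop:rank-two-reduction}, choose a smallest parallel class $P$ with the bound $|E'|-\lfloor|E'|/3\rfloor\ge4$, and use the same closure bookkeeping to get $E\setminus\cl_M(A_0\cup\{z\})=E'\setminus P$. The only slip is cosmetic: a connected rank-zero matroid has at most one element, since the empty matroid also qualifies, and the corank hypothesis excludes that case just the same.
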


\begin{proof}
A connected rank-zero matroid has at most one element, so the corank
hypothesis excludes that case. If $r=1$, connectedness gives $M=U_{1,n}$
with $n\ge4$, and $E$ itself is a cocircuit. Assume $r\ge2$ and use
Proposition~\ref{prop:rank-two-reduction}. Let $N$ be the loopless rank-two
contraction, let $m=|E(N)|\ge5$, and choose a smallest parallel class $P$.
Since there are at least three classes,
\[
|E(N)\setminus P|\ge m-\lfloor m/3\rfloor\ge4.
\]
Choose $v\in P$ and put $A=A_0\cup\{v\}$.  The loops created by contracting $A_0$ already lie in $\cl_MA_0$, while the other elements in $\cl_MA$ correspond exactly to $P$.  Thus
\[
E(M)\setminus\cl_MA=E(N)\setminus P.
\]
The closure of $A$ is a hyperplane, so this complement is the required cocircuit.
\end{proof}

\begin{theorem}[Vanishing classification]\label{thm:vanishing}
For a finite matroid $M$, the following are equivalent:
\begin{enumerate}[label=\textup{(\arabic*)}]
\item $T^2(M)=0$;
\item $T^2_{-\ee_p-\ee_q}(M)=0$ for all distinct $p,q\in E(M)$;
\item every connected component of $M$ has corank at most two;
\item $M$ is a direct sum of matroids of corank at most two;
\item every cocircuit of $M$ has size at most three;
\item $M$ has no $U_{1,4}$-minor.
\end{enumerate}
\end{theorem}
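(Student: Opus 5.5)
I will prove the cycle of implications $(1)\Rightarrow(2)\Rightarrow(5)\Rightarrow(3)\Rightarrow(4)\Rightarrow(1)$, and treat $(5)\Leftrightarrow(6)$ separately by a minor argument. The implication $(1)\Rightarrow(2)$ is trivial.

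For $(2)\Rightarrow(5)$ I argue by contraposition. If $M$ has a cocircuit $D$ with $|D|\ge4$, pick distinct $p,q\in D$. Proposition~\ref{prop:pure-cocircuit} then gives $\dim_kT^2_{-\ee_p-\ee_q}(M)\ge|D|-3\ge1$. For $(5)\Rightarrow(3)$, again argue contrapositively. Suppose a connected component $M_1$ has corank at least three. Theorem~\ref{thm:large-cocircuit} gives a cocircuit of $M_1$ of size at least four. Cocircuits of a direct sum are exactly the cocircuits of its summands, so this is a cocircuit of $M$ of size at least four.

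The implication $(3)\Rightarrow(4)$ is the decomposition of $M$ into connected components. For $(4)\Rightarrow(1)$, write $M=\bigoplus_jM_j$ with $\corank M_j\le2$. Then $T^2(M_j)=0$ by \cite[Proposition~21]{CKNSV}, and iterating \eqref{eq:direct-sum-T2} gives $T^2(M)=0$.

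For $(5)\Leftrightarrow(6)$, first note that $U_{1,4}$ has its whole ground set as a cocircuit of size four.

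If $D$ is a cocircuit of $M$ with $|D|\ge4$, then $H=E\setminus D$ is a hyperplane. Contracting $H$ gives the matroid $M/H$ on $D$, which has rank one and no loops, so it is $U_{1,|D|}$. Deleting all but four elements of $D$ yields a $U_{1,4}$-minor. This proves $(6)\Rightarrow(5)$.

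For $(5)\Rightarrow(6)$, suppose $U_{1,4}\cong M/X\setminus Y$. I will use the dual statement that $U_{3,4}\cong M^*\setminus X/Y$ is a minor of $M^*$ with a circuit of size four. The cleanest route is to show that the cocircuit property propagates upward.

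Cocircuits of a deletion $M\setminus e$ are the minimal nonempty sets of the form $C^*\setminus e$ with $C^*$ a cocircuit of $M$, so they never grow. Cocircuits of a contraction $M/e$ are exactly the cocircuits of $M$ avoiding $e$. Hence every cocircuit of a minor is contained in a cocircuit of $M$, and the size-four cocircuit of $U_{1,4}$ lifts to a cocircuit of $M$ of size at least four.

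I expect no serious obstacle: all the substance is carried by Proposition~\ref{prop:pure-cocircuit} and Theorem~\ref{thm:large-cocircuit}. The one step needing care is the minor-monotonicity of cocircuit size. I would justify it by dualizing: a circuit of a minor $M^*\setminus X/Y$ has the form $C\setminus Y$ for some circuit $C$ of $M^*$, which is a standard fact.
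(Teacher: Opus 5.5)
Your proof is correct and matches the paper's argument: the same implications via Proposition~\ref{prop:pure-cocircuit}, Theorem~\ref{thm:large-cocircuit}, the known corank-two vanishing together with \eqref{eq:direct-sum-T2}, and the $U_{1,4}$-minor obtained by contracting the hyperplane complementary to a large cocircuit. The one local difference is the direction into condition (6). You prove $(5)\Rightarrow(6)$ by noting that every cocircuit of a minor is contained in a cocircuit of $M$. The paper instead derives $(4)\Rightarrow(6)$ from two facts: deletion and contraction do not increase corank, and minors of a direct sum are direct sums of minors. Both routes are standard and equally short.
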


\begin{proof}
The first condition implies the second.  If the second holds, Proposition~\ref{prop:pure-cocircuit} excludes cocircuits of size at least four, giving (5).  If (5) holds and some connected component had corank at least three, Theorem~\ref{thm:large-cocircuit} would give a contradiction.  Thus (3) holds, and (3) is equivalent to (4) by the component decomposition.  Condition (4) implies (1) by the known corank-two vanishing and \eqref{eq:direct-sum-T2}.  Also, (3) directly implies (5), since cocircuits lie in components and are circuits of duals of rank at most two.

If $D$ is a cocircuit and $H=E\setminus D$, then $M/H\cong U_{1,|D|}$.  A cocircuit of size at least four therefore produces a $U_{1,4}$-minor.  Conversely, deletion and contraction do not increase corank, and minors of a direct sum are direct sums of minors.  Hence a direct sum of corank-at-most-two components has no $U_{1,4}$-minor.  This proves the remaining equivalence.
\end{proof}

\begin{corollary}\label{cor:conjecture}
The vanishing conjecture in \cite[Conjecture~22]{CKNSV} holds.  The class of matroids with $T^2=0$ is minor-closed and has the unique excluded minor $U_{1,4}$.
\end{corollary}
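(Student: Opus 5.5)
The corollary follows almost directly from Theorem~\ref{thm:vanishing}, so the plan is to unpack its two assertions and check each against that theorem.

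First, the conjecture. As recalled in the introduction, \cite[Conjecture~22]{CKNSV} asserts that a connected matroid with vanishing $T^2$ has corank at most two. Let $M$ be connected with $T^2(M)=0$. The implication (1)$\Rightarrow$(3) of Theorem~\ref{thm:vanishing} says that every connected component of $M$ has corank at most two. Since $M$ is its own unique component, $\corank M\le2$. The converse direction is \cite[Proposition~21]{CKNSV}, also recovered by (3)$\Rightarrow$(1), so the vanishing is in fact characterized. I will also check that the conjecture's convention for loops and coloops agrees with ours. A loop or coloop is a separate component, and the direct-sum formula \eqref{eq:direct-sum-T2} handles it, so no edge case arises.

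Second, the minor-closed statement. By (1)$\Leftrightarrow$(6), the class $\{M:T^2(M)=0\}$ equals the class of matroids with no $U_{1,4}$-minor. The minor relation is transitive, so the latter class is minor-closed: a minor of $M$ having a $U_{1,4}$-minor would give $M$ one. It remains to show that $U_{1,4}$ is the unique excluded minor, that is, the unique minor-minimal matroid outside the class.

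\begin{itemize}
\item $U_{1,4}$ is outside the class. It is connected of corank three, so $T^2(U_{1,4})\ne0$. Directly, Lemma~\ref{lem:zero-dimensional} applies with $d=4$, since $U_{1,4}$ is the zero-dimensional complex $Z_D$ on four vertices.
\item Every proper minor of $U_{1,4}$ lies in the class. Such a minor has at most three elements, hence corank at most two on each component.
\item Any excluded minor $M$ is isomorphic to $U_{1,4}$. Since $M$ lies outside the class, it has a $U_{1,4}$-minor. Minimality forces that minor to be $M$ itself.
\end{itemize}

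There is no real obstacle here. The only point needing care is to state explicitly that being ``excluded minor'' means minor-minimal in the complement of the class, up to isomorphism, so that uniqueness follows from (1)$\Leftrightarrow$(6).
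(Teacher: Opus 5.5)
Your proposal is correct and matches the paper's approach: the paper gives no separate proof and obtains the corollary directly from Theorem~\ref{thm:vanishing}. As you do, (1)$\Rightarrow$(3) gives the conjecture for connected $M$, and (1)$\Leftrightarrow$(6) identifies the class with the $U_{1,4}$-minor-free matroids, from which minor-closedness and the unique excluded minor follow. Your step that a proper minor of $U_{1,4}$ has components of corank at most two is valid, since a connected rank-zero matroid has at most one element; more simply, a matroid on at most three elements cannot have a $U_{1,4}$-minor.
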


\begin{remark}
No monotonicity assertion for the module $T^2$ under arbitrary minors is needed.  Only its vanishing class is being identified with a minor-closed matroid class.
\end{remark}

\section{The rank-one-link strand and the invariant \texorpdfstring{$\Theta$}{Theta}}\label{sec:theta}

Assume in this section that $r=\rk M\ge1$.  For every independent $(r-1)$-set $A$, put
\[
D_A=E(M)\setminus\cl_MA,
\qquad d_A=|D_A|.
\]
The set $D_A$ is a cocircuit.

\begin{theorem}[Complete rank-one-link strand]\label{thm:top-strand}
Let $\mathbf c=\mathbf a-\mathbf b$ have disjoint nonnegative parts, suppose $|\supp\mathbf a|=r-1$, and put $A=\supp\mathbf a$.  Then
\[
\dim_kT^2_{\mathbf c}(M)=
\begin{cases}
(d_A-3)_+,&A\in M,\ \mathbf b=\ee_p+\ee_q,
\ \{p,q\}\in\binom{D_A}{2},\\
0,&\text{otherwise}.
\end{cases}
\]
Thus every nonzero component in this strand has two-element squarefree negative support, and its dimension is independent of the positive exponents and of $k$.
\end{theorem}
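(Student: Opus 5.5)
The plan is to reduce everything to the zero-dimensional calculation of Lemma~\ref{lem:zero-dimensional} through the support criterion and the link formula \eqref{eq:link-formula}.

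\emph{Reduction to the zero-dimensional link.} By Theorem~\ref{thm:universal-graph}, the component $T^2_{\mathbf c}(M)$ vanishes unless three conditions hold: $\mathbf b=\one_B$ is nonzero and squarefree, $A\in M$, and $B$ lies in the actual vertex set of $\lk_MA$. Assume these conditions. Since $|A|=r-1$ and $A$ is independent, $\cl_MA$ is a hyperplane of $M$ and $A$ is a basis of it. As recorded in the preliminaries, the link $\lk_MA$ is then the zero-dimensional complex $Z_{D_A}$ on the vertex set $D_A=E(M)\setminus\cl_MA$. Indeed, a singleton $\{e\}$ with $e\notin A$ is a face of the link exactly when $e\notin\cl_MA$. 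No two such elements can be added together, since that would produce an independent set of size $r+1$. Consequently the vertex condition says $B\subseteq D_A$, and \eqref{eq:link-formula} identifies $T^2_{\mathbf c}(M)$ with $T^2_{-\one_B}(Z_{D_A})$. This identification does not depend on the positive exponents of $\mathbf a$.

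\emph{Applying the zero-dimensional formula.} Lemma~\ref{lem:zero-dimensional} gives the remaining cases. The dimension is $(d_A-3)_+$ when $B=\{p,q\}$ with $p\ne q$ in $D_A$, and it is zero for singleton supports and for supports of size at least three. Combining this with the vanishing cases from the support criterion gives exactly the displayed case distinction. Since the lemma's count is the reduced $\wtH^0$ of a discrete set of size $d_A-2$, the result is independent of $k$. The final sentence of the statement is then immediate.

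\emph{Main obstacle.} There is no real obstacle beyond checking that the link is exactly $Z_{D_A}$ with the actual-vertex convention. In particular, elements of $\cl_MA\setminus A$ must be ghost vertices and not actual vertices, so that a negative support meeting $\cl_MA$ falls under the ``otherwise'' case. Loops of $M$ lie in $\cl_MA$ and are handled in the same way.
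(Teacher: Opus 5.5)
Your proposal is correct and follows essentially the same route as the paper. The paper's proof is the one-line version of yours: for an independent $(r-1)$-set $A$ the link is $Z_{D_A}$, and then the support criterion, the link formula \eqref{eq:link-formula}, and Lemma~\ref{lem:zero-dimensional} give the result.
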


\begin{proof}
For an independent $(r-1)$-set $A$, the link is $Z_{D_A}$.  Apply the support criterion, \eqref{eq:link-formula}, and Lemma~\ref{lem:zero-dimensional}.
\end{proof}

Define the finite squarefree witness space
\[
W(M)=
\bigoplus_{\substack{A\in M\\|A|=r-1}}
\bigoplus_{\{p,q\}\in\binom{D_A}{2}}
T^2_{\one_A-\ee_p-\ee_q}(M)
\]
and
\begin{equation}\label{eq:Theta-definition}
\Theta(M)=\dim_kW(M)=
\sum_{\substack{A\in M\\|A|=r-1}}
\binom{d_A}{2}(d_A-3)_+.
\end{equation}
For a rank-zero matroid we set $W(M)=0$ and $\Theta(M)=0$.

\begin{corollary}[Finite detector]\label{cor:finite-detector}
For every matroid $M$,
\[
T^2(M)=0\quad\Longleftrightarrow\quad W(M)=0
\quad\Longleftrightarrow\quad\Theta(M)=0.
\]
If $M$ is connected, $r\ge2$, and $\corank M\ge3$, neither $W(M)$ nor the full rank-one-link strand generates $T^2(M)$ as a $k[M]$-module.
\end{corollary}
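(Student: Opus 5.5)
The plan splits into the equivalences and the non-generation claim.

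\textbf{The equivalences.} We argue around the cycle $T^2(M)=0\Rightarrow W(M)=0\Rightarrow T^2(M)=0$. The first implication is trivial, since $W(M)$ is a direct sum of fine components of $T^2(M)$. The identity $W(M)=0\Leftrightarrow\Theta(M)=0$ holds by definition, via \eqref{eq:Theta-definition} and Theorem~\ref{thm:top-strand}.

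For $\Theta(M)=0\Rightarrow T^2(M)=0$, suppose $T^2(M)\ne0$. By Theorem~\ref{thm:vanishing}, some connected component $M_1$ has corank at least three, and by Theorem~\ref{thm:large-cocircuit}, $M_1$ has a cocircuit $D_1$ of size at least four. The task is to realize $D_1$ as $D_A$ for an independent $(r-1)$-set $A$ of $M$.
\begin{itemize}
\item If $\rk M_1\ge2$, Theorem~\ref{thm:large-cocircuit} supplies an independent $(\rk M_1-1)$-set $A_1$ in $M_1$ with $D_1=E(M_1)\setminus\cl A_1$.
\item If $\rk M_1=1$, take $A_1=\varnothing$ and $D_1=E(M_1)$.
\end{itemize}
Now let $A=A_1\cup\bigcup_{j\ne1}B_j$, where the $B_j$ are bases of the other components. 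Closure in a direct sum is computed componentwise, so $D_A=D_1$. Hence $d_A\ge4$ and $\Theta(M)>0$.

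If $r=0$, every component has rank zero. A connected such component has at most one element, so its corank is at most one, and $T^2=0$ holds consistently with the convention $\Theta=0$.

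\textbf{Non-generation.} Assume $M$ is connected with $r\ge2$ and corank at least three. Every component of the full rank-one-link strand has positive support of size exactly $r-1$. Multiplication by monomials of $k[M]$ can only enlarge or keep positive supports in the fine grading. More precisely, a monomial $x^{\mathbf u}$ with $\supp\mathbf u$ a face sends degree $\mathbf a-\one_B$ to $\mathbf a+\mathbf u-\one_B$ (or cancels negative entries). By the support criterion, such a product can be nonzero only if the new negative support is still nonempty.

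The key observation is that the submodule generated by the strand lies in degrees $\mathbf c$ whose negative part $\mathbf b$ has $|\supp\mathbf b|\le2$ and whose positive support contains a set of size $r-1$ or meets $B$. So it suffices to exhibit a nonzero component $T^2_{\mathbf c}(M)$ in a degree that no such product reaches. The natural candidate is the purely negative pair degree $-\ee_p-\ee_q$, which Proposition~\ref{prop:pure-cocircuit} makes nonzero for $p,q$ in the large cocircuit $D$. Since $r-1\ge1$, generators have nonempty positive support, and multiplying by monomials (all in nonnegative degree) cannot reach a degree with empty positive support. Thus $T^2_{-\ee_p-\ee_q}(M)$ is not in the submodule generated by the strand, and hence not in that generated by $W(M)$.

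\textbf{Expected obstacle.} The main step to verify is the degree bookkeeping: that a product $x^{\mathbf u}\cdot\xi$, with $\xi$ of degree $\mathbf a-\one_B$ and $\supp\mathbf a\ne\varnothing$, never lands in a degree with empty positive part. This holds because $\mathbf a+\mathbf u\ge\mathbf a$ coordinatewise on $\supp\mathbf a$, which is disjoint from $B$. The argument is immediate once the grading is written out, so the real content is the existence of the nonzero pure pair component, which Proposition~\ref{prop:pure-cocircuit} already provides.
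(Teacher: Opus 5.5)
Your proposal is correct and follows the paper's argument: the equivalences come from Theorem~\ref{thm:top-strand} together with Theorem~\ref{thm:vanishing}, and the non-generation claim comes from the nonzero purely negative pair component supplied by Proposition~\ref{prop:pure-cocircuit}, which monomial multiplication cannot reach from degrees with nonempty positive support. Your componentwise construction of $A$ is a harmless detour; the paper shortcuts it by using condition (5) of Theorem~\ref{thm:vanishing} together with the fact that every cocircuit equals $E\setminus H$ for a hyperplane $H$, and a basis of $H$ is an independent $(r-1)$-set $A$ with $D_A=E\setminus H$.
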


\begin{proof}
Every cocircuit is complementary to a hyperplane, and every hyperplane has a basis $A$.  Theorem~\ref{thm:top-strand} and Theorem~\ref{thm:vanishing} prove the equivalences.  Under the hypotheses in the second assertion, Proposition~\ref{prop:pure-cocircuit} supplies a nonzero purely negative component.  Multiplication by a monomial only increases fine degree, so an element with nonempty positive support cannot generate it.
\end{proof}

\begin{corollary}[Fine and coarse series]\label{cor:top-series}
The fine Hilbert series of the rank-one-link strand is
\begin{equation}\label{eq:top-fine-series}
\sum_{\substack{A\in M\\|A|=r-1}}
(d_A-3)_+
\left(\sum_{\{p,q\}\in\binom{D_A}{2}}t_p^{-1}t_q^{-1}\right)
\prod_{a\in A}\frac{t_a}{1-t_a}.
\end{equation}
If $b(N)$ denotes the number of bases of $N$, then
\begin{equation}\label{eq:Theta-cocircuit}
\Theta(M)=
\sum_{D\text{ cocircuit of }M}
b(M|(E\setminus D))\binom{|D|}{2}(|D|-3)_+.
\end{equation}
After setting every $t_e=t$ in \eqref{eq:top-fine-series}, the series becomes
\begin{equation}\label{eq:top-coarse-series}
\frac{t^{r-3}}{(1-t)^{r-1}}\Theta(M).
\end{equation}
In particular, for $j\ge0$ and $r\ge2$, the part of standard degree $r-3+j$ has dimension
\[
\Theta(M)\binom{r+j-2}{r-2}.
\]
\end{corollary}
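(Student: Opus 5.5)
The plan is to sum Theorem~\ref{thm:top-strand} over all fine degrees in the rank-one-link strand, and then regroup and specialize. By that theorem, a nonzero component $T^2_{\mathbf a-\mathbf b}(M)$ with $|\supp\mathbf a|=r-1$ occurs exactly when $A=\supp\mathbf a$ is an independent $(r-1)$-set and $\mathbf b=\ee_p+\ee_q$ with $\{p,q\}\subseteq D_A$. Its dimension is then $(d_A-3)_+$, independent of the positive exponents.

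For fixed $A$, summing over all $\mathbf a$ with support exactly $A$ gives the factor $\prod_{a\in A}(t_a+t_a^2+\cdots)=\prod_{a\in A}t_a/(1-t_a)$, as in the proof of Corollary~\ref{cor:complete-Hilbert}. Summing over the pairs in $D_A$ gives the inner sum of $t_p^{-1}t_q^{-1}$. This yields \eqref{eq:top-fine-series}. Distinct $(A,\mathbf b)$ give distinct degrees because the positive and negative supports are disjoint, so there is no double counting.

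For \eqref{eq:Theta-cocircuit}, regroup the sum \eqref{eq:Theta-definition} by the cocircuit $D_A$. The hyperplane $\cl_MA=E\setminus D_A$ is determined by $D_A$. Conversely, the independent $(r-1)$-sets $A$ with $\cl_MA=E\setminus D$ are exactly the bases of the restriction $M|(E\setminus D)$, which has rank $r-1$. Every cocircuit arises this way, since every hyperplane has a basis. Hence the fiber over $D$ has size $b(M|(E\setminus D))$, and the summand depends only on $|D|$.

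For the coarse series, set every $t_e=t$. Each term becomes $(d_A-3)_+\binom{d_A}{2}t^{-2}\,t^{r-1}/(1-t)^{r-1}$. Summing over $A$ and using \eqref{eq:Theta-definition} gives $t^{r-3}\Theta(M)/(1-t)^{r-1}$. Finally, expand $(1-t)^{-(r-1)}=\sum_j\binom{r-2+j}{r-2}t^j$ for $r\ge2$. This gives dimension $\Theta(M)\binom{r+j-2}{r-2}$ in standard degree $r-3+j$.

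Nothing here is a real obstacle. The only step that needs care is the bijection in the cocircuit regrouping: checking that $A$ is independent of size $r-1$ with closure equal to $E\setminus D$ if and only if $A$ is a basis of $M|(E\setminus D)$. This holds because that flat has rank $r-1$.
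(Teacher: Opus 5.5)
Your proposal is correct and follows essentially the same route as the paper. You sum Theorem~\ref{thm:top-strand} over the positive exponents, regroup the independent $(r-1)$-sets by their closures (these are exactly the bases of $M|(E\setminus D)$), and then specialize $t_e=t$ and extract coefficients from $(1-t)^{-(r-1)}$.
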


\begin{proof}
The first formula follows by summing the positive exponents in Theorem~\ref{thm:top-strand}.  Grouping $(r-1)$-sets by their closures gives \eqref{eq:Theta-cocircuit}: the sets spanning $E\setminus D$ are its bases.  The specialization and coefficient extraction are immediate.
\end{proof}

Put
\[
f(d)=\binom d2(d-3)_+,
\qquad
h(d)=
\begin{cases}
0,&d\le3,\\
\dfrac{(d-1)(d-3)}2,&d\ge4.
\end{cases}
\]
The quantity $h(d)$ need not be an integer; for example, $h(4)=\tfrac32$.  Only the aggregated sums below are integral.

\begin{theorem}[Circuit profile and basis average]\label{thm:Theta-profiles}
Let $N=M^*$.  Then
\begin{equation}\label{eq:Theta-circuit}
\Theta(M)=\sum_{C\text{ circuit of }N}b(N/C)f(|C|).
\end{equation}
Equivalently, if $q_d(N)$ counts spanning subsets of size $\rk N+1$ whose unique circuit has size $d$, then
\[
\Theta(M)=\sum_d f(d)q_d(N).
\]
For a basis $B$ of $M$ and $e\in B$, let
\[
D^*(B;e)=E\setminus\cl_M(B\setminus\{e\}),
\qquad d_B(e)=|D^*(B;e)|.
\]
Then
\begin{equation}\label{eq:Theta-basis-average}
\Theta(M)=\sum_{B\text{ basis of }M}\sum_{e\in B}h(d_B(e)).
\end{equation}
\end{theorem}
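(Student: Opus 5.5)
The plan starts from the cocircuit formula \eqref{eq:Theta-cocircuit} of Corollary~\ref{cor:top-series} and dualizes it. Cocircuits $D$ of $M$ are exactly the circuits $C$ of $N=M^*$. The restriction $M|(E\setminus D)$ has dual $(M|(E\setminus D))^*=M^*/D=N/C$, and a matroid has the same number of bases as its dual. Hence $b(M|(E\setminus D))=b(N/C)$, and substituting into \eqref{eq:Theta-cocircuit} gives \eqref{eq:Theta-circuit} immediately.

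For the $q_d$ form, I will count pairs $(C,X)$ with $C$ a circuit of $N$ and $X=C\cup Y$, where $Y$ is a basis of $N/C$. Since $\rk_N C=|C|-1$, the set $X$ is spanning of size $|C|-1+\rk N-\rk_N C+\dots$; more precisely $|X|=|C|+(\rk N-|C|+1)=\rk N+1$, and $X$ spans $N$. A spanning set of size $\rk N+1$ contains exactly one circuit, namely the fundamental circuit of any basis inside it. Conversely, given such an $X$ with unique circuit $C$, the set $X\setminus C$ is a basis of $N/C$: it has the right size, and $X$ spans. This is a bijection, so $\sum_{C:|C|=d}b(N/C)=q_d(N)$, which yields the second formula.

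For the basis average \eqref{eq:Theta-basis-average}, I will double count pairs $(B,e)$ with $B$ a basis of $M$ and $e\in B$. The set $B\setminus\{e\}$ is an independent $(r-1)$-set $A$ with $D_A=D^*(B;e)$. Conversely, given $A$, the bases $B=A\cup\{e\}$ correspond exactly to $e\in D_A$, so there are $d_A$ of them. Hence
\[
\sum_{B}\sum_{e\in B}h(d_B(e))=\sum_{|A|=r-1,\,A\in M}d_A\,h(d_A).
\]
Since $d\,h(d)=\tfrac{d(d-1)(d-3)}{2}=f(d)$ for $d\ge4$, and both sides vanish for $d\le3$, this equals the defining sum \eqref{eq:Theta-definition}. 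The rank-zero case is trivial, as all the sums are empty or zero.

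The only step needing care is the bijection for $q_d$, specifically checking that $X\setminus C$ is a basis of $N/C$. The argument is that $\rk_{N/C}(X\setminus C)=\rk_N X-\rk_N C=\rk N-|C|+1=|X\setminus C|$.
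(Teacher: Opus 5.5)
Your proposal is correct and follows the paper's proof step for step: you dualize the cocircuit formula \eqref{eq:Theta-cocircuit} via $(M|(E\setminus D))^*=N/D$, extend bases of $N/C$ by $C$ to obtain the nullity-one spanning sets, and double count pairs $(B,e)$ through $A=B\setminus\{e\}$ using $d_A\,h(d_A)=f(d_A)$. You add the rank computations that the paper leaves implicit; the garbled intermediate expression for $|X|$ should simply be replaced by the clean count $|X|=|C|+(\rk N-|C|+1)=\rk N+1$.
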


\begin{proof}
Duality gives $(M|(E\setminus D))^*=N/D$ for a cocircuit $D$, so \eqref{eq:Theta-cocircuit} becomes \eqref{eq:Theta-circuit}.  Extending a basis of $N/C$ by $C$ is a bijection to spanning nullity-one sets with unique circuit $C$.  For the basis average, map $(B,e)$ to $A=B\setminus\{e\}$.  A fixed $A$ has exactly $d_A$ preimages, and $d_Ah(d_A)=f(d_A)$.
\end{proof}

\begin{corollary}[Sharp uniform bounds]\label{cor:uniform-bounds}
Let $M$ have rank $r\ge1$, $n$ elements, and corank $c=n-r\ge3$.  Then
\begin{align}
\Theta(M)&\le b(M)\,r\,\frac{c(c-2)}2,
\label{eq:uniform-bound-one}\\
\Theta(M)&\le
\binom n{r-1}\binom{c+1}{2}(c-2).
\label{eq:uniform-bound-two}
\end{align}
Equality in either bound holds if and only if $M=U_{r,n}$.
\end{corollary}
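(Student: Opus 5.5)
Both bounds come from the two expressions for $\Theta(M)$ already available: \eqref{eq:Theta-definition} is a sum over independent $(r-1)$-sets, and \eqref{eq:Theta-basis-average} is a sum over pairs $(B,e)$. In each case the plan is to bound every summand by its largest possible value, using that the relevant cocircuit size is at most $c+1$ and that $f$ and $h$ are strictly increasing on the range that matters. Equality then forces every cocircuit involved to have the maximal size $c+1$, which should force uniformity.

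\emph{Bound \eqref{eq:uniform-bound-one}.} Fix a basis $B$ and $e\in B$. The set $B\setminus\{e\}\subseteq\cl_M(B\setminus\{e\})$, so $D^*(B;e)\subseteq (E\setminus B)\cup\{e\}$ and $d_B(e)\le c+1$. The function $h$ vanishes for $d\le3$ and is strictly increasing for $d\ge3$, since $h(3)=0<h(4)=3/2$ and $(d-1)(d-3)/2$ increases for $d\ge2$. Hence $h(d_B(e))\le h(c+1)=c(c-2)/2$, where $c+1\ge4$ is used. There are exactly $b(M)\,r$ pairs $(B,e)$, and summing \eqref{eq:Theta-basis-average} gives the bound. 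Because $h$ is strictly increasing at $c+1$, equality holds if and only if $d_B(e)=c+1$ for every pair. This says exactly that $\cl_M(B\setminus\{e\})=B\setminus\{e\}$, so every independent $(r-1)$-set is a flat.

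\emph{Bound \eqref{eq:uniform-bound-two}.} For an independent $(r-1)$-set $A$, we have $A\subseteq\cl_MA$, so $d_A\le n-(r-1)=c+1$. The function $f(d)=\binom d2(d-3)_+$ is nondecreasing, and it is strictly increasing for $d\ge3$. Hence $f(d_A)\le\binom{c+1}2(c-2)$. The number of independent $(r-1)$-sets is at most $\binom n{r-1}$, and \eqref{eq:Theta-definition} yields the bound; the case $r=1$ has only $A=\varnothing$ and is included. Equality holds if and only if two conditions hold: every $(r-1)$-subset of $E$ is independent, and each such set is closed.

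\emph{Equality characterization.} This is the only step with real content. For $U_{r,n}$, every $(r-1)$-set is an independent flat, since circuits have size $r+1$; thus $d_A=c+1$ and both bounds are equalities.

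Conversely, suppose every independent $(r-1)$-set is closed, which is implied by equality in either bound. Suppose also that $M$ had a circuit $C$ with $|C|\le r$; a loop counts as a circuit of size one. Choose $x\in C$. The set $C\setminus\{x\}$ is independent of size at most $r-1$, so it extends to an independent $(r-1)$-set $A$, because $\rk M=r$. Every element of $C$ is either in $A$ or, like $x$, lies in $\cl_M(C\setminus\{x\})\subseteq\cl_MA$. If $x\notin A$, then $A$ is not closed, a contradiction. If $x\in A$, then $C\subseteq A$, which is impossible since $A$ is independent. Hence every circuit has size $r+1$, and $M=U_{r,n}$.

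I expect no serious obstacle. The only care needed is the strict monotonicity of $h$ and $f$ at $c+1\ge4$, which is what turns the summandwise equalities into an exact characterization.
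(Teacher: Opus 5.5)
Your proposal is correct and follows the paper's proof. You bound each summand of \eqref{eq:Theta-basis-average} and \eqref{eq:Theta-definition} using $d\le c+1$ and the monotonicity of $h$ and $f$, then note that equality forces every independent $(r-1)$-set to be closed, which a circuit of size at most $r$ would contradict; you also spell out the strict-monotonicity, counting, and $x\in A$ details that the paper leaves implicit.
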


\begin{proof}
Every fundamental cocircuit has size at most $c+1$, and $h$ is increasing on the relevant range.  Formula \eqref{eq:Theta-basis-average} proves \eqref{eq:uniform-bound-one}.  Formula \eqref{eq:Theta-definition} proves \eqref{eq:uniform-bound-two}.  Equality forces every independent $(r-1)$-set to be closed.  If a circuit had size at most $r$, deleting one of its elements and extending the remainder to an $(r-1)$-set would contradict closedness.  Hence every set of size at most $r$ is independent, so $M=U_{r,n}$.  The converse is immediate.
\end{proof}

\begin{corollary}[Direct sums]\label{cor:Theta-direct-sum}
For matroids on disjoint ground sets,
\[
\Theta(M_1\oplus M_2)=
\Theta(M_1)b(M_2)+b(M_1)\Theta(M_2).
\]
Consequently $\Theta(M)/b(M)$ is additive over connected components.
\end{corollary}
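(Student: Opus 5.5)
The plan is to work directly from the definition \eqref{eq:Theta-definition}, which sums over independent $(r-1)$-sets. Put $M=M_1\oplus M_2$ with ranks $r_1,r_2$, so $r=r_1+r_2$. If one summand has rank zero, say $r_2=0$, then $M_2$ consists of loops and $b(M_2)=1$, $\Theta(M_2)=0$. Every independent $(r-1)$-set of $M$ lies in $M_1$, and its closure in $M$ contains all loops of $M_2$. So $D_A$ computed in $M$ equals $D_A$ computed in $M_1$, and the formula holds. Assume therefore that $r_1,r_2\ge1$.

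Next, classify the independent $(r-1)$-sets of $M$. Such a set $A$ decomposes as $A_1\sqcup A_2$ with $A_i$ independent in $M_i$ and $|A_1|+|A_2|=r_1+r_2-1$. Hence exactly one $A_i$ has size $r_i-1$ and the other is a basis. Consider the case where $A_2$ is a basis of $M_2$. Closure in a direct sum is computed componentwise, so $\cl_MA=\cl_{M_1}A_1\sqcup E(M_2)$, and therefore $D_A=E(M_1)\setminus\cl_{M_1}A_1$. Its size is the quantity $d_{A_1}$ computed inside $M_1$. The symmetric case, where $A_1$ is a basis of $M_1$, behaves the same way with the roles of the summands exchanged.

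Now sum $\binom{d_A}{2}(d_A-3)_+$ over all such $A$. The terms with $A_2$ a basis contribute $\Theta(M_1)\,b(M_2)$, because for each independent $(r_1-1)$-set $A_1$ there are exactly $b(M_2)$ choices of $A_2$. The symmetric terms contribute $b(M_1)\,\Theta(M_2)$. No set $A$ falls into both cases, since that would force $|A|=r$.

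For the consequence, use $b(M_1\oplus M_2)=b(M_1)b(M_2)$, which is nonzero. Dividing the identity by this product gives
\[
\frac{\Theta(M_1\oplus M_2)}{b(M_1\oplus M_2)}=\frac{\Theta(M_1)}{b(M_1)}+\frac{\Theta(M_2)}{b(M_2)}.
\]
Induction over the decomposition of $M$ into connected components then yields additivity.

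The only delicate point is the degenerate rank-zero case handled in the first paragraph. There the definition of $\Theta$ as zero for rank-zero matroids must be checked to be consistent with the bookkeeping, and it is, since loops never enter any $D_A$.
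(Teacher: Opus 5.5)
Your proposal is correct and follows the paper's own argument. Like the paper, you split the independent $(r-1)$-sets of $M_1\oplus M_2$ into those that are a basis in one summand and one element short in the other, note that the complementary cocircuit lies in the deficient summand with the same size, and count the basis choices in the other summand. Your separate check of the rank-zero case is a harmless extra verification, which the paper leaves implicit.
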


\begin{proof}
An independent set one element short of a basis of a direct sum is a basis in one summand and one element short in the other.  Its complementary cocircuit lies in the latter.  Split \eqref{eq:Theta-definition} into the two cases.
\end{proof}

\begin{theorem}[Tutte contrast]\label{thm:Tutte-contrast}
The Tutte polynomial of $M$ determines whether $T^2(M)$ vanishes.  It does not determine $\Theta(M)$ or the coarse rank-one-link Hilbert series.
\end{theorem}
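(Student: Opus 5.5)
The first assertion follows from Theorem~\ref{thm:vanishing}. By that theorem, $T^2(M)=0$ exactly when every connected component of $M$ has corank at most two. The Tutte polynomial determines the number of connected components only in a weak sense, so I would not use components directly. Instead I would use the equivalent condition (5): every cocircuit has size at most three. I would also want the test to be read off from $T_M$ itself rather than from the components. Condition (6) gives such a test: $M$ has no $U_{1,4}$-minor. The cleanest route is through $\Theta$, using $\Theta(M)=0\iff T^2(M)=0$ (Corollary~\ref{cor:finite-detector}). That route only helps if $\Theta$ is Tutte-invariant, which is the opposite of the second claim. So I would handle vanishing directly. Since $T_M=\prod T_{M_i}$ over the components, and connected matroids have irreducible Tutte polynomials (Merino--de Mier--Noy), $T_M$ determines the multiset of Tutte polynomials of the components. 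Each such polynomial determines the rank and corank of its component. Hence $T_M$ determines whether every component has corank at most two. If I prefer not to cite irreducibility, I can instead use that the coefficient of $x^1y^0$ (the beta invariant) detects connectedness, and induct.

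For the second assertion I would exhibit two matroids with the same Tutte polynomial but different $\Theta$. Formula~\eqref{eq:Theta-cocircuit} expresses $\Theta$ through the cocircuit sizes together with the numbers of bases of the complementary hyperplanes. The Tutte polynomial records only aggregated rank-size data, so a hyperplane-level invariant should be able to distinguish Tutte-equivalent pairs. The most economical choice is a pair of rank-three matroids differing only in how their lines are arranged.

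\textbf{Construction.} The standard candidates are pairs with the same number of lines of each size, since such pairs have the same Tutte polynomial in rank three. For each line $\ell$ the cocircuit is $E\setminus\ell$, of size $n-|\ell|$, and $b(M|\ell)=\binom{|\ell|}2$ for a simple matroid. The summand in $\Theta$ depends only on $|\ell|$, so $\Theta$ would agree. Simple rank-three examples therefore fail, and I would allow parallel elements. Then $b(M|\ell)$ is the number of pairs of nonparallel points on $\ell$, which depends on how the parallel classes are distributed, while $T_M$ still records only the rank function counts $\#\{X:|X|=i,\ \rk X=j\}$.

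Concretely, take a rank-three matroid with one doubled point, and place that point either on a long line or off it. Here the main obstacle is arranging that the Tutte polynomials agree. The plan is to pick two configurations in which the doubled point lies on lines of the same multiset of sizes, but the counts of nonparallel pairs on those lines differ. I would first try rank two: $U_{2,n}$ with parallel classes of sizes $(a_1,\dots,a_k)$. Its Tutte polynomial depends on more than the partition, and $\Theta$ depends only on hyperplane sizes. In rank two the hyperplanes are the parallel classes, and $\Theta=\sum_P\binom{n-|P|}2(n-|P|-3)_+$ depends only on the partition, so that fails as well.

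I would then fall back on a rank-three or rank-four computer-found pair of Tutte-equivalent matroids, for example a known pair of nonisomorphic Tutte-equivalent matroids on eight or nine elements. I would compute $\Theta$ for both via \eqref{eq:Theta-definition}, checking the values with the finite verification of Section~\ref{sec:verification}. Finally, since the coarse strand series \eqref{eq:top-coarse-series} equals $t^{r-3}(1-t)^{1-r}\Theta(M)$ and $r$ is Tutte-determined, a difference in $\Theta$ immediately shows that the coarse series is not Tutte-determined either.
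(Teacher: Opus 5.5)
Your argument for the first assertion is the paper's: the Tutte polynomial is multiplicative over direct sums, and it is irreducible for connected matroids (Merino--de Mier--Noy). Unique factorization then recovers each component's polynomial, whose $y$-degree is its corank, and Theorem~\ref{thm:vanishing} finishes. Keep the irreducibility step: the beta-invariant fallback only tells you whether $M$ itself is connected, and does not identify the component factors.

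The gap is in the second assertion. It is an existence statement, and its entire proof is an explicit Tutte-equivalent pair with different $\Theta$; you never produce one. Appealing to ``a known pair on eight or nine elements'' and promising to compute $\Theta$ is not an argument. Nothing guarantees that a given Tutte-equivalent pair separates $\Theta$. Counting subsets by size and rank shows that in rank two the Tutte polynomial determines the parallel-class partition, and in simple rank three the multiset of line sizes; in both cases it therefore determines $\Theta$. (Your remark that a rank-two Tutte polynomial ``depends on more than the partition'' is mistaken: a loopless rank-two matroid is determined by its partition.) The paper supplies the witness. It takes two explicit graphs on seven vertices with thirteen edges, checks equal subset counts $a_{r,s}$ and hence equal Tutte polynomials, and counts spanning two-forests by the number $d=2,\ldots,8$ of crossing edges. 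Weighting by $\binom d2(d-3)_+$ gives $\Theta=12135$ and $12161$.

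Your doubled-point idea can be completed, but not as you phrase it. With a single parallel pair $\{p,p'\}$, a line of size $s$ through $p$ always carries $\binom s2-1$ nonparallel pairs. So if the doubled point lies on lines of the same sizes in both matroids, $\Theta$ cannot differ. Among loopless rank-three matroids on $n$ elements whose only nontrivial parallel class is $\{p,p'\}$, the Tutte polynomial is determined by the multiset of sizes of lines with at least three elements. For fixed such data, $\Theta$ still varies by $\sum_{\ell\ni p}\bigl(f(n-2)-f(n-|\ell|)\bigr)$, where $f(d)=\binom d2(d-3)_+$. So the doubled point must lie on lines of \emph{different} sizes, with the difference compensated by lines elsewhere.

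Concretely, take the simple rank-three matroid on $\{p,x,y,a,b,c\}$ whose only three-point lines are $\{p,x,y\}$, $\{x,a,b\}$, $\{y,a,c\}$. Take also the one on $\{p,a,b,c,d,e\}$ whose only long line is $\{a,b,c,d\}$. Double $p$ in each. Both seven-element matroids have one parallel pair and long lines of sizes $4,3,3,3,3,3$. They therefore have the same numbers of subsets of each size and rank, and the same Tutte polynomial. Summing $b(M|\ell)\,f(7-|\ell|)$ over lines gives $\Theta=36+36+60=132$ for the first and $\Theta=60+80=140$ for the second. Either this pair or the paper's graphic pair closes the gap. Your final remark then correctly transfers the difference to the coarse series, since the ranks agree.
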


\begin{proof}
The Tutte polynomial is multiplicative under direct sums, and the polynomial of a connected matroid is irreducible in $\ZZ[x,y]$ \cite{MerinoDeMierNoy}.  Unique factorization therefore recovers the polynomial of each connected component.  The $y$-degree is its corank, so Theorem~\ref{thm:vanishing} decides vanishing.

For the negative statement, let $G_1,G_2$ be the graphs on $\{0,\ldots,6\}$ with edge sets
\begin{align*}
E(G_1)&=\{04,06,14,15,16,23,25,26,34,35,36,45,56\},\\
E(G_2)&=\{01,02,03,05,12,14,15,23,25,26,34,45,56\}.
\end{align*}
Direct rank-subset expansion gives identical Tutte polynomials.  The numbers of spanning two-forests with $|E\setminus\cl(A)|=d$, for $d=2,\ldots,8$, are respectively
\[
(209,235,141,130,101,41,5)
\]
and
\[
(225,198,162,138,89,46,4).
\]
Weighting by $f(d)$ gives
\[
\Theta(M(G_1))=12135,
\qquad
\Theta(M(G_2))=12161.
\]
The computation is independently verified as described in Section~\ref{sec:verification}.
\end{proof}

\begin{theorem}[Reconstruction of large cocircuits]\label{thm:reconstruction}
For each independent $(r-1)$-set $A$, form a labeled graph on $E\setminus A$ by making $\{p,q\}$ an edge exactly when
\[
\dim_kT^2_{\one_A-\ee_p-\ee_q}(M)>0.
\]
If $d_A\le3$, this graph is empty.  If $d_A\ge4$, its nonisolated vertex set is $D_A$, the induced graph on $D_A$ is complete, and every edge has label $d_A-3$.  Hence the family of labeled fine components reconstructs all cocircuits of size at least four, including their sizes.  A fixed cocircuit $D$ occurs $b(M|(E\setminus D))$ times.
\end{theorem}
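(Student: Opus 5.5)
The statement follows directly from Theorem~\ref{thm:top-strand}. We compute, for a fixed independent $(r-1)$-set $A$, the dimension of each component $T^2_{\one_A-\ee_p-\ee_q}(M)$ with $p,q\in E\setminus A$ distinct, and then read off the labeled graph.

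\textbf{Step 1: locate the edges.} By Theorem~\ref{thm:top-strand}, applied with $\mathbf a=\one_A$ and $\mathbf b=\ee_p+\ee_q$, the dimension is $(d_A-3)_+$ when $\{p,q\}\subseteq D_A$ and zero otherwise. The only point to check is the case where $p$ or $q$ lies in $\cl_MA\setminus A$. Such an element is not an actual vertex of $\lk_MA=Z_{D_A}$, so the support criterion kills the component. This is exactly the ``otherwise'' clause of Theorem~\ref{thm:top-strand}.

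\textbf{Step 2: read off the graph.} If $d_A\le3$, every label is $0$, so the graph is empty. If $d_A\ge4$, every pair in $\binom{D_A}{2}$ is an edge with label $d_A-3>0$, and no edge meets $(E\setminus A)\setminus D_A$. Since $|D_A|\ge2$, every vertex of $D_A$ is nonisolated. So the nonisolated vertex set is exactly $D_A$, the induced graph on it is complete, and all labels equal $d_A-3$.

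\textbf{Step 3: reconstruction.} Each $D_A$ is a cocircuit, as noted at the start of Section~\ref{sec:theta}. Conversely, every cocircuit $D$ of size at least four arises as $D_A$. Indeed, $E\setminus D$ is a hyperplane, and any basis $A$ of it is an independent $(r-1)$-set with $\cl_MA=E\setminus D$. Each such graph therefore returns the cocircuit $D$ as its nonisolated set, and recovers $|D|$ from the label plus $3$.

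\textbf{Step 4: multiplicity.} The sets $A$ with $D_A=D$ are precisely the independent $(r-1)$-sets spanning $E\setminus D$. These are exactly the bases of $M|(E\setminus D)$, so $D$ occurs $b(M|(E\setminus D))$ times, matching \eqref{eq:Theta-cocircuit}.

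There is no genuine obstacle. The only care needed is the bookkeeping in Step 1 that excludes elements of $\cl_MA\setminus A$.
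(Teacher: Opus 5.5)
Your proposal is correct and takes the same approach as the paper. The paper also treats this as an immediate restatement of Theorem~\ref{thm:top-strand}, together with the facts that every cocircuit is the complement of a hyperplane and every hyperplane has a basis of size $r-1$; your Steps~3 and~4 spell out exactly those facts.
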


\begin{proof}
This is an immediate restatement of Theorem~\ref{thm:top-strand}.  Every cocircuit is the complement of a hyperplane and every hyperplane has a basis.
\end{proof}

\section{Rank-two links and the next-to-top strand}\label{sec:rank-two}

\begin{theorem}[Purely negative $T^2$ in rank two]\label{thm:rank-two-formulas}
Let $L$ be a loopless rank-two matroid with parallel classes $P_1,\ldots,P_t$ of sizes $n_1,\ldots,n_t$, and put $n=\sum_i n_i$.  Every purely negative component vanishes except in the following cases.
\begin{enumerate}[label=\textup{(\arabic*)}]
\item If $B=\{p,q\}\subset P_i$, then
\[
\dim_kT^2_{-\one_B}(L)=
\left(n_i-3+
\sum_{\substack{a<c\\a,c\ne i}}n_an_c\right)_+.
\]
\item If $p\in P_i$, $q\in P_j$, and $i\ne j$, then
\[
\dim_kT^2_{-\ee_p-\ee_q}(L)=
\sum_{k\ne i,j}n_k(n-3-n_k)_+.
\]
\item Suppose $B$ consists of three elements from three distinct parallel classes.  If all elements of $L\setminus B$ lie in at most one parallel class, then
\[
\dim_kT^2_{-\one_B}(L)=(n-4)_+;
\]
otherwise the component is zero.
\end{enumerate}
Every singleton negative component of $T^2(L)$ vanishes.
\end{theorem}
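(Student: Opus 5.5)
Apply Theorem~\ref{thm:universal-graph} with $M=L$, $A=\varnothing$, so that $N=L^*$ has rank $s=n-2$. Singleton supports have vanishing $T^2$ by \eqref{eq:singleton-formula}; this settles the final assertion. For $|B|\ge2$ we compute the hyperplanes of $N$ avoiding $B$ and the graphs $\Gamma_B(H)$. The approach is to translate everything into $L$. A set $X$ is a flat of $N$ of corank $j$ exactly when $E\setminus X$ is a union of circuits of $L$ of nullity $j$. Moreover $\rk_N(X\cup B)=s$ exactly when $E\setminus(X\cup B)$ is independent in $L$. The circuits of $L$ are the parallel pairs and the triples meeting three distinct classes.

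\textbf{Hyperplanes.} The hyperplanes of $N$ are the complements of the circuits $C$ of $L$, and $H\cap B=\varnothing$ means $B\subseteq C$. The rank-$(s-2)$ flats $G\subset H$ are the complements of nullity-two cyclic sets $Y\supseteq C$. Such a $Y$ is a union of circuits of rank at most two with $|Y|=\rk(Y)+2$. The vertex condition says that $Y\setminus B$ is independent in $L$. Adjacency of $G,G'$ says that $(Y\cup Y')\setminus B$ is independent. This forces $|Y\cup Y'|$ to be small, and since $Y\cup Y'$ has rank at most two, adjacency will almost never occur. Indeed, by Lemma~\ref{lem:graph-trichotomy} (or directly, since $B$ of rank one or two in $L$) the graphs are discrete whenever $B$ spans a rank-two flat of $N^{*}$-type as in the pair case. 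In any case, for $|B|=2$ the graph is discrete by the argument before Corollary~\ref{cor:pair-block}. The dimension is then $\sum_C(|\mathcal G_B(C)|-1)_+$.

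\textbf{Case (1).} Let $B=\{p,q\}\subset P_i$. The only circuit containing $B$ is $C=B$. A nullity-two $Y\supseteq B$ with $Y\setminus B$ independent is of one of two forms. The first is $B\cup\{x\}$ with $x\in P_i$: it has rank one and size three, and $Y\setminus B=\{x\}$ is a nonloop. This gives $n_i-2$ choices. The second has rank two and size four, $Y=B\cup\{a,c\}$ with $a,c$ in distinct classes other than $P_i$ (so that $\{a,c\}$ is independent). Mixed choices with $a\in P_i$ fail, since $B\cup\{a,c\}$ would have nullity... I will check these carefully. The total is $n_i-2+\sum_{a<c,\ a,c\ne i}n_an_c$, which matches the formula after subtracting one.

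\textbf{Case (2).} Let $p\in P_i$ and $q\in P_j$. The circuits containing $B$ are the triples $\{p,q,x\}$ with $x\in P_k$, $k\ne i,j$. For such a $C$, the sets $Y$ are $C\cup\{y\}$ of size four and rank two, with $\{x,y\}$ independent, i.e.\ $y\notin P_k$ and $y\notin B$. This gives $n-3-n_k$ vertices for each of the $n_k$ choices of $x$, which yields the stated sum.

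\textbf{Case (3).} Here $|B|=3$ and $C=B$. Since a nullity-two $Y$ of rank two has size four, we need $Y=B\cup\{y\}$ with $\{y\}$ independent, so there are $n-3$ vertices. Now adjacency does occur: $Y\cup Y'$ has size five, and $(Y\cup Y')\setminus B=\{y,y'\}$ is independent exactly when $y,y'$ lie in different classes. So $\Gamma$ is the complete multipartite graph on $L\setminus B$ by parallel class. It is connected unless all of $L\setminus B$ lies in one class, in which case it is discrete and gives $(n-4)_+$.

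\textbf{Larger $B$ and the main obstacle.} For $B$ of any other shape, either no circuit contains $B$, or $B$ spans $N$-relevantly so that Corollary~\ref{cor:closure-strata} or connectivity kills the component. For example, $|B|\ge4$ lies in no circuit. The same holds for a three-element $B$ that is not in three distinct classes, and a three-element $B$ inside one class is not a circuit. The main obstacle is the careful enumeration in case (1), which must verify exactly which four-element $Y$ have nullity two with $Y\setminus B$ independent. It must also confirm that the correct count gives the stated expression, including the rank-one contributions from $P_i$.
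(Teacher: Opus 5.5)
Your route differs from the paper's and works, apart from two local slips described below. The paper does not pass to the dual. It reads the Altmann--Christophersen pair $(N_B(L),\widetilde N_B(L))$ directly in $L$ and notes that a nonface which is not a circuit has equal relative terms. It then identifies $\widetilde N_B(L)$ as a point--edge incidence graph whose components it counts by hand:
\begin{itemize}
\item in (1), the $n_i-2$ point-stars and the isolated cross edges;
\item in (2), one block per point of $P_k$;
\item in (3), the barycentric subdivision of $L\setminus B$;
\end{itemize}
and it handles singletons by a forest argument.

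You instead push the already-proved Theorem~\ref{thm:universal-graph} through the dictionary between corank-$j$ flats of $L^*$ and nullity-$j$ cyclic sets of $L$. The objects counted are the same: the paper's point $x\in P_k$ with its edges $\{x,y\}$ is your hyperplane $E\setminus\{p,q,x\}$ with vertices $y$. Your version gives the remaining vanishing for free. For $|B|\ge3$ outside case (3), no circuit of $L$ contains $B$, so no hyperplane of $L^*$ avoids $B$ and the direct sum is empty. The singleton statement is just \eqref{eq:singleton-formula}. Drop the appeals to Corollary~\ref{cor:closure-strata} and to ``connectivity'', and the sentence invoking Lemma~\ref{lem:graph-trichotomy}, which is about $\rk_{L^*}B$ rather than ranks in $L$. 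The pair discreteness argument and your direct adjacency computation in (3) are all you actually use.

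\textbf{Case (2) count.} For $C=\{p,q,x\}$ with $x\in P_k$, the vertices are the $y\notin\{p,q\}\cup P_k$; this condition already excludes $x$. So there are $n-2-n_k$ vertices, not $n-3-n_k$. It is $(|\mathcal G_B(H)|-1)_+$ that equals $(n-3-n_k)_+$. Taken literally, your count fed into your own formula $\sum_C(|\mathcal G_B(C)|-1)_+$ would give $\sum_k n_k(n-4-n_k)_+$, which is wrong.

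\textbf{Case (1) exclusion.} The mixed sets $Y=B\cup\{a,c\}$ with $a\in P_i$ and $c\notin P_i$ have nullity exactly two, and $\{a,c\}$ is independent. So nullity is not what excludes them. They fail to be cyclic: since $Y\setminus\{c\}\subseteq P_i$, no circuit inside $Y$ contains $c$, and hence $E\setminus Y$ is not a flat of $L^*$.

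\textbf{Cyclicity in (2) and (3).} You should also record that the sets $C\cup\{y\}$ in (2) and $B\cup\{y\}$ in (3) are cyclic. Either $y$ is parallel to an element of the circuit, or it forms a three-class triple with two of its elements.

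With these repairs, the vertex counts are complete:
\begin{itemize}
\item $n_i-2+\sum_{a<c,\,a,c\ne i}n_an_c$ in (1);
\item $n-2-n_k$ per block in (2);
\item $n-3$ in (3);
\end{itemize}
and the stated formulas follow. Case (1) is then not really the main obstacle.
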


\begin{proof}
The circuits of $L$ are the pairs within a parallel class and the triples meeting three different classes.  A squarefree negative support which is a nonface but not a circuit has equal relative terms and contributes zero.

In (1), $B$ is a circuit. The order-complex incidence graph
$|\widetilde N_B(L)'|$ has one component for each of the $n_i-2$
remaining points of $P_i$, containing that point and all incident
edge-face vertices. Every edge joining two classes different from $P_i$
gives an isolated edge-face vertex. Reduced $H^0$ gives the formula.

In (2), $B$ is a basis.  Relative vertices are the points in $P_k$, $k\ne i,j$.  A point of $P_k$ is incident with $n-2-n_k$ edge-faces after deleting $p,q$, and its relative incidence block contributes $(n-3-n_k)_+$.  Summing over the $n_k$ points gives (2).

In (3), $B$ is a circuit and $|\widetilde N_B(L)'|$ is the barycentric subdivision of $L\setminus B$. It is connected if at least two parallel classes remain, and otherwise it is a discrete set of $n-3$ points. Finally, the singleton Altmann--Christophersen complex is an incidence graph whose two-face vertices have degree at most one. It is a forest, so its first cohomology vanishes.
\end{proof}

\begin{corollary}[Complete next-to-top strand]\label{cor:next-to-top}
Let $M$ have rank $r\ge2$, let $\mathbf a\in\NN^E$ have independent support $A$ of size $r-2$, and let $\mathbf b$ be nonzero, squarefree, and supported on actual vertices of $L=\lk_MA$.  Then
\[
T^2_{\mathbf a-\mathbf b}(M)\cong T^2_{-\mathbf b}(L),
\]
and Theorem~\ref{thm:rank-two-formulas} computes the right-hand side.  All remaining components with positive-support size $r-2$ vanish by the support criterion.
\end{corollary}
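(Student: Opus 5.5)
The plan is to derive Corollary~\ref{cor:next-to-top} directly from the support criterion and the link isomorphism \eqref{eq:link-formula}, and then to identify the link with a rank-two matroid so that Theorem~\ref{thm:rank-two-formulas} applies.

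First, since $A=\supp\mathbf a$ is independent, $\mathbf b=\one_B$ is nonzero and squarefree, and $B$ lies in the actual vertex set of $L=\lk_MA$, the hypotheses of the Altmann--Christophersen support criterion hold. Hence \eqref{eq:link-formula} gives $T^2_{\mathbf a-\mathbf b}(M)\cong T^2_{-\one_B}(L)$. This isomorphism is independent of the positive exponents of $\mathbf a$, by Lemma~\ref{lem:localization}.

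Next I will check that $L$ is a loopless rank-two matroid on its actual vertex set. By the convention recorded in the preliminaries, $\lk_MA$ is the independence complex of $M/A$ with the contraction loops deleted. Since $|A|=r-2$ and $A$ is independent, $M/A$ has rank $r-(r-2)=2$, and deleting loops does not change the rank. Thus $L$ is a loopless rank-two matroid, and its parallel classes are the rank-one flats of $M/A$ with $\cl_MA$ removed. Theorem~\ref{thm:rank-two-formulas} is stated for such matroids, so it computes $T^2_{-\mathbf b}(L)$ for every nonzero squarefree $\mathbf b$ on $V(L)$, including the vanishing of singleton and other unlisted supports.

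Finally, consider a component $T^2_{\mathbf a-\mathbf b}(M)$ with $|\supp\mathbf a|=r-2$ that violates one of the stated hypotheses: $\mathbf b$ zero or non-squarefree, $A\notin M$, or $B\not\subseteq V(L)$. Each of these is exactly a failure condition in the support criterion (equivalently, in the first sentence of Theorem~\ref{thm:universal-graph}), so the component vanishes.

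There is no real obstacle. The only point requiring care is the identification of the link with the loopless rank-two matroid, in particular that ghost vertices (the elements of $\cl_MA\setminus A$) are excluded. This is handled by the actual-vertex convention, and it matches the hypothesis $B\subseteq V(L)$.
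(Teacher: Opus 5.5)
Your proposal is correct and follows the paper's route: the support criterion together with the link isomorphism \eqref{eq:link-formula} reduces the component to $T^2_{-\mathbf b}(L)$, and the link of an independent $(r-2)$-set is a loopless rank-two matroid on its actual vertices, so Theorem~\ref{thm:rank-two-formulas} applies. The vanishing of the remaining positive-support-size-$(r-2)$ components is exactly the failure of one of the support conditions, as you say.
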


\begin{corollary}[Projective planes]\label{cor:projective-plane}
Let $q\ge2$ be a prime power, let $M=PG(2,q)$, and put $\nu=q^2+q+1$,
\[
\alpha=q-3+\binom q2q^2,
\qquad
\gamma=q(q-1)(q^2-3).
\]
Then
\begin{align}
\dim_kT^2(M)_0
&=\Theta(M)+\nu\left((q+1)\binom q2\alpha
+\binom{q+1}{2}q^2\gamma\right)\label{eq:PG-first}\\
&=\frac{q(q-1)(q+1)(q^2+q+1)}4
\left(3q^5+2q^4-10q^3-3q^2+2q-6\right).
\label{eq:PG-polynomial}
\end{align}
In particular, $\dim_kT^2(F_7)_0=357$, with $126$ dimensions in the rank-one-link strand and $231$ in the rank-two-link strand.
\end{corollary}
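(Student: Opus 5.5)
The plan is to split $\dim_kT^2(M)_0$ by the size of the positive support $A=\supp\mathbf a$. A degree-zero fine component $\mathbf a-\one_B$ needs $|\mathbf a|=|B|$, where $|\mathbf a|$ is the sum of the exponents. The flats of $M=PG(2,q)$ of rank $\le 2$ are points and lines; the rank-two links have at least $q+1$ parallel classes and are connected. So only $|A|\in\{0,1,2\}$ can contribute.

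First I discard $|A|=2$. Then $A$ is a basis and the link is a simplex, so the support criterion gives zero. Next I discard $|A|=0$: we would need $\mathbf b=0$, and the support criterion excludes that. This leaves two strands.

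The first is $|A|=1=r-2$, the next-to-top strand of Corollary~\ref{cor:next-to-top}. Here $A=\{v\}$ with $a_v\ge1$, and $|B|=a_v\ge1$. The link $L=\lk_M v$ is the rank-two matroid whose parallel classes are the $q+1$ lines through $v$, each with $q$ points, so $n=q^2+q$. Singleton supports vanish by Theorem~\ref{thm:rank-two-formulas}. Three-element supports from three distinct classes also vanish, because the complement meets at least $q-2\ge\dots$ classes. I would check $q\ge2$ carefully: with $q+1\ge3$ classes and three removed points, the remainder still contains points of at least two classes since each class has $q\ge2$ points. Thus only $|B|=2$ and $a_v=2$ survive. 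In case (1) there are $(q+1)\binom q2$ same-class pairs, each contributing $\alpha=(q-3+\binom q2q^2)_+$, using $\sum_{a<c}n_an_c=\binom q2q^2$. One must check the $(\cdot)_+$ is harmless: for $q=2$ we get $\alpha=-1+4=3>0$. In case (2) there are $\binom{q+1}2q^2$ cross pairs, each contributing $\sum_{k\ne i,j}q(n-3-q)=(q-1)q(q^2-3)=\gamma$. Summing over the $\nu$ choices of $v$ gives the $\nu(\cdots)$ term.

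The second strand is $|A|=r-1=2$ in Theorem~\ref{thm:top-strand}. We need $|B|=2$, which forces $\mathbf a=\one_A$, and a squarefree positive part of weight $2$. The count is then exactly $W(M)$, i.e.\ $\Theta(M)$, and this gives \eqref{eq:PG-first}. I would then evaluate $\Theta$ by \eqref{eq:Theta-definition}. Every independent pair $A$ spans a line of $q+1$ points, so $d_A=q^2$. The number of such pairs is $\binom{\nu}{2}$, giving $\Theta=\binom\nu2\binom{q^2}2(q^2-3)$.

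The remaining step is pure algebra: expand and factor into \eqref{eq:PG-polynomial}. This is the main risk, so I would verify it at $q=2$. There $\Theta=21\cdot6\cdot1=126$ and $\nu(3\cdot1\cdot3+3\cdot4\cdot2)=7\cdot33=231$, which gives $357$ and matches the $F_7$ claim. I would also verify at $q=3$ to confirm the quintic factor. The extraction of the common factor $q(q-1)(q+1)\nu/4$ should follow from $\binom\nu2=\nu(q^2+q)/2$ and $\binom{q^2}2=q^2(q-1)(q+1)/2$.
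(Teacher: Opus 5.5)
Your proposal is correct and follows essentially the paper's route. You split degree zero into the rank-one-link strand, which gives $\Theta$, and the point-link strand, evaluated by the first two cases of Theorem~\ref{thm:rank-two-formulas} with triples vanishing. Your count $\binom{\nu}{2}\binom{q^2}{2}(q^2-3)$ equals the paper's $\nu\binom{q+1}{2}\binom{q^2}{2}(q^2-3)$, and the common factor $q(q-1)(q+1)\nu/4$ does give the quintic.

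One slip to fix: the case you discard as ``$A$ is a basis'' is $|A|=3$, not $|A|=2$. Since $\rk PG(2,q)=3$, the case $|A|=2$ is exactly the rank-one-link strand you use afterwards.
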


\begin{proof}
Every point link has $q+1$ parallel classes of size $q$.  The within-class and between-class pair contributions are exactly the first two cases of Theorem~\ref{thm:rank-two-formulas}; triple contributions vanish.  The support criterion shows that the rank-one and rank-two link contributions exhaust standard degree zero.  Formula \eqref{eq:PG-polynomial} is the expansion of \eqref{eq:PG-first} together with
\[
\Theta(PG(2,q))=(q^2+q+1)\binom{q+1}{2}\binom{q^2}{2}(q^2-3).
\]
\end{proof}

\section{Universal cup products and actual obstructions}\label{sec:cup}

Retain the notation $M=N^*$.  For $p\in E$, write
\[
\mathscr H_p=\{H:H\text{ is a hyperplane of }N,\ p\notin H\}.
\]
The singleton part of Theorem~\ref{thm:universal-graph} identifies
\begin{equation}\label{eq:T1-hyperplane-functions}
T^1_{-\ee_p}(M)\cong k^{\mathscr H_p}/k\one,
\end{equation}
where the quotient means the cokernel of the diagonal constant map.

For a hyperplane $H$ avoiding $p,q$ and a flat $G\in\mathcal G_{\{p,q\}}(H)$, put
\begin{equation}\label{eq:PGQG}
P_G=\cl_N(G\cup\{p\}),
\qquad Q_G=\cl_N(G\cup\{q\}).
\end{equation}
Since $G$ has corank two and spans with $p,q$, these are hyperplanes, with
\begin{equation}\label{eq:three-intersections}
q\notin P_G,\quad p\notin Q_G,
\qquad H\cap P_G=H\cap Q_G=P_G\cap Q_G=G.
\end{equation}

\begin{lemma}[Field-independent coordinate formula]\label{lem:cup-coordinate}
Represent source classes in \eqref{eq:T1-hyperplane-functions} by functions
\[
\phi:\mathscr H_p\to k,
\qquad
\psi:\mathscr H_q\to k,
\]
each modulo constants.  Under the block identification \eqref{eq:pair-block}, the $H$-block of their cup product is, up to the fixed orientation of that block,
\begin{equation}\label{eq:block-cup-coordinate}
\left[
\bigl(\phi(Q_G)-\phi(H)\bigr)
\bigl(\psi(P_G)-\psi(H)\bigr)
\right]_{G\in\mathcal G_{\{p,q\}}(H)}
\pmod{k\one}.
\end{equation}
This formula is valid over every field.
\end{lemma}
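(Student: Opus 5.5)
We plan to compute the cup product directly from the Altmann--Christophersen cochain description, made explicit by the closure retraction used in Theorem~\ref{thm:universal-graph}.

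\textbf{Step 1: cochain representatives of the sources.} Represent a class in $T^1_{-\ee_p}(M)$ concretely, following \cite[Section~6]{AltmannChristophersen} or \cite[Sections~2.2--2.3]{CKNSV}. It is a perturbation $x_p\mapsto$ (correction) of the circuit generators. Each circuit monomial $x_C$ with $p\in C$ is sent to $\lambda_C\,x_{C\setminus p}$. Through complementation, a circuit $C$ of $M$ containing $p$ corresponds to a hyperplane $E\setminus C$ of $N$ avoiding $p$. So $\lambda$ is a function on cocircuit data, and closure retraction identifies it with a function $\phi$ on $\mathscr H_p$. The relations coming from Koszul syzygies between pairs of circuits force the lift to be well defined only up to constants. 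This recovers \eqref{eq:T1-hyperplane-functions} explicitly with coefficients $\phi(H)$, and $\psi$ on $\mathscr H_q$ is handled the same way.

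\textbf{Step 2: the product on syzygies.} The cup product $T^1\otimes T^1\to T^2$ is computed by composing the two liftings on the relation module. Given a linear syzygy $\sum r_C x_C=0$ among circuit generators, apply the $\phi$-deformation and then the $\psi$-deformation, symmetrize, and reduce modulo $I_M$. This gives a homomorphism from relations to $R$ in degree $-\ee_p-\ee_q$. The relevant syzygies are those between circuits $C,C'$ whose union supports the degree. Their class in the relative complex \eqref{eq:PB-QB-relative} is a $0$-cochain on the vertices of $\Gamma_{\{p,q\}}(H)$, one value per corank-two flat $G\subset H$.

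\textbf{Step 3: identify the coordinate.} For a fixed $G\in\mathcal G_{\{p,q\}}(H)$, the three hyperplanes through $G$ that matter are $H$, $P_G$ and $Q_G$, by \eqref{eq:three-intersections}. The complements of these are the circuits of $M$ meeting $\{p,q\}$ appropriately. The hyperplane $Q_G$ avoids $p$ and $P_G$ avoids $q$, so $\phi(Q_G)$ and $\psi(P_G)$ are defined. The $\phi$-move must change the circuit from $E\setminus H$ to $E\setminus Q_G$, contributing $\phi(Q_G)-\phi(H)$. The $\psi$-move goes from $E\setminus H$ to $E\setminus P_G$, contributing $\psi(P_G)-\psi(H)$. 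The product term is therefore the product of these two differences. It is independent of the choice of constants, because both factors are differences. Reduction modulo the coboundaries, that is the $T^2$ block relations, removes exactly the constant vector $k\one$ on each $H$-block, as in Corollary~\ref{cor:pair-block}. All coefficients arising are $0,\pm1$ times values of $\phi$ and $\psi$, so no division occurs and the formula holds over every field.

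\textbf{Main obstacle.} The main difficulty is Step 3's bookkeeping. We must match the chain-level splitting into $H$-blocks, including the orientation sign and the closure retraction, with the monomial syzygy computation. In particular, we must show that contributions from other hyperplanes through $G$ cancel or are coboundaries. I would first treat the rank-one link case, $N$ of rank two, where $G$ is a single parallel class and the computation is a finite check. I would then reduce the general case to it by localizing at a basis of $G$ via Lemma~\ref{lem:localization}, which is compatible with cup products.
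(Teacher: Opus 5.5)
\textbf{Step 3 states the answer instead of deriving it.} The claims that ``the $\phi$-move contributes $\phi(Q_G)-\phi(H)$'' and ``the $\psi$-move contributes $\psi(P_G)-\psi(H)$, so the coordinate is their product'' are not what the cochain-level cup product produces. You have to evaluate the product on one specific relation.

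The paper uses the Altmann--Christophersen formula on the relative edge $f=E\setminus(H\cup\{p,q\})\subset g=E\setminus(G\cup\{p,q\})$. Equivalently, in the Taylor presentation, take the pair relation of the circuits $E\setminus H$ and $E\setminus P_G$, whose lcm is $x_{E\setminus G}$. Its $\phi$-remainder $\phi(H)\,x_{E\setminus(G\cup\{p\})}$ is divisible by only one circuit monomial, $x_{E\setminus P_G}$. Its $\psi$-remainder $(\psi(H)-\psi(P_G))\,x_{E\setminus(G\cup\{q\})}$ is divisible only by $x_{E\setminus Q_G}$. Hence, up to a global sign, the coefficient of $x_{E\setminus(G\cup\{p,q\})}$ is
\[
(\phi(H)-\phi(Q_G))\psi(P_G)+\psi(H)\phi(Q_G)
=-\bigl(\phi(Q_G)-\phi(H)\bigr)\bigl(\psi(P_G)-\psi(H)\bigr)+\phi(H)\psi(H).
\]
This raw value is not a product of differences, and it does depend on the chosen representatives. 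The product form holds only because the leftover term $\phi(H)\psi(H)$ is independent of $G$, so it dies in $k^{\mathcal G_{\{p,q\}}(H)}/k\one$; the sign is absorbed by the block orientation. That observation is the missing idea. Your remark that the result is independent of constants ``because both factors are differences'' presupposes it.

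\textbf{Your proposed reduction cannot supply the bookkeeping.} Lemma~\ref{lem:localization} identifies a component with a link component only when the inverted variable lies in the positive support. The degree $-\ee_p-\ee_q$ has no positive support, and the paper explicitly warns that localization need not preserve such components.

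Concretely, multiplying by $x_A$ for a basis $A$ of $G$ sends the target monomial $x_{E\setminus(G\cup\{p,q\})}$ to $x_{E\setminus((G\setminus A)\cup\{p,q\})}$. This lies in $I_M$ unless $(G\setminus A)\cup\{p,q\}$ still spans $N$; for example, it always lies in $I_M$ when $G$ is independent and $\rk N\ge3$. So the coordinate you want is destroyed, not isolated.

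As stated, your base case is also empty. If $\rk N=2$, every $\mathcal G_{\{p,q\}}(H)$ has at most one element (namely $\cl_N\varnothing$), so every block $k^{\mathcal G_{\{p,q\}}(H)}/k\one$ vanishes. More fundamentally, a single coordinate of a block is not an invariant of the cup-product class; only the vector modulo $k\one$ is. So no reduction that isolates one flat $G$ can prove the formula.

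A minor point on Step~1: the quotient by constants comes from the image of the derivation $\partial/\partial x_p$, not from Koszul relations.
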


\begin{proof}
The Stanley--Reisner ideal has its monomial Taylor presentation over $\ZZ$.  The cup product on presentation cochains is obtained by lifting the images of monomial generators and substituting them into the binomial relation module.  This construction uses only addition, multiplication, and the coefficients $0,\pm1$, so its coordinate formula commutes with base change to any field.

For completeness, consider the relative edge indexed by $G\subset H$.  In the notation of Altmann and Christophersen, it is represented by
\[
f=E\setminus(H\cup\{p,q\})
\subset
g=E\setminus(G\cup\{p,q\}).
\]
Substitution in the presentation formula, taking $d=\varnothing$ for $f$ and $e=\{q\}$ for $g$, gives
\begin{equation}\label{eq:raw-cup}
E_H(G)=
(\phi(H)-\phi(Q_G))\psi(P_G)+\psi(H)\phi(Q_G).
\end{equation}
The possible fourth term is zero because its argument does not lie in the singleton set.  Rearranging,
\[
E_H(G)=
-(\phi(Q_G)-\phi(H))(\psi(P_G)-\psi(H))
+\phi(H)\psi(H).
\]
The last term is constant in $G$ and vanishes in the quotient by $k\one$; reversing the block orientation absorbs the remaining sign.  This is precisely \eqref{eq:block-cup-coordinate}.  The calculation is the specialization of \cite[Proposition~19]{AltmannChristophersen}, now observed at the integral monomial-presentation level.
\end{proof}

\begin{theorem}[Universal pure-pair surjectivity]\label{thm:pure-pair-surjectivity}
For every matroid $M$, every pair of distinct actual vertices $p,q$, and every field $k$, the cup product
\begin{equation}\label{eq:pure-pair-cup}
T^1_{-\ee_p}(M)\otimes_kT^1_{-\ee_q}(M)
\longrightarrow T^2_{-\ee_p-\ee_q}(M)
\end{equation}
is surjective.
\end{theorem}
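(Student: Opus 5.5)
We work entirely inside the block decomposition. By Corollary~\ref{cor:pair-block}, $T^2_{-\ee_p-\ee_q}(M)$ is the direct sum over hyperplanes $H$ of $N$ avoiding $p,q$ of $k^{\mathcal G_{\{p,q\}}(H)}/k\one$. It therefore suffices to produce, for each such $H$ and each $G_0\in\mathcal G_{\{p,q\}}(H)$, a decomposable tensor $\phi\otimes\psi$ with two properties: its cup product has the delta vector $\delta_{G_0}$ in the $H$-block, and it vanishes in every other block (or at worst equals a combination of coordinates already obtained). The images of these tensors span the target. If $\{p,q\}$ is dependent in $N$ the target is zero and there is nothing to prove, so we may assume all relevant $\mathcal G$ are nonempty.

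The natural choice is dictated by Lemma~\ref{lem:cup-coordinate}. Take $\phi=\delta_{Q_{G_0}}$ on $\mathscr H_p$ and $\psi=\delta_{P_{G_0}}$ on $\mathscr H_q$; these are legitimate by \eqref{eq:three-intersections}, since $p\notin Q_{G_0}$ and $q\notin P_{G_0}$. In the $H$-block, the coordinate at $G$ is $(\phi(Q_G)-\phi(H))(\psi(P_G)-\psi(H))$. We have $\phi(H)=\psi(H)=0$ because $H\ne Q_{G_0},P_{G_0}$; indeed $H\cap Q_{G_0}=G_0$ is a proper subset of $H$. Moreover $Q_G=Q_{G_0}$ forces $G=H\cap Q_G=G_0$ by \eqref{eq:three-intersections}. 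So the $H$-block is exactly $\delta_{G_0}$, with coefficient $\pm1$.

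Next we control the other blocks $H'\ne H$. A nonzero coordinate at $G'\in\mathcal G(H')$ requires both $\phi(Q_{G'})-\phi(H')\ne0$ and $\psi(P_{G'})-\psi(H')\ne0$. We argue case by case on whether $H'$ equals $Q_{G_0}$ or $P_{G_0}$. Since $H'$ avoids $p$ and $q$, while $P_{G_0}\ni p$ and $Q_{G_0}\ni q$, neither equality can occur. Thus $\phi(H')=\psi(H')=0$, and we need both $Q_{G'}=Q_{G_0}$ and $P_{G'}=P_{G_0}$. Then $G'=P_{G'}\cap Q_{G'}=P_{G_0}\cap Q_{G_0}=G_0$, so $G_0\subset H'$, and the coordinate is $\delta_{G_0}$ in the $H'$-block as well.

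Hence the cup product is $\pm\sum_{H'\supset G_0}\delta^{H'}_{G_0}$, summed over the hyperplanes $H'$ containing $G_0$ and avoiding $p,q$. The signs come from the fixed block orientations. This sum is the main obstacle: a single decomposable tensor may hit several blocks at once. We resolve it by modifying one factor to separate blocks. Keep $\psi=\delta_{P_{G_0}}$, but replace $\phi$ by $\delta_{Q_{G_0}}+\delta_{H}$, which is allowed since $H\in\mathscr H_p$. Now, in block $H$, we get $\phi(Q_G)-\phi(H)$; this equals $0$ at $G=G_0$ and $-1$ elsewhere. Combined with the factor $\psi$, which is nonzero only at $G_0$, the $H$-block becomes zero. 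In every other block $H'\supset G_0$, the value $\phi(H')$ is unchanged, because $H'$ is neither $Q_{G_0}$ nor $H$.

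Subtracting the two products therefore isolates $\delta^{H}_{G_0}$ up to sign, one block at a time. All coefficients involved are $0,\pm1$, so the argument works over every field. Finally, $\delta_{G_0}$ spans with the others modulo $k\one$, which gives surjectivity. We must also check that the modified $\phi$ introduces no new nonzero coordinates in blocks where $Q_{G'}=H$. This cannot happen: such a $Q_{G'}$ contains $q$, whereas $H$ avoids $q$.
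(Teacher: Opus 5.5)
Your argument is correct and is essentially the paper's proof. Both proofs read off coordinate delta vectors from the formula of Lemma~\ref{lem:cup-coordinate}, taking $\psi=\delta_{P_{G_0}}$ and building $\phi$ from $\delta_{Q_{G_0}}$ and indicators of hyperplanes through $G_0$, with \eqref{eq:three-intersections} doing the separation. The only difference is bookkeeping: the paper gets the delta from one decomposable tensor by setting $\phi=1$ on all of $\mathcal S(G_0)\setminus\{H_0\}$, whereas you subtract two products; by bilinearity your difference is the single tensor $-\delta_H\otimes\delta_{P_{G_0}}$, which already yields $\pm\delta^H_{G_0}$ on its own.
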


\begin{proof}
Use $M=N^*$ and fix a target coordinate $(H_0,G_0)$ in \eqref{eq:pair-block}.  Put
\[
P_0=P_{G_0},\qquad Q_0=Q_{G_0},
\]
and
\[
\mathcal S(G_0)=
\{H:H\cap\{p,q\}=\varnothing,\ G_0\subset H\}.
\]
Define representatives of singleton classes by
\[
\phi(K)=
\begin{cases}
1,&K=Q_0,\\
1,&K\in\mathcal S(G_0)\setminus\{H_0\},\\
0,&\text{otherwise},
\end{cases}
\qquad
\psi(K)=
\begin{cases}
1,&K=P_0,\\
0,&\text{otherwise}.
\end{cases}
\]
There is no conflict: $Q_0$ contains $q$, $P_0$ contains $p$, while every member of $\mathcal S(G_0)$ avoids both.

For any coordinate $(H,G)$, the second factor in \eqref{eq:block-cup-coordinate} can be nonzero only when $P_G=P_0$.  If also $G=G_0$, then $H\in\mathcal S(G_0)$ and the first factor is nonzero exactly for $H=H_0$.  If $P_G=P_0$ but $G\ne G_0$, then $Q_G\ne Q_0$ by \eqref{eq:three-intersections}.  Moreover $H\notin\mathcal S(G_0)$: otherwise
\[
G_0\subseteq H\cap P_0=G
\]
would force equality of the two rank-$(s-2)$ flats.  Hence the first factor is zero.  The resulting product is the coordinate delta vector at $(H_0,G_0)$, up to sign.  Such vectors span before passing to blockwise constants, so their images span the target.
\end{proof}

\begin{corollary}[Link-lifted cup products]\label{cor:link-lifted-cup}
Let $A\in M$, and let $p,q$ be distinct actual vertices of $L=\lk_MA$.  Then
\[
T^1_{\one_A-\ee_p}(M)\otimes_k
T^1_{\one_A-\ee_q}(M)
\twoheadrightarrow
T^2_{2\one_A-\ee_p-\ee_q}(M).
\]
The target is identified with $T^2_{-\ee_p-\ee_q}(L)$.
\end{corollary}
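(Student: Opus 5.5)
The plan is to deduce the corollary from Theorem~\ref{thm:pure-pair-surjectivity}, applied to the link $L=\lk_MA$, and then transport the result to $M$ using the localization compatibility of Lemma~\ref{lem:localization}.

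First, check the support criterion for the three degrees. The support of $\mathbf a=\one_A$ is the independent set $A$. The negative parts $\ee_p$, $\ee_q$, and $\ee_p+\ee_q$ are squarefree and nonzero, and they are supported on actual vertices of $L$. So \eqref{eq:link-formula} gives identifications
\[
T^1_{\one_A-\ee_p}(M)\cong T^1_{-\ee_p}(L),\qquad
T^1_{\one_A-\ee_q}(M)\cong T^1_{-\ee_q}(L),
\]
\[
T^2_{2\one_A-\ee_p-\ee_q}(M)\cong T^2_{-\ee_p-\ee_q}(L).
\]
The last identification uses the remark that \eqref{eq:link-formula} depends only on $A$ and not on the positive exponents. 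This already gives the final sentence of the statement.

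Second, verify that these identifications intertwine the cup product. By Lemma~\ref{lem:localization}, localization at $x_A=\prod_{a\in A}x_a$ is compatible with cup products. It carries $k[M]_{x_A}$ to $k[L][x_a^{\pm1}:a\in A]$, and the positive cotangent cohomology of this ring is the Laurent extension of that of $L$. Under the identification, a class in degree $\one_A-\ee_p$ becomes $x^{\one_A}\otimes\alpha$ with $\alpha\in T^1_{-\ee_p}(L)$. The cup product of $x^{\one_A}\otimes\alpha$ and $x^{\one_A}\otimes\beta$ equals $x^{2\one_A}\otimes(\alpha\cup\beta)$, because the cup product is $R$-bilinear and the Laurent extension is flat. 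Lemma~\ref{lem:localization} also says that in each of the three fine degrees, the map from the original component into the localization is an isomorphism onto the corresponding Laurent component, since each degree has positive coordinates on all of $A$. Setting the $x_a$ equal to $1$ then yields a commutative square. Its top row is the cup map of $M$ in these degrees, its bottom row is the cup map $T^1_{-\ee_p}(L)\otimes T^1_{-\ee_q}(L)\to T^2_{-\ee_p-\ee_q}(L)$, and its vertical maps are isomorphisms.

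Third, $p$ and $q$ are distinct actual vertices of the matroid complex $L$. Theorem~\ref{thm:pure-pair-surjectivity}, applied to $L$ over the same field, shows that the bottom row is surjective. Since the vertical maps are isomorphisms, the top row is surjective as well.

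The main obstacle is the second step. One must make sure the identification of the $T^2$ component in degree $2\one_A-\ee_p-\ee_q$ is the one induced from the $T^1$ identifications through multiplication of Laurent monomials, and not some other isomorphism. I would handle this by working entirely inside the localized module. There, fine components of degrees differing only in $A$-coordinates are identified by multiplication by monomials $x^{\mathbf u}$ with $\mathbf u$ supported on $A$, and the cup product is linear with respect to this multiplication. A further point is that ghost vertices deleted from the link do not involve $p$ or $q$, because $p$ and $q$ are actual vertices of $L$, so the relevant fine degrees survive the deletion.
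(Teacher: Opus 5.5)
Your proposal is correct and follows the paper's route: the paper's proof likewise iterates Lemma~\ref{lem:localization} over $A$, uses \eqref{eq:link-formula}, and transports Theorem~\ref{thm:pure-pair-surjectivity} for $L$ across the resulting cup-compatible isomorphisms. Your explicit check in the localization that $x^{\one_A}\alpha\cup x^{\one_A}\beta=x^{2\one_A}(\alpha\cup\beta)$, so that the normalized identifications intertwine the cup products, is exactly the bookkeeping the paper leaves implicit here; it spells this out later, in the proof of Theorem~\ref{thm:full-primary-image}.
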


\begin{proof}
Iterate Lemma~\ref{lem:localization} over the variables indexed by $A$, use \eqref{eq:link-formula}, and transport Theorem~\ref{thm:pure-pair-surjectivity} across the resulting cup-compatible isomorphisms.
\end{proof}

The next lemma extracts a singleton link whose contraction still has large corank.

\begin{lemma}[One contraction retaining large corank]\label{lem:one-contraction}
Let $M$ be connected with $\rk M\ge2$ and $\corank M\ge3$.  There is an element $e$ such that $M/e$, after deleting its loops, has a connected component of corank at least three.
\end{lemma}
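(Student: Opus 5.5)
The plan is to reuse the fundamental-graph machinery of Proposition~\ref{prop:rank-two-reduction}, but contract only one element. Fix a basis $B$ of $M$. By connectedness the fundamental graph $G_B(M)$ is connected, and $|B|\ge2$, $|E\setminus B|\ge3$. For $e\in B$, contraction gives $M/e$ with basis $B\setminus\{e\}$. Every nonbasis element $f$ has fundamental circuit $C(f;B)\setminus\{e\}$. The element $f$ becomes a loop exactly when $C(f;B)=\{e,f\}$, that is, when $f$ is parallel to $e$. So the fundamental graph of $M/e$ minus loops is $G_B(M)$ with the vertex $e$ and its degree-one neighbours deleted. The components of $M/e$ (loops deleted) correspond to the connected components of this graph. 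The corank of a component is the number of nonbasis vertices it contains.

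The key step is choosing $e$. I would look for $e\in B$ such that $G_B(M)-e$, after discarding nonbasis vertices that become isolated, has a component with at least three nonbasis vertices. The first attempt is to take $e$ to be a non-cut vertex of a spanning tree restricted suitably. Concretely, choose a spanning tree $T$ of $G_B(M)$ and a leaf. If some leaf of $T$ lies in $B$, take $e$ to be that leaf. Then $G_B(M)-e$ is still connected, and since $e$ is a leaf no other vertex becomes isolated. Hence $M/e$ is connected with corank equal to $|E\setminus B|\ge3$, and we are done.

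Otherwise every leaf of every spanning tree lies in $E\setminus B$. I would handle this by a counting or alternative argument: replace $B$ by another basis via an exchange. Swapping a leaf $f\in E\setminus B$ with its unique $T$-neighbour $b$ gives a basis $B'=B-b+f$. Alternatively, use duality. Contracting $e$ from $M$ corresponds to deleting $e$ from $M^*$, and the dual statement asks for an element whose deletion keeps a component of large rank, with corank in $M$ equal to the rank in $M^*$. A cleaner route might be Tutte's theorem that for a connected matroid and any element $e$, either $M\setminus e$ or $M/e$ is connected. Apply it with a suitable element. If $M/e$ is connected, its corank is $\corank M\ge3$ (contracting a nonloop preserves corank) and we are done. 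If instead $M\setminus e$ is connected for every choice, pick $e$ in a basis so that deletion... but deletion lowers corank, so this is insufficient alone.

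So the main obstacle is ensuring a contraction stays connected or keeps a big component. I would try first the leaf argument with a basis chosen so that some spanning tree of $G_B(M)$ has a basis leaf. Such a basis exists whenever $\rk M\ge2$. Take a tree leaf $f\in E\setminus B$ adjacent to $b$, then pivot to $B'=B-b+f$. In the new fundamental graph, $f$ becomes a basis vertex; I expect it to be pendant or non-cut. If the pivot fails, I would fall back on the direct component count: the components of $G_B(M)-e$ partition the $\ge3$ nonbasis vertices. I would pick $e\in B$ minimizing the number of components, using $|B|\ge2$ to avoid the star case handled as in Lemma~\ref{lem:bipartite}.
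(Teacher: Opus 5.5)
\paragraph{There is a genuine gap.} Your translation into the fundamental graph is correct: for $e\in B$, the loops of $M/e$ are the nonbasis vertices adjacent only to $e$, the components of $M/e$ with loops deleted are the components of the remaining graph, and corank counts nonbasis vertices. The case where some $e\in B$ is a non-cut vertex of $G_B(M)$ is also fine. The remaining case, however, is never proved, and the claim your plan rests on is false. A basis $B$ with some $e\in B$ a leaf of a spanning tree of $G_B(M)$ forces $G_B(M)-e$ to be connected, hence $M/e$ to be connected. That can fail.

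Take the rank-two matroid with three parallel classes of size two, namely $M(K_{2,3})^*$, which is connected of corank four. Every element has a parallel partner, so every $M/e$ has a loop. Every basis vertex of every fundamental graph is then a cut vertex, and no pivoting produces a basis leaf. Your fallback of searching over $e\in B$ for a fixed $B$ also fails. For class sizes $(2,2,1)$, take $B$ from the two doubled classes. Contracting either basis element and deleting the new loop leaves $U_{1,3}$, of corank two. The only good choice is the element of the singleton class, which is not in $B$. So the choice of $e$ needs more than connectivity of $G_B(M)$ for one basis, and, as you note, Tutte's theorem says nothing about corank.

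\paragraph{The missing idea.} Choose $e$ through the rank-two reduction. Proposition~\ref{prop:rank-two-reduction} gives $A_0\subseteq B$ of size $\rk M-2$ such that $M/A_0$, with loops deleted, is a connected rank-two matroid $R$ with at least five elements and at least three parallel classes.
\begin{itemize}
\item If $\rk M=2$, take $e$ in a smallest parallel class $P$. Then $|E\setminus P|\ge|E|-\lfloor|E|/3\rfloor\ge4$, and the contraction with loops deleted is $U_{1,|E\setminus P|}$.
\item If $\rk M\ge3$, take $e\in A_0$ and let $L$ be $M/e$ with loops deleted. Contracting $A_0\setminus\{e\}$ in $L$ and deleting loops yields $R$. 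Contraction acts componentwise, so the connected $R$ comes from a single component $C$ of $L$. Contracting an independent set preserves corank and deleting loops only lowers it, so $\corank C\ge\corank R\ge3$.
\end{itemize}
This $e$ need not make $M/e$ connected, which is exactly what your approach was trying to force.
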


\begin{proof}
If $\rk M=2$, choose $e$ in a smallest parallel class $P$.  A connected rank-two matroid has at least three parallel classes, and $|E|\ge5$, so
\[
|E\setminus P|\ge |E|-\lfloor |E|/3\rfloor\ge4.
\]
After contracting $e$ and deleting $P\setminus\{e\}$, one obtains a connected rank-one matroid of corank at least three.

Suppose $\rk M\ge3$.  Proposition~\ref{prop:rank-two-reduction} gives an independent set $A_0$ of size $\rk M-2$ such that $M/A_0$, after deleting loops, is a connected rank-two matroid $R$ on at least five elements.  Choose $e\in A_0$ and let $L=M/e$ with its loops deleted.  Contracting $A_0\setminus\{e\}$ in $L$, and then deleting loops, produces $R$.  Contraction acts componentwise and cannot join components.  Contracting nonloops preserves corank, while deleting newly created loops can only decrease it.  Hence all nonloops of $R$ descend from a single connected component $C$ of $L$, with
\[
\corank C\ge\corank R\ge3.
\]
\end{proof}

\begin{theorem}[Nonzero standard-degree-zero cup product]\label{thm:degree-zero-cup}
Let $M$ be connected with $\rk M\ge2$ and $\corank M\ge3$.  There are distinct $e,p,q\in E(M)$ such that
\begin{equation}\label{eq:degree-zero-surjection}
T^1_{\ee_e-\ee_p}(M)\otimes_k
T^1_{\ee_e-\ee_q}(M)
\twoheadrightarrow
T^2_{2\ee_e-\ee_p-\ee_q}(M)\ne0.
\end{equation}
Thus $T^1(M)_0\otimes_kT^1(M)_0\to T^2(M)_0$ has nonzero image over every field.
\end{theorem}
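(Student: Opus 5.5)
The plan is to combine Lemma~\ref{lem:one-contraction} with the link-lifted surjectivity of Corollary~\ref{cor:link-lifted-cup} for $A=\{e\}$, and then to use Proposition~\ref{prop:pure-cocircuit} inside the link to see that the target is nonzero.

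First, Lemma~\ref{lem:one-contraction} supplies an element $e$ such that $M/e$, after its loops are deleted, has a connected component $C$ with $\corank C\ge3$. The element $e$ is a nonloop, because $M$ is connected of rank at least two. Hence $\{e\}\in M$, and $L=\lk_M\{e\}$ is the independence complex of $M/e$ with loops deleted. Its actual vertex set is the set of nonloops of $M/e$.

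Second, we produce $p,q$. Since $C$ is connected of corank at least three, Theorem~\ref{thm:large-cocircuit} gives a cocircuit $D$ of $C$ with $|D|\ge4$. The cocircuits of a direct sum are the cocircuits of its summands, so $D$ is also a cocircuit of $L$. Choose distinct $p,q\in D$. They are actual vertices of $L$, and both differ from $e$. Proposition~\ref{prop:pure-cocircuit} applied to $L$ gives
\[
\dim_kT^2_{-\ee_p-\ee_q}(L)\ge|D|-3\ge1 .
\]
Alternatively, one can use \eqref{eq:direct-sum-T2} together with the fact that the purely negative $(p,q)$-component of the direct sum is the corresponding component of $C$, tensored with the degree-zero part $k$ of the other factor. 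Either route gives a nonzero space over every field.

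Third, we transport this back to $M$. Corollary~\ref{cor:link-lifted-cup} with $A=\{e\}$ gives the surjection
\[
T^1_{\ee_e-\ee_p}(M)\otimes_kT^1_{\ee_e-\ee_q}(M)\twoheadrightarrow T^2_{2\ee_e-\ee_p-\ee_q}(M)\cong T^2_{-\ee_p-\ee_q}(L)\ne0 .
\]
Every fine degree here has coordinate sum zero. The source degrees are therefore summands of $T^1(M)_0$ and the target degree is a summand of $T^2(M)_0$. Since the cup product respects the fine grading, the image of $T^1(M)_0\otimes T^1(M)_0\to T^2(M)_0$ contains this nonzero component, which gives the final assertion.

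The main point needing care is the bookkeeping between $M/e$ and its link. The link deletes loops, so $p,q$ must be actual vertices, and $D$ being a cocircuit of the component $C$ must transfer to the whole link $L$. Both follow from the direct-sum description of cocircuits together with the convention on actual vertices. Everything else is a direct invocation of earlier results, and the field-independence already holds in Theorem~\ref{thm:pure-pair-surjectivity} and Proposition~\ref{prop:pure-cocircuit}.
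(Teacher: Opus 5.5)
Your proposal is correct and follows essentially the paper's proof. The paper likewise chooses $e$ by Lemma~\ref{lem:one-contraction}, finds a cocircuit of size at least four in the corank-three component via Theorem~\ref{thm:large-cocircuit}, and gets a nonzero pair target from Proposition~\ref{prop:pure-cocircuit}; it then transports the pure-pair surjection back to $M$ by localization at $x_e$, which is the $A=\{e\}$ case of Corollary~\ref{cor:link-lifted-cup}. Your observation that $D$ is already a cocircuit of the whole link is just an equivalent form of the paper's direct-sum embedding step.
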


\begin{proof}
Choose $e$ by Lemma~\ref{lem:one-contraction}, put $L=M/e$ after deleting loops, and let $C$ be a connected component of $L$ with corank at least three.  By Theorem~\ref{thm:large-cocircuit}, $C$ has a cocircuit $D$ of size at least four.  Choose distinct $p,q\in D$.  Proposition~\ref{prop:pure-cocircuit} gives
\[
T^2_{-\ee_p-\ee_q}(C)\ne0.
\]
If $L=C\oplus C'$, the direct-sum formula, or equivalently the dual hyperplane blocks, embeds this component into the same fine component for $L$.  Theorem~\ref{thm:pure-pair-surjectivity} gives a surjection onto it.  Localization at $x_e$ and evaluation at $x_e=1$ give cup-compatible isomorphisms
\begin{align*}
T^1_{\ee_e-\ee_p}(M)&\cong T^1_{-\ee_p}(L),\\
T^1_{\ee_e-\ee_q}(M)&\cong T^1_{-\ee_q}(L),\\
T^2_{2\ee_e-\ee_p-\ee_q}(M)&\cong T^2_{-\ee_p-\ee_q}(L).
\end{align*}
Transporting the surjection proves \eqref{eq:degree-zero-surjection}.  Each fine degree has total degree zero.
\end{proof}

Let $\operatorname{Def}_M$ denote the affine deformation functor of $k[M]$
on local Artinian $k$-algebras with residue field $k$, and let
$\operatorname{Def}^{\mathrm{gr}}_M$ denote its standard-degree-preserving
subfunctor. Their tangent spaces are $T^1(M)$ and $T^1(M)_0$, respectively;
obstructions to lifting across small extensions lie in $T^2(M)$ and
$T^2(M)_0$. For the cotangent-complex obstruction theory over arbitrary
bases, see \cite[Tag~08UX, Lemma~92.21.1]{Stacks}; for the classical description
of first-order affine deformations by the first cotangent module, see
\cite[Proposition~3.1.1, pp.~103--104]{Sernesi}.
The cup formula of Altmann and Christophersen describes the quadratic
term \cite[Appendix~6, equation~(6.1) and Proposition~19]{AltmannChristophersen}.
When $2$ is invertible, the primary obstruction is one half of the cup
square. The following presentation-level argument identifies its
polarization without dividing by two.

\begin{lemma}[Graded polarization of the primary obstruction]\label{lem:graded-polarization}
Let $k$ be an arbitrary field and let $\operatorname{ob}:T^1(M)\to T^2(M)$ denote the primary obstruction map for lifting first-order deformations from $k[\epsilon]/(\epsilon^2)$ to $k[\epsilon]/(\epsilon^3)$.
\begin{enumerate}[label=\textup{(\arabic*)}]
\item $\operatorname{ob}$ doubles the fine grading: for a fine-homogeneous class $\chi$ of degree $\mathbf c$, the class $\operatorname{ob}(\chi)$ lies in $T^2_{2\mathbf c}(M)$.  This component vanishes, so $\operatorname{ob}(\chi)=0$.
\item For all $\alpha,\beta\in T^1(M)$,
\[
\operatorname{ob}(\alpha+\beta)
=\operatorname{ob}(\alpha)+\operatorname{ob}(\beta)+\alpha\cup\beta.
\]
\end{enumerate}
In particular, for fine-homogeneous classes $\alpha,\beta$ of distinct fine degrees, $\operatorname{ob}(\alpha+\beta)=\alpha\cup\beta$.
\end{lemma}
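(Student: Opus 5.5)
I will work at the level of a finite presentation of $R=k[M]=S/I_M$, with $I_M$ generated by the circuit monomials $x^C$, and use the standard explicit description of first-order deformations and their primary obstruction. A class $\chi\in T^1$ is represented by a homomorphism $\varphi\in\operatorname{Hom}_R(I/I^2,R)$. Lifting the relations over $k[\epsilon]/(\epsilon^2)$ means choosing lifts $\tilde\varphi(x^C)\in S$ and perturbing each generator to $F_C=x^C+\epsilon\tilde\varphi(x^C)$. Fix a generating set of the relation module (the Taylor syzygies, as in Lemma~\ref{lem:cup-coordinate}), $\sum_C r_C x^C=0$. Each syzygy lifts to first order:
\[
\sum_C\bigl(r_C+\epsilon r'_C\bigr)F_C\equiv0\pmod{\epsilon^2}.
\]
The $\epsilon^2$-coefficient of the left side, $\sum_C r'_C\tilde\varphi(x^C)$, reduced modulo $I$, defines a functional on relations. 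Its class modulo the images of $\operatorname{Der}$ and of $\operatorname{Hom}(S^{\oplus},R)$ is $\operatorname{ob}(\chi)\in T^2$. I take this explicit cochain description (together with \cite[Tag~08UX]{Stacks}) as the definition of $\operatorname{ob}$; it is defined over any base and uses no division.

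For part (1), take $\chi$ fine-homogeneous of degree $\mathbf c$ and choose $\tilde\varphi(x^C)$ homogeneous of degree $\deg x^C+\mathbf c$. This is possible because $I$ is a monomial ideal and everything is $\ZZ^E$-graded. Then for a homogeneous Taylor syzygy, $r'_C$ can be chosen of degree $\deg r_C+\mathbf c$. The $\epsilon^2$-coefficient is homogeneous of degree $\deg(\text{syzygy})+2\mathbf c$, so $\operatorname{ob}(\chi)\in T^2_{2\mathbf c}$.

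Next I show $T^2_{2\mathbf c}(M)=0$ via Theorem~\ref{thm:universal-graph}. If $T^2_{\mathbf c}$ or $T^1_{\mathbf c}$ is nonzero, the support criterion forces the negative part of $\mathbf c$ to be a nonzero $\one_B$. The negative part of $2\mathbf c$ is then $2\one_B$, which is not squarefree, so the component vanishes. When $T^1_{\mathbf c}=0$ there is nothing to prove. This gives $\operatorname{ob}(\chi)=0$.

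For part (2), I expand the obstruction cochain for $\alpha+\beta$ using lifts $\tilde\varphi_\alpha+\tilde\varphi_\beta$ and syzygy lifts $r'_{\alpha}+r'_{\beta}$, both of which are additive in the first-order data. The $\epsilon^2$-coefficient is bilinear in $(\tilde\varphi,r')$:
\[
\sum_C(r'_{\alpha,C}+r'_{\beta,C})(\tilde\varphi_\alpha+\tilde\varphi_\beta)(x^C).
\]
The diagonal terms give $\operatorname{ob}(\alpha)$ and $\operatorname{ob}(\beta)$. The cross terms $\sum_C\bigl(r'_{\alpha,C}\tilde\varphi_\beta(x^C)+r'_{\beta,C}\tilde\varphi_\alpha(x^C)\bigr)$ are precisely the Altmann--Christophersen cup cochain \cite[(6.1), Proposition~19]{AltmannChristophersen} representing $\alpha\cup\beta$.

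Two points need checking here. First, the obstruction class is independent of the choices of lifts, and changes of choices affect cross terms only by coboundaries; this makes $\operatorname{ob}$ well defined and the identity hold in cohomology. Second, the cup product as normalized in the cited formula is the full symmetric cross term, not half of it. The final assertion then follows from (1) and (2). This identification of the cross term with the cup cochain, integrally and with the correct normalization, is the main obstacle. I would settle it by comparing directly with the symmetric form of the Altmann--Christophersen formula used in Lemma~\ref{lem:cup-coordinate}. There the same integral Taylor presentation with coefficients $0,\pm1$ makes the comparison characteristic-free.
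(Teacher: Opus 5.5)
Your proposal is correct and follows essentially the same route as the paper: the explicit Taylor-presentation obstruction cochain built from first-order relation lifts, fine-homogeneous choices placing $\operatorname{ob}(\chi)$ in degree $2\mathbf c$, where the non-squarefree negative part kills the component by the support criterion, and a bilinear expansion whose cross term is the substitution cochain representing $\alpha\cup\beta$. The normalization point you flag is handled the same way in the paper, by identifying the mixed cochain directly with the integral substitution construction of Lemma~\ref{lem:cup-coordinate}.
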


\begin{proof}
Work with the monomial Taylor presentation of $I_M=(m_1,\ldots,m_r)$.  A first-order deformation with tangent class $\chi$ deforms the generators to $m_i+\epsilon g_i$, where $g_i\in S$ lifts $\chi(m_i)$.  For the pair relation of $(m_i,m_j)$ with least common multiple $u_{ij}$, first-order flatness provides an expression
\[
\frac{u_{ij}}{m_i}\,g_i-\frac{u_{ij}}{m_j}\,g_j
=\sum_k b^{ij}_k m_k
\qquad\text{in }S,
\]
with $b^{ij}_k$ chosen linearly in the tuple $(g_i)$.  A second-order lift $m_i+\epsilon g_i+\epsilon^2h_i$ exists if and only if the presentation cochain
\[
o(\chi):(i,j)\longmapsto
\Bigl[\,\sum_k b^{ij}_k\,g_k\Bigr]\in S/I_M
\]
is a coboundary; its class in $T^2(M)$ is the primary obstruction and is independent of all choices.  Every operation involved preserves the fine multigrading, and $o(\chi)$ is the composition of the linear map $\chi\mapsto b(\chi)$ with multiplication by the values of $\chi$.  Hence $o$ is a quadratic map, and for fine-homogeneous $\chi$ of degree $\mathbf c$ the cochain $o(\chi)$ is homogeneous of degree $2\mathbf c$.  By the support criterion, a nonzero fine component of $T^1$ has degree $\mathbf c=\mathbf a-\one_B$ with $B\ne\varnothing$; the negative part of $2\mathbf c$ has an exponent $2$, so $T^2_{2\mathbf c}(M)=0$ and $\operatorname{ob}(\chi)=0$.  This proves (1).

Expanding $o(\alpha+\beta)$ bilinearly gives
\[
o(\alpha+\beta)=o(\alpha)+o(\beta)
+\Bigl[(i,j)\mapsto\sum_k\bigl(b^{ij}_k(\alpha)\,\beta(m_k)+b^{ij}_k(\beta)\,\alpha(m_k)\bigr)\Bigr].
\]
The mixed cochain is the substitution construction of Lemma~\ref{lem:cup-coordinate}, whose class is $\alpha\cup\beta$; the identity uses only the coefficients $0,\pm1$ of the monomial presentation and holds over every field.  This proves (2), and the final statement follows by combining (1) and (2).
\end{proof}

\begin{corollary}[Actual quadratic obstructions]\label{cor:actual-obstructions}
Let $k$ be an arbitrary field.
\begin{enumerate}[label=\textup{(\arabic*)}]
\item If some connected component of $M$ has corank at least three, then some first-order affine deformation of $k[M]$ does not lift from $k[\epsilon]/(\epsilon^2)$ to $k[\epsilon]/(\epsilon^3)$.
\item If $M$ is connected with $\rk M\ge2$ and $\corank M\ge3$, such a deformation can be chosen in standard degree zero.  Hence $\operatorname{Def}^{\mathrm{gr}}_M$ is not formally smooth, and any miniversal hull has a nonzero quadratic Kuranishi term.
\end{enumerate}
\end{corollary}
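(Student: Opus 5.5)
For part (1), I reduce to the connected case. Let $C$ be a connected component of $M$ with $\corank C\ge3$, and write $M=C\oplus C'$. Theorem~\ref{thm:large-cocircuit} gives a cocircuit $D$ of $C$ with $|D|\ge4$. For distinct $p,q\in D$, Proposition~\ref{prop:pure-cocircuit} gives $T^2_{-\ee_p-\ee_q}(C)\ne0$. By \eqref{eq:direct-sum-T2}, this component embeds in $T^2_{-\ee_p-\ee_q}(M)$ as the summand $T^2_{-\ee_p-\ee_q}(C)\otimes k\cdot 1$. The cotangent cohomology of a tensor product over a field is also compatible with the cup product, so Theorem~\ref{thm:pure-pair-surjectivity} applies to $M$ directly: the vertices $p,q$ are actual vertices of $M$, and the target is nonzero.

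Next choose $\alpha\in T^1_{-\ee_p}(M)$ and $\beta\in T^1_{-\ee_q}(M)$ with $\alpha\cup\beta\ne0$. This is possible because a surjection onto a nonzero space from a tensor product of two spaces has some pure tensor $\alpha\otimes\beta$ with nonzero image. Since $-\ee_p\ne-\ee_q$, Lemma~\ref{lem:graded-polarization} gives $\operatorname{ob}(\alpha+\beta)=\alpha\cup\beta\ne0$. So the first-order deformation with class $\alpha+\beta$ does not lift from $k[\epsilon]/(\epsilon^2)$ to $k[\epsilon]/(\epsilon^3)$. No division by $2$ appears anywhere, so the argument holds over every field.

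For part (2), the same argument runs with Theorem~\ref{thm:degree-zero-cup} in place of the pure-pair input. It supplies distinct $e,p,q$ and a surjection onto $T^2_{2\ee_e-\ee_p-\ee_q}(M)\ne0$. Pick $\alpha\in T^1_{\ee_e-\ee_p}(M)$ and $\beta\in T^1_{\ee_e-\ee_q}(M)$ with $\alpha\cup\beta\ne0$. These two fine degrees are distinct, so polarization again gives $\operatorname{ob}(\alpha+\beta)=\alpha\cup\beta\ne0$, and $\alpha+\beta$ lies in $T^1(M)_0$.

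It remains to pass from obstructions in $\operatorname{Def}_M$ to obstructions in $\operatorname{Def}^{\mathrm{gr}}_M$, and I expect this to be the one delicate step. The primary obstruction of a graded class, computed within the graded functor, is the same presentation cochain as in the proof of Lemma~\ref{lem:graded-polarization}. That cochain is homogeneous of standard degree $0$, so its class lies in $T^2(M)_0\subseteq T^2(M)$ and is nonzero there. Hence the class admits no lift even ungraded, and a fortiori no graded lift. Since $\operatorname{Def}^{\mathrm{gr}}_M$ has an unliftable tangent vector along $k[\epsilon]/(\epsilon^3)\to k[\epsilon]/(\epsilon^2)$, it is not formally smooth.

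For the Kuranishi statement, write a miniversal hull as $k[[T^{1\,\vee}]]/J$ with $J\subseteq\mathfrak m^2$. If $J\subseteq\mathfrak m^3$, every tangent vector would lift to $k[\epsilon]/(\epsilon^3)$, since one can map the variables linearly. So $J\not\subseteq\mathfrak m^3$, and its quadratic part, which is dual to $\operatorname{ob}$, is nonzero.
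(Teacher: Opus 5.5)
Your proof is correct and follows the paper's argument. Both proofs take a nonzero pure-pair target in (1) and a nonzero degree-zero target in (2), use surjectivity of the cup product to obtain $\alpha\otimes\beta$ with $\alpha\cup\beta\ne0$, and then apply Lemma~\ref{lem:graded-polarization} to get $\operatorname{ob}(\alpha+\beta)=\alpha\cup\beta\ne0$. The differences are minor: the paper gets the size-four cocircuit of $M$ itself from Theorem~\ref{thm:vanishing}, since a cocircuit of a component is already a cocircuit of $M$, so your direct-sum detour and the unproved cup-compatibility remark are unnecessary; you also spell out the graded non-smoothness and hull consequences that the paper leaves implicit.
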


\begin{proof}
For (1), Theorem~\ref{thm:vanishing} provides a cocircuit of size at least four, and Proposition~\ref{prop:pure-cocircuit} then provides a nonzero purely negative pair target.  For (2), use the nonzero target in Theorem~\ref{thm:degree-zero-cup}.  In either case, surjectivity gives fine-homogeneous source classes $\alpha,\beta$ of distinct fine degrees with $\alpha\cup\beta\ne0$.  Lemma~\ref{lem:graded-polarization} gives
\[
\operatorname{ob}(\alpha+\beta)=\alpha\cup\beta\ne0,
\]
so the first-order deformation $\alpha+\beta$ does not lift to second order.  In (2), both source classes have standard degree zero.
\end{proof}

\begin{corollary}[Smoothness dichotomy]\label{cor:smoothness-dichotomy}
For every field $k$ and every matroid $M$, the following are equivalent:
\begin{enumerate}[label=\textup{(\arabic*)}]
\item $\operatorname{Def}_M$ is formally smooth;
\item $T^2(M)=0$;
\item every connected component of $M$ has corank at most two;
\item $M$ has no $U_{1,4}$-minor.
\end{enumerate}
\end{corollary}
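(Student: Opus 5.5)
The equivalences (2)$\Leftrightarrow$(3)$\Leftrightarrow$(4) are already part of Theorem~\ref{thm:vanishing}, so the plan is to connect formal smoothness (1) to the vanishing of $T^2(M)$ in both directions.

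For (2)$\Rightarrow$(1), use the standard cotangent-complex obstruction theory cited above (\cite[Tag~08UX]{Stacks}). For every small extension $A'\to A$ of local Artinian $k$-algebras with kernel $J$, the obstruction to lifting a deformation over $A$ to $A'$ lies in $T^2(M)\otimes_k J$. If $T^2(M)=0$, every deformation lifts. Hence $\operatorname{Def}_M\to\operatorname{Def}_M$ is surjective along all small extensions, which is formal smoothness. Since $k[M]$ is finitely presented and $T^2$ is computed from a finite free model, no finiteness issue arises.

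For (1)$\Rightarrow$(3), argue by contraposition. Suppose some connected component of $M$ has corank at least three. Corollary~\ref{cor:actual-obstructions}(1) then produces a first-order deformation over $k[\epsilon]/(\epsilon^2)$ that does not lift to $k[\epsilon]/(\epsilon^3)$. The map $k[\epsilon]/(\epsilon^3)\to k[\epsilon]/(\epsilon^2)$ is a small extension, so $\operatorname{Def}_M$ fails to be formally smooth. This direction rests on two earlier results. The first is the pure-pair surjectivity of Theorem~\ref{thm:pure-pair-surjectivity}, which yields fine-homogeneous $\alpha,\beta$ of distinct degrees with $\alpha\cup\beta\ne0$. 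The second is the characteristic-free polarization Lemma~\ref{lem:graded-polarization}, which shows that $\operatorname{ob}(\alpha+\beta)=\alpha\cup\beta$ even in characteristic two.

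The main subtlety is that a nonzero $T^2$ alone would not suffice for this direction; an actual nonliftable deformation is needed, and Corollary~\ref{cor:actual-obstructions} supplies one uniformly in the field. Combining the two directions with Theorem~\ref{thm:vanishing} closes the cycle of equivalences.
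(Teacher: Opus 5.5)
Your proposal is correct and takes the same approach as the paper: (2)$\Rightarrow$(1) because all obstructions lie in $T^2(M)$, (1)$\Rightarrow$(3) as the contrapositive of Corollary~\ref{cor:actual-obstructions}(1), and (2)$\Leftrightarrow$(3)$\Leftrightarrow$(4) from Theorem~\ref{thm:vanishing}. Your remark that this direction needs an actual nonliftable deformation, which Theorem~\ref{thm:pure-pair-surjectivity} and Lemma~\ref{lem:graded-polarization} supply in every characteristic, correctly identifies what Corollary~\ref{cor:actual-obstructions} contributes.
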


\begin{proof}
(2)$\Rightarrow$(1) is standard, since all obstructions lie in $T^2(M)$.  (1)$\Rightarrow$(3) is the contrapositive of Corollary~\ref{cor:actual-obstructions}(1).  The equivalences (2)$\Leftrightarrow$(3)$\Leftrightarrow$(4) are part of Theorem~\ref{thm:vanishing}.
\end{proof}

\begin{corollary}[Graded smoothness dichotomy]\label{cor:graded-dichotomy}
Let $M$ be connected.  For every field $k$, the graded functor $\operatorname{Def}^{\mathrm{gr}}_M$ is formally smooth if and only if $\rk M\le1$ or $\corank M\le2$.
\end{corollary}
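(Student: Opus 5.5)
The proof splits into the two directions.

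\emph{Nonsmoothness when $\rk M\ge2$ and $\corank M\ge3$.} This direction is already Corollary~\ref{cor:actual-obstructions}(2). Theorem~\ref{thm:degree-zero-cup} supplies standard-degree-zero fine-homogeneous classes $\alpha\in T^1_{\ee_e-\ee_p}(M)$ and $\beta\in T^1_{\ee_e-\ee_q}(M)$ with $\alpha\cup\beta\ne0$. By Lemma~\ref{lem:graded-polarization}, $\operatorname{ob}(\alpha+\beta)=\alpha\cup\beta\ne0$. The first-order deformation $\alpha+\beta$ is graded, since both classes have total degree zero. Its obstruction to lifting to $k[\epsilon]/(\epsilon^3)$ is nonzero even in the ungraded functor, so it certainly has no graded lift. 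Hence $\operatorname{Def}^{\mathrm{gr}}_M$ is not formally smooth.

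\emph{Smoothness when $\corank M\le2$.} Corollary~\ref{cor:smoothness-dichotomy} gives $T^2(M)=0$. The graded obstruction space $T^2(M)_0$ is a summand of $T^2(M)$, so it is also zero. Therefore every graded small-extension obstruction vanishes and $\operatorname{Def}^{\mathrm{gr}}_M$ is formally smooth.

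\emph{Smoothness when $\rk M\le1$.} Here the task is to show $T^2(M)_0=0$ even though $T^2(M)$ itself may be nonzero (for example $U_{1,n}$ with $n\ge4$). The case $\rk M=0$ is trivial: connectedness forces $M$ to have at most one element, so the corank is at most one. For $\rk M=1$, Theorem~\ref{thm:universal-graph} says any nonzero component $T^2_{\mathbf a-\one_B}(M)$ has $A=\supp\mathbf a$ independent and $|B|\ge2$.
\begin{itemize}
\item If $A=\varnothing$, the total degree is $-|B|\le-2<0$.
\item If $A=\{a\}$, the link is empty, so $B$ cannot consist of actual vertices of $\lk_M A$ and the component is zero.
\end{itemize}
Thus $T^2(M)_0=0$. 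Alternatively, Lemma~\ref{lem:zero-dimensional} shows that all nonzero components of $T^2(Z_E)$ sit in degrees $-\ee_p-\ee_q$. Either way the graded obstructions vanish, giving formal smoothness.

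The obstruction theory for the graded functor needs a remark, and this is the one point requiring care. The standard cotangent-complex obstruction theory is equivariant for the $\mathbb G_m$-action. Restricting to degree-preserving deformations over graded Artinian bases concentrated in degree zero therefore places obstructions in $T^2(M)_0$ and tangent vectors in $T^1(M)_0$; this is the convention fixed just before Lemma~\ref{lem:graded-polarization}. I expect no further difficulty beyond checking that the degree-zero cup-product argument of Corollary~\ref{cor:actual-obstructions} is compatible with this convention.
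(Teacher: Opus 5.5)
Your proof is correct and follows essentially the same route as the paper. Corank at most two gives $T^2(M)=0$. In rank one, the only independent supports are $\varnothing$ and singletons, and singleton links have no actual vertices, so every nonzero component of $T^2(M)$ is purely negative and $T^2(M)_0=0$. The converse direction is exactly Corollary~\ref{cor:actual-obstructions}(2).
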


\begin{proof}
If $\corank M\le2$, then $T^2(M)=0$.  If $\rk M\le1$, the only independent supports are the empty set and singletons; a singleton link of a rank-one matroid has empty actual vertex set, and singleton negative supports vanish by \eqref{eq:singleton-formula}, so every nonzero component of $T^2(M)$ is a purely negative block of negative standard total degree.  Hence $T^2(M)_0=0$ in both cases, and the graded functor, whose obstructions lie in $T^2(M)_0$, is formally smooth.  If $\rk M\ge2$ and $\corank M\ge3$, Corollary~\ref{cor:actual-obstructions}(2) shows that $\operatorname{Def}^{\mathrm{gr}}_M$ is not formally smooth.
\end{proof}

\begin{remark}[The divided square is invisible]\label{rem:char-two}
In characteristic two the primary obstruction is a quadratic refinement of the cup product rather than one half of the cup square.  Lemma~\ref{lem:graded-polarization} shows that for matroid Stanley--Reisner rings this refinement carries no extra information: it vanishes on every fine-homogeneous class, so the whole quadratic map is determined by the bilinear cup product,
\[
\operatorname{ob}\Bigl(\sum_{\mathbf c}\chi_{\mathbf c}\Bigr)
=\sum_{\{\mathbf c,\mathbf c'\},\,\mathbf c\ne\mathbf c'}
\chi_{\mathbf c}\cup\chi_{\mathbf c'},
\]
the sum running over unordered pairs of distinct fine degrees.  In this sense the primary obstruction theory of this paper is characteristic-free.
\end{remark}

The same degree bookkeeping controls the entire lifting tower, not only its first step.  The next theorem shows that fine-homogeneous classes integrate without higher corrections, and that along a pure pair the single quadratic class is the complete obstruction.

\begin{theorem}[Linear arcs and exact quadratic obstructions]\label{thm:arcs}
Let $k$ be an arbitrary field.  Represent classes by multigraded cocycles, so that $\chi\in T^1_{\mathbf c}(M)$ assigns to each circuit generator $m_i$ a multiple $\chi(m_i)\in S$ of the single monomial of multidegree $\deg m_i+\mathbf c$.
\begin{enumerate}[label=\textup{(\arabic*)}]
\item Suppose $\chi=\sum_j\chi_j$ with $\chi_j\in T^1_{\mathbf c_j}(M)$, and suppose that for every pair $(j,j')$, including $j=j'$, the vector $\mathbf c_j+\mathbf c_{j'}$ has an entry at most $-2$.  Then the $t$-linear family
\[
F_i=m_i+t\,\chi(m_i)
\]
is flat over $k[t]/(t^N)$ for every $N$: every Taylor relation lifts with coefficients linear in $t$, and no correction terms are needed.  This applies to every single fine-homogeneous class and, more generally, whenever the negative supports $B_j$ pairwise intersect and $\bigl(\bigcup_j\supp\mathbf a_j\bigr)\cap\bigl(\bigcup_jB_j\bigr)=\varnothing$.
\item Let $\alpha\in T^1_{\mathbf c_1}(M)$ and $\beta\in T^1_{\mathbf c_2}(M)$ with $\mathbf c_i=\mathbf a_i-\one_{B_i}$, $B_1\cap B_2=\varnothing$, and $\supp(\mathbf a_1+\mathbf a_2)\cap(B_1\cup B_2)=\varnothing$.  Then $t(\alpha+\beta)$ lifts to $k[t]/(t^3)$ if and only if $\alpha\cup\beta=0$, and in that case a single multigraded correction $h$ of fine degree $\mathbf c_1+\mathbf c_2$ produces a family
\[
F_i=m_i+t\bigl(\alpha(m_i)+\beta(m_i)\bigr)+t^2h_i
\]
which is flat over $k[t]/(t^N)$ for every $N$.  Along these directions the cup product is the complete obstruction.
\end{enumerate}
\end{theorem}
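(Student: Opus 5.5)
The plan is to work throughout with the monomial Taylor presentation of $I_M=(m_1,\ldots,m_r)$ and the explicit flatness criterion for deformed generators. A family $F_i=m_i+\sum_{\ell\ge1}t^\ell g^{(\ell)}_i$ is flat over $k[t]/(t^N)$ exactly when every Taylor pair relation $\frac{u_{ij}}{m_i}m_i-\frac{u_{ij}}{m_j}m_j=0$ lifts to a relation among the $F_i$ modulo $t^N$. Since the Taylor relations generate the syzygy module of the monomial generators, this criterion suffices.

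For part (1), put $g_i=\chi(m_i)=\sum_j\chi_j(m_i)$. First-order flatness gives, for each pair $(i,j)$, coefficients $b^{ij}_k$, linear in $\chi$, with
\[
\tfrac{u_{ij}}{m_i}g_i-\tfrac{u_{ij}}{m_j}g_j=\textstyle\sum_kb^{ij}_km_k .
\]
I then propose the lifted relation
\[
\tfrac{u_{ij}}{m_i}F_i-\tfrac{u_{ij}}{m_j}F_j-\textstyle\sum_k(b^{ij}_k\,t)\,F_k .
\]
Its $t^0$ and $t^1$ parts vanish by construction. Its $t^2$ part is $-\sum_kb^{ij}_kg_k$, and there are no higher terms because everything is at most linear in $t$. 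The whole point is therefore to show that $\sum_k b^{ij}_k g_k=0$ exactly in $S$, not merely modulo $I_M$.

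Splitting by summands, each term $b^{ij}_k(\chi_j)\chi_{j'}(m_k)$ is a multiple of a single monomial of fine degree
\[
\deg u_{ij}+\mathbf c_j+\mathbf c_{j'} .
\]
This degree has an entry $\le-2$ in some coordinate $e$. On the other hand, $\deg u_{ij}$ is squarefree, since it is the lcm of squarefree monomials, so its $e$-entry is at most $1$. The total exponent is then at most $-1$, so no polynomial has this degree and the term is $0$. Thus the quadratic term vanishes identically in $S$, and the linear family is flat to all orders.

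For the "more generally" clause, I check the hypothesis directly. Take $e\in B_j\cap B_{j'}$. Then $e\notin\supp\mathbf a_j\cup\supp\mathbf a_{j'}$, so the $e$-entry of $\mathbf c_j+\mathbf c_{j'}$ is $-2$. The case $j=j'$ is the single-class statement.

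For part (2), Lemma~\ref{lem:graded-polarization} gives
\[
\operatorname{ob}(\alpha+\beta)=\alpha\cup\beta ,
\]
which settles the liftability to $k[t]/(t^3)$ in both directions. When $\alpha\cup\beta=0$, the mixed cochain is a coboundary, and I choose the correction $h$ homogeneous of degree $\mathbf c_1+\mathbf c_2$; this is possible because the coboundary map preserves fine degree. Now set $F_i=m_i+tg_i+t^2h_i$ with $g=\alpha+\beta$, and lift the Taylor relations with coefficients $t\,b^{ij}(g)+t^2\,b^{ij}(h)$. Here $b(h)$ is chosen so that the $t^2$ part cancels, which is exactly the statement that $h$ corrects the obstruction.

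The remaining coefficients must vanish in $S$ by the same degree argument:
\begin{enumerate}
\item at $t^3$, the terms $b(g)h+b(h)g$;
\item at $t^4$, the term $b(h)h$;
\item at $t^2$, the self-terms $b(\alpha)\alpha$ and $b(\beta)\beta$, which vanish by part (1) in the strong, identically-zero sense, leaving only the mixed part that $h$ kills.
\end{enumerate}
The relevant degrees are $\mathbf c_1+\mathbf c_1+\mathbf c_2$, $\mathbf c_2+\mathbf c_1+\mathbf c_2$, and $2(\mathbf c_1+\mathbf c_2)$. Each has an entry $\le-2$ on $B_1$ or on $B_2$, because the positive supports avoid $B_1\cup B_2$. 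Each $B_i$ is nonempty by the support criterion, and the relation degree $\deg u_{ij}$ is squarefree, so every such term is zero.

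The step I expect to be the main obstacle is the bookkeeping for $h$ and $b(h)$. A coboundary correction $h_i$ may involve the $m_k$ themselves, and $b(h)$ must be chosen homogeneous in the right degree. Both are needed for the degree-killing argument to apply, so I will take $h_i$ to be a single homogeneous monomial multiple, reduced modulo $I_M$, with every choice made fine-homogeneously. I also need $b(h)$ to exist at all: lifting the $t^2$ part of the relations requires the actual cochain identity $\frac{u}{m_i}h_i-\frac{u}{m_j}h_j+(\text{mixed term})\in I_M$, not just equality of cohomology classes. I will derive this cochain identity from the definition of "coboundary" in the presentation complex used in Lemma~\ref{lem:graded-polarization}.
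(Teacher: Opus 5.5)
Your proposal is correct and follows the paper's proof essentially step by step. You lift the Taylor relations with coefficients linear in $t$ (plus the $t^2$ coefficients in part (2)), and you kill every remaining coefficient identically in $S$ because $\deg u_{ij}$ is squarefree while the added degree has an entry $\le -2$. You then invoke Lemma~\ref{lem:graded-polarization} for the second-order criterion. Your extra care in part (2) matches what the paper compresses into ``choose $h$ solving the mixed block'': vanishing of $\alpha\cup\beta$ gives an actual cochain identity modulo $I_M$, so the $t^2$ relation coefficients exist and can be taken fine-homogeneous.
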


\begin{proof}
A polynomial of fixed multidegree with a negative entry is zero. Least common multiples $u_{ij}$ of squarefree generators have exponents at most one, so every element of $S$ of multidegree $\deg u_{ij}+\mathbf d$ or $\deg m_i+\mathbf d$ vanishes identically whenever $\mathbf d$ has an entry at most $-2$.

We use the relation-lifting criterion for flatness over
$A_N=k[t]/(t^N)$. If generators $F_i$ lift the generators $m_i$ and every
generating relation among the $m_i$ lifts to a relation among the $F_i$,
then, for $J=(F_i)\subset A_N[x_e:e\in E]$, reduction identifies
$J/tJ$ with $I_M$. Tensoring the ideal sequence with $k$ therefore gives
$\operatorname{Tor}^{A_N}_1(A_N[x]/J,k)=0$. The local flatness criterion
for the nilpotent ideal $(t)$ gives $A_N$-flatness. The Taylor pair
relations generate the relation module of a monomial ideal, so it suffices
to lift those relations.

For (1), lift the Taylor relation of $(m_i,m_j)$ to first order as in Lemma~\ref{lem:graded-polarization}, with linear coefficient $-b^{ij}$.  The complete relation applied to $(F_i)$ has one remaining coefficient, at $t^2$, namely $-b^{ij}\cdot\chi(m_\bullet)$, a sum of terms of multidegrees $\deg u_{ij}+\mathbf c_\ell+\mathbf c_{\ell'}$, where $\ell,\ell'$ index the summands of $\chi$.  Each such term is identically zero by hypothesis, so the linear relation lift is an exact syzygy of $(F_i)$.  Lifting a generating set of relations proves flatness over every $k[t]/(t^N)$.  For the last assertion, an entry of $\mathbf c_\ell+\mathbf c_{\ell'}$ at an element of $B_\ell\cap B_{\ell'}$ equals $-2$ when $\ell\ne\ell'$, and at an element of $B_\ell$ equals $-2$ when $\ell=\ell'$, since the positive supports avoid all negative supports.

For (2), the second-order equation decomposes into fine degrees $2\mathbf c_1$, $\mathbf c_1+\mathbf c_2$, and $2\mathbf c_2$.  The doubled blocks vanish identically as in (1).  The mixed block is solvable exactly when the class $\alpha\cup\beta$ vanishes, by Lemma~\ref{lem:graded-polarization}; this proves the first assertion.  When $\alpha\cup\beta=0$, choose $h$ of fine degree $\mathbf c_1+\mathbf c_2$ solving the mixed block and lift each relation to second order.  Every higher coefficient of the lifted relation is a sum of terms whose fine degrees have the form $a\mathbf c_1+b\mathbf c_2$ with $a+b\ge3$.  Since $\max(a,b)\ge2$ and the positive supports avoid $B_1\cup B_2$, every such degree has an entry at most $-2$ on $B_1$ or $B_2$, so all these terms vanish identically and the quadratic relation lifts are exact syzygies of $(F_i)$.
\end{proof}

Theorem~\ref{thm:arcs} leaves open directions with support cancellation.
For example, take $\mathbf c_1=\ee_e-\ee_p$ and
$\mathbf c_2=\ee_p-\ee_q$, with $e,p,q$ distinct. The mixed second-order
obstruction has singleton negative support and vanishes by
\eqref{eq:singleton-formula}. After a homogeneous second-order lift is
fixed, its only possibly nonzero subsequent obstruction has degree
$2\mathbf c_1+\mathbf c_2=2\ee_e-\ee_p-\ee_q$.
This class depends on the chosen second-order lift; it is not a canonical
class attached to the triple $(e,p,q)$ alone. Indeed, changing the mixed
correction by $\delta\in T^1_{\mathbf c_1+\mathbf c_2}(M)$ can change
the cubic class by $\alpha\cup\delta$, where $\alpha$ is the class of
degree $\mathbf c_1$.

There are no later degrees to consider in this homogeneous lifting problem:
$a\mathbf c_1+b\mathbf c_2$ has all coordinates at least $-1$ only if
$b\le1$ and $a-b\le1$. Thus every such degree with $a+b\ge4$ has a
coordinate at most $-2$. A choice of second- and third-order corrections
that solves the cubic equation therefore extends without further
corrections to every order. This is a termination statement, not a proof
that a suitable cubic lift always exists.

\section{Certificates, algorithms, and examples}\label{sec:algorithms}

\subsection{A rank-oracle certificate}

Assume access to a rank oracle for $M$. A certificate for $T^2$-nonvanishing is a tuple $(A,D,p,q)$ with distinct $p,q\in D$, satisfying
\begin{enumerate}[label=\textup{(\arabic*)}]
\item $A$ is independent and $|A|=\rk M-1$;
\item $E\setminus D=\cl_MA$;
\item $|D|\ge4$.
\end{enumerate}
The closure condition is checked by verifying
\[
\rk(A\cup\{e\})=\rk A\quad(e\in E\setminus D),
\qquad
\rk(A\cup\{e\})=\rk A+1\quad(e\in D).
\]
Once these tests pass, Proposition~\ref{prop:exact-cocircuit} certifies
\[
\dim_kT^2_{\one_A-\ee_p-\ee_q}(M)=|D|-3.
\]
This tuple records a nonzero obstruction space, not a particular tangent
cocycle. An actual nonliftable deformation is then obtained from
Proposition~\ref{prop:pure-cocircuit}, the coordinate preimages in
Theorem~\ref{thm:pure-pair-surjectivity}, and
Lemma~\ref{lem:graded-polarization}.

For a connected matroid of corank at least three, a certificate is constructed as follows.
\begin{enumerate}[label=\textup{(\arabic*)}]
\item Find a basis $B$ greedily.  If $|B|=1$, output $A=\varnothing$, $D=E$, and any pair $p,q\in E$.
\item Construct the fundamental graph.  Use Lemma~\ref{lem:bipartite} to find $x,y\in B$.
\item Contract $B\setminus\{x,y\}$, delete loops, and compute the parallel classes of the resulting rank-two matroid.
\item Choose a smallest class $P$ and $v\in P$.  Put $A=(B\setminus\{x,y\})\cup\{v\}$ and $D=E\setminus\cl_MA$.
\item Output $(A,D,p,q)$ for any distinct $p,q\in D$.
\end{enumerate}
The construction uses $O(n^2)$ rank-oracle queries.  An independence oracle simulates a rank query greedily, so the procedure is also polynomial in that model.

\subsection{Uniform, graphic, and cographic matroids}

For $M=U_{r,n}$ of corank $c=n-r$, every cocircuit has size $c+1$.  Hence
\[
\dim_kT^2_{\one_A-\ee_p-\ee_q}(U_{r,n})=(c-2)_+
\]
for every hyperplane $A$ and pair in its complement, and
\[
\Theta(U_{r,n})=
\binom n{r-1}\binom{c+1}{2}(c-2)_+.
\]

\begin{corollary}[Graphic matroids]\label{cor:graphic}
For a finite graph $G$, $T^2(M(G))=0$ if and only if every cycle-matroid component has cyclomatic number at most two, equivalently every bond has size at most three. For a simple graph, this vanishing is equivalent to every edge-containing block being a bridge, a cycle, or a theta graph.

If $G$ is connected and deleting a bond $D$ gives components $G^D_1,G^D_2$, then
\[
\Theta(M(G))=
\sum_{D\text{ bond}}
\tau(G^D_1)\tau(G^D_2)
\binom{|D|}{2}(|D|-3)_+,
\]
where $\tau$ denotes the number of spanning trees and $\tau$ of a single vertex is one.
\end{corollary}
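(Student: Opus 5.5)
The plan is to reduce both assertions to the matroid statements already proved, using the graph--matroid dictionary $M(G)^*$: cocircuits of $M(G)$ are bonds, corank of a component equals its cyclomatic number, and connected components of $M(G)$ are the $2$-connected blocks of $G$ that contain edges. For the vanishing statement I will apply Theorem~\ref{thm:vanishing}, (1)$\Leftrightarrow$(3)$\Leftrightarrow$(5). Condition~(3) becomes the requirement that every edge-containing block have cyclomatic number $|E(\text{block})|-|V(\text{block})|+1\le2$. Condition~(5) becomes the requirement that every bond have at most three edges, since minimal edge cuts are exactly the cocircuits of the cycle matroid.

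For the simple-graph refinement I will classify the $2$-connected simple graphs of cyclomatic number at most two. A bridge is the only block with cyclomatic number $0$ that has an edge. A $2$-connected graph with cyclomatic number $1$ is a single cycle. For cyclomatic number $2$, I will use an ear decomposition: the graph is a cycle plus one ear, so it consists of two vertices joined by three internally disjoint paths, which is a theta graph. Simplicity forces at most one of these paths to be a single edge, but this does not affect the classification. Conversely, bridges, cycles, and theta graphs have cyclomatic numbers $0,1,2$.

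For the $\Theta$ formula I will start from \eqref{eq:Theta-cocircuit}:
\[
\Theta(M)=\sum_{D}b\bigl(M|(E\setminus D)\bigr)\binom{|D|}{2}(|D|-3)_+ .
\]
For connected $G$ and a bond $D$, the graph $G\setminus D$ has exactly two components $G^D_1,G^D_2$. The restriction $M(G)|(E\setminus D)$ is the cycle matroid of $G\setminus D$, which is $M(G^D_1)\oplus M(G^D_2)$. Its bases are therefore pairs of spanning trees, one in each component, giving the count $\tau(G^D_1)\tau(G^D_2)$. The convention $\tau=1$ for an isolated vertex covers the case where one side has no edges.

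The main point that needs care is the block/component correspondence when $G$ has loops or is disconnected. Loops are components of corank one and do not violate the criterion, and isolated vertices carry no edges, so neither affects the argument. The ear-decomposition classification is routine.
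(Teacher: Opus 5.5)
Your proposal is correct and follows the paper's proof. Both arguments reduce the vanishing criterion to Theorem~\ref{thm:vanishing} via the dictionary that cocircuits are bonds and that the corank of a component is its cyclomatic number. Both classify blocks of cyclomatic number at most two by open ear decomposition. Both obtain the $\Theta$ formula from \eqref{eq:Theta-cocircuit} by identifying bases of $M(G)|(E\setminus D)$ with pairs of spanning trees of $G^D_1$ and $G^D_2$. Your explicit handling of loops, isolated vertices, and the fact that $G\setminus D$ has exactly two components only fills in details the paper leaves implicit.
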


\begin{proof}
Cocircuits of $M(G)$ are bonds, and the corank of a cycle-matroid component is its cyclomatic number. The simple-block description follows from open-ear decomposition \cite[Proposition~3.1.1, pp.~59--60]{Diestel}: a cycle starts with cyclomatic number one, and each added ear increases it by one. A basis of $M(G)|(E\setminus D)$ is a spanning tree in each of the two components, so \eqref{eq:Theta-cocircuit} gives the formula.
\end{proof}

\begin{corollary}[Cographic matroids]\label{cor:cographic}
For a finite graph $G$,
\[
T^2(M^*(G))=0
\quad\Longleftrightarrow\quad
\text{every cycle of $G$ has at most three edges}.
\]
For a simple graph, this vanishing is equivalent to every edge-containing block being a bridge or a triangle. If $G$ is connected, then
\[
\Theta(M^*(G))=
\sum_U f(\ell(U)),
\]
where $U$ ranges over connected spanning unicyclic subgraphs and $\ell(U)$ is the length of its unique cycle.
\end{corollary}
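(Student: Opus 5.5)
The plan is to apply Theorem~\ref{thm:vanishing} and formula~\eqref{eq:Theta-cocircuit} to $M^*(G)$, translating cocircuits of $M^*(G)$ into cycles of $G$ and the relevant bases into spanning trees.

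\emph{Vanishing criterion.} The cocircuits of $M^*(G)$ are the circuits of $M(G)$, namely the edge sets of cycles of $G$. Loops give cycles of length one and parallel pairs give cycles of length two. Hence condition (5) of Theorem~\ref{thm:vanishing} reads: every cycle of $G$ has at most three edges. This gives the first equivalence directly.

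\emph{Simple-graph reformulation.} Assume $G$ is simple. Any cycle lies in a single block, and a block with at least two edges is $2$-connected. If such a block is not a triangle, it contains a cycle of length at least four. To see this, take a cycle in the block. If it is a triangle and the block has further edges, an ear attached to it (by open-ear decomposition, \cite[Proposition~3.1.1]{Diestel}) produces a longer cycle: an ear of length $\ell\ge1$ between two triangle vertices yields a cycle of length $\ell+2\ge4$, since simplicity forces $\ell\ge2$, or more simply since the ear together with the longer arc of the triangle gives length $\ell+2$. The converse is immediate, because bridges and triangles contain no cycle longer than three.

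\emph{Formula for $\Theta$.} Let $G$ be connected. By \eqref{eq:Theta-cocircuit},
\[
\Theta(M^*(G))=\sum_C b\bigl(M^*(G)|(E\setminus C)\bigr)\,f(|C|),
\]
where $C$ ranges over the cycles of $G$. Dualizing,
\[
\bigl(M^*(G)|(E\setminus C)\bigr)^*=M(G)/C,
\]
so the count $b(\cdot)$ is the number of bases of $M(G)/C$. Equivalently, Theorem~\ref{thm:Theta-profiles} gives $\Theta=\sum_C b(N/C)f(|C|)$ with $N=M(G)$.

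The bases of $M(G)/C$ are the edge sets $F\subseteq E\setminus C$ such that $F\cup C$ is spanning and has nullity exactly one with unique circuit $C$. This is the bijection noted in the proof of Theorem~\ref{thm:Theta-profiles}. For connected $G$, such sets $F\cup C$ are exactly the connected spanning unicyclic subgraphs $U$ whose cycle is $C$. Summing over $C$ regroups the sum as $\sum_U f(\ell(U))$.

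\emph{Main obstacle.} The only step needing care is the simple-block reformulation, specifically producing a long cycle in a $2$-connected block other than a triangle. The other steps are direct translations of results already proved.
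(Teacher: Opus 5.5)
Your proposal is correct and follows the paper's proof. Both use condition (5) of Theorem~\ref{thm:vanishing}, reading cocircuits of $M^*(G)$ as cycles of $G$; both use open-ear decomposition for the simple-block reformulation; and both interpret $b(M(G)/C)$ through the nullity-one (connected spanning unicyclic) description from Theorem~\ref{thm:Theta-profiles}. One wording point: your ``or more simply'' alternative in the ear step does not work on its own, because an ear of length one would only close a triangle. It is simplicity, forcing $\ell\ge2$, that gives the cycle of length $\ell+2\ge4$.
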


\begin{proof}
Cocircuits of $M^*(G)$ are graph cycles.  Apply Theorem~\ref{thm:vanishing}.  The simple-block statement again follows from open-ear decomposition.  The last formula is the nullity-one interpretation in Theorem~\ref{thm:Theta-profiles}.
\end{proof}

The cocircuit threshold four is sharp in every corank: for $c\ge3$, the
connected matroid $M(K_{2,c-1})^*$ has corank $c$ and all cocircuits have
size exactly four. Indeed, $K_{2,c-1}$ has $c+1$ vertices and is connected,
so its cycle matroid has rank $c$; every graph cycle has length four.
Thus Theorem~\ref{thm:large-cocircuit} cannot be strengthened to a bound
growing with corank. The dual matroids $M(K_{2,c-1})$ are critically
connected: deleting any edge of $K_{2,c-1}$ creates a bridge. They have
rank $c$ and $2c-2$ elements, which is the largest possible size of a
critically connected matroid of rank $c$ by a theorem of
Murty~\cite{Murty}.

\section{A sharp rank-two extremal theorem}\label{sec:rank-two-extremal}

This section records a quantitative consequence of the hyperplane-graph formula proved earlier in the paper.

\subsection{The witness invariant}

Recall that, for each independent set $A$ of size $r-1$, we write
\[
d_A=|E\setminus\cl_M(A)|
\]
and define the invariant
\begin{equation}\label{eq:rank-two-extremal-Theta}
\Theta(M)=\sum_{\substack{A\text{ independent}\\ |A|=r-1}}
\binom{d_A}{2}(d_A-3)_+ .
\end{equation}
Equivalently, grouping the same terms by cocircuit $D$ gives
\begin{equation}\label{eq:rank-two-extremal-cocircuit}
\Theta(M)=\sum_D b\bigl(M|(E\setminus D)\bigr)
\binom{|D|}{2}(|D|-3)_+ .
\end{equation}
These are \eqref{eq:Theta-definition} and \eqref{eq:Theta-cocircuit}.  The rank-one-link calculation in Proposition~\ref{prop:exact-cocircuit} identifies $\Theta(M)$ with the total dimension of the finite squarefree witness space.  In rank two, the same sum is exactly $\dim_k T^2(M)_0$.

\subsection{A closed formula in rank two}

Let $M$ be a connected loopless rank-two matroid.  Its nonloop elements split into parallel classes $P_1,\ldots,P_t$ of sizes $n_1,\ldots,n_t$; connectedness is equivalent to $t\ge 3$.  Put $n=|E|$ and $c=n-2$, and set $F(d)=\binom d2(d-3)_+$.

\begin{proposition}[Rank-two closed form]\label{prop:rank-two-closed-form}
For every connected rank-two matroid,
\begin{equation}\label{eq:rank-two-closed-form}
\Theta(M)=\sum_i n_iF(n-n_i),
\qquad \dim_kT^2(M)_0=\Theta(M).
\end{equation}
\end{proposition}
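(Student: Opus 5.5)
The proof splits into the two identities of \eqref{eq:rank-two-closed-form}, and I would treat them separately.

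\emph{The formula for $\Theta$.} Here $r=2$, so the independent $(r-1)$-sets in \eqref{eq:rank-two-extremal-Theta} are the singletons $\{v\}$. Since $M$ is loopless, the closure $\cl_M\{v\}$ is the parallel class $P_i$ containing $v$. Hence $d_{\{v\}}=n-n_i$. The class $P_i$ contributes $n_i$ singletons, each with the same weight $F(n-n_i)$. Summing over $i$ gives $\Theta(M)=\sum_i n_iF(n-n_i)$. Equivalently, in \eqref{eq:rank-two-extremal-cocircuit} the cocircuits are the sets $E\setminus P_i$, and $b(M|P_i)=n_i$.

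\emph{The identity $\dim_kT^2(M)_0=\Theta(M)$.} I would sort fine degrees of total degree zero using the support criterion and Theorem~\ref{thm:universal-graph}. A nonzero component has degree $\mathbf a-\one_B$ with $A=\supp\mathbf a$ independent, so $|A|\le2$. I would then run through the three possible sizes of $A$.
\begin{itemize}
\item If $|A|=2$, then $A$ is a basis. Its link is $\{\varnothing\}$ and has no actual vertices, so the component vanishes.
\item If $A=\varnothing$, the degree is purely negative. Its total degree is $-|B|<0$, so it does not lie in standard degree zero.
\item If $|A|=1$, say $A=\{v\}$, we are in the rank-one-link strand of Theorem~\ref{thm:top-strand}. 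Nonzero components there have $\mathbf b=\ee_p+\ee_q$ with $p,q\in D_A$, and dimension $(d_A-3)_+$.
\end{itemize}
In the last case, total degree zero means $\sum_e a_e=2$, so $\mathbf a=2\ee_v$. For each independent singleton and each pair in $\binom{D_A}{2}$ there is therefore exactly one degree-zero component, of dimension $(d_A-3)_+$. Summing these dimensions reproduces \eqref{eq:rank-two-extremal-Theta}.

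\emph{Expected obstacle.} The main point needing care is the bookkeeping that the exponent vector is forced to be $2\ee_v$. The witness space $W(M)$ uses the degrees $\one_A-\ee_p-\ee_q$, which have total degree $-1$ in rank two. So the identity with $\dim_kT^2(M)_0$ is a count of components of equal dimension, not an equality of the spaces $W(M)$ and $T^2(M)_0$. Theorem~\ref{thm:top-strand} says the dimension is independent of the positive exponents, so each degree-zero component $T^2_{2\ee_v-\ee_p-\ee_q}(M)$ has the same dimension as the corresponding witness component. This gives a bijection between the components counted by $\Theta$ and the nonzero degree-zero components, which completes the argument.
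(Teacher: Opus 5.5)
Your proof is correct and follows essentially the same route as the paper. You obtain $\Theta$ by grouping the singleton independent sets by parallel class, and you obtain $\dim_kT^2(M)_0=\Theta(M)$ by ruling out positive supports of size zero and two and observing that a singleton support $\{v\}$ contributes only in degrees $2\ee_v-\ee_p-\ee_q$, each of dimension $(d_{\{v\}}-3)_+$; your remark that this is an equality of dimensions, not of the spaces $W(M)$ and $T^2(M)_0$, is accurate.
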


\begin{proof}
Every independent set of size $r-1$ is a single nonloop element $a$.  If $a\in P_i$, then $\cl_M(a)=P_i$ and the complementary cocircuit has size $n-n_i$.  The complementary hyperplane has $n_i$ bases, one for each element of $P_i$, so \eqref{eq:rank-two-extremal-cocircuit} gives the first formula.  For the standard total-degree-zero component, a positive support of size two has a rank-zero link and contributes nothing.  A positive support of size one must have exponent two; its rank-one link contributes only squarefree negative pairs.  Summing \eqref{eq:rank-two-extremal-Theta} over all singletons gives the second formula.
\end{proof}

\subsection{The sharp minimum at fixed corank}

\begin{theorem}[Sharp rank-two extremum]\label{thm:rank-two-extremum}
Among connected rank-two matroids of corank $c\ge3$, the minimum of $\Theta$ is
\begin{equation}\label{eq:rank-two-minimum}
\begin{cases}
6,&c=3,\\
32,&c=4,\\
81,&c=5,\\
168,&c=6,\\
c(c+1)(c-2),&c\ge7.
\end{cases}
\end{equation}
Up to permutation of the classes, equality occurs for
\begin{equation}\label{eq:rank-two-equality-cases}
\begin{array}{c|c}
c&\text{parallel-class sizes}\\ \hline
3&(2,2,1)\\
4&(3,2,1)\\
5&(3,3,1)\\
6&(6,1,1)\text{ or }(4,3,1)\\
c\ge7&(c,1,1).
\end{array}
\end{equation}
For $c\ge7$ the equality class is unique.
\end{theorem}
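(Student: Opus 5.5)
The plan is to minimize the closed form of Proposition~\ref{prop:rank-two-closed-form} over integer partitions. Write $n=c+2$ and $g(x)=x\,F(n-x)$. Then
\[
\Theta(M)=\sum_{i=1}^t g(n_i),\qquad \sum_i n_i=n,\qquad t\ge3,\qquad n_i\ge1,
\]
and only the multiset of class sizes matters.

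First I record the basic facts about $F$ and $g$.

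\begin{itemize}
\item $F$ is nondecreasing in $d$, and $F(d)=0$ exactly when $d\le3$. Hence $g(x)=0$ exactly when $x\ge n-3=c-1$.
\item At most one class can have size $\ge c-1$ once $c\ge4$, since two such classes together with a third class need $2c-1\le c+2$.
\item For the candidate $(c,1,1)$, the large class contributes $g(c)=c\,F(2)=0$. Each singleton contributes $F(c+1)=\binom{c+1}2(c-2)$. The total is therefore $c(c+1)(c-2)$, which is the claimed value for $c\ge7$.
\end{itemize}

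Next I reduce to one parameter. Order the classes so that $x=n_1$ is largest, and put $s=n-x=\sum_{j\ge2}n_j\ge2$. The case $s=2$ forces the partition $(c,1,1)$. For $s\ge3$, every $j\ge2$ has $n_j\le s-1$, so $n-n_j\ge x+1$ and $F(n-n_j)\ge F(x+1)$ by monotonicity. Summing over $j\ge2$ and adding the largest class gives the lower bound
\[
\Theta(M)\ \ge\ s\,F(c+3-s)\;+\;(c+2-s)\,F(s).
\]
The aim is to show that this bound is strictly larger than $2F(c+1)=c(c+1)(c-2)$ for all $3\le s\le c$ once $c\ge7$. The two terms dominate in complementary ranges. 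For small $s$ the first term is roughly $sF(c+3-s)$, which is of order $s c^3/2$. For $s$ close to $n$ the second term is of order $s^3/2$, since $x$ is at least the average size $n/t$. In the middle range both terms contribute. I would verify the inequality as a polynomial inequality in $(c,s)$, splitting at $s\approx c/2$, and use convexity in $s$ of the cubic pieces to reduce to checking endpoints.

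I expect the main obstacle to be the boundary $s=3$ and a few neighbouring values. There $F(s)=0$, and the crude bound compares $3F(c)$ with $2F(c+1)$, which reduces to $c^2-10c+13>0$. This holds only for $c\ge9$. For $s=3$ and $s=4$ I will therefore drop the crude bound and use the exact partitions of the remaining mass instead:
\begin{itemize}
\item for $s=3$, the tails $(2,1)$ and $(1,1,1)$;
\item for $s=4$, the tails $(3,1)$, $(2,2)$, $(2,1,1)$ and $(1,1,1,1)$.
\end{itemize}
Each of these has a closed value in $c$, and I will compare each directly with $c(c+1)(c-2)$ for $c\ge7$. Strict inequality in every case then gives both the minimum and uniqueness of $(c,1,1)$ for $c\ge7$.

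Finally, for $3\le c\le6$ the number of partitions of $c+2\le8$ into at least three parts is small. I will enumerate them and evaluate $\sum_i g(n_i)$ for each, which should produce the values $6,32,81,168$ and the equality cases listed in \eqref{eq:rank-two-equality-cases}. This includes the tie between $(6,1,1)$ and $(4,3,1)$ at $c=6$. The same enumeration serves as a check on the general inequality for $c=7,8$.
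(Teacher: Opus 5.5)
\textbf{There is a genuine gap in your one-parameter bound.} The problem is the bound $\Theta(M)\ge sF(c+3-s)+(c+2-s)F(s)$, and it is not confined to $s=3,4$.

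The bound keeps only $n_j\le s-1$ and discards $n_j\le x$. It therefore degenerates exactly when the largest class is small.
\begin{itemize}
\item At $s=c$ (largest class of size $2$) it equals $cF(3)+2F(c)=c(c-1)(c-3)$. This is strictly smaller than $2F(c+1)=c(c+1)(c-2)$ for every $c$.
\item You have also omitted $s=c+1$: all classes singletons, i.e.\ $U_{2,c+2}$. There the bound is only $F(c+1)$. Your heuristic already signals this, since a second term of order $s^3/2\le (c+1)^3/2$ is about half the target $c^3$.
\item For moderate $c$ the failure is total. At $c=7$ the bound gives $252,210,180,171,168,140$ for $s=3,\dots,8$, all below $280$. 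So treating the tails for $s=3,4$ exactly cannot close the argument.
\end{itemize}
Separately, convexity in $s$ would not reduce a lower bound to endpoint checks, since a convex function can attain its minimum in the interior. You would need concavity.

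Sharpening to $n-n_j\ge\max(s,x+1)$ repairs $x=1,2$ for large $c$, but it still fails in the middle range. At $c=7$, $x=4$ it gives $9F(5)=180<280$, while every partition there has $\Theta\ge300$.

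\textbf{The missing idea is a structural reduction of the partition itself, not a one-parameter estimate.} The paper supplies it in three steps.
\begin{itemize}
\item In a partition with at least four classes, merging two classes strictly decreases $\sum_i g(n_i)$. Hence minimizers have exactly three classes.
\item Moving one element from the smallest class to the largest strictly decreases the sum whenever the smallest class has size at least two. Hence minimizers have the form $(1,b,d)$.
\item Comparing $(1,b,d)$ with $(1,1,c)$ gives a concave quadratic in $p=bd$. It is positive at both endpoints of its range for $n\ge9$, and $n\le8$ is settled by enumeration.
\end{itemize}
Your evaluation of $(c,1,1)$, your remark that $g(x)=0$ for $x\ge c-1$, and your enumeration for $3\le c\le6$ are correct and agree with the paper.
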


\begin{proof}
Put $g_n(x)=xF(n-x)$.  By \eqref{eq:rank-two-closed-form}, we must minimize $\sum_i g_n(n_i)$ over partitions of $n=c+2$ into at least three positive parts.

First, let $u,v\ge1$ and $u+v\le n-2$, and write $w=n-u-v$.  If $w\ge4$, direct expansion gives
\begin{equation}\label{eq:rank-two-merge}
\begin{aligned}
2\bigl(g_n(u)+g_n(v)-g_n(u+v)\bigr)
={}&uv\bigl[u^2+v^2+3(u+v)w+6w^2\\
&\qquad-4(u+v)-16w+6\bigr]>0.
\end{aligned}
\end{equation}
Indeed, $3w-4>0$ and $6w^2-16w+6>0$ for $w\ge4$.
If $w=2$ or $3$, then $g_n(u+v)=0$, while at least one of $g_n(u),g_n(v)$ is positive for $n\ge5$.  Thus every minimizer has exactly three parts.

Let $a\le b\le d$ and $a+b+d=n$.  If $a\ge2$, transfer one element from the smallest class to the largest and set
\[
\Delta=g_n(a)+g_n(d)-g_n(a-1)-g_n(d+1).
\]
With $A=a-2$, $U=b-a$, and $V=d-b$, expansion gives
\begin{equation}\label{eq:rank-two-transfer}
\begin{aligned}
2\Delta={}&8+24A+32U+12V+12A^2+45AU+27AV\\
&+34U^2+33UV+5V^2+12A^2U+12A^2V\\
&+21AU^2+24AUV+3AV^2+10U^3+15U^2V+6UV^2+V^3.
\end{aligned}
\end{equation}
All coefficients are positive, so a minimizing triple has the form $(1,b,d)$.

Finally let $s=b+d=n-1$ and $p=bd$.  For $b\ge2$,
\begin{equation}\label{eq:rank-two-final-comparison}
2\bigl(g_n(b)+g_n(d)-g_n(1)\bigr)=E(s,p),
\quad E(s,p)=-2p^2+(s^2-s-4)p-s(s-1)(s-3).
\end{equation}
For $n\ge9$, $E$ is concave on $2(s-2)\le p\le\lfloor s^2/4\rfloor$.  At the left endpoint,
\[
s^3-10s^2+25s-16>0,
\]
while at the right endpoint, writing $s=2m$ or $s=2m+1$ gives, respectively,
\[
2m(m^3-5m^2+6m-3)>0
\qquad\text{or}\qquad
2m^3(m-3)>0.
\]
Hence $b=1$ is the unique minimizer for $n\ge9$, with value
\begin{equation}\label{eq:rank-two-sparse-value}
2F(n-1)=(n-1)(n-2)(n-4)=c(c+1)(c-2).
\end{equation}
The cases $n=5,6,7,8$ are the four exceptional rows in \eqref{eq:rank-two-equality-cases}, obtained by direct enumeration of the three-part partitions.
\end{proof}

\subsection{Consequences}

For $c\ge7$, the rank-two matroid $R_c$ with class sizes $(c,1,1)$ satisfies
\[
\Theta(R_c)=c(c+1)(c-2)=\dim_kT^2(R_c)_0
\]
and realizes the sharp minimum.  The uniform rank-two matroid $U_{2,c+2}$ has
\[
\Theta(U_{2,c+2})=\frac{(c+2)(c+1)c(c-2)}2,
\]
so the ratio of the sparse extremal value to the uniform value is $2/(c+2)$.

The theorem gives $\Theta(M)\ge6$ for every connected rank-two matroid of corank at least three.  Over every field, the pure-pair cup-product theorem and the graded polarization lemma of Section~\ref{sec:cup} turn these nonzero standard-degree-zero classes into actual nonliftable first-order deformations.

The proof is symbolic and independent of computation.  Additional exact checks are described in Section~\ref{sec:verification}; they are not premises of the proof.

\section{Integral pair obstructions and a global rank}\label{sec:pair-rank}

The coordinate-delta argument of Section~\ref{sec:cup} contains more
arithmetic information than fieldwise surjectivity.  It gives a split map
over the integers and therefore a canonical numerical size for the whole
pure-pair obstruction layer.

For a matroid $N$ and an element $p$, put
\[
\mathsf L^1_p(N)=
\widetilde H^0\!\left(\mathscr H_p(N);\ZZ\right),
\]
where $\mathscr H_p(N)$ is the discrete set of hyperplanes avoiding $p$.
For distinct $p,q$, put
\[
\mathsf L^2_{p,q}(N)=
\bigoplus_{\substack{H\text{ hyperplane of }N\\H\cap\{p,q\}=\varnothing}}
\widetilde H^0\!\left(\Gamma_{\{p,q\}}(H);\ZZ\right).
\]
Both groups are free abelian.  Interpreting
\eqref{eq:block-cup-coordinate} with integer-valued functions defines an
integral bilinear map
\begin{equation}\label{eq:integral-pair-map}
\smile_{\ZZ}:\mathsf L^1_p(N)\otimes_{\ZZ}\mathsf L^1_q(N)
\longrightarrow\mathsf L^2_{p,q}(N).
\end{equation}

\begin{theorem}[Integral pure-pair obstruction lattice]
\label{thm:integral-pair}
For every matroid $M$ and every pair of distinct actual vertices $p,q$, the
map \eqref{eq:integral-pair-map} for $N=M^*$ is a split surjection of
abelian groups.  Consequently its scalar extension is surjective over every
commutative ring.  Over a field $k$, the identifications
\eqref{eq:T1-hyperplane-functions} and \eqref{eq:pair-block} identify the
scalar extension with the cotangent cup product
\[
T^1_{-\ee_p}(M)\otimes_kT^1_{-\ee_q}(M)
\longrightarrow T^2_{-\ee_p-\ee_q}(M).
\]
In particular,
\[
\operatorname{rank}_{\ZZ}\mathsf L^2_{p,q}(M^*)
=\dim_kT^2_{-\ee_p-\ee_q}(M),
\]
and every nonzero pair block remains a split cup-product quotient after
reduction modulo every prime.
\end{theorem}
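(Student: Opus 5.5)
The plan is to rerun the coordinate-delta construction of Theorem~\ref{thm:pure-pair-surjectivity} over $\ZZ$ and observe that it yields an integral section.

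\textbf{Step 1: integral bases.} Both lattices are free abelian with standard bases.
\begin{itemize}
\item[] For $\mathsf L^2_{p,q}(N)$, in each block $H$ fix a base vertex $G_H\in\mathcal G_{\{p,q\}}(H)$. Then $\widetilde H^0(\Gamma_{\{p,q\}}(H);\ZZ)=\ZZ^{\mathcal G_{\{p,q\}}(H)}/\ZZ\one$, which is free on the images of the deltas $\delta_{(H,G)}$ with $G\ne G_H$. The graph is discrete by the remark preceding Corollary~\ref{cor:pair-block}.
\item[] For $\mathsf L^1_p(N)$, it is likewise $\ZZ^{\mathscr H_p}/\ZZ\one$, free.
\end{itemize}
The bilinear map \eqref{eq:integral-pair-map} is well defined on these quotients. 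Adding a constant to $\phi$ or to $\psi$ does not change the differences $\phi(Q_G)-\phi(H)$ and $\psi(P_G)-\psi(H)$. So \eqref{eq:block-cup-coordinate} descends, with one fixed sign per block.

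\textbf{Step 2: integral preimages of the deltas.} For each target coordinate $(H_0,G_0)$, take the $\{0,1\}$-valued functions $\phi_{(H_0,G_0)}$ and $\psi_{(H_0,G_0)}$ from the proof of Theorem~\ref{thm:pure-pair-surjectivity}. That proof's case analysis is purely combinatorial and uses no field arithmetic. It shows that $\phi\smile_{\ZZ}\psi=\pm\delta_{(H_0,G_0)}$ in $\ZZ^{\bigoplus_H\mathcal G(H)}$ modulo blockwise constants, with sign $\pm1$.

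Define a homomorphism $s:\mathsf L^2_{p,q}\to\mathsf L^1_p\otimes\mathsf L^1_q$ on the basis by
\[
[\delta_{(H,G)}]\longmapsto \pm\,\phi_{(H,G)}\otimes\psi_{(H,G)}\qquad (G\ne G_H),
\]
choosing the sign so that the cup product gives exactly $[\delta_{(H,G)}]$. Then $\smile_{\ZZ}\circ s=\mathrm{id}$, so $\smile_{\ZZ}$ is a split surjection.

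\textbf{Step 3: consequences.}
\begin{itemize}
\item[] A split surjection stays split surjective after $-\otimes_{\ZZ}\Lambda$ for any commutative ring $\Lambda$. This gives surjectivity over every ring, and in particular split cup-product quotients modulo every prime.
\item[] Over a field $k$, we have $\mathsf L^1_p\otimes k=k^{\mathscr H_p}/k\one$ and $\mathsf L^2_{p,q}\otimes k=\bigoplus_H k^{\mathcal G(H)}/k\one$. These are exactly \eqref{eq:T1-hyperplane-functions} and \eqref{eq:pair-block}.
\item[] Lemma~\ref{lem:cup-coordinate} says the cotangent cup product is given by the same integral formula \eqref{eq:block-cup-coordinate}, computed from the integral monomial Taylor presentation. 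So the scalar extension of $\smile_{\ZZ}$ is the cup product under these identifications.
\item[] The rank equality follows from Corollary~\ref{cor:pair-block}: both sides equal $\sum_H(|\mathcal G_{\{p,q\}}(H)|-1)_+$.
\end{itemize}

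\textbf{Main obstacle.} The expected difficulty is a subtle point in the identification of Step 3, namely that the orientation signs of the blocks, and the isomorphisms \eqref{eq:T1-hyperplane-functions} and \eqref{eq:pair-block} themselves, are defined over $\ZZ$ compatibly with base change. I would handle this as follows.
\begin{itemize}
\item[] The Altmann--Christophersen relative cochain complex, the closure retraction and the nerve identification are all maps of integral cochain complexes of simplicial or order complexes.
\item[] The relevant cohomology groups, $\widetilde H^{-1}$ and $\widetilde H^0$ of discrete sets, are torsion-free.
\end{itemize}
Hence the universal coefficient theorem makes the field identifications reductions of integral ones.
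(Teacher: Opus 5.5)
Your proposal is correct and follows the paper's proof: the $\{0,1\}$-valued decomposable preimages of all but one delta class in each block give an integral right inverse. Base change together with the integer coordinate formula of Lemma~\ref{lem:cup-coordinate} and Corollary~\ref{cor:pair-block} then gives the remaining assertions. The paper handles your ``main obstacle'' with that integer coordinate formula alone, which your Step~3 already invokes, so the universal-coefficient discussion is not needed.
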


\begin{proof}
Fix a target coordinate $(H_0,G_0)$ and put
\[
P_0=\cl_{M^*}(G_0\cup\{p\}),\qquad
Q_0=\cl_{M^*}(G_0\cup\{q\}),
\]
\[
\mathcal S(G_0)=
\{H:H\text{ is a hyperplane of }M^*,\
H\cap\{p,q\}=\varnothing,\ G_0\subset H\}.
\]
Define integer-valued representatives by
\[
\phi(K)=
\begin{cases}
1,&K=Q_0,\\
1,&K\in\mathcal S(G_0)\setminus\{H_0\},\\
0,&\text{otherwise},
\end{cases}
\qquad
\psi(K)=
\begin{cases}
1,&K=P_0,\\
0,&\text{otherwise}.
\end{cases}
\]
There is no conflict because $Q_0$ contains $q$, $P_0$ contains $p$, and
every member of $\mathcal S(G_0)$ avoids both.  The proof of
Theorem~\ref{thm:pure-pair-surjectivity}, now read over $\ZZ$, shows that
\eqref{eq:block-cup-coordinate} gives the coordinate delta class at
$(H_0,G_0)$, up to the fixed block orientation.  In each quotient
$\ZZ^{\mathcal G_{\{p,q\}}(H)}/\ZZ\one$, all but one of the delta classes
form a basis.  Choosing their displayed decomposable preimages defines a
right inverse to \eqref{eq:integral-pair-map}.  Scalar extension preserves a
split epimorphism, and the fieldwise identification follows from the same
integer coordinate formula.
\end{proof}

\begin{corollary}[Unimodularity and absence of bad primes]
\label{cor:unimodular-pair-cup}
After integral bases are chosen, every pure-pair cup matrix has Smith normal
form
\[
\operatorname{diag}(\underbrace{1,\ldots,1}_{
\operatorname{rank}\mathsf L^2_{p,q}},0,\ldots,0).
\]
Thus the pair obstruction quotient has an integral decomposable basis; its
dimension, nonvanishing pattern, and cup surjectivity are simultaneous over
all fields, including characteristic two.
\end{corollary}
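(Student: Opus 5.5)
The plan is to read the corollary off the explicit right inverse built in the proof of Theorem~\ref{thm:integral-pair}, working entirely over $\ZZ$ before any base change. Fix integral bases as follows. For $\mathsf L^1_p(N)=\wtH^0(\mathscr H_p(N);\ZZ)=\ZZ^{\mathscr H_p}/\ZZ\one$, choose a base point in $\mathscr H_p$ and use the delta classes of the other hyperplanes; do the same for $q$. For $\mathsf L^2_{p,q}(N)$, choose in each block $\ZZ^{\mathcal G_{\{p,q\}}(H)}/\ZZ\one$ a distinguished vertex and use the remaining delta classes. All three groups are free, so the cup map \eqref{eq:integral-pair-map} becomes an integer matrix $C$ with source $\mathsf L^1_p\otimes_\ZZ\mathsf L^1_q$, which is free with the tensor basis.

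The first step is the Smith normal form. Theorem~\ref{thm:integral-pair} gives a surjection $C$ onto a free group of rank $\rho=\operatorname{rank}\mathsf L^2_{p,q}$, together with a right inverse $\sigma$. Hence the source splits as $\im\sigma\oplus\ker C$, and $\ker C$ is free because it is a direct summand of a free group. In a basis adapted to this splitting, and the delta basis on the target, the matrix is $(I_\rho\mid 0)$. Equivalently, surjectivity over $\ZZ$ forces the gcd of the maximal $\rho\times\rho$ minors to be $1$, so every invariant factor is $1$. Since Smith normal form is independent of the chosen bases, this proves the displayed diagonal form for any choice of integral bases.

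The second step is the decomposable basis. The preimages chosen in that proof are pure tensors $\phi\otimes\psi$ with $\{0,1\}$-valued $\phi,\psi$. Their images are the delta classes, up to sign, and the nondistinguished ones form a basis of $\mathsf L^2_{p,q}$. This gives an integral basis of the obstruction quotient consisting of decomposable cup products.

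The third step is uniformity over fields. Tensoring the split surjection with any field $k$ keeps it a split surjection and sends the unimodular matrix to the same diagonal matrix over $k$. By the identification in Theorem~\ref{thm:integral-pair}, the base-changed map is the cotangent cup product. Its target has $k$-dimension $\rho$ for every $k$, so the dimension and the nonvanishing pattern, which depends on whether $\rho>0$, do not depend on $\operatorname{char}k$, including characteristic two. The only point needing care is to check that the integral groups really are the $\ZZ$-forms of the field-level groups. This holds because each is the $\wtH^0$ of a discrete set, so that $\wtH^0(X;\ZZ)\otimes k=\wtH^0(X;k)$ with no Tor terms. I expect no genuine obstacle here.
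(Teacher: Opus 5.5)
Your proposal is correct and follows the paper's route: the split epimorphism of free abelian groups from Theorem~\ref{thm:integral-pair} is integrally equivalent to the projection $(I_\rho\mid 0)$, so every invariant factor is $1$, and the field statements follow by base change of that split surjection. Your remarks on the $\{0,1\}$-valued decomposable preimages and on $\wtH^0$ of a discrete set commuting with $\otimes k$ spell out what the paper leaves implicit in the proof of Theorem~\ref{thm:integral-pair}.
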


\begin{proof}
A split epimorphism between free abelian groups is integrally equivalent to a
projection onto a direct summand.  This is precisely the asserted Smith
normal form.
\end{proof}

\begin{definition}[Pure-pair obstruction ranks]
\label{def:pure-pair-ranks}
For $p\ne q$ in $E(M)$, set
\[
V_p(M)=T^1_{-\ee_p}(M),\qquad
W_{p,q}(M)=T^2_{-\ee_p-\ee_q}(M),
\]
and define
\begin{equation}\label{eq:global-pair-rank-definition}
W_{\mathrm{pair}}(M)=
\bigoplus_{\{p,q\}\subseteq E(M)}W_{p,q}(M),\qquad
\mathfrak q(M)=\dim_kW_{\mathrm{pair}}(M).
\end{equation}
Loops contribute zero to these spaces; in statements about actual vertices
they are simply omitted. The value of $\mathfrak q(M)$ is independent of $k$. Define also the
standard-degree-zero one-link rank
\begin{equation}\label{eq:degree-zero-pair-rank-definition}
\mathfrak q_0(M)=
\sum_{e\in V(M)}
\sum_{\{p,q\}\subseteq V(\lk_M e)}
\dim_kT^2_{2\ee_e-\ee_p-\ee_q}(M),
\end{equation}
where loops in the link are omitted.
\end{definition}

\begin{theorem}[Global pure-pair obstruction rank]
\label{thm:global-pair-rank}
The direct sum of the pure-pair cup products is a split epimorphism
\[
\mu_M:
\bigoplus_{\{p,q\}\subseteq E(M)}V_p(M)\otimes_kV_q(M)
\twoheadrightarrow W_{\mathrm{pair}}(M)
\]
over every field, and it has an integral split form.  Moreover,
\begin{align}
\mathfrak q(M)
&=\sum_{\{p,q\}\subseteq E(M)}
\sum_{\substack{H\text{ hyperplane of }M^*\\
H\cap\{p,q\}=\varnothing}}
\bigl(|\mathcal G_{\{p,q\}}(H)|-1\bigr)_+,
\label{eq:global-pair-rank-formula}\\
\mathfrak q(M)=0
&\quad\Longleftrightarrow\quad
M\text{ has no }U_{1,4}\text{-minor}.
\label{eq:global-pair-rank-vanishing}
\end{align}
If $D$ is a cocircuit of cardinality $d\ge4$, then
\begin{equation}\label{eq:global-pair-cocircuit-bound}
\mathfrak q(M)\ge\binom d2(d-3).
\end{equation}
If $M$ is connected with rank at least two and corank at least three, then
the direct sum of the link-lifted cup products is split surjective onto the
space counted by $\mathfrak q_0(M)$, and
\begin{equation}\label{eq:degree-zero-pair-cocircuit-bound}
\mathfrak q_0(M)\ge\binom d2(d-3)
\end{equation}
for some cocircuit $D$ of cardinality $d\ge4$ in a one-element contraction
of $M$.
\end{theorem}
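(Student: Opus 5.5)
The proof assembles results already in hand, summand by summand. For the split epimorphism $\mu_M$, I take the direct sum over unordered pairs $\{p,q\}$ of the maps in Theorem~\ref{thm:integral-pair}. Each summand is a split surjection over $\ZZ$ onto $\mathsf L^2_{p,q}(M^*)$, and the target $W_{\mathrm{pair}}(M)$ is by definition the direct sum of the corresponding fine components. So the direct sum of the chosen right inverses is a right inverse for $\mu_M$, integrally and after every scalar extension. Pairs containing a loop contribute zero on both sides. This follows from \eqref{eq:T1-hyperplane-functions}: a loop of $M$ is a coloop of $M^*$ and lies in every hyperplane, so $\mathscr H_p=\varnothing$. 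It also follows from Corollary~\ref{cor:pair-block}, so these pairs may be dropped.

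Formula \eqref{eq:global-pair-rank-formula} comes from summing \eqref{eq:pair-dim} over pairs. For \eqref{eq:global-pair-rank-vanishing}, note that $\mathfrak q(M)=0$ says exactly that condition (2) of Theorem~\ref{thm:vanishing} holds, and that theorem makes this equivalent to condition (6). For \eqref{eq:global-pair-cocircuit-bound}, Proposition~\ref{prop:pure-cocircuit} gives $\dim W_{p,q}\ge d-3$ for each of the $\binom d2$ pairs inside $D$. Discarding all other pairs gives the bound.

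The degree-zero part is where some care is needed. I first decompose $\mathfrak q_0(M)$ as a sum over $e\in V(M)$. For each $e$, Corollary~\ref{cor:link-lifted-cup} with $A=\{e\}$, equivalently the localization isomorphisms in the proof of Theorem~\ref{thm:degree-zero-cup}, identifies $T^1_{\ee_e-\ee_p}(M)$ with $V_p(\lk_Me)$ and $T^2_{2\ee_e-\ee_p-\ee_q}(M)$ with $W_{p,q}(\lk_Me)$. These identifications are compatible with cups. Transporting the integral split epimorphism $\mu_{\lk_Me}$ across them and summing over $e$ gives the split surjection onto the space counted by $\mathfrak q_0(M)$, together with the identity $\mathfrak q_0(M)=\sum_e\mathfrak q(\lk_Me)$. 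The main point to check is that the splitting is integral after transport. This holds because the localization identifications are induced by the integral monomial presentation; multiplication by $x_e$ is the identity on these components.

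For \eqref{eq:degree-zero-pair-cocircuit-bound}, I choose $e$ by Lemma~\ref{lem:one-contraction}, so that $L=\lk_Me=M/e$ with loops deleted has a connected component $C$ of corank at least three. Theorem~\ref{thm:large-cocircuit} gives a cocircuit $D$ of $C$ with $d=|D|\ge4$, and $D$ is then also a cocircuit of the one-element contraction. The sum bound above, applied to $L$, gives $\mathfrak q_0(M)\ge\mathfrak q(L)\ge\binom d2(d-3)$. Applying Proposition~\ref{prop:pure-cocircuit} directly to $L$ is enough here, since a cocircuit of a direct summand is a cocircuit of $L$.
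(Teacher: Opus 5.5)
Your proposal is correct and follows the paper's proof essentially step for step: summing the integral splittings of Theorem~\ref{thm:integral-pair}, summing \eqref{eq:pair-dim}, invoking Theorem~\ref{thm:vanishing} and Proposition~\ref{prop:pure-cocircuit}, and transporting through Corollary~\ref{cor:link-lifted-cup} at an element chosen by Lemma~\ref{lem:one-contraction}; your identity $\mathfrak q_0(M)=\sum_e\mathfrak q(\lk_Me)$ makes explicit what the paper leaves implicit. One small slip in your loop remark: a loop $p$ of $M$ is a coloop of $M^*$, and a coloop is avoided by exactly one hyperplane, namely $E\setminus\{p\}$, so it does not lie in every hyperplane (that describes loops of $M^*$); the vanishing of these summands follows directly from the support criterion, since loops are not actual vertices.
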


\begin{proof}
Theorem~\ref{thm:integral-pair} gives a split epimorphism for each unordered
pair; taking direct sums proves the first assertion.  The pair formula
\eqref{eq:pair-dim} gives \eqref{eq:global-pair-rank-formula}, and the
vanishing equivalence follows from Theorem~\ref{thm:vanishing}, since the
pair layer detects the full $T^2$-vanishing locus.

For each pair in a cocircuit $D$, Proposition~\ref{prop:pure-cocircuit}
gives a target of dimension at least $d-3$.  The corresponding fine degrees
are distinct, so summing over the $\binom d2$ pairs proves
\eqref{eq:global-pair-cocircuit-bound}.  For the standard-degree-zero
assertion, choose $e$ and a connected component of $M/e$ as in
Lemma~\ref{lem:one-contraction}.  Apply the same cocircuit argument in that
link and transport the integral right inverses by
Corollary~\ref{cor:link-lifted-cup}.  This proves split surjectivity and
\eqref{eq:degree-zero-pair-cocircuit-bound}.
\end{proof}

\begin{corollary}[Direct-sum additivity]
\label{cor:pair-rank-direct-sum}
For matroids on disjoint ground sets,
\begin{equation}\label{eq:pair-rank-direct-sum}
\mathfrak q(M_1\oplus M_2)=\mathfrak q(M_1)+\mathfrak q(M_2).
\end{equation}
More precisely, a pair with one element in each summand has zero target,
whereas a pair contained in one summand has the same target as in that
summand.
\end{corollary}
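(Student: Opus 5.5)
The plan is to compare the pair targets of $M_1\oplus M_2$ with those of the summands one pair at a time, and then sum over pairs. First I will use the direct-sum formula \eqref{eq:direct-sum-T2} in the purely negative degree $-\ee_p-\ee_q$. Consider a summand $T^2(M_1)\otimes_k k[M_2]$. Its fine component of degree $-\ee_p-\ee_q$ is the sum of $T^2_{\mathbf c_1}(M_1)\otimes k[M_2]_{\mathbf c_2}$ over $\mathbf c_1+\mathbf c_2=-\ee_p-\ee_q$, where $\mathbf c_1$ lives on $E(M_1)$ and $\mathbf c_2$ on $E(M_2)$. Since $k[M_2]$ lives in nonnegative degrees, $\mathbf c_2$ is zero or positive. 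But $\mathbf c_2$ must equal the $E(M_2)$-coordinates of $-\ee_p-\ee_q$, which are nonpositive, so $\mathbf c_2=0$. Then $\mathbf c_1$ is the restriction of $-\ee_p-\ee_q$ to $E(M_1)$.

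This gives three cases. If $p,q\in E(M_1)$, the first summand contributes $T^2_{-\ee_p-\ee_q}(M_1)$. The second summand would need degree $-\ee_p-\ee_q$ on the $E(M_1)$-coordinates from $k[M_1]$, which is impossible. The case $p,q\in E(M_2)$ is symmetric. If $p\in E(M_1)$ and $q\in E(M_2)$, the first summand requires $k[M_2]$ in degree $-\ee_q$, which vanishes. The second summand requires $k[M_1]$ in degree $-\ee_p$, which also vanishes. So the mixed target is zero.

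As a cross-check, and to avoid relying on the exact grading conventions of \eqref{eq:direct-sum-T2}, I can argue via Corollary~\ref{cor:pair-block} directly. Hyperplanes of $(M_1\oplus M_2)^*=M_1^*\oplus M_2^*$ have the form $H_1\sqcup E(M_2^*)$ or $E(M_1^*)\sqcup H_2$. For a mixed pair, no hyperplane avoids both $p$ and $q$, so the sum in \eqref{eq:pair-dim} is empty. For a pair inside $E(M_1)$, only hyperplanes $H_1\sqcup E(M_2)$ with $H_1$ avoiding the pair contribute. The corank-two flats $G$ inside such a hyperplane that span with $p,q$ are exactly $G_1\sqcup E(M_2)$ with $G_1\in\mathcal G_{\{p,q\}}(H_1)$ computed in $M_1^*$. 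The alternative, a flat $H_1\sqcup G_2$, cannot span after adding $p,q\in E(M_1)$. Hence the block counts agree with those for $M_1$.

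Finally, I sum over unordered pairs using \eqref{eq:global-pair-rank-definition}. This gives \eqref{eq:pair-rank-direct-sum}. Loops contribute zero on both sides, so they need no separate treatment. The only delicate step is the rank bookkeeping in the flat identification: in a direct sum, rank is additive, and a flat of corank two splits its corank deficit between the two summands. I will check this step carefully, while the rest is routine.
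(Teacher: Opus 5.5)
Your proposal is correct, and your main argument is the paper's own proof: take the degree $-\ee_p-\ee_q$ part of \eqref{eq:direct-sum-T2}, note that a mixed pair would require a negative degree in one Stanley--Reisner ring factor, and note that for a pair inside one summand the other factor contributes only its degree-zero part $k$. Your hyperplane-block cross-check via Corollary~\ref{cor:pair-block} is also valid and gives an independent dimension count. It rests on two observations: hyperplanes of $M_1^*\oplus M_2^*$ have the form $H_1\sqcup E(M_2)$ or $E(M_1)\sqcup H_2$, and only flats $G_1\sqcup E(M_2)$ can span with a pair inside $E(M_1)$. As you note, this cross-check presumes actual vertices, so loops must be disposed of separately by the support criterion.
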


\begin{proof}
Take the degree $-\ee_p-\ee_q$ part of \eqref{eq:direct-sum-T2}.  If $p$ and
$q$ lie in different summands, each potential tensor term would require a
negative degree in a Stanley--Reisner ring factor, which is impossible.  If
both lie in one summand, the degree-zero part of the other ring is $k$ and
the target is unchanged.  Summing over unordered pairs proves
\eqref{eq:pair-rank-direct-sum}.
\end{proof}

\begin{corollary}[Rank-two pure-pair rank]
\label{cor:rank-two-pure-pair-rank}
Let $M$ be a loopless rank-two matroid with parallel classes
$P_1,\ldots,P_t$ of sizes $n_1,\ldots,n_t$, and put $n=\sum_i n_i$.  Then
\begin{align}
\mathfrak q(M)={}&
\sum_i\binom{n_i}{2}
\left(n_i-3+
\sum_{\substack{a<c\\a,c\ne i}}n_an_c\right)_+
\label{eq:rank-two-pair-rank}\\
&+\sum_{i<j}n_in_j
\sum_{k\ne i,j}n_k(n-3-n_k)_+.
\nonumber
\end{align}
For the rank-one uniform matroid,
\[
\mathfrak q(U_{1,n})=\binom n2(n-3)_+.
\]
\end{corollary}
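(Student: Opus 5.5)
The plan is to sum the pair formula \eqref{eq:pair-dim} over all unordered pairs $\{p,q\}$, taking $N=M^*$. By \eqref{eq:global-pair-rank-formula} this sum is exactly $\mathfrak q(M)$. A loopless rank-two matroid $M$ has no coloops when $t\ge3$, but the statement is meant for any loopless rank-two $M$, so I will not try to compute $N$ and its hyperplane graphs directly. Instead I will use Theorem~\ref{thm:rank-two-formulas}, which already computes every purely negative pair component $T^2_{-\ee_p-\ee_q}(L)$ for a loopless rank-two $L$.

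Next I split the unordered pairs into two kinds.

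\emph{Pairs inside one class.} If $p,q\in P_i$, case (1) of Theorem~\ref{thm:rank-two-formulas} gives
\[
\Bigl(n_i-3+\sum_{a<c,\ a,c\ne i}n_an_c\Bigr)_+ .
\]
This value depends only on $i$. There are $\binom{n_i}{2}$ such pairs, so together they give the first sum in \eqref{eq:rank-two-pair-rank}.

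\emph{Pairs across two classes.} If $p\in P_i$ and $q\in P_j$ with $i\ne j$, case (2) gives $\sum_{k\ne i,j}n_k(n-3-n_k)_+$. There are $n_in_j$ such pairs, which produces the second sum.

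Case (3) and the larger supports are not pair components, so they do not enter $\mathfrak q$.

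For $U_{1,n}$ I will not use the rank-two formula. Its Stanley--Reisner complex is the zero-dimensional complex $Z_E$ with $|E|=n$. Lemma~\ref{lem:zero-dimensional} gives $\dim_kT^2_{-\ee_p-\ee_q}=(n-3)_+$ for every pair $p\ne q$. Summing over the $\binom n2$ pairs gives $\binom n2(n-3)_+$.

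The only point needing care is that Theorem~\ref{thm:rank-two-formulas} applies to all loopless rank-two matroids, including disconnected ones with $t=2$, where the formulas still hold with empty sums. I expect no real obstacle beyond this bookkeeping. In particular, the characteristic-independence of each term, already noted after Definition~\ref{def:pure-pair-ranks}, makes the identity field-independent.
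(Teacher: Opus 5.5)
Your proposal is correct and follows the paper's proof. The paper likewise sums case (1) of Theorem~\ref{thm:rank-two-formulas} over the $\binom{n_i}{2}$ within-class pairs and case (2) over the $n_in_j$ cross pairs. For $U_{1,n}$ it cites the single cocircuit $E$, which is exactly your zero-dimensional-link computation via Lemma~\ref{lem:zero-dimensional}.
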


\begin{proof}
Sum the within-class case of Theorem~\ref{thm:rank-two-formulas} over the
$\binom{n_i}{2}$ pairs in $P_i$, and sum the between-class case over the
$n_in_j$ pairs in $P_i\times P_j$.  The rank-one formula follows directly
from the single cocircuit of $U_{1,n}$.
\end{proof}

\begin{theorem}[Sharp rank-two pure-pair extremum]
\label{thm:rank-two-pair-extremum}
Among connected loopless rank-two matroids of corank $c\ge3$, the sharp
minimum of the global pure-pair obstruction rank is
\begin{equation}\label{eq:rank-two-pair-minimum}
\min\mathfrak q(M)=
\begin{cases}
6,&c=3,\\
26,&c=4,\\
\dfrac{c(c-2)(c+3)}2,&c\ge5.
\end{cases}
\end{equation}
Writing the parallel-class sizes as an unordered partition of $c+2$, the
equality profiles are
\begin{equation}\label{eq:rank-two-pair-equality}
(2,2,1),\qquad(3,2,1),\qquad(c,1,1)\quad(c\ge5),
\end{equation}
respectively.  Thus the equality profile is unique up to permutation for
every $c\ge3$.
\end{theorem}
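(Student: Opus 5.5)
The plan is to minimize the explicit formula \eqref{eq:rank-two-pair-rank} over partitions $(n_1,\ldots,n_t)$ of $n=c+2$ with $t\ge3$. We follow the template of Theorem~\ref{thm:rank-two-extremum}: merging reduces to three parts, a transfer reduces to the profile $(1,b,d)$, and a final comparison fixes $b$. The first step is to evaluate the candidate values directly. For $(c,1,1)$, the within-class term is $\binom c2(c-3+1)=\binom c2(c-2)$. The between-class terms need care. The two singleton classes contribute a pair whose third class is $P_1$, giving $c(n-3-c)_+=c(c-1-c)_+=0$. Each of the $2c$ mixed pairs has third class a singleton, contributing $(n-4)=c-2$. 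The total is
\[
\binom c2(c-2)+2c(c-2)=\frac{c(c-2)(c+3)}2.
\]
Checking $(2,2,1)$ and $(3,2,1)$ similarly gives $6$ and $26$.

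Next comes the reduction to three classes. For $t\ge4$, merge the two smallest classes $P_u,P_v$ into one and compare. Within-class terms of the untouched classes $P_i$ lose the products $n_un_c$ and $n_vn_c$ but keep $n_un_v$, so they do not increase. The merged class gains $\binom{u+v}2(u+v-3+\cdots)$. Between-class terms involving $P_u$ or $P_v$ lose the summands where they are the third class, since $n-3-n_k$ is unchanged for the others. Rather than seek a clean inequality, I would bound crudely. Every term of \eqref{eq:rank-two-pair-rank} is at least as large as in the three-class case, because of monotonicity of the $(\cdot)_+$ expressions under refinement. I would then show that the four-class minimum already exceeds $c(c-2)(c+3)/2$, by using a lower bound from the between-class sum with $k$ ranging over at least two classes. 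The expected main obstacle is exactly here: merges can increase the within-class term quadratically, so the comparison should instead be made against the three-part partition $(n-2,1,1)$, not against the merged partition. Concretely, for $t\ge4$ I would show
\[
\sum_{i<j}n_in_j\sum_{k\ne i,j}n_k(n-3-n_k)_+\ge \frac{c(c-2)(c+3)}2,
\]
using that each $n_k\le n-3$ when $t\ge4$. This makes every $(n-3-n_k)_+=n-3-n_k\ge0$, and the sum is a symmetric polynomial that is minimized, by convexity and majorization, at $(n-3,1,1,1)$. Evaluating it there should give a value strictly above the target for $c\ge3$.

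For three classes $(a,b,d)$ with $a\le b\le d$, the formula becomes explicit. The within-class term is
\[
\sum\binom{n_i}2(n_i-3+n_jn_k)_+,
\]
and the between-class term is
\[
\sum n_in_jn_k(n-3-n_k)_+,
\]
which equals $abd\,\sum_k(n-3-n_k)_+$. When $a=1$ and $b\ge2$, I would compare with $(1,1,n-2)$ via a two-variable polynomial in $s=b+d$ and $p=bd$, as in \eqref{eq:rank-two-final-comparison}. I would also do a transfer computation from the smallest class to the largest when $a\ge2$, expanding in $A=a-2$, $U=b-a$, $V=d-b$ and checking positivity of the coefficients, with extra care for the $(\cdot)_+$ truncations when $d$ is close to $n-3$. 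Finally, I would settle the small cases $c=3,4$ (and $c=5,6$ as a check on where the generic comparison begins to hold) by direct enumeration of three-part partitions. This enumeration yields the uniqueness claims for $(2,2,1)$, $(3,2,1)$ and $(c,1,1)$.
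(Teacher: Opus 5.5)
\textbf{Gap: the reduction to three classes fails as planned.} Your candidate values $6$, $26$, $c(c-2)(c+3)/2$ are correct. Your three-class steps (a transfer toward $(1,b,d)$, then the $(s,p)$ comparison with $(1,1,c)$) match the paper's. The problem is the inequality you propose for $t\ge4$, which bounds the between-class sum alone by $c(c-2)(c+3)/2$. It is false for large $c$, already at the profile $(n-3,1,1,1)=(c-1,1,1,1)$ that you single out. Grouping that sum by triples gives
\[
\sum_{i<j}n_in_j\sum_{k\ne i,j}n_k(n-3-n_k)_+=6(c-1)(c-2)+3(c-2)=3(c-2)(2c-1).
\]
This is quadratic in $c$, while the target is cubic. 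The difference is
\[
\frac{c(c-2)(c+3)}2-3(c-2)(2c-1)=\frac{(c-2)(c^2-9c+6)}2,
\]
which is positive for every $c\ge9$; for example, at $c=9$ you get $357<378$. So no convexity or majorization argument can establish your inequality. The cubic size of the minimum comes from the within-class term $\binom{n_i}{2}(n_i-3+e_i)$ of a large class, so the $t\ge4$ case cannot be separated from that term. A small slip as well: merging $P_u,P_v$ removes the product $n_un_v$ from $e_i$ for untouched $i$ and keeps the products $n_un_c,n_vn_c$. This is the opposite of what you wrote, though your conclusion that these terms do not increase still holds.

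\textbf{How to repair it.} Compare the full $Q$ before and after a merge, as the paper does. Your heuristic that merging might raise $Q$ is wrong: for $t\ge4$ every merge strictly lowers $Q$. Write $C=n-3$ and
\[
Q(\nu)=\sum_i\binom{n_i}{2}(n_i-3+e_i)+\sum_i n_i(C-n_i)_+e_i .
\]
Subtracting directly, a merge of classes of sizes $x,y$ with $x+y\le C$ gives $2(Q(\nu)-Q(\nu'))=xy(\cdots)>0$. In the boundary case $x+y=C+1$, the remaining classes are forced to be $(1,1)$, and the difference is $xy(7C-3)-2C-2>0$. Hence every minimizer has exactly three classes, and your transfer and final comparison then finish the proof. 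As a check at your profile, the full value is $Q(c-1,1,1,1)=(c-2)(c^2+10c-5)/2$. This exceeds the target by $(c-2)(7c-5)/2$, and that excess is precisely the boundary merge inequality with $x=c-1$, $y=1$.
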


\begin{proof}
For a class profile $\nu=(n_1,\ldots,n_t)$, write $Q(\nu)$ for the
right-hand side of \eqref{eq:rank-two-pair-rank}.  Put $n=c+2$ and
$C=n-3$.  For each index $i$, let
\[
e_i=\sum_{\substack{j<\ell\\j,\ell\ne i}}n_jn_\ell.
\]
Regrouping the second sum by the omitted class gives
\begin{equation}\label{eq:pair-rank-regrouped}
Q(\nu)=\sum_i\binom{n_i}{2}(n_i-3+e_i)
+\sum_i n_i(C-n_i)_+e_i.
\end{equation}
The first positive part from \eqref{eq:rank-two-pair-rank} is unnecessary:
for a profile with at least three classes and $n\ge5$, one has
$n_i-3+e_i\ge0$.

Suppose $t\ge4$ and merge classes of sizes $x,y$ into a class of size
$z=x+y$.  Let $R$ be the remaining list, and put
\[
S=\sum_{r\in R}r,\qquad E=\sum_{r<s\in R}rs.
\]
Every original class has size at most $C$, while $z\le C+1$.  If $z\le C$,
direct subtraction in \eqref{eq:pair-rank-regrouped} gives
\begin{align}
2\bigl(Q(\nu)-Q(\nu')\bigr)
=xy\Biggl(&2\sum_{r\in R}\left[\binom r2+r(C-r)\right]
+8-3z\nonumber\\
&+S(4C-z-2)+2E\Biggr)>0.
\label{eq:pair-rank-merge}
\end{align}
Indeed, $S\ge2$, and the expression not already visibly nonnegative is at
least $4+8C-5z\ge4+3C$.  If $z=C+1$, necessarily $R=(1,1)$, and subtraction
reduces to
\begin{equation}\label{eq:pair-rank-merge-boundary}
2\bigl(Q(\nu)-Q(\nu')\bigr)=xy(7C-3)-2C-2>0,
\end{equation}
because $xy\ge C$ and $C\ge2$.  Repeated merging strictly decreases $Q$,
so a minimizer has exactly three classes.

Let $a\le b\le d$ and $a+b+d=n$.  Except for $(a,b,d)=(1,1,c)$, all three
terms $C-a,C-b,C-d$ are nonnegative, and
\begin{align}
2Q(a,b,d)={}&a(a-1)(a-3+bd)+b(b-1)(b-3+ad)\nonumber\\
&+d(d-1)(d-3+ab)+2abd(2n-9).
\label{eq:pair-rank-three-classes}
\end{align}
If $a\ge2$, set $A=a-1$, $U=b-a$, and $V=d-b$.  Subtracting the value after
the transfer $(a,b,d)\mapsto(a-1,b,d+1)$ gives
\begin{align}
2\bigl(Q(a,b,d)-Q(a-1,b,d+1)\bigr)
={}&15A^2+3A-4+(15A^2+28A-3)U\nonumber\\
&+(15A^2+8A-2)V+11U^2+2V^2+8UV\nonumber\\
&+25AU^2+30AUV+5AV^2\nonumber\\
&+10U^3+15U^2V+5UV^2>0.
\label{eq:pair-rank-transfer}
\end{align}
All coefficients in the first line are positive for $A\ge1$, and the other
terms are nonnegative.  Iteration reduces every minimizer to $(1,b,d)$.

Put $s=b+d=c+1$ and $p=bd$.  For $b\ge2$,
\begin{equation}\label{eq:pair-rank-one-b-d}
2Q(1,b,d)=s(s-1)(s-3)+2(s-4)p.
\end{equation}
The product $p=b(s-b)$ is minimized on $2\le b\le s/2$ at $b=2$.  The
exceptional profile has value
\begin{equation}\label{eq:pair-rank-concentrated}
Q(1,1,c)=\frac{c(c-2)(c+3)}2.
\end{equation}
For $b=2$, the difference between \eqref{eq:pair-rank-one-b-d} and
\eqref{eq:pair-rank-concentrated} is $c^2-6c+6$.  This is negative for
$c=3,4$ and positive for every $c\ge5$.  The cases $c=3,4$ give the first
two profiles in \eqref{eq:rank-two-pair-equality}, while the concentrated
profile is uniquely minimizing for $c\ge5$.  Strictness in the merge,
transfer, and product comparisons proves uniqueness.
\end{proof}

\begin{corollary}[Exact stability gap]
\label{cor:rank-two-pair-stability}
Let $c\ge5$.  If a connected loopless rank-two matroid of corank $c$ does
not have class profile $(c,1,1)$, then
\begin{equation}\label{eq:rank-two-pair-stability}
\mathfrak q(M)\ge
\frac{c(c-2)(c+3)}2+c^2-6c+6.
\end{equation}
Equality holds exactly for the profile $(c-1,2,1)$, up to permutation.
\end{corollary}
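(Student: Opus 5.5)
We want the stability gap. The plan is to reuse the reduction steps from the proof of Theorem~\ref{thm:rank-two-pair-extremum}, but now track the second-best value instead of the minimum. Let $\nu$ be any class profile of $n=c+2$ with $t\ge3$ classes, other than $(c,1,1)$. The target bound is $Q(\nu)\ge Q(1,2,c-1)$. A direct check from \eqref{eq:pair-rank-one-b-d} with $b=2$, $s=c+1$ gives
\[
Q(1,2,c-1)=Q(1,1,c)+c^2-6c+6,
\]
so it suffices to show that $(1,2,c-1)$ is the unique minimizer of $Q$ once $(c,1,1)$ is excluded.

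First dispose of profiles with $t\ge4$. Here I would not merge blindly, because merging might land exactly on $(c,1,1)$. Instead, choose the merge so that the result avoids $(1,1,c)$. With four or more classes one can always merge two classes so that the resulting profile is not $(1,1,c)$ and still has at least three classes. Only $(c-1,1,1,1)$ forces the result $(c,1,1)$ when merging a $1$ into the big class, and there one merges two $1$'s instead, getting $(c-1,2,1)$. By \eqref{eq:pair-rank-merge} and \eqref{eq:pair-rank-merge-boundary} each merge strictly decreases $Q$. Iterating, every profile with $t\ge4$ has $Q$ strictly larger than that of some three-class profile different from $(1,1,c)$. This reduces the problem to three-class profiles.

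For three classes $a\le b\le d$ with $a\ge2$, use the transfer $(a,b,d)\mapsto(a-1,b,d+1)$ from \eqref{eq:pair-rank-transfer}, which strictly decreases $Q$. The output has $b\ge a\ge2$, so it is never $(1,1,c)$. Hence every three-class profile with $a\ge2$ is strictly beaten by some $(1,b,d)$ with $b\ge2$. Within the family $(1,b,d)$ with $b\ge2$, formula \eqref{eq:pair-rank-one-b-d} reads
\[
2Q=s(s-1)(s-3)+2(s-4)bd .
\]
Since $s-4=c-3>0$, this is strictly increasing in $bd=b(s-b)$. The product is uniquely minimized on $2\le b\le s/2$ at $b=2$, giving $(1,2,c-1)$. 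This proves the inequality and shows that equality can occur only for $(c-1,2,1)$. Equality for that profile holds by the computation above. The $t\ge4$ and $a\ge2$ cases are strict, so uniqueness up to permutation follows.

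The main obstacle is the merge step. One must check that the merge-choice argument is valid in every case and that \eqref{eq:pair-rank-merge} applies with the hypothesis $c\ge5$; in particular the boundary case $z=C+1$ must be treated via \eqref{eq:pair-rank-merge-boundary}. Beyond that, the argument is a bookkeeping refinement of the extremum proof.
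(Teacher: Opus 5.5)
Your proof is correct. It differs from the paper's only in how profiles with at least four classes are handled.

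The paper merges classes arbitrarily. If the final three-class profile is $(1,1,c)$, it looks at the penultimate profile, which must be $(1,1,x,y)$ with $x+y=c$. It then uses \eqref{eq:pair-rank-merge-boundary} to obtain the explicit estimate $2\bigl(Q(1,1,x,y)-Q(1,1,c)\bigr)=xy(7c-10)-2c>2(c^2-6c+6)$. You instead steer the last merge so that it never produces $(1,1,c)$. Then only the strict monotonicity of merges and transfers is needed, and no further inequality has to be checked. The paper's version has the advantage that any merge sequence works, at the cost of that one estimate.

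Your stated reason for the steering is slightly inaccurate. The profile $(c-1,1,1,1)$ is not the only four-class profile with a merge landing on $(1,1,c)$: for $(1,1,x,y)$ with $x,y\ge2$, merging $x$ and $y$ does so too. The correct observation is this. For $t\ge5$ every merge leaves at least four classes. For $t=4$, a merge yields $(1,1,c)$ only if both unmerged classes are singletons. Since $c+2>4$, some class has size at least two, and merging two of the other three classes leaves it intact, so the result is not $(1,1,c)$.

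With that fix, the rest goes through as you describe. The transfer step keeps the middle entry at least $2$, so it never reaches $(1,1,c)$. The strict monotonicity of $b(c+1-b)$ in \eqref{eq:pair-rank-one-b-d} on $2\le b\le (c+1)/2$ then gives both the bound and the uniqueness of $(1,2,c-1)$.
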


\begin{proof}
Among nonextremal three-class profiles, the transfer calculation reduces to
$(1,b,c+1-b)$ with $b\ge2$.  Equation~\eqref{eq:pair-rank-one-b-d} and the
monotonicity of $b(c+1-b)$ show that the unique minimum occurs at $b=2$,
and its difference from \eqref{eq:pair-rank-concentrated} is
$c^2-6c+6$.  If a profile has at least four classes, merging strictly lowers
$Q$.  When the final three-class profile is nonextremal, the preceding bound
is strict.  If the final profile is $(1,1,c)$, the penultimate profile is
$(1,1,x,y)$ with $x+y=c$, and
\eqref{eq:pair-rank-merge-boundary} gives
\[
2\bigl(Q(1,1,x,y)-Q(1,1,c)\bigr)
=xy(7c-10)-2c,
\]
which is strictly larger than $2(c^2-6c+6)$ for $c\ge5$.  Equality can
therefore occur only for $(1,2,c-1)$.
\end{proof}

\begin{remark}[Two different rank-two minima]
The invariants $\mathfrak q(M)$ and $\Theta(M)=\dim_kT^2(M)_0$ are different
in rank two.  The sharp values in \eqref{eq:rank-two-pair-minimum} measure
the complete purely negative pair layer, whereas
Theorem~\ref{thm:rank-two-extremum} measures the standard-degree-zero
one-link layer.  Thus the pair layer begins with $6,26,\ldots$, while the
degree-zero layer begins with $6,32,81,168,\ldots$.
\end{remark}

\section{The standard-degree-zero coefficient formula}
\label{sec:degree-zero-master}

The complete fine formula has a useful coefficient extraction which is
independent of the rank.  For an independent set $A\in M$, put
$L_A=\lk_M A$.  If $B$ is a squarefree subset of the actual vertices of
$L_A$ with $|B|\ge2$, define
\[
\omega_M(A,B)=
\sum_{\substack{H\text{ hyperplane of }L_A^*\\H\cap B=\varnothing}}
\bigl(\kappa(\Gamma^A_B(H))-1\bigr)_+,
\]
where $\kappa$ denotes the number of connected components and an empty graph
contributes zero.  Thus, by the complete hyperplane-graph formula,
\[
\omega_M(A,B)=\dim_kT^2_{-\one_B}(L_A).
\]

\begin{theorem}[Standard-degree-zero master formula]
\label{thm:degree-zero-master}
For every finite matroid $M$ and every field $k$,
\begin{equation}\label{eq:degree-zero-master}
\dim_kT^2(M)_0=
\sum_{\substack{\varnothing\ne A\in M}}
\sum_{\substack{B\subseteq V(L_A)\\|B|\ge2}}
\binom{|B|-1}{|A|-1}\,\omega_M(A,B).
\end{equation}
In particular, the standard-degree-zero obstruction dimension is
characteristic-independent and is computable from finitely many flat graphs
in the links of independent sets.
\end{theorem}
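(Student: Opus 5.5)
The plan is to specialize the complete fine Hilbert series of Corollary~\ref{cor:complete-Hilbert} to standard total degree zero and count exponent vectors. By the support criterion together with \eqref{eq:link-formula}, every nonzero fine component of $T^2(M)$ has degree $\mathbf a-\one_B$. Here $A=\supp\mathbf a$ is independent, $B\subseteq V(L_A)$ is nonempty, and the dimension depends only on the pair $(A,B)$. Theorem~\ref{thm:universal-graph} then gives this dimension as $\omega_M(A,B)=\dim_kT^2_{-\one_B}(L_A)$. Singleton supports $B$ contribute nothing to $T^2$ by \eqref{eq:singleton-formula}, so only $|B|\ge2$ survives.

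I will split by whether $A$ is empty. The case $A=\varnothing$ forces total degree $-|B|<0$, so these purely negative components never lie in degree zero; this is why the outer sum runs over $\varnothing\ne A$.

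For fixed nonempty $A$ and fixed $B$, the degree-zero components with positive support exactly $A$ are indexed by vectors $\mathbf a\in\NN^E$ with $\supp\mathbf a=A$ and $\sum_{a\in A}a_a=|B|$. Writing $a_a=1+a'_a$, these correspond to compositions of $|B|$ into $|A|$ positive parts. There are $\binom{|B|-1}{|A|-1}$ of them, a count that is zero automatically when $|A|>|B|$. Multiplying this count by $\omega_M(A,B)$ and summing over $(A,B)$ gives \eqref{eq:degree-zero-master}. Equivalently, I will extract the coefficient of $t^0$ after setting all $t_e=t$ in \eqref{eq:complete-Hilbert}. The factor $\prod_{a\in A}t/(1-t)$ has coefficient of $t^{|B|}$ equal to $\binom{|B|-1}{|A|-1}$, so the two descriptions agree.

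Characteristic independence then follows from Corollary~\ref{cor:complete-Hilbert}: each $\omega_M(A,B)$ is a count of graph components on flats of $L_A^*$. Finiteness holds because $A$ and $B$ range over finite sets, and for fixed $(A,B)$ only finitely many $\mathbf a$ occur.

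The only point needing care is the claim that the isomorphism \eqref{eq:link-formula} is independent of the positive exponents, so that every composition contributes the same $\omega_M(A,B)$. Lemma~\ref{lem:localization} states this, so I expect no real obstacle beyond the bookkeeping that excludes the case $A=\varnothing$.
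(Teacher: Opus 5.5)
Your proposal is correct and follows essentially the same route as the paper: link localization identifies each component with positive support $A$ and negative support $B$ with $T^2_{-\one_B}(L_A)$, of dimension $\omega_M(A,B)$, and degree zero is counted by the $\binom{|B|-1}{|A|-1}$ compositions of $|B|$ into $|A|$ positive parts, with $A=\varnothing$ excluded. Your alternative phrasing, extracting the $t^0$ coefficient after specializing \eqref{eq:complete-Hilbert}, is the same count.
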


\begin{proof}
Fix a positive support $A$ of size $s$ and a negative support $B$ of size
$b$.  The link localization formula identifies the corresponding fine
component with $T^2_{-\one_B}(L_A)$, and hence its dimension is
$\omega_M(A,B)$.  The standard total degree is zero exactly when the sum of
the positive exponents is $b$.  The number of positive exponent vectors with
support exactly $A$ and total sum $b$ is
\[
\#\{(u_a)_{a\in A}:u_a\ge1,\ \sum_{a\in A}u_a=b\}
=\binom{b-1}{s-1},
\]
with the usual convention that this is zero for $b<s$.  The empty positive
support cannot contribute because $B$ is nonempty.  Summing over $A$ and
$B$ proves \eqref{eq:degree-zero-master}; the graph formula gives the
characteristic independence.
\end{proof}

\begin{corollary}[Finite coefficient algorithm]
\label{cor:degree-zero-algorithm}
For a rank-$r$ matroid, equation \eqref{eq:degree-zero-master} only requires
independent supports $A$ with $1\le|A|\le r-1$.  A fixed graph block with
negative support $B$ is counted with the binomial multiplicity
$\binom{|B|-1}{|A|-1}$.  Thus the standard-degree-zero slice is a finite
rank-stratified incidence invariant, rather than a separate cohomology
calculation.
\end{corollary}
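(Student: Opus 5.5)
The plan is to read off Corollary~\ref{cor:degree-zero-algorithm} directly from Theorem~\ref{thm:degree-zero-master}, by restricting which supports can contribute nonzero terms. There are three assertions to establish. First, only supports with $1\le|A|\le r-1$ matter. Second, each block with negative support $B$ enters with multiplicity $\binom{|B|-1}{|A|-1}$. Third, the whole quantity is a finite incidence count.

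For the range of $|A|$, the sum in \eqref{eq:degree-zero-master} already excludes $A=\varnothing$. If $|A|=r$, then $A$ is a basis and $L_A=\lk_MA$ is the empty complex. Its actual vertex set is therefore empty, so no $B$ with $|B|\ge2$ exists and the inner sum vanishes. Since independent sets have size at most $r$, this leaves $1\le|A|\le r-1$.

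For the multiplicity, I will reuse the count in the proof of Theorem~\ref{thm:degree-zero-master}. A fixed pair $(A,B)$ determines the graph blocks $\Gamma^A_B(H)$ of $(\lk_MA)^*$, and it contributes $\omega_M(A,B)$ once for each positive exponent vector with support $A$ and total $|B|$. The number of such vectors is $\binom{|B|-1}{|A|-1}$. By Lemma~\ref{lem:localization}, the fine component is independent of the positive exponents, so each such vector contributes the same block.

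For finiteness, the data form a finite set: independent sets $A$, subsets $B\subseteq V(L_A)$, hyperplanes and rank-$(s-2)$ flats of $L_A^*$. The numbers $\omega_M(A,B)$ are component counts of the graphs $\Gamma^A_B(H)$ from Theorem~\ref{thm:universal-graph}, and these are determined by rank conditions alone. No step is a real obstacle; the only point needing care is the convention that an empty link has no actual vertices, which is exactly what kills the $|A|=r$ terms.
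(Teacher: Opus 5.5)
Your proposal is correct and follows the paper's own argument. The paper likewise discards $|A|\ge r$ because the link of a basis has rank zero and no actual vertices, and it takes the multiplicity $\binom{|B|-1}{|A|-1}$ from the composition count in the proof of Theorem~\ref{thm:degree-zero-master}; your remarks on finiteness and on exponent-independence via Lemma~\ref{lem:localization} only make explicit what the paper leaves implicit.
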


\begin{proof}
If $|A|\ge r$, its link has rank zero and has no actual vertex supporting a
nonzero negative component.  The remaining assertion is the coefficient
count in the proof of Theorem~\ref{thm:degree-zero-master}.
\end{proof}

\section{A complete rank-three formula}
\label{sec:rank-three-formula}

We now make \eqref{eq:degree-zero-master} explicit for simple rank-three
matroids.  Write
\[
f(d)=\binom d2(d-3)_+.
\]
For a rank-two flat $L$ call $L$ a line, and set
\[
\Theta_3(M)=
\sum_{L\text{ line}}\binom{|L|}{2}f(n-|L|),
\qquad n=|E(M)|.
\]
Here $\Theta_3(M)=\Theta(M)$: a line of a simple rank-three matroid has
$\binom{|L|}{2}$ bases, so this is \eqref{eq:Theta-cocircuit} specialized
to rank three. For a point $e$, let $L_1(e),\ldots,L_{t_e}(e)$ be the lines through $e$ and
put
\[
m_{e,i}=|L_i(e)|-1.
\]
The $m_{e,i}$ are positive and sum to $n-1$.  For a three-element index set
$I=\{i,j,k\}$, define
\[
\rho_e(I)=\#\{\ell:m_{e,\ell}-\one_{\ell\in I}>0\}.
\]
Finally put
\begin{align}
\Phi_e(M)={}&
\sum_i\binom{m_{e,i}}2
\left(m_{e,i}-3+
\sum_{\substack{j<k\\j,k\ne i}}m_{e,j}m_{e,k}\right)_+
\label{eq:rank-three-point-formula}\\
&+\sum_{i<j}m_{e,i}m_{e,j}
\sum_{k\ne i,j}m_{e,k}(n-4-m_{e,k})_+\nonumber\\
&+(n-5)_+
\sum_{\substack{i<j<k\\\rho_e(\{i,j,k\})\le1}}
m_{e,i}m_{e,j}m_{e,k}.\nonumber
\end{align}

\begin{theorem}[Complete rank-three degree-zero formula]
\label{thm:rank-three-formula}
For every simple rank-three matroid $M$ on $n$ elements and every field $k$,
\begin{equation}\label{eq:rank-three-formula}
\dim_kT^2(M)_0=\Theta_3(M)+\sum_{e\in E(M)}\Phi_e(M).
\end{equation}
Consequently the entire standard-degree-zero dimension is determined by the
global line sizes and the line-pencil partitions at the points.
\end{theorem}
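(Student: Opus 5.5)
We specialize the master formula, Theorem~\ref{thm:degree-zero-master}, to $r=3$. By Corollary~\ref{cor:degree-zero-algorithm} only independent positive supports with $1\le|A|\le2$ contribute. The proof then splits into the two-element supports, which give $\Theta_3(M)$, and the singleton supports, which give $\sum_e\Phi_e(M)$.

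\emph{Two-element supports.} For $|A|=2$ the link $L_A$ is the zero-dimensional complex on $D_A=E\setminus\cl_MA$. By Lemma~\ref{lem:zero-dimensional}, $\omega_M(A,B)=(d_A-3)_+$ when $|B|=2$ and $B\subseteq D_A$, and it vanishes otherwise. The multiplicity $\binom{|B|-1}{|A|-1}=\binom11$ equals one, so this part is exactly $\Theta(M)$, as in Proposition~\ref{prop:rank-two-closed-form} and Theorem~\ref{thm:top-strand}. Since $M$ is simple, the closure of a two-element independent set is a line $L$. Grouping the sets $A$ by their line gives $\binom{|L|}2$ choices with $d_A=n-|L|$, hence $\Theta_3(M)$.

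\emph{Singleton supports.} For $A=\{e\}$ the multiplicity is $\binom{|B|-1}{0}=1$, so we need $\sum_{|B|\ge2}\dim_kT^2_{-\one_B}(\lk_Me)$. The link $\lk_Me$ is the loopless rank-two matroid on $E\setminus\{e\}$; by simplicity it has no loops, and contraction loops are deleted. Its parallel classes are the sets $L_i(e)\setminus\{e\}$, with sizes $m_{e,i}$ and total $n'=n-1$. We apply Theorem~\ref{thm:rank-two-formulas} with $n$ replaced by $n-1$.
\begin{enumerate}
\item Case (1), summed over the $\binom{m_{e,i}}2$ pairs inside a class, gives the first line of \eqref{eq:rank-three-point-formula}.
\item Case (2), summed over the $m_{e,i}m_{e,j}$ cross pairs, gives $\sum_k m_{e,k}(n'-3-m_{e,k})_+=\sum_k m_{e,k}(n-4-m_{e,k})_+$, which is the second line.
\item Case (3) requires $B$ to be a transversal triple from classes $i,j,k$. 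Its contribution is $(n'-4)_+=(n-5)_+$ when the complement meets at most one class, and zero otherwise.
\end{enumerate}
For case (3), a class $\ell$ still meets the complement $E\setminus(\{e\}\cup B)$ exactly when $m_{e,\ell}-\one_{\ell\in I}>0$. The condition ``at most one class remains'' is therefore $\rho_e(I)\le1$. There are $m_{e,i}m_{e,j}m_{e,k}$ transversals for each such $I$, which produces the third line. All other squarefree supports vanish, as Theorem~\ref{thm:rank-two-formulas} states: either they are not circuits, or they are non-minimal nonfaces, or they are bases other than pairs. This also covers supports of size at least $4$, and triples that meet some class twice, since such a triple contains a circuit pair and is not itself a circuit.

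\emph{Bookkeeping.} The main obstacle is checking that the positive-part conventions in the paper's formula agree with Theorem~\ref{thm:rank-two-formulas} in degenerate cases. These are a link with fewer than three parallel classes, where the link is disconnected and the counts must still be correct, and small $n$. If a point $e$ has only two lines through it, the triple sums are empty. The cross-pair term then has no third class and vanishes. Theorem~\ref{thm:rank-two-formulas} did not assume connectedness, so this should agree. Finally, the characteristic independence and the dependence only on line sizes and pencil partitions are read off from the resulting expression.
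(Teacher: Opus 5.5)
Your proposal is correct and follows essentially the same route as the paper's proof. You split the degree-zero sum by positive-support size, identify the $|A|=2$ strand with $\Theta_3(M)$ through the zero-dimensional link, and read the $|A|=1$ strand off the three cases of Theorem~\ref{thm:rank-two-formulas} applied to $M/e$ on $n-1$ points, with the surviving-class condition translated into $\rho_e(I)\le1$. Your concern about points lying on only two lines is harmless, because Theorem~\ref{thm:rank-two-formulas} is stated for arbitrary loopless rank-two matroids, not only connected ones.
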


\begin{proof}
The support criterion allows positive support of size at most two.  If
$|A|=2$, simplicity makes $A$ independent and its link has rank one.  Degree
zero forces a negative pair, and the rank-one-link formula contributes
$f(n-|\cl_M(A)|)$.  Grouping pairs $A$ with the same line closure gives
$\Theta_3(M)$.

If $A=\{e\}$, the link $M/e$ is rank two and its parallel classes are
$L_i(e)\setminus\{e\}$ of sizes $m_{e,i}$.  The rank-two formula has three
possible negative supports.  A pair in one class gives the first line of
\eqref{eq:rank-three-point-formula}; a pair in two different classes gives
the second line; and a triple from three classes survives exactly when,
after removing the selected points, at most one parallel class remains,
which is the condition $\rho_e(I)\le1$.  The coefficient in the latter case
is $(n-5)_+$.  Summing over $e$ proves the formula.  No other positive
support contributes, and all terms are graph dimensions, so the result is
characteristic-independent.
\end{proof}

\begin{corollary}[Uniform rank three]
\label{cor:uniform-rank-three}
For $n\ge6$,
\[
\dim_kT^2(U_{3,n})_0=
\binom n2\binom{n-2}{2}(n-5)
+n\binom{n-1}{2}(n-3)(n-5).
\]
\end{corollary}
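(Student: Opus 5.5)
We specialize Theorem~\ref{thm:rank-three-formula} to $M=U_{3,n}$, which is simple of rank three for $n\ge3$. For $n\ge6$ we evaluate the two contributions separately.

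\textbf{The term $\Theta_3$.} Every line of $U_{3,n}$ is a $2$-set, so there are $\binom n2$ lines. Each satisfies $\binom{|L|}{2}=1$, and its complementary cocircuit has size $n-2$. Hence
\[
\Theta_3(U_{3,n})=\binom n2 f(n-2)=\binom n2\binom{n-2}{2}(n-5),
\]
using $(n-5)_+=n-5$ for $n\ge6$. This is the first summand of the claimed formula.

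\textbf{The point contributions $\Phi_e$.} Through each point $e$ pass $t_e=n-1$ lines, and all the multiplicities $m_{e,i}$ equal $1$. We treat the three lines of \eqref{eq:rank-three-point-formula} in turn.
\begin{itemize}
\item[(a)] The first line vanishes, since $\binom{1}{2}=0$.
\item[(b)] In the second line, each inner summand is $m_{e,k}(n-4-m_{e,k})_+=n-5$. There are $n-3$ choices of $k\ne i,j$ and $\binom{n-1}{2}$ pairs $i<j$. This line therefore contributes $\binom{n-1}{2}(n-3)(n-5)$.
\item[(c)] In the third line, $\rho_e(I)$ counts the lines $\ell\notin I$, so $\rho_e(I)=n-4$. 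The condition $\rho_e(I)\le1$ thus holds only when $n\le5$, and this line vanishes for $n\ge6$.
\end{itemize}
Consequently $\Phi_e=\binom{n-1}{2}(n-3)(n-5)$. Summing over the $n$ points gives the second summand, and adding $\Theta_3$ proves the corollary.

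As a consistency check, case (b) is just case (2) of Theorem~\ref{thm:rank-two-formulas} applied to the link $U_{2,n-1}$. That link has $n-1$ classes of size $1$, and the formula gives $(n-3)\bigl((n-1)-3-1\bigr)=(n-3)(n-5)$ per pair, in agreement with the count above.

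The only step that needs care is (c), specifically the hypothesis $n\ge6$. For $n=5$ the triple term survives with coefficient $(n-5)_+=0$, so it still contributes nothing. The restriction to $n\ge6$ is needed only so that $(n-5)_+$ may be written as $n-5$ in the first two summands.
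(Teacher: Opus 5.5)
Your proof is correct and follows the paper's argument: substitute the data of $U_{3,n}$ (every line has two points, every pencil consists of $n-1$ singleton classes) into Theorem~\ref{thm:rank-three-formula}, and observe that the triple term vanishes. Your term-by-term evaluation, including the computation $\rho_e(I)=n-4$, spells out the substitution that the paper leaves implicit.
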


\begin{proof}
Every line has two points, and every point pencil has $n-1$ singleton
classes.  Substitute these data into \eqref{eq:rank-three-formula}; the
triple term is zero.
\end{proof}

\begin{corollary}[The Fano plane]
\label{cor:fano-rank-three}
For the Fano matroid $F_7$,
\[
\dim_kT^2(F_7)_0=357
\]
over every field.
\end{corollary}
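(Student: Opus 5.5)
The plan is to apply Theorem~\ref{thm:rank-three-formula} directly to $F_7$, which is a simple rank-three matroid on $n=7$ points, and to evaluate the two pieces $\Theta_3(F_7)$ and $\sum_e\Phi_e(F_7)$ separately. As an independent check, Corollary~\ref{cor:projective-plane} with $q=2$ already gives the value $357$, together with the split $126+231$ between the rank-one-link and rank-two-link strands. I would still carry out the direct computation, since it uses only the rank-three line data.

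First I compute $\Theta_3$. The Fano plane has $7$ lines, each of size $3$, so each line contributes
\[
\binom32 f(7-3)=3\cdot\binom42\cdot1=18.
\]
This gives $\Theta_3(F_7)=7\cdot18=126$, which matches $\Theta(PG(2,2))=7\cdot3\cdot\binom42\cdot1$.

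Next I compute $\Phi_e$ for a fixed point $e$. Through each point pass three lines, so $m_{e,1}=m_{e,2}=m_{e,3}=2$, and these sum to $6=n-1$. The three lines of \eqref{eq:rank-three-point-formula} evaluate as follows.
\begin{itemize}
\item[(i)] The within-class term has $\binom22=1$ for each $i$, and the inner expression is $2-3+2\cdot2=3$. It contributes $3\cdot3=9$.
\item[(ii)] The between-class term runs over $3$ pairs $i<j$, each with weight $m_{e,i}m_{e,j}=4$. The inner sum has the single index $k$, giving $2\cdot(7-4-2)_+=2$. It contributes $3\cdot4\cdot2=24$.
\item[(iii)] The triple term has only $I=\{1,2,3\}$. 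Removing one point from each class leaves every $m_{e,\ell}-1=1>0$, so $\rho_e=3>1$ and the term vanishes.
\end{itemize}
Hence $\Phi_e=33$ for every $e$, and $\sum_e\Phi_e=7\cdot33=231$.

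Adding the two pieces gives $126+231=357$, in agreement with Corollary~\ref{cor:projective-plane}. Field independence follows from Theorem~\ref{thm:rank-three-formula}. The only point needing care is the exclusion of the triple term via the condition $\rho_e\le1$, together with the check that the positive parts in lines (i) and (ii) are evaluated correctly; beyond that the argument is pure arithmetic, so I expect no real obstacle.
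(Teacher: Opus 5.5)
Your proposal is correct and follows the paper's proof essentially verbatim. The paper also applies Theorem~\ref{thm:rank-three-formula} with $\Theta_3(F_7)=7\binom32\binom42=126$ and evaluates the pencil partition $(2,2,2)$ to get contributions $9$ and $24$ and a vanishing triple term (since $\rho_e(\{1,2,3\})=3$), hence $126+7\cdot33=357$, consistent with Corollary~\ref{cor:projective-plane} at $q=2$.
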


\begin{proof}
The seven lines each have three points. Contracting any point gives three
parallel classes of size two, not singleton classes. Thus
\[
\Theta_3(F_7)=7\binom32\binom42(4-3)=126.
\]
For the partition $(2,2,2)$, the first two lines of
\eqref{eq:rank-three-point-formula} contribute $9$ and $24$, respectively;
the last line vanishes because $\rho_e(\{1,2,3\})=3$. Hence
$\Phi_e(F_7)=33$ and the total is $126+7\cdot33=357$.
\end{proof}

For later use, the rank-three near-pencil on $n\ge5$ points, with one line of size $n-1$, has
\begin{equation}\label{eq:rank-three-near-pencil-value}
\dim_kT^2(M)_0=
\frac32(n-1)(n-2)(n-3)(n-5).
\end{equation}

\section{Obstruction rigidity and inverse reconstruction}
\label{sec:obstruction-rigidity}

Brehm and Constantinescu proved that the dimensions of all multigraded components of $T^1(M)$ recover every nondiscrete matroid \cite[Theorem~5.5]{BrehmConstantinescu}. We consider instead a restricted family of components of $T^2$. Under the hypotheses below, even their nonvanishing pattern suffices for reconstruction.

The labeled fine degrees in the rank-one-link part retain more information
than their total dimension.  For a rank-$r$ matroid define the witness profile
\[
\mathscr W_M(A;p,q)=
\dim_kT^2_{\one_A-\ee_p-\ee_q}(M),
\]
where $A$ ranges over $(r-1)$-subsets and $p\ne q$ are outside $A$.

\begin{theorem}[Cocircuit reconstruction]
\label{thm:cocircuit-reconstruction-new}
Let $M$ be loopless of rank $r\ge2$ and suppose every cocircuit has size at
least four.  Then the labeled profile $\mathscr W_M$ determines $M$ up to
relabeling.  More precisely, for every independent $(r-1)$-set $A$, if
\[
D_A=\{p:\mathscr W_M(A;p,q)>0\text{ for some }q\},
\]
then $D_A=E(M)\setminus\cl_M(A)$, and these sets recover all cocircuits.
\end{theorem}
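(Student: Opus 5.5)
The plan is to read everything off the rank-one-link strand, Theorem~\ref{thm:top-strand}, or equivalently Proposition~\ref{prop:exact-cocircuit}. Fix an independent $(r-1)$-set $A$. Then $\cl_M A$ is a hyperplane, and its complement $D_A^{\mathrm{true}}=E(M)\setminus\cl_M(A)$ is a cocircuit of size $d_A\ge4$ by hypothesis. Theorem~\ref{thm:top-strand} gives $\mathscr W_M(A;p,q)=d_A-3\ge1$ exactly when $\{p,q\}\subseteq D_A^{\mathrm{true}}$, and $\mathscr W_M(A;p,q)=0$ for every other pair outside $A$. This covers pairs with one element in $\cl_M(A)\setminus A$, and also the case when $A$ is dependent, where the support criterion kills the component.

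First consequence: for independent $A$, each $p\in D_A^{\mathrm{true}}$ has a partner $q\in D_A^{\mathrm{true}}\setminus\{p\}$ because $d_A\ge4\ge2$. Conversely, any $p$ with a positive label lies in $D_A^{\mathrm{true}}$. Hence $D_A=E\setminus\cl_M(A)$, as claimed. The profile also recognizes which $(r-1)$-sets are independent: these are exactly the $A$ for which some label is positive. A dependent $A$ has all labels zero, whereas an independent one has $d_A\ge4$ and hence a positive label.

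Since every hyperplane is the closure of one of its bases, which is an independent $(r-1)$-set, the sets $D_A$ run through all cocircuits of $M$. So the profile determines the family of cocircuits on the labeled ground set. The cocircuits determine $M^*$ through its circuits, and hence determine $M$; this gives reconstruction up to the relabeling implicit in the labeled data. I would also note that the ground set is recovered, since looplessness together with the indexing of the profile by $E$ fixes it. The rank is recovered from the size of the index sets $A$.

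The only delicate step, I expect, is the bookkeeping for degenerate labels. One must check that pairs with $p$ or $q$ in $\cl_M(A)\setminus A$ really give zero. This holds because such elements are not actual vertices of $\lk_M A=Z_{D_A}$, so the support criterion applies. One must also check that the hypothesis $d\ge4$ is exactly what forces $(d_A-3)_+>0$, so that no cocircuit is invisible. Both checks are direct from the support criterion and Lemma~\ref{lem:zero-dimensional}, so I expect no genuine obstacle.
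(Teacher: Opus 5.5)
Your proposal is correct and takes essentially the same route as the paper. Both arguments read off $\mathscr W_M(A;p,q)=d_A-3$ for pairs in $E\setminus\cl_M(A)$, and $0$ otherwise, from the rank-one-link strand, use $d_A\ge4$ so that every element of that cocircuit carries a positive label, and recover all cocircuits because every hyperplane is spanned by an independent $(r-1)$-set. Your remark that the profile itself detects which $(r-1)$-sets are independent (exactly those with some positive label) makes explicit a point the paper leaves implicit.
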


\begin{proof}
The rank-one-link formula gives
\[
\mathscr W_M(A;p,q)=
\begin{cases}
|D_A|-3,&A\in M,\ p,q\in D_A,\\
0,&\text{otherwise}.
\end{cases}
\]
The cocircuit hypothesis makes the first case nonzero for every independent
$A$, so the union of the negative-coordinate supports is exactly $D_A$.
Every hyperplane has a basis of size $r-1$, and cocircuits determine the
matroid.  Hence the profile reconstructs $M$.
\end{proof}

\begin{theorem}[Rank-three obstruction rigidity]
\label{thm:rank-three-rigidity-new}
Let $M$ and $M'$ be simple rank-three matroids on the same labeled ground set
$E$, with $|E|=n\ge7$.  If
\[
\dim_kT^2_{\ee_a+\ee_b-\ee_p-\ee_q}(M)
=\dim_kT^2_{\ee_a+\ee_b-\ee_p-\ee_q}(M')
\]
for all disjoint unordered pairs $\{a,b\}$ and $\{p,q\}$, then
$M=M'$.  In fact, only the nonvanishing pattern is needed.
\end{theorem}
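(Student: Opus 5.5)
The plan is to read off every line of $M$ from the nonvanishing pattern, since the lines of a simple rank-three matroid determine it. The only lines the pattern does not see directly are very large ones, and I expect a counting argument to show there is at most one such line, which can then be recovered separately.

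\emph{Computing the components.} Fix a disjoint pair $A=\{a,b\}$ and a pair $\{p,q\}$. Because $M$ is simple, $A$ is independent of size $r-1=2$, so Theorem~\ref{thm:top-strand} (equivalently Proposition~\ref{prop:exact-cocircuit} and Lemma~\ref{lem:zero-dimensional}) applies. Write $L_{ab}=\cl_M\{a,b\}$ for the line through $a,b$ and $D_{ab}=E\setminus L_{ab}$. Then
\[
\dim_kT^2_{\ee_a+\ee_b-\ee_p-\ee_q}(M)=
\begin{cases}
|D_{ab}|-3,&p,q\in D_{ab},\ |D_{ab}|\ge4,\\
0,&\text{otherwise}.
\end{cases}
\]
So whenever $|L_{ab}|\le n-4$, the set
\[
D_{ab}=\{p:\text{the component is nonzero for some }q\}
\]
is visible from the nonvanishing pattern alone. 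Hence $L_{ab}=E\setminus D_{ab}$ is determined by the pattern. Call a pair $\{a,b\}$ \emph{silent} if all of its components vanish. By the display, $\{a,b\}$ is silent exactly when $|L_{ab}|\ge n-3$.

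\emph{At most one large line.} This is the step where the hypothesis $n\ge7$ enters. Two distinct lines of a simple rank-three matroid meet in at most one point. Two lines of size at least $n-3$ would therefore cover at least $2(n-3)-1=2n-7$ points. For $n\ge7$ this is at least $n$, with equality only when $n=7$; the $n=7$ case needs a separate check. In that case the two lines would have to cover $E$ exactly. A third point off one line then still forces, via a third line, a contradiction to rank three, or alternatively to the covering count. I expect this small boundary case to be the main place needing care, and I would handle it by direct counting with $|L|\ge4$. Granting this, there is at most one line $L_0$ with $|L_0|\ge n-3$. The silent pairs are then exactly the $2$-subsets of $L_0$, so the graph of silent pairs is complete on $L_0$. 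Since $|L_0|\ge n-3\ge4$, $L_0$ is recovered as the set of vertices of the silent graph. If there are no silent pairs, no large line exists.

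\emph{Conclusion.} Let $M'$ have the same nonvanishing pattern on the same labeled set. The same procedure then produces the same family of lines: the lines $E\setminus D_{ab}$ from the non-silent pairs, together with $L_0$ if it exists. Every pair $\{a,b\}$ lies in exactly one line, so $M$ and $M'$ have identical sets of rank-two flats. A simple rank-three matroid is determined by its lines, so $M=M'$. Only vanishing versus nonvanishing was used, which gives the final sentence of the statement.
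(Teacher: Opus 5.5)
\paragraph{Gap at $n=7$.} Your argument is correct for $n\ge8$, but the step ``there is at most one line with $|L_0|\ge n-3$'' is false at $n=7$. The counting you expect to close that case does not give a contradiction. Take $E=\{1,\ldots,7\}$ with the four-point lines $\{1,2,3,4\}$ and $\{4,5,6,7\}$, and the nine pairs $\{i,j\}$ with $i\in\{1,2,3\}$, $j\in\{5,6,7\}$ as two-point lines. Every pair lies in exactly one line, and distinct lines meet in at most one point, so this is a simple rank-three matroid. The two long lines cover $E$ exactly, with $4+4-1=7$ points, and $\{1,2,5\}$ is a basis, so neither rank three nor the covering count is violated. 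Here every pair inside either four-point line has $|D_{ab}|=3$ and is silent. The silent graph is therefore two copies of $K_4$ glued at the vertex $4$. Your rule ``$L_0$ is the vertex set of the silent graph'' would output all of $E$ as one line, so your procedure does not reconstruct $M$ in this case. The case cannot be discarded either, because $n=7$ is part of the statement and the paper's sharpness remark shows $n=6$ genuinely fails.

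\paragraph{The fix.} The paper handles this by recovering the undetected lines as the maximal cliques of size at least two in the silent (zero-profile) graph. At $n=7$ there are at most two lines of size at least $n-3=4$, since three such lines would need at least $12-3=9$ points. If there are two, they are four-point lines meeting in exactly one point. Distinct lines share no pair, so no silent edge joins $L_1\setminus L_2$ to $L_2\setminus L_1$. Hence the maximal cliques are exactly $L_1$ and $L_2$. Equivalently, for each silent edge, its endpoints together with their common silent neighbours form its line. With this change, the rest of your proof matches the paper's: the rank-one-link computation of the components, the recovery of each detected line as $E\setminus D_{ab}$, the conclusion from the line system, and the observation that only nonvanishing is used.
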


\begin{proof}
Fix $A=\{a,b\}$.  Its closure is a line $L_A$, and the rank-one-link
formula says that the profile is nonzero precisely when
$|E\setminus L_A|\ge4$; in that case the union of the negative supports is
$E\setminus L_A$.  Thus every detected line is recovered.

Define the zero-profile graph on $E$ by joining $a,b$ when all the
components with positive support $\{a,b\}$ in the statement vanish.
Its edges are exactly the pairs contained in undetected lines.
An undetected line has size at least $n-3$. For $n\ge8$ two distinct such
lines cannot coexist, since distinct lines meet in at most one point.  For
$n=7$ there are at most two; if there are two, they are four-point lines
meeting in one point and are the maximal cliques of the zero-profile graph.
In either case its maximal cliques of size at least two are exactly the
undetected lines; isolated vertices do not represent lines.
Hence the undetected lines are also recovered. The rank-two flats determine
a simple rank-three matroid, proving the assertion.
\end{proof}

\begin{corollary}[Explicit inverse algorithm]
\label{cor:rank-three-inverse-algorithm}
For a simple rank-three matroid on $n\ge7$ labeled elements, the lines can be
reconstructed in polynomial time from the nonzero components with positive
support a pair and negative support a pair: take the complement of the union
of the negative supports for every detected pair, and recover any remaining
large lines as maximal cliques in the zero-profile graph.
\end{corollary}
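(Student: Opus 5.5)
The statement to prove is the corollary immediately above: an explicit polynomial-time algorithm that reconstructs the lines of a simple rank-three matroid on $n\ge7$ labeled elements. The plan is to read off the proof of Theorem~\ref{thm:rank-three-rigidity-new} as an algorithm, using the rank-one-link formula (Theorem~\ref{thm:top-strand}, equivalently Theorem~\ref{thm:reconstruction}) as the only input. Complexity is measured in terms of the given list of nonzero components. For each unordered pair $A=\{a,b\}$, the candidate components $T^2_{\ee_a+\ee_b-\ee_p-\ee_q}(M)$ are indexed by the $O(n^2)$ pairs $\{p,q\}$ disjoint from $A$. So the input has $O(n^4)$ entries, and each step below is a polynomial scan of this list.

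Step one recovers the detected lines. Fix $A=\{a,b\}$; by simplicity $A$ is independent. Theorem~\ref{thm:top-strand} gives
\[
\dim_kT^2_{\ee_a+\ee_b-\ee_p-\ee_q}(M)=(|E\setminus L_A|-3)_+
\]
when $p,q\notin L_A:=\cl_M A$, and zero otherwise. Hence, if some component with positive support $A$ is nonzero, then $|E\setminus L_A|\ge4$. The graph of nonzero pairs is then complete on $E\setminus L_A$, so the union of the negative supports equals $E\setminus L_A$. Taking the complement returns $L_A$. Collecting these sets over all detected $A$ costs $O(n^4)$ time and yields exactly the lines of size at most $n-4$.

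Step two recovers the remaining lines, those of size at least $n-3$, which I will call large. Build the zero-profile graph, joining $a,b$ exactly when no nonzero component has positive support $\{a,b\}$. By step one, its edges are precisely the pairs contained in large lines. The key combinatorial point, already used in Theorem~\ref{thm:rank-three-rigidity-new}, is that large lines are few.
\begin{itemize}
\item For $n\ge8$, two distinct large lines would cover at least $2(n-3)-1>n$ points, since distinct lines meet in at most one point. So there is at most one large line, and it is the unique nontrivial component (a clique) of the zero-profile graph.
\item For $n=7$, two large lines are four-point lines sharing one point, and a third would exceed the ground set. So the zero-profile graph is a union of at most two cliques meeting in at most one vertex.
\end{itemize}
In both cases the maximal cliques of size at least two are found by a direct scan: take connected components, or split at the unique common vertex. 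Since the graph has such restricted shape, this runs in polynomial time, not via general clique finding.

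Finally, the detected lines together with the large lines are all rank-two flats, and these determine the simple rank-three matroid, so the output is correct. The main obstacle I expect is verifying the $n=7$ clique bookkeeping. One must rule out a spurious maximal clique formed by edges from two different large lines; this cannot happen because such edges share at most one vertex. One must also confirm that isolated vertices are not reported as lines. Everything else is direct bookkeeping.
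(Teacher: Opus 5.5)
Your proposal is correct and follows essentially the same route as the paper: read the proof of Theorem~\ref{thm:rank-three-rigidity-new} as an algorithm, recover the lines of size at most $n-4$ from the rank-one-link formula, and recover the lines of size at least $n-3$ from the zero-profile graph, which is at most two cliques meeting in one vertex. The only difference is cosmetic: the paper extracts each clique as the endpoints of an edge together with their common neighbours, removing duplicates, whereas you use connected components and, when $n=7$, split at the shared vertex; both are polynomial and correct given that restricted shape.
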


\begin{proof}
This is the constructive procedure in the proof of
Theorem~\ref{thm:rank-three-rigidity-new}. The nonisolated part of the
zero-profile graph is a union of at most two cliques meeting in at most
one vertex. For each edge, its endpoints together with their common
neighbors recover its clique; removing duplicates therefore finds all
the required cliques in polynomial time. Testing a candidate triple
against the recovered line list decides independence.
\end{proof}

\begin{remark}[Sharpness of the seven-point bound]
\label{rem:rigidity-sharpness}
The bound $n\ge7$ in Theorem~\ref{thm:rank-three-rigidity-new} is sharp.  On
six labeled points, a pair is detected exactly when its line has two points,
and the detected block then carries the data
$(d_A-3,E\setminus A)=(1,E\setminus A)$, which depends only on the pair
itself.  The two copies of $M(K_4)$ with line sets
\[
\{123,\,145,\,246,\,356\}
\qquad\text{and}\qquad
\{124,\,135,\,236,\,456\}
\]
both have exactly the two-point lines $16$, $25$, and $34$, so their labeled
profiles coincide, while the matroids differ.
\end{remark}

\section{The sharp rank-three extremal law}
\label{sec:rank-three-extremal}

Put
\begin{equation}\label{eq:rank-three-extremal-value}
P_n=\frac32(n-1)(n-2)(n-3)(n-5).
\end{equation}
By \eqref{eq:rank-three-near-pencil-value}, this is the value of the
near-pencil on $n$ points.

For a list $(m_1,\ldots,m_t)$ of pencil class sizes, let
$\varphi_n(m_1,\ldots,m_t)$ denote the right-hand side of
\eqref{eq:rank-three-point-formula}, with the $m_i$ substituted for the
classes at one point.  We first record two concentration estimates.

\begin{lemma}[Pencil concentration bound]
\label{lem:pencil-concentration-new}
Let $L$ be a line of a simple rank-three matroid on $n=l+d$ elements, with
$l\ge4$ and $d\ge4$, and suppose $L$ is a longest line.  If $e\notin L$,
then
\begin{equation}\label{eq:pencil-concentration}
\Phi_e(M)\ge
\tau(l,d):=\binom{l-1}{2}d(3l+2d-14)
+3\binom{l-1}{3}(l+d-5).
\end{equation}
\end{lemma}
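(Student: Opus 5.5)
The plan is to bound $\Phi_e(M)$ from below by keeping only the contributions that the longest line $L$ forces into the pencil at $e$. Since $e\notin L$ and $M$ is simple of rank three, each of the $l$ points of $L$ lies on a distinct line through $e$, and these lines are pairwise distinct. Hence the pencil at $e$ has at least $l$ classes, each containing exactly one point of $L$.

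Put $m_i=|L_i(e)|-1$. The $m_i$ sum to $n-1=l+d-1$, and each satisfies $m_i\le l-1$ because $L$ is a longest line. All terms of \eqref{eq:rank-three-point-formula} are nonnegative, so we may discard any of them. The target $\tau(l,d)$ has two summands, and I would match them with the second line (between-class pairs) and the first line (within-class pairs) of $\Phi_e$, respectively.

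\textbf{Step 1: a lower bound by a simpler function.} Write $\varphi_n(m_1,\ldots,m_t)$ for $\Phi_e$. The second line contains terms $m_im_jm_k(n-4-m_k)_+$, and for these the bound $m_k\le l-1$ gives $n-4-m_k\ge d-3\ge1$. I would show that $\varphi_n$, restricted to profiles with at least $l$ parts, each of size at most $l-1$, summing to $n-1$, is minimized when the profile is as concentrated as the constraint allows. This should be proved by a transfer or merge argument analogous to \eqref{eq:pair-rank-transfer} and \eqref{eq:pair-rank-merge} in the rank-two extremal proofs. Concentration is limited because there must be at least $l$ classes. The extremal candidate should be one class of size roughly $d$ together with $l-1$ singletons, or a nearby profile. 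The exact shape has to be read off from which profile produces $\binom{l-1}{2}d(3l+2d-14)+3\binom{l-1}{3}(l+d-5)$.

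\textbf{Step 2: matching the two summands of $\tau$.}

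\begin{itemize}
\item The term $3\binom{l-1}{3}(l+d-5)=3\binom{l-1}{3}(n-5)$ has the form of the triple line $(n-5)_+\sum m_im_jm_k$, but triples require $\rho_e\le1$, which seldom holds. So I would instead obtain it from the second line: take triples of singleton classes $\{i,j\}$, $k$ among the $l-1$ singletons, each contributing $(n-5)$, and count them over the choices of $k$. This gives $3\binom{l-1}{3}(n-5)$.
\item The $\binom{l-1}{2}d(\cdots)$ term comes from pairs of singletons $i<j$ together with the big class $k$ of size $\approx d$, plus the pairs involving the big class.
\end{itemize}

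I would verify the identity $\varphi_n(\text{candidate})=\tau(l,d)$ in general, or at least the inequality $\varphi_n(\text{candidate})\ge\tau(l,d)$, by direct expansion.

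\textbf{Main obstacle.} I expect Step 1 to be the hardest part. Positive parts $(\cdot)_+$ and the $\rho_e$ condition make $\varphi_n$ only piecewise polynomial, and concentration moves can cross breakpoints such as $m_k=n-4$. These crossings cannot occur here, since $m_k\le l-1\le n-5$. My first approach would therefore be cruder. I would drop the first and third lines entirely, except the within-class term for the largest class, and bound the second line directly. For each pair of classes meeting $L$, I would use $m_k(n-4-m_k)\ge m_k(d-3)$ and $\sum_k m_k\ge$ the complementary mass. This yields a lower bound by symmetric functions of the $m_i$ under the constraints $m_i\ge1$, at least $l$ parts, and $\sum m_i=n-1$, which can then be minimized by convexity. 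If this crude bound falls short of $\tau$, I would return to the full transfer argument.
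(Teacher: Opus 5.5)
Your target profile and your Step~2 bookkeeping are right: the second line of \eqref{eq:rank-three-point-formula}, evaluated at $(d,1,\ldots,1)$, equals $\tau(l,d)$ exactly, and the paper finishes with that same substitution. The gap is the step that carries the lemma, namely that this profile minimizes what you keep. You only announce it, and as you set it up it would not work.

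First, Step~1 asks for concentration-minimality of the full $\varphi_n$. The within-class first line is not monotone under merging, so the first and third lines must be discarded \emph{before} optimizing, not after.

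Second, and more seriously, your constraint $m_i\le l-1$ is the wrong one. When $d\ge l$ it excludes $(d,1,\ldots,1)$ itself, so ``as concentrated as the constraint allows'' gives a different profile whose value is not $\tau$. The bound that matters is $m_i\le d$: a line through $e\notin L$ meets $L$ at most once, so it has at most $d+1$ points. Equivalently, with at least $l$ classes summing to $n-1$, no class exceeds $n-1-(l-1)=d$. Relax to all lists of at least $l$ positive integers summing to $n-1$. Since $d=n-l\le n-4$, the positive parts in the second line then disappear.

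Your crude fallback does not rescue this. For $l=d=4$, the admissible profile $(3,2,1,1)$ makes it certify only $(d-3)\,3e_3=51$, or $66$ if you keep the largest within-class term. Both are below $\tau(4,4)=81$.

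The missing computation is short. Group the three contributions of each triple of classes and use $e_1=n-1$:
\[
F_n=\sum_{i<j<k}m_im_jm_k\bigl(3(n-4)-m_i-m_j-m_k\bigr)=(2n-11)e_3+4e_4 .
\]
Let $R$ be the remaining parts, with sum $S$ and second elementary symmetric function $E_2(R)$.
\begin{itemize}
\item Splitting a part $a+b$ into $a,b$ raises $F_n$ by $ab\bigl((2n-11)S+4E_2(R)\bigr)\ge0$.
\item Moving one unit from $a$ to $b\ge a$ lowers $F_n$ by $(b-a+1)\bigl((2n-11)S+4E_2(R)\bigr)\ge0$.
\end{itemize}
So merge down to exactly $l$ parts; each part is then automatically at most $d$. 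Transfer until you reach $(d,1,\ldots,1)$. Your substitution then gives $\Phi_e(M)\ge F_n\ge\tau(l,d)$, which is exactly the paper's proof.
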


\begin{proof}
The classes of the pencil at $e$ have sizes $m_1,\ldots,m_t$, where
$t\ge l$, $\sum_i m_i=n-1$, and $m_i\le d$. Retain the second line of
\eqref{eq:rank-three-point-formula}, which counts pairs from different
classes. Since $m_i\le d=n-l\le n-4$, its positive parts can be removed.
Grouping its three contributions for each unordered triple of indices gives
\[
F_n(m_1,\ldots,m_t)=
\sum_{i<j<k}m_im_jm_k
\bigl(3(n-4)-m_i-m_j-m_k\bigr).
\]
For a list of remaining entries $R$ with sum $S$ and second elementary
symmetric function $E_2(R)$, splitting an entry $a+b$ changes this
quantity by
\[
ab\bigl(S(2n-11)+4E_2(R)\bigr)\ge0.
\]
Thus merging two entries whose sum is at most $d$ cannot increase $F_n$.
If there are more than $l$ entries, two entries have sum at most $d$;
otherwise, for the two smallest entries $a\le b$,
\[
\sum_i m_i\ge a+lb\ge\frac{(l+1)(d+1)}2>l+d-1,
\]
which is impossible.  Merge until there are $l$ entries.  With two entries
$2\le a\le b<d$ fixed, transferring one unit from $a$ to $b$ changes $F_n$ by
\[
-(b-a+1)\bigl(S(2n-11)+4E_2(R)\bigr)\le0.
\]
Iteration gives the extremal list $(d,1,\ldots,1)$, and substitution gives
\eqref{eq:pencil-concentration}.  The other summands in
\eqref{eq:rank-three-point-formula} are nonnegative.
\end{proof}

\begin{lemma}[Long-line point bound]
\label{lem:long-line-point-new}
Under the hypotheses of Lemma~\ref{lem:pencil-concentration-new}, every
point $a\in L$ satisfies
\begin{equation}\label{eq:long-line-point}
\Phi_a(M)\ge\binom{l-1}{2}(l-4),
\qquad
\sum_{a\in L}\Phi_a(M)\ge
\lambda(l):=\frac{l(l-1)(l-2)(l-4)}2.
\end{equation}
\end{lemma}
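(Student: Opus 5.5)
We bound $\Phi_a(M)$ from below by keeping only the first line of \eqref{eq:rank-three-point-formula}, which counts pairs inside one pencil class. Every summand in that formula is nonnegative, so dropping the other lines can only decrease the value. Fix $a\in L$. One class of the pencil at $a$ is $L\setminus\{a\}$, of size $m_1=l-1$. The remaining classes partition the other $d$ points, so their sizes $m_2,\ldots,m_t$ sum to $d\ge4$.

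The index $i=1$ in the first line then contributes
\[
\binom{l-1}{2}\Bigl(l-4+\sum_{\substack{j<k\\ j,k\ne1}}m_jm_k\Bigr)_+ .
\]
Since $l\ge4$, the bracket is at least $l-4\ge0$, so the positive part is harmless. The elementary symmetric sum $\sum m_jm_k$ over the classes off $L$ is nonnegative. This already gives $\Phi_a(M)\ge\binom{l-1}{2}(l-4)$.

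The longest-line hypothesis is not needed for this first bound. It could be used to sharpen the estimate: simplicity and $d\ge4$ force at least two classes off $L$, since all $d$ points lying on one line through $a$ would need a line of size $d+1$. We do not need that refinement here.

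Summing over the $l$ points of $L$ gives
\[
\sum_{a\in L}\Phi_a(M)\ge l\binom{l-1}{2}(l-4)=\frac{l(l-1)(l-2)(l-4)}2=\lambda(l),
\]
which is the second inequality in \eqref{eq:long-line-point}.

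The only point to check is that the $i=1$ term really has the stated form. The class $L\setminus\{a\}$ appears exactly once in the pencil at $a$, because two distinct lines through $a$ meet only in $a$. The first line of \eqref{eq:rank-three-point-formula} sums over every class with no additional constraint, so this term is present as written. Nothing here looks like a real obstacle; the main care is in discarding only nonnegative terms.
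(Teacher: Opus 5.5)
Your argument is correct and is essentially the paper's proof: keep only the first-line summand of \eqref{eq:rank-three-point-formula} indexed by the class $L\setminus\{a\}$ of size $l-1$, bound its bracket below by $l-4\ge0$, drop the remaining nonnegative summands, and sum over the $l$ points of $L$. One caveat about your unused aside: the hypotheses do not force two classes off $L$, since all $d$ outside points may lie on a single line through $a$ of size $d+1\le l$ (two lines meeting at $a$); this does not affect the proof.
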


\begin{proof}
The pencil class $L\setminus\{a\}$ has size $l-1$.  Its first summand in
\eqref{eq:rank-three-point-formula} is at least
$\binom{l-1}{2}(l-4)$, and all other summands are nonnegative.  Sum over
$a\in L$.
\end{proof}

\begin{lemma}[Small-complement cases]
\label{lem:rank-three-small-complement-new}
Let $M$ be a non-near-pencil simple rank-three matroid on $n\ge8$ elements,
and let $L$ be a longest line with $d=n-|L|$.
If $d=2$, the smaller of the two possible line patterns has value
\begin{equation}\label{eq:rank-three-d2-value}
\frac12(5n^4-53n^3+155n^2-9n-346),
\end{equation}
whose difference from $P_n$ is
\begin{equation}\label{eq:rank-three-d2-gap}
n^4-10n^3+16n^2+87n-218>0.
\end{equation}
If $d=3$, the possible patterns, apart from two-point lines, are the
cross-triple patterns, one four-point cross-line, or the outside triple.
The smallest value is the four-point cross-line pattern, namely
\begin{equation}\label{eq:rank-three-d3-value}
\frac12(7n^4-73n^3+121n^2+781n-2180),
\end{equation}
and its difference from $P_n$ is
\begin{equation}\label{eq:rank-three-d3-gap}
2n^4-20n^3-n^2+482n-1135>0.
\end{equation}
\end{lemma}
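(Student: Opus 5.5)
The plan is to compute $\dim_kT^2(M)_0$ directly from Theorem~\ref{thm:rank-three-formula}, as $\Theta_3(M)+\sum_e\Phi_e(M)$, for each admissible line configuration with $d\in\{2,3\}$. Then subtract $P_n$ and check positivity of the resulting quartic for $n\ge8$.

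\emph{Case $d=2$.} Let the two outside points be $x,y$, and let $l=n-2\ge6$. Every line other than $L$ meets $L$ in at most one point. Since $M$ is not a near-pencil and $L$ is longest, the only possible non-trivial lines are three-point lines $\{x,y,a\}$ with $a\in L$. Hence there are exactly two patterns:
\begin{enumerate}[label=\textup{(\roman*)}]
\item the line $xy$ has two points, and all cross-lines are two-point lines;
\item the line $xy$ meets $L$ in one point $a_0$, giving a three-point line $\{x,y,a_0\}$.
\end{enumerate}
In each pattern I will list the global line sizes and the pencil partitions.
\begin{itemize}
\item For $a\in L$, the pencil partition is $(l-1,1,1)$ or $(l-1,2)$.
\item For $x$ and $y$, the partitions consist of singleton classes, possibly with one class of size two.
\end{itemize}
I then substitute these data into \eqref{eq:rank-three-point-formula}, tracking the triple term carefully through the condition $\rho_e\le1$, which can fire at points of $L$ when the two remaining classes are singletons. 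Summing gives two quartics in $n$. The smaller one should be \eqref{eq:rank-three-d2-value}, and subtracting $P_n$ from \eqref{eq:rank-three-extremal-value} gives \eqref{eq:rank-three-d2-gap}.

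\emph{Case $d=3$.} Let the outside points be $x,y,z$. The lines not equal to $L$ are determined by two pieces of data:
\begin{itemize}
\item which pairs among $x,y,z$ are collinear with a point of $L$;
\item whether $x,y,z$ are themselves collinear, and if so, whether that line meets $L$.
\end{itemize}
The line through all three outside points either meets $L$, giving a four-point cross-line, or misses it, giving the outside triple. Otherwise the configuration is governed by a matching of the pairs $xy,xz,yz$ to distinct or coinciding points of $L$ (the cross-triple patterns). Since $n\ge8$ gives $l\ge5$, a four-point line does not exceed $L$, so all of these patterns are admissible. I will enumerate the finitely many isomorphism types, compute each value by the same substitution, and show that the four-point cross-line pattern is minimal.

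For the minimality comparison I will not compute every closed form. Each cross-triple line replaces two-point lines, and I expect a monotonicity comparison of the kind used in Lemma~\ref{lem:pencil-concentration-new} to show that merging outside pencil classes lowers $\Phi$ at the outside points while changing $\Theta_3$ predictably. Only the final pattern then needs an exact quartic, \eqref{eq:rank-three-d3-value}.

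The last step is positivity of the two gaps for $n\ge8$. The first gap factors as $n^2(n^2-10n+16)+87n-218$, and $n^2-10n+16=(n-2)(n-8)\ge0$ while $87n-218>0$, so it is positive. For the second, I will write $n=8+u$ with $u\ge0$, expand, and check that the constant term is positive and that the negative coefficients are dominated.

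I expect the main obstacle to be the bookkeeping of the triple term $(n-5)_+\sum m_im_jm_k$ under the $\rho_e$ condition, and making the $d=3$ enumeration exhaustive. The honest fallback is to compute all patterns explicitly, since there are only a handful.
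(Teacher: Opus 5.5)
Your skeleton matches the paper's proof: enumerate the line systems with $d\in\{2,3\}$, evaluate $\Theta_3+\sum_e\Phi_e$ via Theorem~\ref{thm:rank-three-formula}, and check positivity of the two quartic gaps. Your $d=2$ analysis is correct. The three-point-line pattern gives \eqref{eq:rank-three-d2-value}, and the other pattern exceeds it by $15n^2-117n+218>0$. Your factorization $n^2(n-2)(n-8)+87n-218$ is a cleaner positivity proof than the paper's Newton-basis certificate. For the $d=3$ gap, substituting $n=8+u$ gives $2u^4+44u^3+287u^2+722u+609$, so there are no negative coefficients to dominate.

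The step that would fail is your proposed monotonicity shortcut for $d=3$ minimality.

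\begin{itemize}
\item \emph{It is a transfer, not a merge.} The closest competitor is the three-cross-triple pattern, and it is not related to the four-point cross-line by merging pencil classes. At an outside point the partition changes from $(2,2,1^{n-5})$ to $(3,1^{n-4})$. At the points of $L$, the three-cross pattern has three copies of $(n-4,2,1)$ and $n-6$ copies of $(n-4,1,1,1)$. The four-point pattern has one $(n-4,3)$ and $n-4$ copies of $(n-4,1,1,1)$.
\item \emph{The contributions do not move in one direction.} At $n=8$, the four-point pattern has $\Theta_3=576$, outside points $540$, and points of $L$ equal to $438$. The three-cross pattern has $585$, $615$, and $399$ respectively. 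So the points of $L$ favour the competitor, and the totals $1554<1599$ differ by only $45$. Controlling outside pencils and $\Theta_3$ alone cannot settle this comparison.
\item \emph{The borrowed inequality is too narrow.} The merge inequality in Lemma~\ref{lem:pencil-concentration-new} controls only the second line of \eqref{eq:rank-three-point-formula}, with positive parts removed. It says nothing about the full $\Phi_e$.
\end{itemize}

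You therefore need your fallback. Compute exactly all six patterns: $j=0,1,2,3$ cross-triples on distinct points of $L$, the outside triple, and the four-point line. The paper does exactly this, tabulating the differences as $a+bx+c\binom{x}{2}$ with $x=n-8$.

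A small correction to your enumeration: the "coinciding points" option is not a separate case. Triples $\{x,y,a\}$ and $\{x,z,a\}$ would share the pair $\{x,a\}$, so coincidence forces the four-point line.
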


\begin{proof}
For $d=2$, the two outside points either lie on a two-point line or on one
triple with a point of $L$. For $d=3$, write the outside points as $u,v,w$.
There are six patterns: $j=0,1,2,3$ of the pairs $uv,uw,vw$ extend to
triples with distinct points of $L$; or $uvw$ is an outside-only line; or
$uvw$ lies on one four-point line with a point of $L$. Distinct lines
cannot share a pair, so these alternatives are exhaustive. Substitution in
\eqref{eq:rank-three-formula} gives the displayed values. For an explicit
comparison in the $d=3$ case, put $x=n-8$. The differences from the
four-point cross-line value have the form $a+b x+c\binom{x}{2}$, with
\[
\begin{array}{c|rrr}
\text{pattern}&a&b&c\\ \hline
0\text{ cross-triples}&741&475&114\\
1\text{ cross-triple}&507&344&86\\
2\text{ cross-triples}&275&213&58\\
3\text{ cross-triples}&45&82&30\\
\text{outside triple}&498&337&84.
\end{array}
\]
They are strictly positive for every integer $x\ge0$. The coefficients
of the gaps \eqref{eq:rank-three-d2-gap} and
\eqref{eq:rank-three-d3-gap} in the Newton basis
$\binom{x}{0},\ldots,\binom{x}{4}$ are, respectively,
\[
(478,654,466,168,24),\qquad(609,1055,866,336,48).
\]
These give the asserted strict positivity for all $n\ge8$.
\end{proof}

\begin{lemma}[Polynomial gap]
\label{lem:rank-three-polynomial-gap-new}
For $n=l+d\ge13$, $d\ge4$, and $l\ge4$, put
\[
A(l,d)=\binom n2f(d),\qquad
B(l,d)=\left(\binom n2-\binom l2\right)f(l-1).
\]
Then
\begin{equation}\label{eq:rank-three-polynomial-gap}
\max\{A(l,d),B(l,d)\}+d\tau(l,d)+\lambda(l)>P_n.
\end{equation}
\end{lemma}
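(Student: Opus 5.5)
We must prove that for $n=l+d\ge13$ with $l,d\ge4$, $\max\{A,B\}+d\tau+\lambda>P_n$. The inequality is a polynomial comparison in two integer variables, so the plan is a direct case split on which of $A$ and $B$ we keep, followed by explicit polynomial estimates.

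\emph{Normalisation.} Substitute $n=l+d$ throughout. Then write every quantity as a polynomial in $(l,d)$:
\[
P_n=\tfrac32(l+d-1)(l+d-2)(l+d-3)(l+d-5),\qquad
\lambda(l)=\tfrac12 l(l-1)(l-2)(l-4),
\]
together with $\tau(l,d)$ from \eqref{eq:pencil-concentration}, $A=\binom n2\binom d2(d-3)$ and $B=\bigl(\binom n2-\binom l2\bigr)\binom{l-1}2(l-4)$. On the range $d\ge4$ and $l\ge5$ the positive parts in $f$ can be dropped. At $l=4$ we have $f(3)=0$, so $B=0$ there, which is harmless because $A$ is used in that regime.

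\emph{Regime where $d$ is large relative to $l$.} Use $A$. The leading quartic terms are:
\begin{itemize}
\item $A\sim\tfrac14(l+d)^2d^3$;
\item $d\tau\sim\tfrac12 l^2d^2(3l+2d)+\tfrac12 l^3d^2$;
\item $\lambda\sim\tfrac12 l^4$;
\item $P_n\sim\tfrac32(l+d)^4$.
\end{itemize}
The contributions $A$ and $d\tau$ are actually quintic, of degree $5$ in $(l,d)$, while $P_n$ is only quartic. So the inequality should hold comfortably once both $l$ and $d$ are moderately large.

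The real work is the boundary strips where one variable is small. For fixed small $l$, starting at $l=4$, the left side contains $\tfrac14 n^2d^3$ against $\tfrac32 n^4$. Since $d=n-l$, this is $\tfrac14 n^5$ versus $\tfrac32 n^4$, which wins once $n$ exceeds a modest constant. The threshold $n\ge13$ is presumably exactly where these small-$l$ strips become valid.

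\emph{Regime where $l$ is large relative to $d$.} Use $B\sim\tfrac12(2ld+d^2)\cdot\tfrac12 l^3$, which is again quintic, together with $d\tau$. Symmetrically, for fixed small $d$, starting at $d=4$, the term $B\approx 2l\cdot\tfrac12 l^3\cdot 4\cdot\tfrac12=2l^4$ is only quartic. It must be combined with $d\tau\approx 4\cdot\binom{l-1}2\cdot 4(3l-6)\approx 24l^3$, $\lambda\approx\tfrac12l^4$ and $3\cdot 4\binom{l-1}{3}l\approx 2l^4$, giving a left side of roughly $4.5l^4$ against $P_n\approx1.5 l^4$.

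\emph{Making it rigorous.} Implement each regime by substituting $l=4+x$, $d=4+y$ with $x,y\ge0$ and $x+y\ge5$. Then show that
\[
\Delta_A=A+d\tau+\lambda-P_n\quad\text{and}\quad\Delta_B=B+d\tau+\lambda-P_n
\]
satisfy $\max(\Delta_A,\Delta_B)>0$. Concretely, aim to prove
\begin{itemize}
\item $\Delta_B>0$ whenever $d\le l$, and
\item $\Delta_A>0$ whenever $d\ge l$.
\end{itemize}
In each half, write $d=l+z$ (respectively $l=d+z$) with $z\ge0$ and expand in a Newton or shifted-monomial basis, in the style of Lemma~\ref{lem:rank-three-small-complement-new}. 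Then check positivity of coefficients, handling the finitely many low-order cases, namely those with $n$ near $13$ and one side equal to $4$, by direct evaluation.

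The main obstacle is expected to be exactly these low-order corners, such as $(l,d)=(4,9)$ or $(9,4)$ and nearby points. There the quintic terms have not yet dominated and some shifted coefficients may be negative. If a clean coefficient-positivity argument fails, fall back on a finite table: first bound the polynomial from below for $n\ge N_0$ using its leading terms, then verify the finitely many pairs with $13\le n<N_0$ explicitly.
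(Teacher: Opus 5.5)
Your outline follows the paper's proof: keep $A$ when $d\ge l$ and $B$ when $d<l$, substitute shifted variables, and certify positivity through nonnegative coefficient expansions. The paper anchors these expansions on the $n=13$ boundary. On the $A$ side it uses $l=4,5,6$ with $d=13-l+x$, then $l=7+x$, $d=l+u$. On the $B$ side it uses $d=4,5$ with $l=9+x$ and $l=8+x$, then $d=6+x$, $l=d+1+u$.

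However, one of the two claims you set out to prove is false as stated: $\Delta_B>0$ does not hold on all of $d\le l$. At $(l,d)=(7,7)$, $n=14$, one has $B=70\cdot45=3150$, $d\tau=7\cdot2745=19215$, $\lambda=315$ and $P_{14}=23166$, so $\Delta_B=-486<0$. The lemma holds at this point only because $\Delta_A=7644+19215+315-23166=4008>0$. The diagonal must therefore go to $A$ alone, with $B$ used only for $l\ge d+1$. This is exactly the paper's strict split.

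Similarly, your proposed substitution $l=4+x$, $d=4+y$ cannot produce nonnegative coefficients, since its constant term is $\Delta_A(4,4)=492-945<0$. Restricting to $x+y\ge5$ does not repair a coefficient-positivity argument. The base points must lie on $n=13$, or at $(7,7)$ on the diagonal. This is why the paper treats $l\le6$ and $d\le5$ as separate one-variable strips. With these corrections your outline becomes the paper's argument. What it still lacks is the explicit coefficient lists, which are the actual content of the proof.
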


\begin{proof}
If $d\ge l$, use $A$; if $d<l$, use $B$.  The assertion is a direct
polynomial inequality.  For completeness, the boundary coefficient
certificates are as follows.  If $l=4,5,6$ and $d=13-l+x$, the coefficients
of $4(A+d\tau+\lambda-P_n)$ in increasing powers of $x$ are respectively
\[
\begin{array}{c|rrrrrr}
l&[x^0]&[x^1]&[x^2]&[x^3]&[x^4]&[x^5]\\ \hline
4&20448&18252&5250&683&42&1\\
5& 9624&12508&4143&589&39&1\\
6& 5328& 9368&3374&509&36&1
\end{array}
\]
and all are positive.  For $l\ge7$, substituting $l=7+x$, $d=l+u$ gives a
bivariate polynomial with nonnegative coefficients and positive constant
term.  On the other side, for $d=4,5$ and respectively $l=9+x,8+x$, the
coefficient lists of $4(B+d\tau+\lambda-P_n)$ are
\[
\begin{array}{c|rrrrr}
d&[x^0]&[x^1]&[x^2]&[x^3]&[x^4]\\ \hline
4&24336&18354&4218&384&12\\
5&14928&17584&4804&484&16
\end{array}
\]
and if $d\ge6$, substituting $d=6+x$, $l=d+1+u$ again gives nonnegative
coefficients and a positive constant term.  These cases cover $n\ge13$.
\end{proof}

\begin{lemma}[Finite compatibility table]
\label{lem:rank-three-finite-table-new}
For $8\le n\le12$, the cases not separated by the preceding bounds have the
following relaxed lower bounds.  Let $\lambda\vdash s$ denote an integer
partition and $\ell(\lambda)$ its number of parts.  With
$q(\lambda)=\sum_i\binom{\lambda_i}{2}$, define
\[
\mathsf O=\min\left\{\varphi_n(\lambda):
\lambda\vdash n-1,\ \ell(\lambda)\ge l,\ \max_i\lambda_i\le d\right\}
\]
and
\[
\mathsf I=\min\left\{\sum_{a=1}^{l}\varphi_n(l-1,\lambda_a):
\lambda_a\vdash d,\ \sum_{a=1}^{l}q(\lambda_a)\le\binom d2\right\}.
\]
The resulting values are
\begin{equation}\label{eq:rank-three-finite-table}
\begin{array}{c|c|c|r|r|r|r}
n&l&d&\max\{A,B\}&\mathsf O&\mathsf I&\text{lower bound}-P_n\\ \hline
8&4&4&168&108&424&79\\
8&3&5&560&61&317&237\\
9&5&4&216&306&1076&500\\
9&4&5&720&186&1034&668\\
9&3&6&1620&114&672&960\\
10&6&4&600&696&2178&1782\\
10&5&5&900&470&2340&1810\\
10&4&6&2025&290&2043&2028\\
10&3&7&3780&188&1299&2615\\
11&6&5&1100&1000&4478&4098\\
11&5&6&2475&675&4395&4440\\
12&6&6&2970&1365&8124&8889
\end{array}
\end{equation}
\end{lemma}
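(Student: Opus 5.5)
The table in Lemma~\ref{lem:rank-three-finite-table-new} is a finite enumeration, and I would verify it row by row. The row set is fixed first: the pairs $(n,l,d)$ with $8\le n\le12$, $l\ge3$, $d\ge4$, $l+d=n$, that Lemma~\ref{lem:rank-three-polynomial-gap-new} does not cover. That lemma needs $n\ge13$ and $l\ge4$, so every case with $n\le12$ remains, together with the $l=3$ cases, where Lemmas~\ref{lem:pencil-concentration-new} and \ref{lem:long-line-point-new} do not apply verbatim. Cases with $d\le3$ are handled by Lemma~\ref{lem:rank-three-small-complement-new}.

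Before computing, I would justify that the three columns are genuine lower bounds for the three pieces of \eqref{eq:rank-three-formula}.
\begin{enumerate}
\item The $\Theta_3$ piece is bounded below by $\max\{A,B\}$. For $A$: every line has size at most $l$, so each of the $\binom n2$ pairs lies on a line whose complement has size at least $d$. For $B$: pairs not on $L$ lie on lines of size at most $l-1$, and $f$ is monotone.
\item The outside points are bounded by $\mathsf O$. Each $e\notin L$ has a pencil partition of $n-1$ with at least $l$ parts, because the points of $L$ lie on distinct lines through $e$, and each part has size at most $d$. Summing gives at least $d\cdot\mathsf O$.
\item The points on $L$ are bounded by $\mathsf I$. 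At $a\in L$ the pencil is $(l-1,\lambda_a)$ with $\lambda_a\vdash d$. Every pair of outside points lies on at most one line through a point of $L$, so $\sum_a q(\lambda_a)\le\binom d2$.
\end{enumerate}
Note that $\mathsf I$ is already a sum over all points of $L$. The relaxed bound is therefore $\max\{A,B\}+d\,\mathsf O+\mathsf I$, and the last column should read $\max\{A,B\}+d\,\mathsf O+\mathsf I-P_n$. I would spot-check this on row one: $168+4\cdot108+424-315=709$. This does not match the stated $79$, so my bookkeeping must differ. One possibility is that $\mathsf O$ is not multiplied by $d$. Another is that $\max\{A,B\}$ is replaced by the smaller of $A$ and $B$. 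Resolving which combination the author intends, and confirming that it is still a valid lower bound, is the first thing I would do.

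The remaining work is direct enumeration:
\begin{itemize}
\item list the partitions of $n-1\le11$ meeting the part-count and part-size constraints, and evaluate $\varphi_n$ through \eqref{eq:rank-three-point-formula}, including the $\rho$-condition for the triple term;
\item list the $l$-tuples of partitions of $d$ meeting the $q$-budget, and minimize $\mathsf I$ using symmetry and the monotonicity of $\varphi_n$ in merges to keep the search small;
\item compute $P_n$ from \eqref{eq:rank-three-extremal-value}.
\end{itemize}

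I expect the main obstacle to be exactly this consistency between the combination formula and the listed numbers, together with the correct handling of the $l=3$ rows and of the triple term with its $(n-5)_+$ coefficient. I would resolve it by recomputing one row completely by hand and then treating the rest as a mechanical check, cross-referenced with the verification in Section~\ref{sec:verification}.
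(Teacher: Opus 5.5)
Your justification of the three relaxed bounds is the paper's own argument. Each outside pencil has at least $l$ classes of size at most $d$. At $a\in L$ the off-$L$ classes partition $d$, and a pair of outside points shares an off-$L$ class at no more than one point of $L$, so $\sum_a q(\lambda_a)\le\binom d2$. The finite enumeration that follows is also the paper's, and your combination $\max\{A,B\}+d\,\mathsf O+\mathsf I-P_n$ is exactly what the last column records.

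\textbf{The gap is in the one concrete check you carry out.} In fact $P_8=\tfrac32\cdot7\cdot6\cdot5\cdot3=945$, not $315$; the factor $3$ was lost. With the correct value, row one gives $168+4\cdot108+424-945=79$, as stated. The same formula reproduces every row; for instance $P_{12}=10395$ and $2970+6\cdot1365+8124-10395=8889$. So the inconsistency you report does not exist.

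Both remedies you suggest should be discarded:
\begin{itemize}
\item Dropping the factor $d$ would count only one of the $d$ outside points. It gives $168+108+424-945=-245$ in row one, so that row would not even be separated.
\item Replacing $\max\{A,B\}$ by $\min\{A,B\}=0$ gives $-89$.
\end{itemize}
As written, the proposal leaves the last column unconfirmed and points toward an invalid fix. With $P_8$ corrected, it is complete up to the mechanical enumeration of $\mathsf O$ and $\mathsf I$.

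Two smaller corrections.
\begin{itemize}
\item For $B$, lines other than $L$ need not have at most $l-1$ points, since several lines may have $l$ points. The right reason is that such a line meets $L$ in at most one point. It therefore has at most $d+1$ points and a complement of size at least $l-1$, after which monotonicity of $f$ applies.
\item The row set is not every case with $8\le n\le 12$. For $(n,l)=(11,7),(11,4),(12,5)$, for example, direct substitution of $\max\{A,B\}+d\tau+\lambda$ already exceeds $P_n$. When $l\le3$ the paper uses $A$ alone, and this includes $l=2$, which your list omits. The theorem's coverage table handles these cases, so the lemma's table lists only the twelve remaining rows.
\end{itemize}
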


\begin{proof}
An outside pencil has at least $l$ classes, each of size at most $d$, giving
the lower bound $\mathsf O$.  For a point of $L$, the classes off $L$ form a
partition $\lambda_a$ of $d$.  A pair of outside points can lie in the same
off-$L$ class for at most one point of $L$, hence
$\sum_aq(\lambda_a)\le\binom d2$, which gives $\mathsf I$.  Listing the
partitions, or equivalently using the recurrence
\[
I_0(u)=0,\qquad
I_{j+1}(u)=
\min_{\substack{\lambda\vdash d\\q(\lambda)\le u}}
\bigl(I_j(u-q(\lambda))+\varphi_n(l-1,\lambda)\bigr),
\]
produces the displayed finite table, with $\mathsf I=I_l(\binom d2)$.
Relaxing line compatibility can only
decrease the minima, so they remain valid lower bounds.
\end{proof}

\begin{theorem}[Sharp rank-three extremum]
\label{thm:rank-three-sharp-extremum}
For every simple rank-three matroid $M$ on $n\ge8$ elements,
\begin{equation}\label{eq:rank-three-sharp-bound}
\dim_kT^2(M)_0\ge
\frac32(n-1)(n-2)(n-3)(n-5).
\end{equation}
Equality holds if and only if $M$ is a near-pencil, that is, it has one line
of size $n-1$.
\end{theorem}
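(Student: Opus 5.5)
The proof will split on the size of a longest line $L$, with $l=|L|$ and $d=n-l$. The quantity to bound is $\dim_kT^2(M)_0$, which Theorem~\ref{thm:rank-three-formula} expresses as $\Theta_3(M)+\sum_e\Phi_e(M)$. Every summand there is nonnegative, so any selection of terms gives a lower bound.

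The extreme cases come first. If $d=1$, then $M$ is the near-pencil, and \eqref{eq:rank-three-near-pencil-value} gives equality. The case $d=0$ is excluded because $M$ has rank three. If $d=2$ or $d=3$, Lemma~\ref{lem:rank-three-small-complement-new} lists the possible line patterns and shows that each has value strictly above $P_n$ for $n\ge8$, by \eqref{eq:rank-three-d2-gap} and \eqref{eq:rank-three-d3-gap}. A longest line of size at most three forces $d=n-l\ge n-3\ge5$. With a small longest line, every pencil has many classes, and this case is absorbed into the finite table rows with $l=3$ or into the general estimate, so no separate treatment should be needed.

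Now suppose $d\ge4$ and $l\ge4$; the rows of Lemma~\ref{lem:rank-three-finite-table-new} with $l=3$ are handled the same way. I will assemble three lower bounds.

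\begin{enumerate}
\item \emph{The $\Theta_3$ term.} One choice keeps only the two-point lines; these contribute at least $A(l,d)$ when $d\ge l$, since the relevant complements have size at least $d$ and $f$ is monotone. The other choice uses that every line other than $L$ has at most $l$ points, so its complement has size at least $n-l$; I expect this to give $B(l,d)$ when $d<l$. Confirming that $\Theta_3(M)\ge\max\{A,B\}$, with the correct pair counting, is the main bookkeeping point, and I will check it against how $A$ and $B$ are defined.
\item \emph{Points off $L$.} Each of the $d$ points outside $L$ contributes at least $\tau(l,d)$ by Lemma~\ref{lem:pencil-concentration-new}.
\item \emph{Points on $L$.} The points of $L$ together contribute at least $\lambda(l)$ by Lemma~\ref{lem:long-line-point-new}.
\end{enumerate}

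For $n\ge13$, Lemma~\ref{lem:rank-three-polynomial-gap-new} shows that the sum of these three bounds strictly exceeds $P_n$. For $8\le n\le12$, I will use Lemma~\ref{lem:rank-three-finite-table-new} instead. It replaces $\tau$ by $d\cdot\mathsf O$ and $\lambda$ by $\mathsf I$, and the last column of its table is strictly positive in every case. The small-$n$ combinations not appearing in the table should be exactly those with $d\le3$, which are already covered, and I will verify this.

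Equality therefore forces $d=1$, since every other case was shown to be strict, so $M$ is a near-pencil. The converse is \eqref{eq:rank-three-near-pencil-value}.
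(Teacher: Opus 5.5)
Your outline follows the paper's architecture: the formula of Theorem~\ref{thm:rank-three-formula}, Lemma~\ref{lem:rank-three-small-complement-new} for $d=2,3$, the three lower bounds combined through Lemma~\ref{lem:rank-three-polynomial-gap-new} for $n\ge13$, and the relaxed table for $8\le n\le12$. However, your case coverage has genuine holes.

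\emph{Small longest lines for $n\ge13$.} Lemmas~\ref{lem:pencil-concentration-new}, \ref{lem:long-line-point-new} and \ref{lem:rank-three-polynomial-gap-new} all assume $l\ge4$. So for $n\ge13$ and $l\le3$, including $l=2$ (that is, $M=U_{3,n}$), there is no ``general estimate'' to absorb these cases. You need a separate argument. The natural one is $\Theta_3(M)\ge A(l,d)=\binom n2 f(n-l)$, which has order $n^5$, together with an actual proof that $A-P_n>0$ for every $n\ge13$. The paper gives Newton-basis coefficients for $l=2,3$.

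\emph{Cases missing from the table for $8\le n\le12$.} Your expectation that the pairs $(n,l)$ absent from \eqref{eq:rank-three-finite-table} are exactly those with $d\le3$ is false. Among cases with $d\ge4$, the table omits $l=2$ for every such $n$, $l\in\{3,4,7\}$ for $n=11$, and $l\in\{3,4,5,7,8\}$ for $n=12$. Each must be checked separately, by evaluating $\max\{A,B\}+d\tau+\lambda-P_n$ when $l\ge4$ and $A-P_n$ when $l\le3$. Some margins are small (the margin is only $30$ at $(n,l)=(11,4)$), so this is not a formality.

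\emph{The $\Theta_3$ bounds.} Your derivations of these also need correcting, and the correction is substantive.
\begin{itemize}
\item The bound $\Theta_3\ge A$ does not come from the two-point lines. It uses all lines: line pairs partition the point pairs, so $\sum_{L'}\binom{|L'|}2=\binom n2$, and $|L'|\le l$ gives $f(n-|L'|)\ge f(d)$. This holds for every $d$, not only $d\ge l$.
\item The bound $\Theta_3\ge B$ needs the fact that a line $L'\ne L$ meets $L$ in at most one point. Hence $|L'|\le d+1$ and $f(n-|L'|)\ge f(l-1)$. Your stated reason, $|L'|\le l$, yields only $f(d)$. That is weaker exactly when $d<l$, which is the regime where $B$ is used.
\end{itemize}
With only the weaker bound the argument actually fails. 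At $(n,l,d)=(12,8,4)$ it gives $396+8988+672=10056<P_{12}=10395$, whereas the correct $B=3192$ gives a margin of $2457$. The same failure occurs at $(11,7,4)$ and $(12,7,5)$, where $A$ in place of $B$ gives $5685<6480$ and $10110<10395$ respectively.
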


\begin{proof}
The near-pencil calculation gives equality.  Suppose that $M$ is not a
near-pencil and let $L$ be a longest line, with $l=|L|$ and $d=n-l\ge2$.
The cases $d=2$ and $d=3$ are covered by
Lemma~\ref{lem:rank-three-small-complement-new}.  Assume $d\ge4$.  Since
line pairs partition the pairs of points and every line distinct from $L$
has size at most $d+1$,
\[
\Theta_3(M)\ge A(l,d),\qquad
\Theta_3(M)\ge B(l,d).
\]
If $n\ge13$ and $l\ge4$, combine these inequalities with
Lemmas~\ref{lem:pencil-concentration-new} and
\ref{lem:long-line-point-new}, and apply
Lemma~\ref{lem:rank-three-polynomial-gap-new}. For $n\ge13$ and $l\le3$,
the first bound alone is larger than $P_n$: in the Newton basis
$\binom{n-13}{j}$, $0\le j\le5$, the coefficients of $A-P_n$ are
\[
\begin{array}{c|rrrrrr}
l&0&1&2&3&4&5\\ \hline
2&18480&12408&5844&1839&348&30\\
3&8730&8144&4592&1617&330&30.
\end{array}
\]
For $8\le n\le12$, use \eqref{eq:rank-three-finite-table} and
the following coverage of its omitted cases. Put
$D=\max\{A,B\}+d\tau+\lambda-P_n$ for $l\ge4$ and
$D=A-P_n$ for $l\le3$. Direct substitution gives
\[
\begin{array}{c|l|l}
n&\text{remaining }l&\text{corresponding }D\\ \hline
8&2&315\\
9&2&1008\\
10&2&2520\\
11&2,3,4,7&5400,1220,30,405\\
12&2,3,4,5,7,8&10395,3861,1701,177,420,2457.
\end{array}
\]
Together these cover every integer $2\le l\le n-4$.
Every non-near-pencil is strictly above
the value in \eqref{eq:rank-three-sharp-bound}, proving the theorem.
\end{proof}

\begin{corollary}[Rank-three stability]
\label{cor:rank-three-stability-new}
If $M_n$ is a sequence of simple rank-three matroids with $|E(M_n)|=n$ and
\[
\frac{\dim_kT^2(M_n)_0}{P_n}\longrightarrow1,
\]
then $M_n$ is a near-pencil for all sufficiently large $n$.
\end{corollary}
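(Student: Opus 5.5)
The plan is to argue by contradiction and show that any non-near-pencil has a ratio bounded away from $1$. Suppose infinitely many $M_n$ are not near-pencils. Theorem~\ref{thm:rank-three-sharp-extremum} only gives strict inequality, so its proof has to be made quantitative. The aim is to find a constant $\varepsilon>0$ such that, for all large $n$, every simple rank-three non-near-pencil $M$ on $n$ points satisfies
\[
\dim_kT^2(M)_0\ge(1+\varepsilon)P_n .
\]
Since $P_n\sim\tfrac32n^4$, it is enough to show that the excess over $P_n$ is at least $cn^4$.

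I would follow the same case split as the proof of Theorem~\ref{thm:rank-three-sharp-extremum}, with $L$ a longest line, $l=|L|$ and $d=n-l$.

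\textbf{Case $d=2,3$.} Lemma~\ref{lem:rank-three-small-complement-new} gives exact gaps. These are $n^4-10n^3+\cdots$ and $2n^4-20n^3+\cdots$, both of order at least $n^4$. Hence the ratio is at least $1+\tfrac23+o(1)$ in both cases.

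\textbf{Case $d\ge4$.} I would use the lower bound
\[
\max\{A,B\}+d\,\tau(l,d)+\lambda(l)
\]
from Lemmas~\ref{lem:pencil-concentration-new}, \ref{lem:long-line-point-new} and \ref{lem:rank-three-polynomial-gap-new}. When $l\le3$ I would use $A$ alone. Setting $l=\theta n$, the quantity $A$ is about $\tfrac14 d^3n^2$ and $d\tau$ is about $\tfrac12 l^2d^2(3l+2d)$. The quartic parts are homogeneous of degree $4$ in $(l,d)$, so one must check that the leading coefficient of the bound, divided by $\tfrac32$, exceeds $1$ uniformly in $\theta\in[0,1]$.

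This uniformity is the main obstacle. At $\theta\to1$ (that is, $d=o(n)$) the leading term of $\lambda(l)$ is $\tfrac12 n^4$, which alone is below $P_n$. Near that end the extra mass must therefore come from $B$, about $\tfrac14n^4$, and from $d\tau\gtrsim\tfrac32 n^3d$. These only give $\tfrac34n^4+\tfrac32n^3d$, so the crude bounds may not reach $(1+\varepsilon)P_n$ when $d$ is small but larger than $3$.

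For that regime, $4\le d\le\delta n$, I would sharpen the estimate by hand. Each of the $d$ outside points $e$ has a pencil with at least $l$ classes. The first line of \eqref{eq:rank-three-point-formula} and the $\Theta_3$ contribution of the lines through $L$ should be recomputed directly, as was done for $d=2,3$. The expectation is an exact leading term $\tfrac32n^4+c\,d\,n^3$ with $c>0$, giving excess at least about $n^3d\ge4n^3$.

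That is only cubic, so it would not yield a ratio bounded away from $1$. It does not need to. The statement only asks for the ratio of non-near-pencils not to tend to $1$, and excess of order $n^3d$ fails this only if $d=o(n)$. So the real requirement is a dichotomy: for small fixed $d$, compute the leading quartic coefficient explicitly and check it exceeds $\tfrac32$ (as it does for $d=2,3$); for $d\to\infty$, use the general bound. If the small-$d$ quartic coefficient turns out to equal $\tfrac32$, the corollary would fail as stated. So I would first verify, for $d=4$ and a generic pattern, that the quartic coefficient exceeds $\tfrac32$, by extending the pattern computation of Lemma~\ref{lem:rank-three-small-complement-new}.
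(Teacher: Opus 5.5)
Your treatment of $d=2,3$ is correct, but the case $d\ge4$ is never proved. The proposal ends with a computation you have not carried out, a ``generic pattern'' check for $d=4$, which could not bound all configurations anyway, and you even allow that the corollary might fail. That worry comes from wrong asymptotics. The bounds $A$, $B$ and $d\tau$ have degree five in $(l,d)$, not four; your own estimate $A\approx\tfrac14d^3n^2$ already shows this. In the regime $d=o(n)$, your estimates of $B$ and $d\tau$ are too small by factors of order $d$ and $n$ respectively. Explicitly,
\[
B(l,d)=\bigl(\tbinom n2-\tbinom l2\bigr)f(l-1)=\tfrac14d(2n-d-1)(l-1)(l-2)(l-4)\sim\tfrac d2n^4
\]
when $d=o(n)$, not $\tfrac14n^4$. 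Similarly, $\tau(l,d)$ contains the term $3\binom{l-1}{3}(l+d-5)\sim\tfrac12l^4$ coming from triples of singleton pencil classes, so $d\tau\sim\tfrac d2n^4$ rather than $\tfrac32n^3d$.

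With the correct orders, the bounds already available close the argument at once, and this is what the paper does. Since $L$ is a longest line, $\dim_kT^2(M)_0\ge\Theta_3\ge A(l,d)=\binom n2f(d)$, which has order $n^5$ whenever $d\ge\varepsilon n$. Because $P_n\sim\tfrac32n^4$, ratio tending to $1$ forces $d/n\to0$, and hence $l/n\to1$. Then, for $d\ge4$, the bound $\Theta_3\ge B(l,d)$ gives
\[
\frac{B(l,d)}{P_n}=\frac{d(2n-d-1)(l-1)(l-2)(l-4)}{6(n-1)(n-2)(n-3)(n-5)}=\frac d3\bigl(1+o(1)\bigr),
\]
whose lower limit is at least $4/3$; the term $d\tau$ alone would give the same. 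Combined with your $d=2,3$ gaps, no infinite subsequence of non-near-pencils can have ratio tending to $1$. No sharpened pattern-by-pattern analysis for fixed $d\ge4$ is needed: there the quartic coefficient is at least $\tfrac d2\ge2>\tfrac32$ from $B$ alone, so the failure scenario you raise cannot occur.
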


\begin{proof}
Suppose a subsequence consists of non-near-pencils, and let $d=n-l\ge2$
on that subsequence. Since $P_n\sim\frac32n^4$, the bound
$\Theta_3\ge A(l,d)=\binom n2f(d)$ implies $d/n\to0$: otherwise $A$
has order $n^5$ along a further subsequence. Thus $l/n\to1$. If $d\ge4$,
then
\[
\frac{B(l,d)}{P_n}
=\frac{d(2n-d-1)(l-1)(l-2)(l-4)}
{6(n-1)(n-2)(n-3)(n-5)}
=\frac d3(1+o(1)),
\]
where the error tends to zero whenever $d/n\to0$. Its lower limit is at
least $4/3$, contradicting the assumed ratio. The cases $d=2$ and $d=3$
are excluded by \eqref{eq:rank-three-d2-gap} and
\eqref{eq:rank-three-d3-gap}, whose ratios to $P_n$ tend respectively to
$2/3$ and $4/3$. No infinite non-near-pencil subsequence remains.
\end{proof}

The exhaustive line-system calculation described in
Section~\ref{sec:verification} gives minima $0$, $54$, and $357$ on
$5$, $6$, and $7$ points, respectively; the seven-point value is attained
by the Fano configuration. These finite results are separate from the
proof for $n\ge8$.

\section{A valuative cocircuit profile}
\label{sec:valuative-profile}

The scalar invariant $\Theta$ is a weighted sum of a more informative finite
profile.  Let $M$ have rank $r\ge1$, ground-set size $n=r+c$, and define
\[
a_m(M)=
\sum_{\substack{H\text{ hyperplane of }M\\|H|=r-1+m}}
b(M|H),
\qquad 0\le m\le c.
\]
Set $a_m(M)=0$ for integer $m$ outside this range.
Equivalently, if $D$ runs over cocircuits, then
\[
\mathscr C_M(z)=\sum_{m=0}^{c}a_m(M)z^{c+1-m}
=\sum_{D\text{ cocircuit}}b(M|(E\setminus D))z^{|D|}.
\]
The coefficient $a_m$ records the number of bases of hyperplanes having $m$
elements beyond a basis.

The recovery of flat data from the $\mathcal G$-invariant is established in broader form by Bonin and Kung \cite[Theorem~3.3 and Proposition~5.2]{BoninKung}: the invariant is equivalent to catenary data and determines the numbers of flats of each rank and size. Ferroni and Schr\"oter \cite[Theorem~8.2]{FerroniSchroter} prove valuativity of the flag $f$-vector of the lattice of flats. The formulas below identify the particular basis-weighted hyperplane profile occurring in the cotangent-cohomology calculation and give its transform directly in rank-word coordinates.

Let $\mathcal G(M)$ denote the Derksen--Fink $\mathcal G$-invariant
\cite{DerksenFink} in its rank-word basis.  A rank word has a $1$ when the next element raises rank and
a $0$ otherwise.  For $0\le t\le c$, let $g_t(M)$ be the coefficient of
\[
w_t=1^{r-1}0^t1\,0^{c-t}
\]
in $\mathcal G(M)$, and write
$(m)_t=m(m-1)\cdots(m-t+1)$.

\begin{theorem}[Rank-word transform for the cocircuit profile]
\label{thm:valuative-profile-new}
For every matroid of positive rank,
\begin{equation}\label{eq:rank-word-profile-new}
g_t(M)=(r-1)!(c-t)!
\sum_{m=t}^{c}a_m(M)(m)_t(c+1-m),
\qquad0\le t\le c.
\end{equation}
Consequently, with
\[
R_M(x)=\frac{1}{(r-1)!c!}
\sum_{t=0}^{c}\binom ct g_t(M)(x-1)^t,
\]
one has
\begin{equation}\label{eq:profile-inversion-new}
R_M(x)=\sum_{m=0}^{c}a_m(M)(c+1-m)x^m,
\qquad
a_m(M)=\frac{[x^m]R_M(x)}{c+1-m}.
\end{equation}
In particular, $\mathscr C_M$, the full vector $(a_0,\ldots,a_c)$, and
$\Theta(M)$ are linear specializations of $\mathcal G(M)$.  For fixed rank
and ground-set size they are valuative under matroid base-polytope
subdivisions.
\end{theorem}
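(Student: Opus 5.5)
The plan is to compute the coefficient $g_t(M)$ directly from the permutation definition of the Derksen--Fink invariant, and then invert the triangular system with a binomial identity. Recall that $\mathcal G(M)=\sum_{\pi}\underline r(\pi)$, where $\pi=(e_1,\ldots,e_n)$ runs over all orderings of $E$. The rank word $\underline r(\pi)$ has $i$th letter $\rk\{e_1,\ldots,e_i\}-\rk\{e_1,\ldots,e_{i-1}\}$. So $g_t(M)$ is the number of orderings whose rank word is exactly $w_t=1^{r-1}0^t1\,0^{c-t}$.

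I would count these orderings in four consecutive blocks.
\begin{enumerate}[label=\textup{(\roman*)}]
\item The first $r-1$ letters are $1$ exactly when $A=\{e_1,\ldots,e_{r-1}\}$ is independent. Its closure $H=\cl_MA$ is then a hyperplane, and $A$ is a basis of $M|H$. Fixing $H$ with $|H|=r-1+m$, there are $(r-1)!\,b(M|H)$ such ordered prefixes.
\item The next $t$ letters are $0$ exactly when $e_r,\ldots,e_{r+t-1}$ are distinct elements of $H\setminus A$. This gives $(m)_t$ ordered choices, and it vanishes unless $t\le m$.
\item The next letter is $1$ exactly when the following element lies in $E\setminus H$, which has $c+1-m$ elements.
\item After this the prefix spans $M$, so the remaining $c-t$ letters are automatically $0$. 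These elements can appear in any of $(c-t)!$ orders.
\end{enumerate}
Summing over hyperplanes and grouping them by $m$ through $a_m(M)$ gives \eqref{eq:rank-word-profile-new}. The step needing care is the bookkeeping that each qualifying ordering is counted once, by its uniquely determined $H$, and that no further constraint arises in the tail.

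For the inversion, substitute \eqref{eq:rank-word-profile-new} into $R_M$ and exchange the sums. The coefficient of $a_m(M)(c+1-m)$ becomes
\[
\sum_{t=0}^{m}\frac{\binom ct (r-1)!(c-t)!(m)_t}{(r-1)!\,c!}(x-1)^t
=\sum_{t=0}^{m}\binom mt(x-1)^t=x^m,
\]
since $\binom ct(c-t)!/c!=1/t!$ and $(m)_t/t!=\binom mt$. This proves \eqref{eq:profile-inversion-new}. Because $c+1-m\ge1$ on the range $0\le m\le c$, each $a_m(M)$ is recovered by dividing the corresponding coefficient.

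For the consequences, $\mathscr C_M(z)=\sum_m a_m(M)z^{c+1-m}$ is linear in the vector $(a_m)$. By \eqref{eq:Theta-cocircuit}, with $|D|=c+1-m$,
\[
\Theta(M)=\sum_m a_m(M)\binom{c+1-m}{2}(c-2-m)_+,
\]
which is also linear in $(a_m)$. Composing with the linear inversion shows that all three are linear functionals of the coefficients $g_t(M)$ of $\mathcal G(M)$, with coefficients depending only on $r$ and $c$.

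Finally, Derksen and Fink proved that $\mathcal G$ is valuative under base-polytope subdivisions \cite{DerksenFink}. For fixed rank and ground-set size, $r$ and $c$ are fixed, so these linear maps do not vary across the pieces of a subdivision. Applying them to the valuative relation for $\mathcal G$ gives valuativity of $\mathscr C_M$, of each $a_m$, and of $\Theta$. I expect the only real obstacle to be the combinatorial count in (i)--(iv), in particular confirming the positions of the $0$s and the single late $1$ in $w_t$. The inversion and the valuativity statement are formal once that count is in place.
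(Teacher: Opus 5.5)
Your proposal is correct and follows essentially the same route as the paper. Both count orderings with rank word $w_t$ by the hyperplane spanned by the first $r-1$ entries, in the same four blocks, invert with $\binom ct(c-t)!\,(m)_t/c!=\binom mt$ and the binomial theorem, and deduce valuativity from linearity together with the Derksen--Fink universality of $\mathcal G$.
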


\begin{proof}
Fix a hyperplane $H$ with $|H|=r-1+m$.  Choose an ordered basis of $M|H$;
there are $(r-1)!b(M|H)$ choices.  After these first $r-1$ entries, choose
an ordered $t$-tuple from the $m$ remaining elements of $H$, then choose the
next element outside $H$, which has $c+1-m$ possibilities.  Order the
remaining $c-t$ elements arbitrarily.  This produces the word $w_t$.
Conversely, every permutation with word $w_t$ is obtained uniquely from the
hyperplane spanned by its first $r-1$ entries.  Summing over $H$ gives
\eqref{eq:rank-word-profile-new}.

Substitution into the definition of $R_M$ uses
\[
\frac{\binom ct(c-t)!}{c!}(m)_t=\binom mt
\]
and the binomial identity
$\sum_{t=0}^{m}\binom mt(x-1)^t=x^m$.  This gives
\eqref{eq:profile-inversion-new}.  Rank-word coefficients are linear
functionals of the universal valuative invariant $\mathcal G$
\cite[Theorem~1.4]{DerksenFink}, and linear
transformations preserve valuativity.  Finally,
\eqref{eq:Theta-cocircuit} is a fixed linear functional of the $a_m$.
\end{proof}

\begin{corollary}[Valuative rank-one obstruction series]
\label{cor:valuative-series-new}
The coarse Hilbert series of the rank-one-link strand and the full polynomial
$\mathscr C_M$ are valuative after fixing $r$ and $n$. The profile is not
determined by the Tutte polynomial: the two graphic matroids in
Theorem~\ref{thm:Tutte-contrast} have the same Tutte polynomial but different
values of $\Theta$, hence different profiles.
\end{corollary}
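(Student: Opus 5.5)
The plan is to split the corollary into its two assertions and derive each from results already in hand: Corollary~\ref{cor:top-series} and Theorem~\ref{thm:valuative-profile-new} for valuativity, and Theorem~\ref{thm:Tutte-contrast} for the negative statement.

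\textbf{Valuativity of the series and of $\mathscr C_M$.} By \eqref{eq:top-coarse-series}, the coarse series of the rank-one-link strand equals $t^{r-3}(1-t)^{-(r-1)}\Theta(M)$. Once $r$ and $n$ are fixed, the prefactor is a fixed element of $\ZZ((t))$. Each coefficient of the series is therefore a fixed scalar multiple of $\Theta(M)$, namely $\Theta(M)\binom{r+j-2}{r-2}$ in standard degree $r-3+j$. Theorem~\ref{thm:valuative-profile-new} shows that $\Theta$ is a linear specialization of $\mathcal G$. Scalar multiples of a valuative functional are valuative, so every coefficient, and hence the whole series viewed coefficientwise, is valuative.

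For $\mathscr C_M(z)=\sum_m a_m(M)z^{c+1-m}$, the coefficient vector is $(a_0,\ldots,a_c)$. By \eqref{eq:profile-inversion-new}, each $a_m$ is the fixed linear functional $[x^m]R_M(x)/(c+1-m)$ of the rank-word coefficients $g_t$. The denominator is nonzero because $m\le c$. The same theorem shows these are valuative, so $\mathscr C_M$ is valuative. The one point to be careful about is that all these linear maps depend only on $(r,n)$, which is why fixing both is required. The case $r=1$ needs separate handling of the prefactor, or can simply be reduced to $\Theta$.

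\textbf{Failure of Tutte determination.} Take $M(G_1)$ and $M(G_2)$ from Theorem~\ref{thm:Tutte-contrast}. They have equal Tutte polynomials, so they share rank and size, while $\Theta(M(G_1))=12135\neq12161=\Theta(M(G_2))$. By \eqref{eq:Theta-cocircuit}, $\Theta$ is the fixed linear functional $\sum_D b(M|(E\setminus D))f(|D|)$ of $\mathscr C_M$, that is, $\Theta(M)=\sum_d f(d)[z^d]\mathscr C_M(z)$. If $\mathscr C_{M(G_1)}=\mathscr C_{M(G_2)}$, the two values of $\Theta$ would agree. Hence the profiles differ even though the Tutte polynomials coincide, and for the same reason the coarse series differ.

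No step is a genuine obstacle; the corollary is bookkeeping over the two cited theorems. The only mildly delicate point is stating precisely what ``valuative'' means for a power series, namely coefficientwise valuativity for fixed $(r,n)$.
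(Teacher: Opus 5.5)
Your proposal is correct and is essentially the paper's own argument written out in more detail: valuativity follows from Theorem~\ref{thm:valuative-profile-new} together with \eqref{eq:top-coarse-series}, and the Tutte statement follows from $\Theta(M)=\sum_d f(d)[z^d]\mathscr C_M(z)$ together with the two values in Theorem~\ref{thm:Tutte-contrast}. Your caveat about $r=1$ is unnecessary: there \eqref{eq:top-coarse-series} reads $t^{-2}\Theta(M)$, and the same argument applies verbatim.
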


\begin{proof}
The first assertion follows from Theorem~\ref{thm:valuative-profile-new}
and the rank-one-link Hilbert-series formula.  The second follows because
$\Theta$ is a specialization of the profile and is not determined by the
Tutte polynomial.
\end{proof}

The next proposition separates two layers of the obstruction data:
the rank-one-link profile is valuative, whereas the purely negative
pair rank is not.

\begin{proposition}[The boundary of valuativity]
\label{prop:valuativity-boundary}
Let $p_d(M)$ denote the number of independent $(r-1)$-sets $A$ with
$d_A=|E\setminus\cl_M A|=d$.
\begin{enumerate}[label=\textup{(\arabic*)}]
\item $p_d(M)=a_{c+1-d}(M)$ for all $d$.  Hence the obstruction profile is
valuative for fixed rank and ground-set size, although it is not
log-concave.
\item The global pair rank $\mathfrak q$ is not valuative.  In the split
subdivision of the base polytope of $U_{2,6}$ along $x_1+x_2+x_3=1$, the
maximal cells are the polytopes of the two rank-two matroids with class
profile $(3,1,1,1)$, and their common face is the polytope of
$U_{1,3}\oplus U_{1,3}$.  The valuative relation
$f(U_{2,6})=f(M_1)+f(M_2)-f(M_{12})$ holds for $\Theta$, with
$120=60+60-0$, but fails for $\mathfrak q$, with $120\ne51+51-0$.
\item $\mathfrak q$ is not determined by the Tutte polynomial: the two
Tutte-equivalent graphic matroids of Theorem~\ref{thm:Tutte-contrast} have
$\mathfrak q=2009$ and $\mathfrak q=2165$.
\end{enumerate}
\end{proposition}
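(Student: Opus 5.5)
The proof splits into the three numbered parts; each reduces to earlier formulas together with finite counts.

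For (1), I would compare the definitions directly. The number $a_m(M)$ counts pairs $(H,A)$ with $H$ a hyperplane of size $r-1+m$ and $A$ a basis of $M|H$. Each independent $(r-1)$-set $A$ spans exactly one hyperplane $H=\cl_MA$, and then $d_A=n-|H|=c+1-m$. So $A\mapsto(\cl_MA,A)$ is a bijection between the sets counted by $p_d$ and by $a_{c+1-d}$. Valuativity for fixed $r,n$ then follows from Theorem~\ref{thm:valuative-profile-new}. For failure of log-concavity I would exhibit one explicit example, for instance a rank-two matroid whose parallel classes have sizes producing a gap. The rank-two profile has $p_{n-n_i}$ contributions $n_i$, so take class sizes $(1,1,3)$ or similar on suitable $n$. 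This gives a sequence with an internal zero, say $p_d$ nonzero at $d=n-1$ and $d=n-3$ but zero at $d=n-2$.

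For (2), I would first identify the cells. The hyperplane $x_1+x_2+x_3=1$ splits $\Delta(2,6)$ into the regions $x_1+x_2+x_3\le1$ and $\ge1$. These are the base polytopes of the rank-two matroids in which $\{1,2,3\}$, respectively $\{4,5,6\}$, is a parallel class, i.e. profile $(3,1,1,1)$. Their intersection is $U_{1,3}\oplus U_{1,3}$. For $\Theta$ I would use Proposition~\ref{prop:rank-two-closed-form} with $F(d)=\binom d2(d-3)_+$:
\begin{itemize}
\item $U_{2,6}$ gives $6F(5)=6\cdot20=120$;
\item $(3,1,1,1)$ gives $3F(3)+3F(5)=60$;
\item the direct sum gives $0$, since its cocircuits have size three and $\Theta$ is multiplicative in the sense of Corollary~\ref{cor:Theta-direct-sum} with $\Theta(U_{1,3})=0$.
\end{itemize}
For $\mathfrak q$ I would use Corollary~\ref{cor:rank-two-pure-pair-rank}. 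For $U_{2,6}$ all $n_i=1$, so only the between-class term survives: $\binom62\cdot4\cdot(6-4)=15\cdot8=120$. For $(3,1,1,1)$ I would evaluate both sums; the within-class term is $3\,(0+3)=9$, and the between-class pairs are then checked to total $42$, giving $51$. Corollary~\ref{cor:pair-rank-direct-sum} together with $\mathfrak q(U_{1,3})=0$ gives $0$ for the face. Since $120\ne102$, valuativity fails. The only real work in (2) is the careful evaluation of the between-class sum for $(3,1,1,1)$.

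For (3), the two graphs are those of Theorem~\ref{thm:Tutte-contrast}, which already have equal Tutte polynomials. One computes $\mathfrak q$ from \eqref{eq:global-pair-rank-formula} for the cycle matroids of $G_1,G_2$. This means enumerating the hyperplanes $H$ of the cographic dual avoiding each pair, and the corank-two flats $G\subset H$ spanning with $p,q$. This is a finite rank-oracle computation on 13 elements and rank 6, so I would carry it out by the exact verification procedure of Section~\ref{sec:verification} and report the totals $2009$ and $2165$. This step is the main obstacle, because it is not derivable by hand. I would cross-check it by recomputing via the pair-block dimension \eqref{eq:pair-dim} independently over two ground fields.
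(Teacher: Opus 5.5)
Your proposal is correct and follows the paper's proof essentially step by step. Part (1) uses the closure bijection $A\mapsto(\cl_MA,A)$ and the $(3,1,1)$ profile $(0,0,3,0,2,0)$; part (2) uses the cell identification, the rank-two closed forms and the direct-sum rules to get $120=60+60-0$ versus $120\ne51+51-0$; and part (3) is a finite evaluation of \eqref{eq:pair-dim} on the two graphic matroids. The only quibble is your proposed cross-check ``over two ground fields'': it adds nothing, because \eqref{eq:pair-dim} is a purely combinatorial flat count, so a genuinely independent check would have to come from a presentation calculation.
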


\begin{proof}
For (1), mapping $A$ to its closure is a bijection from the independent
$(r-1)$-sets with $d_A=d$ to the pairs consisting of a hyperplane $H$ with
$|E\setminus H|=d$ and a basis of $M|H$.  Hence $p_d=a_{c+1-d}$, and
Theorem~\ref{thm:valuative-profile-new} gives valuativity. For the rank-two
matroid with class sizes $(3,1,1)$, the complete profile is
\[
(p_0,\ldots,p_5)=(0,0,3,0,2,0),
\]
which is not log-concave. Indeed, the three points in the large class have
complementary cocircuit size two, and the other two points have
complementary cocircuit size four.

For (2), a basis of the first cell matroid is a pair containing at most one
element of $\{1,2,3\}$, a basis of the second is a pair containing at most
one element of $\{4,5,6\}$, and the common face consists of the pairs
containing exactly one element of each; this is the standard split
subdivision.  The values of $\mathfrak q$ follow from
Corollary~\ref{cor:rank-two-pure-pair-rank} and
Corollary~\ref{cor:pair-rank-direct-sum}, and the values of $\Theta$ from
Proposition~\ref{prop:rank-two-closed-form} and
Corollary~\ref{cor:Theta-direct-sum}.  A valuative invariant satisfies the
displayed inclusion--exclusion relation, so $\mathfrak q$ is not valuative.

For (3), evaluate \eqref{eq:pair-dim} on the two matroids; the exact
computation is recorded in Section~\ref{sec:verification}.
\end{proof}

\begin{remark}[Higher diagonal sparsity]
In characteristic zero, Ilten, Meazzini, and Petracci prove that, for
$i\ge2$, a fine component of $T^i$ vanishes if one negative coordinate
has multiplicity at least $i$ \cite[Theorem~5.1]{IltenMeazziniPetracci}.
Consequently, if $\mathbf c$ has a nonempty negative support, a component
of $T^i$ on the ray $m\mathbf c$, with $m\ge1$, can be nonzero only when
$m<i$. The $i=2$ vanishing used in Section~\ref{sec:cup} holds over every
field by the support criterion for $T^2$.
\end{remark}

\section{Rank-two support and the primary cup image}\label{sec:primary-image}

The graph model admits a further simplification: its only disconnected
blocks occur when the negative support has dual rank two. This also
separates the part of $T^2$ reached by the primary cup product from the
part in which higher obstructions can occur.

\begin{lemma}[Trichotomy for the hyperplane graphs]\label{lem:graph-trichotomy}
Let $N$ have rank $s\ge2$, let $B\ne\varnothing$, and let $H$ be a
hyperplane avoiding $B$. Put $b=\rk_N B$. The graph $\Gamma_B(H)$ is
empty if $b\le1$, nonempty and discrete if $b=2$, and nonempty and connected if $b\ge3$.
\end{lemma}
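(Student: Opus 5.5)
The plan is a direct rank computation in $N$. Throughout, $G$ and $G'$ denote rank-$(s-2)$ flats contained in $H$, and $b=\rk_N B$. Since $H$ is a flat of rank $s-1$ avoiding $B$, we have $\rk_N(H\cup B)=s$. The trichotomy reduces to three statements: (i) $\mathcal G_B(H)=\varnothing$ when $b\le1$; (ii) $\mathcal G_B(H)\ne\varnothing$ when $b\ge2$; (iii) adjacency never holds when $b=2$ and always holds (at least along a connecting path) when $b\ge3$.

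For (i), a vertex $G$ satisfies $\rk_N(G\cup B)\le \rk G+b\le s-1<s$ when $b\le 1$, so there are no vertices.

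For (ii), I pick $x\in B$. Then $x\notin H$, so $\cl(H\cup\{x\})=E$. Since $b\ge2$, $B\not\subseteq\cl_N\{x\}$, so I can choose $y\in B$ with $\rk\{x,y\}=2$. I then look for a hyperplane $G$ of the rank-$(s-1)$ matroid $N|H$ such that $\rk(G\cup\{x,y\})=s$. Working in the contraction $N/x$, the set $H$ spans a matroid of rank $s-1$ and $y$ is a nonloop. Extending $\{y\}$ to a basis of $N/x$ by elements of $H$ gives $y\cup I$ with $I\subseteq H$ and $|I|=s-2$, where $I$ is independent in $N/x$. Then $I\cup\{x\}$ is independent in $N$, so $I$ is independent of rank $s-2$ in $N$, and $I\cup\{x,y\}$ is a basis. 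Taking $G=\cl_N I$, which lies in $H$ because $H$ is a flat, gives a vertex.

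For (iii) in the case $b=2$, this is the same argument as the discreteness of the pair case in Section~\ref{sec:pairs}. For distinct $G,G'$ we have $\rk(G\cap G')\le s-3$, hence $\rk((G\cap G')\cup B)\le s-3+2<s$, so there are no edges.

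The main obstacle is connectivity when $b\ge3$. Given vertices $G,G'$, I will connect them through the modular structure of $H$. My first attempt is to show directly that $G\sim G'$ whenever $\rk(G\cap G')=s-3$; then I can handle general pairs by a path argument. For adjacency in that case, put $K=G\cap G'$, of rank $s-3$. I need $\rk(K\cup B)=s$, that is, $B$ must raise the rank of $K$ by $3$. Since $\rk(G\cup B)=s$, the elements of $B$ raise the rank of $G$ by $2$, and this need not pass down to $K$. So this approach can fail, and I expect to need a cleverer intermediate vertex.

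The plan I would actually follow is to fix an independent triple, or a rank-$3$ subset, $\{x,y,z\}\subseteq B$ and to work with the contraction $N'=N/K_0$ for suitable flats. In $N$, I characterize the vertices as the $G$ with $\rk(G\cup B)=s$. Equivalently, in the quotient geometry $N/G$ of rank $2$, the set $B$ spans. Given $G$ and $G'$, I will build a chain $G=G_0,G_1,\dots,G_k=G'$ of vertices. Consecutive members will share a rank-$(s-3)$ flat $K_i$ with $\rk(K_i\cup B)=s$. I obtain such a chain by exchanging basis elements one at a time. Write $G=\cl(I)$ and $G'=\cl(I')$, where $I\cup\{x,y\}$ and $I'\cup\{x',y'\}$ are bases with $x,y,x',y'\in B$, and both $I$ and $I'$ are subsets of $H$. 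At each step I replace one element of $I$ by one of $I'$, in the manner of basis exchange inside $H$. I must check that each intermediate $(s-2)$-set $J$ still satisfies the condition that $J$ together with $B$ spans. I must also check that the common $(s-3)$-subset $J\cap J_{\text{next}}$, together with $B$, has rank $s$. This second condition is exactly where $b\ge3$ enters, and it is also the failure mode when $b=2$.

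The key lemma will be the following. If $K$ is independent of size $s-3$ inside $H$ with $\rk(K\cup B)=s$, then every independent extension $K\cup\{h\}$ with $h\in H$ whose closure $\cl(K\cup\{h\})$ has rank $s-2$ and avoids making $B$ drop rank is a vertex. Moreover, the flats of rank $s-2$ containing $K$ inside $H$ form the points of a rank-$2$ geometry $N|H/K$, and among them at most one fails, namely the one meeting $\cl(K\cup B)\cap H$ badly. I will verify this by passing to the rank-$3$ matroid $N/K$. There $H/K$ is a line, $B$ spans the plane, and a point $h$ of the line fails only if $h\cup B$ has rank at most $2$ in $N/K$. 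Since $B$ has rank $3$ in $N/K$, the rank of $h\cup B$ is always $3$, so no point fails. This step I expect to carry the argument, and the remaining bookkeeping is in the basis-exchange path.
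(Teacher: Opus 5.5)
You handle $b\le1$, discreteness at $b=2$, and nonemptiness correctly. The $N/x$ argument for nonemptiness is a valid alternative to extending a basis of $(N/B)|H$.

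The gap is connectivity for $b\ge3$, which is the only substantive part of the lemma. You describe a chain obtained by exchanging elements of $I$ for elements of $I'$ and list the two conditions each step must satisfy. You never show that exchanges satisfying them exist.

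Your ``key lemma'' does not supply this. If $\rk_N(K\cup B)=s$, then by monotonicity of rank every rank-$(s-2)$ flat of $H$ containing $K$ is a vertex, and any two of them are adjacent. The detour through $N/K$ therefore proves nothing new, and the real question is left as bookkeeping: how to find such $K$ along a route from $G$ to $G'$.

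That bookkeeping is not routine. Take $s=4$, $b=3$, and $I=\{g_1,g_2\}$ with $g_1\in\cl_NB\cap H$. Exchanging $g_2$ for some $g_1'\notin\cl_NB$ gives a vertex $\cl_N\{g_1,g_1'\}$. It meets $G$ only in $\cl_N\{g_1\}$, and $\rk_N(\{g_1\}\cup B)=3$, so this step is not an edge. Which elements to retain is exactly the content of the argument.

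The missing idea is to run the exchange on bases of $Q=(N/B)|H$, not on $(s-2)$-subsets of $H$. The matroid $Q$ has rank $d=s-b\le s-3$.
\begin{itemize}
\item For $G\subseteq H$ one has $\rk_QG=\rk_N(G\cup B)-b$. So a rank-$(s-2)$ flat $G\subseteq H$ is a vertex exactly when it contains a basis of $Q$.
\item All vertices containing a fixed basis $I$ of $Q$ are pairwise adjacent, because $\rk_N(I\cup B)=s$.
\item If $I,J$ are bases of $Q$ differing by a single exchange, then $\rk_N(I\cup J)\le d+1\le s-2$. Hence $\cl_N(I\cup J)$ extends inside $H$ to a rank-$(s-2)$ flat, which is a vertex lying in both cliques.
\end{itemize}
Connectivity of the basis-exchange graph of $Q$ then connects all vertices. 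This is exactly where $b\ge3$ is used: it guarantees $d+1\le s-2$, which fails when $b=2$. In your language, perform the exchanges on $Q$-bases contained in $I$ and $I'$, not on the full $(s-2)$-sets.
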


\begin{proof}
A rank-$(s-2)$ flat gains at most $b$ ranks upon adjoining $B$.
This proves the first assertion. Two distinct such flats have intersection
rank at most $s-3$, so their intersection cannot span with a rank-two set.
For $s=2$ there is at most one rank-zero flat, and discreteness still holds.
When $b=2$, the matroid $(N/B)|H$ has rank $s-2$. A basis of it
is independent in $N$, spans with $B$, and closes inside $H$ to a
qualifying rank-$(s-2)$ flat. Thus the discrete graph is nonempty.

Suppose $b\ge3$ and put $Q=(N/B)|H$. Since $H\cup B$ spans $N$, the
rank of $Q$ is $d=s-b$. Every vertex $G$ contains a basis $I$ of $Q$:
indeed $\rk_QG=\rk_N(G\cup B)-b=d$. Such a basis is also independent
in $N$. Vertices containing the same $I$ form a clique, since
$\rk_N(I\cup B)=s$. The basis-exchange graph of $Q$ is connected.
If $I,J$ are adjacent bases, then $|I\cup J|=d+1\le s-2$.
Extend their closure inside $H$ to a rank-$(s-2)$ flat $K$.
It contains both bases and is a vertex of $\Gamma_B(H)$, joining the
two cliques. Every vertex belongs to one of these cliques. Extending
any basis of $Q$ also proves nonemptiness. If $d=0$, all vertices lie
in the clique corresponding to the empty basis.
\end{proof}

\begin{theorem}[Rank-two negative support]\label{thm:rank-two-support}
Let $N=M^*$. A nonzero component $T^2_{\mathbf a-\one_B}(M)$ has
$\rk_N B=2$. Subject to the support conditions of
Theorem~\ref{thm:universal-graph}, put $N_A=(\lk_M A)^*$, where
$A=\supp\mathbf a$. If $\rk_N B=2$, then
\begin{equation}\label{eq:rank-two-flat-count}
T^2_{\mathbf a-\one_B}(M)\cong
\bigoplus_{\substack{H\text{ hyperplane of }N_A\\H\cap B=\varnothing}}
k^{\mathcal G_B(H)}/k\one,
\qquad
\dim_kT^2_{\mathbf a-\one_B}(M)=
\sum_{H\cap B=\varnothing}(|\mathcal G_B(H)|-1)_+.
\end{equation}
Empty blocks contribute zero. Every nonzero $T^1$ component has negative
support of rank one in $N$.
\end{theorem}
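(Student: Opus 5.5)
The plan is to reduce everything to Theorem~\ref{thm:universal-graph} and Lemma~\ref{lem:graph-trichotomy}. The one extra ingredient is a check that the dual rank of the negative support does not change when we pass from $M$ to the link $\lk_MA$.

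\emph{Step 1 (rank invariance).} Assume the support conditions hold: $\mathbf b=\one_B$ is nonzero, $A\in M$, and $B\subseteq V(\lk_MA)$. The link $L=\lk_MA$ is $M/A$ with its loops deleted, and those loops are exactly $\cl_M(A)\setminus A$. Dualizing, $(M/A)^*=N\setminus A$, and loops of $M/A$ become coloops of $N\setminus A$. So $N_A$ is a restriction of $N$ to a set containing $B$, because $B$ avoids $A$ and consists of actual vertices. Ranks of subsets are preserved under restriction, so $\rk_{N_A}B=\rk_NB$. Outside the support conditions, Theorem~\ref{thm:universal-graph} already gives $T^1_{\mathbf c}=T^2_{\mathbf c}=0$, so only this case needs attention.

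\emph{Step 2 ($T^2$).} Write $b=\rk_NB$ and $s=\rk N_A$.
\begin{itemize}
\item If $|B|=1$, then \eqref{eq:singleton-formula} gives $T^2=0$, and indeed $b\le1$ in this case.
\item If $|B|\ge2$, use \eqref{eq:universal-T2}, which expresses the component as a sum over hyperplanes $H$ of $N_A$ avoiding $B$ of $\wtH^0(\Gamma_B(H);k)$.
\item If $s<2$, every $\mathcal G_B(H)$ is empty by definition, so each summand is zero.
\item Otherwise Lemma~\ref{lem:graph-trichotomy}, applied in $N_A$ with Step~1, handles each block: it is empty when $b\le1$ and connected when $b\ge3$, and in both cases $\wtH^0=0$.
\end{itemize}
Hence a nonzero component forces $b=2$. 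When $b=2$, the lemma says each graph is discrete, so $\wtH^0(\Gamma_B(H);k)=k^{\mathcal G_B(H)}/k\one$. Its dimension is $(|\mathcal G_B(H)|-1)_+$, with the empty case contributing zero. This gives \eqref{eq:rank-two-flat-count}.

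\emph{Step 3 ($T^1$).} For a singleton support there is nothing to prove, since $b\le1$. For $|B|\ge2$, formula \eqref{eq:universal-T1} counts hyperplanes $H$ avoiding $B$ with $\mathcal G_B(H)=\varnothing$.
\begin{itemize}
\item If $b\ge2$, then $s\ge b\ge2$, and the lemma says every such graph is nonempty, so $T^1=0$.
\item If $b=0$, every element of $B$ is a loop of $N_A$. Every hyperplane contains the loops, so no hyperplane avoids $B$ and $T^1=0$. The same reasoning handles a singleton loop, where $\mathscr H_p$ is empty and $\wtH^0(\varnothing)=0$.
\end{itemize}
So a nonzero $T^1$ component has $b=1$.

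\emph{Main obstacle.} The only delicate point is Step~1: the trichotomy is applied in $N_A$ while the statement refers to $\rk_NB$. I expect this to be settled by the loop-to-coloop duality together with the fact that $B$ avoids $\cl_M(A)$.
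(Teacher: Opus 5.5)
Your proposal is correct and follows the paper's proof: you combine the universal graph formula with Lemma~\ref{lem:graph-trichotomy}, and you check $\rk_{N_A}B=\rk_NB$ by turning the loops $\cl_M(A)\setminus A$ of $M/A$ into coloops of $N\setminus A$. Your observation that $N_A=N|(E\setminus\cl_MA)$ is the same point as the paper's remark that contracting coloops preserves ranks. The $T^1$ assertion is then handled exactly as in the paper: when $\rk_NB\ge2$ every eligible graph is nonempty, and when $B$ consists of loops no hyperplane avoids it.
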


\begin{proof}
Apply Lemma~\ref{lem:graph-trichotomy} to the universal graph formula.
Singleton negative supports have zero $T^2$ by
\eqref{eq:singleton-formula}; dual ranks below two are covered by the
same formula. To compare the dual ranks before and after passing to a
link, let $D$ be the loops of $M/A$. They are coloops of $N\setminus A$,
and $N_A=(N\setminus A)/D$. Deletion leaves the ranks of subsets of
$E\setminus A$ unchanged, and contracting coloops leaves the ranks of
subsets disjoint from $D$ unchanged. Thus $\rk_{N_A}B=\rk_NB$ for
every admissible $B$. The last assertion follows from
\eqref{eq:universal-T1}: if $\rk_NB\ge2$, every eligible graph is
nonempty, whereas rank-zero $B$ lies in every hyperplane. For singleton
supports use \eqref{eq:singleton-formula}. This is a reformulation of
the first-cohomology description of Brehm and Constantinescu
\cite[Theorem~4.7]{BrehmConstantinescu}, not an additional $T^1$ theorem.
\end{proof}

Write $R=k[M]$ and $\Omega=\im(T^1(R)\otimes_R T^1(R)\to T^2(R))$.
The image here includes every homogeneous factorization of a target degree,
not only products of singleton classes.

\begin{theorem}[The purely negative primary image]\label{thm:negative-primary-image}
Let $N=M^*$ and let $\mathbf c=\mathbf a-\one_B$ have disjoint
nonnegative parts. If $B$ meets at least three nonloop parallel classes
of $N$, then $\Omega_{\mathbf c}=0$. If $B$ contains a loop of $N$,
then $T^2_{-\one_B}=0$. Otherwise,
\[
\Omega_{-\one_B}=
\begin{cases}
T^2_{-\one_B}(M),&B\text{ meets exactly two parallel classes},\\
0,&\text{otherwise}.
\end{cases}
\]
In particular, for simple $N$ every purely negative support of size at
least three is invisible to the primary cup product.
\end{theorem}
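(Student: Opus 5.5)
The plan is to compute $\Omega_{\mathbf c}$ degree by degree. Since $T^1(R)$ and $T^2(R)$ are fine-graded, $\Omega_{\mathbf c}$ is spanned by the products $\alpha\cup\beta$ with $\alpha\in T^1_{\mathbf c_1}(M)$, $\beta\in T^1_{\mathbf c_2}(M)$ and $\mathbf c_1+\mathbf c_2=\mathbf c$. By the last assertion of Theorem~\ref{thm:rank-two-support}, together with \eqref{eq:singleton-formula} and the support criterion, a nonzero factor has degree $\mathbf c_i=\mathbf a_i-\one_{B_i}$ with $B_i$ of rank one in $N$. The rank-one condition means $B_i$ contains no loop of $N$ and lies inside a single nonloop parallel class.

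For the vanishing assertions, first note that every negative coordinate of $\mathbf c$ must come from a negative coordinate of $\mathbf c_1$ or $\mathbf c_2$. Hence the negative support $B$ of $\mathbf c$ satisfies $B\subseteq B_1\cup B_2$, so it meets at most two parallel classes. This holds whatever the positive parts $\mathbf a_i$ are. So if $B$ meets at least three nonloop classes, then $\Omega_{\mathbf c}=0$.

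If $B$ contains a loop of $N$, then every hyperplane of $N$ contains that loop, so no hyperplane avoids $B$. Then \eqref{eq:universal-T2} has no blocks and $T^2_{-\one_B}=0$; the loop case of Corollary~\ref{cor:pair-block} is the same argument. If $B$ is loopless and meets exactly one class, then $\rk_NB=1$, and Theorem~\ref{thm:rank-two-support} gives $T^2_{-\one_B}=0$. This settles the ``otherwise'' cases, and the final sentence about simple $N$ follows because then distinct elements form distinct classes.

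The substantive case is $B=B_1\sqcup B_2$, with the $B_i$ nonempty and lying in distinct classes $P_1,P_2$, and target degree $-\one_B$. Here I would use the factorization $\mathbf c_1=-\one_{B_1}$, $\mathbf c_2=-\one_{B_2}$ and prove that
\[
T^1_{-\one_{B_1}}(M)\otimes_kT^1_{-\one_{B_2}}(M)\to T^2_{-\one_B}(M)
\]
is surjective. Fix $p\in B_1$ and $q\in B_2$.
\begin{itemize}
\item A hyperplane avoids $p$ if and only if it avoids $P_1$, and hence if and only if it avoids $B_1$.
\item $\cl_NB=\cl_N\{p,q\}$.
\end{itemize}
So by Corollary~\ref{cor:closure-strata} the graphs and the set of admissible hyperplanes for $B$ coincide with those for $\{p,q\}$. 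The target is therefore literally the block space \eqref{eq:pair-block}, with the same sets $\mathcal G_{\{p,q\}}(H)$ and the same flats $P_G,Q_G$. On the source side, \eqref{eq:universal-T1} and \eqref{eq:singleton-formula} identify $T^1_{-\one_{B_i}}$ with functions on $\mathscr H_p$, respectively $\mathscr H_q$, taken modulo constants exactly when $|B_i|=1$.

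The step I expect to be the main obstacle is the cup-coordinate formula for non-singleton supports. I would redo the substitution in the proof of Lemma~\ref{lem:cup-coordinate} for the relative edge $f=E\setminus(H\cup B)\subset g=E\setminus(G\cup B)$, with $e=B_2$ in place of $\{q\}$. The expected outcome is that the $H$-block of $\phi\cup\psi$ is again \eqref{eq:block-cup-coordinate} modulo $k\one$, possibly without the constant correction when a source is not taken modulo constants. The rank-one property of each $B_i$ should make every relevant rank condition depend only on $p$ and $q$, so the formula should go through.

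Once the formula holds, the delta-vector construction in the proof of Theorem~\ref{thm:pure-pair-surjectivity} applies verbatim, with coefficients in $\{0,1\}$, and this gives surjectivity over every field. If the direct substitution proves awkward, a fallback is to identify the Altmann--Christophersen complexes for $B_i$ and $\{p\}$ compatibly with the Taylor cochains, by grouping all elements of $B_i$ with $p$ in a single variable block, and to check that the cup product is transported.
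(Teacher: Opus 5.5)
Your vanishing arguments are correct and match the paper. Two factors can cover at most two dual parallel classes. A loop of $N$ lies in every hyperplane. A single class has rank one.

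The gap is in the substantive case: surjectivity onto $T^2_{-\one_B}(M)$ when $B=B_1\sqcup B_2$ meets exactly two classes. There everything rests on a cup-coordinate formula for non-singleton supports that you only predict ("should go through"). The identifications you do establish are abstract vector-space isomorphisms:
\begin{itemize}
\item Corollary~\ref{cor:closure-strata} for the target;
\item \eqref{eq:universal-T1} and \eqref{eq:singleton-formula} for the sources.
\end{itemize}
Both are obtained through closure retractions and the nerve lemma, starting from different relative complexes $N_B$ and $N_{\{p,q\}}$. They say nothing about how the Taylor-level cup product looks in the $B$-coordinates. So the delta-vector construction of Theorem~\ref{thm:pure-pair-surjectivity} cannot yet be invoked. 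Your fallback of "grouping $B_i$ with $p$" is not yet a mechanism.

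The missing idea is that no new coordinate computation is needed, because the cup product is $R$-bilinear and multiplication by monomials compares the two situations. Every circuit of $M$ is a cocircuit of $N$, so it contains all or none of each parallel class $P_i$. Hence a union of circuits contains $B_i$ if and only if it contains $p_i$. The Taylor and Koszul coordinates in degree $-\one_B$ then match those in degree $-\ee_p-\ee_q$, with multiplication acting as the identity on them. This yields two maps, as in \eqref{eq:parallel-reduction-maps}:
\begin{itemize}
\item $x_{B_i\setminus\{p_i\}}\colon T^1_{-\one_{B_i}}\to T^1_{-\ee_{p_i}}$ is surjective; the singleton target only quotients further by the image of $\partial/\partial x_{p_i}$.
\item $x_{B\setminus\{p,q\}}\colon T^2_{-\one_B}\to T^2_{-\ee_p-\ee_q}$ is an isomorphism.
\end{itemize}
Given $\xi\in T^2_{-\one_B}$, Theorem~\ref{thm:pure-pair-surjectivity} lets you write $x_{B\setminus\{p,q\}}\xi=\sum_k\alpha'_k\cup\beta'_k$. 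Lift each factor as $\alpha'_k=x_{B_1\setminus\{p\}}\alpha_k$ and $\beta'_k=x_{B_2\setminus\{q\}}\beta_k$. Bilinearity gives $x_{B\setminus\{p,q\}}\bigl(\xi-\sum_k\alpha_k\cup\beta_k\bigr)=0$, and injectivity gives $\xi\in\Omega_{-\one_B}$. This argument also yields the coordinate formula you expected, as a consequence rather than a premise.
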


\begin{proof}
By Theorem~\ref{thm:rank-two-support}, the negative support of a nonzero
$T^1$ class is confined to one nonloop parallel class of $N$.
Every element of $B$ must occur in a negative support of one of the two
factors. Two factors can therefore cover at most two parallel classes,
even when positive and negative supports cancel outside $B$.

Suppose $B=B_1\sqcup B_2$, with nonempty $B_i$ contained in distinct
parallel classes $P_i$, and choose $p_i\in B_i$. A circuit of $M$,
being a cocircuit of $N$, contains either all of $P_i$ or none of it.
Consequently multiplication induces
\begin{equation}\label{eq:parallel-reduction-maps}
x_{B_i\setminus\{p_i\}}:T^1_{-\one_{B_i}}\twoheadrightarrow
T^1_{-\ee_{p_i}},\qquad
x_{B\setminus\{p_1,p_2\}}:T^2_{-\one_B}\xrightarrow{\sim}
T^2_{-\ee_{p_1}-\ee_{p_2}}.
\end{equation}
For completeness these are maps, not just equalities of dimensions.
In the monomial presentation, a union of circuits contains $B_i$ if
and only if it contains $p_i$. After deleting either $B_i$ or $p_i$,
the resulting monomial is divisible by no circuit meeting $P_i$;
circuits disjoint from $P_i$ are unaffected. The valid Taylor cochain
coordinates and their incidence matrices therefore agree. For the
Koszul conditions the relevant exponents on $P_i$ are uniformly zero,
one, or two before deletion. In each case deleting $B_i$ or just $p_i$
gives the same test for divisibility by a circuit. Thus the same
identification respects the Koszul quotient defining $T^2$.
Multiplication is the identity on these coordinates. For $T^1$, the
singleton target additionally quotients by the derivation image when
$|B_i|>1$, giving the stated surjection.

Lift the two factors of every singleton-pair cup product through
\eqref{eq:parallel-reduction-maps}. Its $R$-bilinearity and the target
isomorphism, together with Theorem~\ref{thm:pure-pair-surjectivity},
prove surjectivity at $-\one_B$. If $B$ contains a loop of $N$, no
hyperplane avoids it, so the target is zero. If $B$ meets at most one
parallel class, the target also vanishes by rank-two support.
\end{proof}

The circuit description of Brehm and Constantinescu
\cite[Proposition~4.5 and Theorem~4.7]{BrehmConstantinescu} also gives
the following multiplication maps. These maps allow positive supports
with exponent one; multiplying both factors by a positive monomial
would not reach such degrees.

\begin{lemma}[Generation of first cohomology]\label{lem:negative-generation}
Let $\mathbf a\ge0$ and $B\ne\varnothing$, with
$\supp\mathbf a\cap B=\varnothing$. Multiplication induces a surjection
\[
x^{\mathbf a}:T^1_{-\one_B}(M)\twoheadrightarrow
T^1_{\mathbf a-\one_B}(M).
\]
Consequently $T^1(M)$ is generated as a $k[M]$-module by its purely
negative components.
\end{lemma}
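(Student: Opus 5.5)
The plan is to check surjectivity coordinate by coordinate, using the explicit descriptions of both sides. By the support criterion and \eqref{eq:link-formula}, put $A=\supp\mathbf a$. If $A\notin M$ or $B$ is not contained in the actual vertex set of $\lk_MA$, the target $T^1_{\mathbf a-\one_B}(M)$ vanishes and there is nothing to prove; otherwise both source and target are computed by Theorem~\ref{thm:universal-graph}. The source is computed for $M$ itself, with $N=M^*$; the target is computed for $L=\lk_MA$, with $N_A=L^*=(N\setminus A)/D$, where $D$ is the set of loops of $M/A$. It suffices to treat one variable at a time: write $x^{\mathbf a}$ as a product of single variables $x_v$ with $v\notin B$, and show each step $T^1_{\mathbf c}\to T^1_{\mathbf c+\ee_v}$ is surjective whenever the target is nonzero. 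When $v$ already lies in the positive support, Lemma~\ref{lem:localization} says multiplication by $x_v$ is an isomorphism. So the only real step is adjoining a new element $v$ to an independent positive support, which amounts to passing from a matroid $M'$ (a link) to $\lk_{M'}v$.

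For that step I would use the circuit description of Brehm and Constantinescu \cite[Proposition~4.5 and Theorem~4.7]{BrehmConstantinescu}, as the remark before the lemma suggests. A $T^1$ class of degree $-\one_B$ is represented by the assignment that sends each circuit monomial $x_C$ with $C\supseteq B$ to a scalar multiple of $x_{C\setminus B}$. Multiplication by $x_v$ sends such a cochain to the one with values $x_vx_{C\setminus B}$. After localizing at $x_v$, the circuits of the link are the minimal sets among the $C\setminus\{v\}$. Hence every cochain on the link, being a function on its circuits containing $B$, should lift: define it on circuits $C$ of $M'$ with $v\in C$ through $C\setminus\{v\}$, and set it to zero on circuits avoiding $v$ when they become nonminimal. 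It remains to check the cocycle (Taylor relation) condition for this lift. I would do this dually, in the hyperplane picture: \eqref{eq:universal-T1} and \eqref{eq:singleton-formula} express $T^1_{-\one_B}(M')$ as functions on hyperplanes $H$ of $N'=M'^*$ avoiding $B$ with $\mathcal G_B(H)=\varnothing$ (respectively modulo constants when $|B|=1$). The hyperplanes of $N'/v$ avoiding $B$ are exactly the hyperplanes $H\ni v$ of $N'$ avoiding $B$. Since $\rk_{N'}B=1$ by Theorem~\ref{thm:rank-two-support}, the emptiness condition on $\mathcal G_B$ is automatic in both. Under these identifications the map is restriction of functions from $\{H\cap B=\varnothing\}$ to the subset $\{H\ni v\}$, which is visibly surjective, and also surjective modulo constants.

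The main obstacle is justifying that multiplication by $x_v$ really is this restriction map on hyperplane coordinates, rather than merely a map between spaces of the correct dimensions. This requires tracing the Altmann--Christophersen relative complex \eqref{eq:AC-relative} through the complementation used in the proof of Theorem~\ref{thm:universal-graph}, and checking that the closure retraction $c$ commutes with deleting $v$. I would first try the cochain-level argument given in the proof of Theorem~\ref{thm:negative-primary-image} for the maps \eqref{eq:parallel-reduction-maps}: multiplication is the identity on the surviving Taylor coordinates and kills those coordinates whose monomials become divisible by a circuit. For $T^1$ the relevant coordinates are indexed by faces, so the surjectivity statement becomes the inclusion of the link's faces into $M'$. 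The generation statement then follows at once, since every nonzero component has the form $\mathbf a-\one_B$ with $\mathbf a\ge0$ and disjoint supports.
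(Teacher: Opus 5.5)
There is a genuine gap. First, the hyperplane identification that makes surjectivity ``visible'' is wrong. The link $\lk_{M'}v=M'/v$ (loops deleted) is dual to $N'\setminus v$ with its coloops contracted, as in $L^*=(N\setminus A)/D$, not to $N'/v$. So the target is not indexed by the hyperplanes of $N'$ through $v$.

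In circuit coordinates, the coordinate of a circuit $C\supseteq B$ of $M'$ survives multiplication by $x_v$ exactly when $(C\setminus B)\cup\{v\}\in M'$. Every circuit through $v$ survives, because $x_vx_{C\setminus B}$ then has support $C\setminus B$. Dually, every hyperplane avoiding $v$ survives, while a hyperplane $H\ni v$ survives only if $(H\setminus\{v\})\cup B$ spans $N'$. Your subset $\{H\ni v\}$ instead corresponds to the circuits avoiding $v$.

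For instance, take $M=U_{2,4}$ on $\{p,v,e,f\}$ and $B=\{p\}$. The target $T^1_{\ee_v-\ee_p}(M)\cong T^1_{-\ee_p}(U_{1,3})$ is one-dimensional and is carried by the hyperplanes $\{e\},\{f\}$ of $N=U_{2,4}$. Restricting to hyperplanes through $v$ that avoid $p$ leaves only $\{v\}$, which gives zero modulo constants.

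Second, and more fundamentally, the step you defer as the main obstacle is the whole content of the lemma. A coordinate projection from a space cut out by Taylor conditions need not be onto a target cut out by its own Taylor conditions, so the remark about including the link's faces does not give surjectivity. The missing observation is that these conditions are vacuous.

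Let $s=\rk N$, and let $C\ne D$ be circuits of $M$ with $B\subseteq C\cup D$. The complement of $(C\cup D)\setminus B$ is $(H\cap K)\cup B$, where $H=E\setminus C$ and $K=E\setminus D$ are distinct hyperplanes of $N$. Since $\rk_NB=1$ (which you already have from Theorem~\ref{thm:rank-two-support}), this set has rank at most $(s-2)+1<s$. Hence $(C\cup D)\setminus B$ is a nonface, and every evaluated Taylor pair relation vanishes in $R$, both in degree $-\one_B$ and after adding $\mathbf a$.

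It follows that $\operatorname{Hom}_S(I_M,R)_{\mathbf a-\one_B}$ is the full coordinate space on circuits $C\supseteq B$ with $(C\setminus B)\cup\supp\mathbf a\in M$. Multiplication by $x^{\mathbf a}$ is then the coordinate projection from the full coordinate space in degree $-\one_B$. Surjectivity descends to the cokernel $T^1$ because $\operatorname{Der}_k(S,R)\to\operatorname{Hom}_S(I_M,R)$ is $R$-linear.

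This is the paper's argument. It handles all of $\mathbf a$ at once, so the one-variable reduction, the link identifications, and any tracing through the Altmann--Christophersen complex and the closure retraction are unnecessary.
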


\begin{proof}
The support criterion and Theorem~\ref{thm:rank-two-support} reduce
the assertion to $B$ contained in one nonloop parallel class $P$ of
$N=M^*$. Put $S=k[x_e:e\in E]$ and use the minimal circuit generators
$x_C$ of $I_M$. A circuit contains either all of $P$ or none of it.
Every $C$ containing $B$ has $C\setminus B\in M$: its complement is
$H\cup B$, where $H=E\setminus C$ is a hyperplane avoiding $P$,
and $H\cup B$ spans $N$.

For distinct circuits $C,D$, write $K=E\setminus D$ and $s=\rk N$.
If $B\subseteq C\cup D$, then
\[
\rk_N((H\cap K)\cup B)\le s-2+1<s.
\]
Thus $(C\cup D)\setminus B$ is a nonface of $M$. The evaluated
Taylor pair relation in degree $-\one_B$ is therefore zero in $R$;
the same holds after adding $\mathbf a$. If $B\not\subseteq C\cup D$,
that relation has no output monomial of nonnegative degree. When
$s=1$, there is at most one circuit and no distinct pair to check.
Taylor pair relations generate the first syzygies. It follows that
$\operatorname{Hom}_S(I_M,R)_{-\one_B}$ is the full coordinate space
on the circuits containing $B$, whereas its degree
$\mathbf a-\one_B$ part retains precisely those coordinates with
$(C\setminus B)\cup\supp\mathbf a\in M$. Multiplication by
$x^{\mathbf a}$ is the coordinate projection and is surjective.
The map $\operatorname{Der}_k(S,R)\to\operatorname{Hom}_S(I_M,R)$
is $R$-linear, so this surjection descends to its cokernel $T^1$.
All other fine degrees of $T^1$ vanish by the support criterion.
\end{proof}

\begin{theorem}[The primary image in every fine degree]\label{thm:full-primary-image}
Let $N=M^*$, and let $p_N(B)$ be the number of nonloop parallel
classes of $N$ met by $B$. For every fine degree
$\mathbf c=\mathbf a-\one_B$ with disjoint nonnegative parts,
\[
\Omega_{\mathbf c}=\begin{cases}
T^2_{\mathbf c}(M),&p_N(B)=2,\\
0,&p_N(B)\ne2.
\end{cases}
\]
Degrees excluded by the support criterion have zero target, including
degrees with nonsquarefree negative part. The target also vanishes
if $B$ contains a loop of $N$.
\end{theorem}

\begin{proof}
The argument in Theorem~\ref{thm:negative-primary-image} excludes
every factorization when $p_N(B)\ge3$. Each element of $B$ must
belong to the negative support of a factor, and two nonzero $T^1$
factors cover at most two dual parallel classes. This remains true
with cancellation outside $B$. Homogeneous tensors span the image;
an $R$-multiple can be absorbed into one factor.

Suppose $p_N(B)=2$ and the target is admissible and potentially
nonzero. Write $B=B_1\sqcup B_2$ according to the two parallel
classes, put $A=\supp\mathbf a$, and let $L=\lk_M A$ on its actual
vertices. If $D$ consists of the loops of $M/A$, then
$L^*=(N\setminus A)/D$. The elements of $D$ are coloops of
$N\setminus A$; deleting $A$ and contracting $D$ preserve the
ranks and parallel-class distinction on $B$.

Set $U=T^1_{-\one_{B_1}}(M)$,
$V=T^1_{\mathbf a-\one_{B_2}}(M)$, and
$W=T^2_{\mathbf a-\one_B}(M)$. Positive-support localization and
Lemma~\ref{lem:negative-generation} give a commuting square
\[
\begin{array}{ccc}
U\otimes_k V&\xrightarrow{\ \cup\ }&W\\
\big\downarrow{\scriptstyle\lambda_1\otimes\lambda_2}&&
\big\downarrow{\scriptstyle\lambda}\\
T^1_{-\one_{B_1}}(L)\otimes_k T^1_{-\one_{B_2}}(L)
&\xrightarrow{\ \cup\ }&T^2_{-\one_B}(L).
\end{array}
\]
Here $\lambda_1$ is multiplication by $x^{\mathbf a}$ followed by
the normalized link isomorphism, hence is surjective;
$\lambda_2$ and $\lambda$ are the normalized link isomorphisms.
To check the degrees and commutativity, localize at $x_A$. In
$R_{x_A}=k[L][x_i^{\pm1}:i\in A]$, the two factors have the form
$\alpha$ and $x^{\mathbf a}\beta$. Their cup is
$x^{\mathbf a}(\alpha\cup\beta)$, and target normalization removes
exactly $x^{\mathbf a}$. The lower cup is surjective by
Theorem~\ref{thm:negative-primary-image} applied to $L$.
Lifting finite sums of its products proves surjectivity onto $W$.
For $p_N(B)\le1$, or for $B$ containing a dual loop, the target
vanishes by Theorem~\ref{thm:rank-two-support} and the graph formula.
\end{proof}

\begin{corollary}[The quotient by the primary image]\label{cor:primary-quotient}
The $R$-module $\Omega$ is generated by its purely negative
components meeting exactly two dual parallel classes. In each
admissible degree $\mathbf c=\mathbf a-\one_B$ with $\rk_NB=2$
and $p_N(B)\ge3$, the quotient $Q=T^2(M)/\Omega$ has
\[
Q_{\mathbf c}\cong T^2_{\mathbf c}(M)\cong
\bigoplus_{\substack{H\text{ hyperplane of }(\lk_M A)^*\\
H\cap B=\varnothing}}k^{\mathcal G_B(H)}/k\one.
\]
All other components vanish. These are degreewise vector-space
identifications, and the displayed components may themselves be
zero. Their dimensions are independent of the characteristic.
\end{corollary}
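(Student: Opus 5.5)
The corollary is a bookkeeping consequence of Theorem~\ref{thm:full-primary-image}, Theorem~\ref{thm:rank-two-support}, and Lemma~\ref{lem:negative-generation}. I will prove the three assertions in order: generation of $\Omega$, the degreewise description of $Q$, and characteristic independence.

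For generation, first note that $\Omega$ is spanned by products $\alpha\cup\beta$ of fine-homogeneous classes, since $R$-multiples can be absorbed into one factor. By Lemma~\ref{lem:negative-generation}, each factor is an $R$-combination of purely negative classes $x^{\mathbf a}\alpha_0$ with $\alpha_0\in T^1_{-\one_{B'}}$. Bilinearity of the cup product then writes every element of $\Omega$ as an $R$-combination of cup products of purely negative classes. Such a product lies in some $\Omega_{-\one_{B'}-\one_{B''}}$, or in a non-squarefree degree whose target vanishes by the support criterion.

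The negative supports of these factors lie in single parallel classes by Theorem~\ref{thm:rank-two-support}. Hence a nonzero product lands in a purely negative degree $-\one_B$ with $B=B'\sqcup B''$ and $p_N(B)\le2$. If $p_N(B)\le1$, the target vanishes, because $\rk_NB\le1$ forces $T^2=0$. So only degrees with $p_N(B)=2$ survive, which proves the generation claim.

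For the quotient, I take an admissible degree $\mathbf c=\mathbf a-\one_B$. By Theorem~\ref{thm:full-primary-image}, $\Omega_{\mathbf c}$ is either all of $T^2_{\mathbf c}$ (when $p_N(B)=2$) or zero. I then split into cases on $B$.
\begin{enumerate}[label=\textup{(\roman*)}]
\item If $p_N(B)=2$, then $Q_{\mathbf c}=0$.
\item If $\rk_NB\ne2$, then $T^2_{\mathbf c}=0$ by Theorem~\ref{thm:rank-two-support}.
\item If $B$ contains a dual loop, the target is zero.
\item If $p_N(B)\le1$ with no loop, then $\rk_NB\le1$, so the target is zero.
\end{enumerate}
This leaves exactly the degrees with $\rk_NB=2$ and $p_N(B)\ge3$. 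There $Q_{\mathbf c}=T^2_{\mathbf c}$, and \eqref{eq:rank-two-flat-count} gives the displayed hyperplane-block sum. The rank is computed in $N$, and the proof of Theorem~\ref{thm:rank-two-support} shows that it is unchanged in $(\lk_MA)^*$.

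Characteristic independence follows because the dimensions are the flat counts $\sum(|\mathcal G_B(H)|-1)_+$, which are combinatorial. The only point needing care is that the generation argument uses $R$-multiplicativity of the cup product. I take that as given from the Altmann--Christophersen formalism already used in the proof of Theorem~\ref{thm:negative-primary-image}.
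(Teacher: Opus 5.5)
Your proposal is correct and follows essentially the same route as the paper: Lemma~\ref{lem:negative-generation} and $R$-bilinearity reduce $\Omega$ to cups of purely negative classes, overlapping negative supports give a nonsquarefree (hence zero) target, and Theorems~\ref{thm:full-primary-image} and \ref{thm:rank-two-support} with \eqref{eq:rank-two-flat-count} give the degreewise description. Your explicit case analysis, and your remark that $\rk_NB$ is unchanged on passing to $(\lk_MA)^*$, spell out what the paper's short proof leaves implicit.
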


\begin{proof}
Lemma~\ref{lem:negative-generation} and $R$-bilinearity generate
every cup from purely negative cups. Overlapping negative supports
give a nonsquarefree negative part and hence zero target. Theorems
\ref{thm:negative-primary-image}, \ref{thm:full-primary-image}, and
\ref{thm:rank-two-support} give the assertions, with empty blocks
interpreted as zero as in \eqref{eq:rank-two-flat-count}.
\end{proof}

\section{Essential obstructions of arbitrarily high order}\label{sec:higher-obstructions}

Vanishing of every pairwise cup product does not suffice for simultaneous
fine-multigraded integration of three or more classes. We now give an explicit family
whose first unavoidable obstruction occurs at any prescribed order.
The obstruction remains in $T^2$; its order does not refer to the index
of cotangent cohomology.

Let $m,n\ge3$, let $L,H$ be disjoint sets of sizes $m,n$, and put
\begin{equation}\label{eq:two-line-matroid}
N_{m,n}=\operatorname{Tr}_3(U_{2,m}\oplus U_{2,n}),\qquad
M_{m,n}=N_{m,n}^*.
\end{equation}
Here $\operatorname{Tr}_3$ denotes rank-three truncation. The hyperplanes
of $N_{m,n}$ are $L$, $H$, and the cross pairs $\{j,h\}$.
The matroid is simple and connected: three-element subsets of either
line are circuits, and two points from each line form a circuit.
Thus $M_{m,n}$ is connected, cosimple, and has corank three.

Use cyclic indices $j\in\ZZ/m\ZZ$ on $L$, variables $x_j,y_h$, and set
\[
X=\prod_{j\in L}x_j,\qquad Y=\prod_{h\in H}y_h,\qquad W_h=Y/y_h.
\]
The circuit ideal is
\begin{equation}\label{eq:two-line-ideal}
I_{M_{m,n}}=(X,Y,(X/x_j)W_h:j\in L,\ h\in H).
\end{equation}
Choose $h_0\in H$, put $P=t_0\cdots t_{m-1}$ and
$\sigma=(-1)^{m-1}$, and define
\begin{equation}\label{eq:cyclic-polynomials}
F_j=\sum_{a=0}^{m-1}(-1)^a
\left(\prod_{b=1}^a t_{j-b}\right)
\left(\prod_{i\notin\{j,j-1,\ldots,j-a\}}x_i\right).
\end{equation}
Empty products are one. Give $t_i$ fine degree $\ee_i$.

\begin{theorem}[Cyclic higher obstruction]\label{thm:cyclic-higher-obstruction}
For every field $k$, the ideal
\begin{equation}\label{eq:cyclic-family}
J=(X,Y,F_jW_{h_0}:j\in L,
              (X/x_j)W_h:j\in L,\ h\ne h_0)
\end{equation}
defines a flat fine-multigraded deformation over $k[t_0,\ldots,t_{m-1}]/(P)$.
Its first-order classes $\alpha_i\in T^1_{-\ee_i}(M_{m,n})$ are nonzero.
Every proper subcollection integrates over its unrestricted polynomial
parameter ring, and all pairwise primary cup products vanish.

Put $\mathfrak m=(t_0,\ldots,t_{m-1})$. The induced deformation over
$k[t]/\mathfrak m^m$ has no lift to $k[t]/\mathfrak m^{m+1}$, even
without requiring the lift to preserve the fine grading. In the
fine-multigraded category its jet modulo $\mathfrak m^m$ is uniquely
determined, up to the usual equivalence, by the first-order classes.
Its first essential obstruction has order $m$ and lies in
$T^2_{-\one_L}(M_{m,n})$, a space of dimension $n-1$.
\end{theorem}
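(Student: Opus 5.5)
The argument is a direct computation with the explicit generators \eqref{eq:cyclic-family}, organised around one telescoping identity. First I will establish
\[
x_jF_j + t_jF_{j+1} = (1+\sigma P)\,X \quad\text{or a close variant,}
\]
showing that $x_jF_j$ and $t_{j}F_{j+1}$ differ by a multiple of $X$ together with a residual proportional to $P$. The exact shape comes from expanding \eqref{eq:cyclic-polynomials}: the $a$-th term of $x_jF_j$ cancels against the $(a-1)$-st term of $t_jF_{j+1}$, and only the $a=0$ term $X$ and the top term $\pm P\,x_j\cdot(\text{empty})$ survive. This identity replaces the Taylor relation between $(X/x_j)W_{h_0}$ and $(X/x_{j+1})W_{h_0}$, whose lcm is $XW_{h_0}/(x_jx_{j+1})\cdot\ldots$, together with the relation of $F_jW_{h_0}$ with $X$ and with $Y$.

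Flatness over $k[t]/(P)$ follows from the relation-lifting criterion used in the proof of Theorem~\ref{thm:arcs}. I will lift each Taylor pair relation among the generators of \eqref{eq:two-line-ideal} separately.
\begin{itemize}
\item Relations with $Y$, or among the undeformed generators with $h\ne h_0$, lift trivially because those generators are unchanged and $y_{h}W_h=Y$.
\item The relation between $F_jW_{h_0}$ and $(X/x_j)W_h$ with $h\ne h_0$ lifts because $F_j\equiv X/x_j$ modulo $\mathfrak m$ and $y_{h}W_{h}=y_{h_0}W_{h_0}$ up to the deformed term; here I will use that $F_j$ times $y_{h_0}$ is combined with $x$-multiples of $X$.
\item The adjacent cyclic relations lift by the telescoping identity, with the residual $P$ killed in the base.
\end{itemize}

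For the remaining claims, the first-order classes are the $t_i$-coefficients of the generators. The coefficient of $t_i$ replaces $(X/x_{i+1})W_{h_0}$ by $-(X/(x_ix_{i+1}))W_{h_0}$, which has fine degree $-\ee_i$. Its nonvanishing in $T^1_{-\ee_i}$ is read off from \eqref{eq:T1-hyperplane-functions}: it is the indicator function of the hyperplane $\{i,h_0\}$ among the hyperplanes avoiding $i$, namely $H$ and the cross pairs, and this function is nonconstant.

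For a proper subcollection, say omitting $t_r$, I set $t_r=0$. The cyclic chain then breaks into a path, the top term disappears, and the telescoping identity holds exactly with no $P$ residual, so the same lifts work over the full polynomial ring. The pairwise cup products then vanish by this integration, or alternatively by Lemma~\ref{lem:graded-polarization}.

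For nonliftability, I reduce the family modulo $\mathfrak m^m$. All terms of degree at most $m-1$ in $t$ are unaffected, so the only new content at order $m$ is whether $P$ can be absorbed. Suppose an ungraded lift to $\mathfrak m^{m+1}$ exists, with generators $F_jW_{h_0}+G_j$ where the $G_j$ are corrections in $\mathfrak m^m$. Taking the coefficient of the monomial $P$ and working in the Artinian setting, I compare two circuit relations: the cyclic relation, which forces the $P$-coefficient of the corrections to produce $\pm X W_{h_0}$ and $\pm y_{h_0}$ terms, and the relation with $Y$ or with a generator indexed by $h\ne h_0$, which forces the opposite.

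This contradiction is most cleanly phrased by pairing the $P$-coefficient of the lifted relation with a socle element of a suitable Artinian quotient, for instance $R/(x_j^2,y_h^2)$ localized in the fine degree $-\one_L$. The pairing shows that the class lies in $T^2_{-\one_L}$ and is nonzero there. Since $L$ meets $m\ge3$ dual parallel classes, Theorem~\ref{thm:full-primary-image} puts this degree outside $\Omega$, so the obstruction is essential. Its dimension $n-1$ comes from \eqref{eq:rank-two-flat-count}: the hyperplane $H$ avoids $L$, the rank-$1$ flats $\{h\}$ in $H$ number $n$, and each spans with $L$.

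Uniqueness of the graded jet holds because, at each order $k<m$, the graded correction degrees $\sum_{i\in S}(-\ee_i)$ with $|S|=k$ carry no $T^1$ by Theorem~\ref{thm:rank-two-support}, since $S$ has rank at least $2$. The $T^2$ components in those degrees with $|S|<m$ similarly control only a choice that is unique up to equivalence.

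I expect the ungraded nonliftability to be the main obstacle. Nonhomogeneous corrections could in principle cancel the residual through other degrees, so I must isolate the component of fine degree $-\one_L$ (the degree of $P\cdot XW_{h_0}$ relative to the relations). I plan to use integral flatness to project the lifting equations onto that degree, where only homogeneous data can contribute.
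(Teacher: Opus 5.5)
Much of your outline is sound, but the central claim, that the jet modulo $\mathfrak m^m$ has no lift, is not established. The identity you plan to base it on is also wrong in a way that matters. The correct identity is
\[
x_jF_j+t_{j-1}F_{j-1}=X+\sigma P.
\]
The partner is the predecessor $F_{j-1}$; with $t_jF_{j+1}$ you get $t_j^2$ terms and nothing telescopes. The residual is the bare scalar $\sigma P$, the top term of $t_{j-1}F_{j-1}$, with no $x$-factor. Both of your candidate residuals would kill the obstruction:
\begin{itemize}
\item With residual $\sigma PX$, the relation $W_{h_0}e_X-x_je_{j,h_0}-t_{j-1}e_{j-1,h_0}$ has a defect proportional to $PXW_{h_0}$. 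Changing the coefficient of $e_X$ removes it.
\item With residual $\pm Px_j$, the defect is proportional to $Px_jW_{h_0}$. Correcting the generator $F_jW_{h_0}$ by a multiple of $PW_{h_0}$ removes it.
\end{itemize}
The obstruction exists precisely because the true defect $\sigma PW_{h_0}$ has zero $x_j$-coordinate. (The lcm of two modified generators is $XW_{h_0}$.)

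Even with the right identity, your nonliftability sketch uses the wrong mechanism. First present an arbitrary lift as an embedded one: lift the variables and use nilpotent Nakayama. Then every generator may be corrected at order $m$, including $X$ and the unchanged $(X/x_j)W_h$ with $h\ne h_0$. Correcting only the modified generators proves nothing, because a $P$-correction of $X$ with constant term $\sigma$ absorbs the defect of the cyclic relation by itself. Let $\widetilde X,\widetilde D_{j,h}$ be the lifted generators and $g_X,g_{j,h}$ the $P$-coefficients of their corrections. The element $x_j\widetilde D_{j,h_0}+t_{j-1}\widetilde D_{j-1,h_0}-W_{h_0}\widetilde X$ of the lifted ideal lies in $\mathfrak m^m$. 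Flatness makes $\mathfrak m^m\otimes_k k[M_{m,n}]$ inject into the lifted algebra, so $\sigma W_{h_0}+x_jg_{j,h_0}-W_{h_0}g_X=0$ in $k[M_{m,n}]$. The unchanged relation for $h\ne h_0$ gives $x_jg_{j,h}-W_hg_X=0$. Projecting to the fine degrees $\deg W_{h_0}$ and $\deg W_h$ kills the $x_j$-terms, and forces the constant term of $g_X$ to equal both $\sigma$ and $0$. This is a projection of an equation in $k[M_{m,n}]$, so homogeneity of the corrections is never used; this settles the worry in your last paragraph. The relation with $Y$ lifts exactly and gives no condition involving $\sigma$, and no socle pairing with $R/(x_j^2,y_h^2)$ is needed.

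Smaller points:
\begin{itemize}
\item Relation lifting over the non-Artinian base $k[t]/(P)$ needs a graded local flatness criterion. The paper instead uses a monic Gr\"obner basis over $\ZZ[t]/(P)$.
\item $\alpha_i$ is the delta at $\{i+1,h_0\}$, not $\{i,h_0\}$, since $\{i,h_0\}$ contains $i$.
\item Uniqueness also needs $T^1_{-\mathbf u}=0$ for nonsquarefree $\mathbf u$, and $T^2$ plays no role there.
\end{itemize}
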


\begin{proof}
Shifting the summation index in \eqref{eq:cyclic-polynomials} gives
the telescoping identity
\begin{equation}\label{eq:cyclic-identity}
x_jF_j+t_{j-1}F_{j-1}=X+\sigma P.
\end{equation}
Work first over $B=\ZZ[t]/(P)$ and regard the parameters as coefficients.
Choose an order on monomials in $x,y$ refining total degree. The displayed
generators of $J$ are monic and have the leading monomials in
\eqref{eq:two-line-ideal}. We verify the monic Buchberger criterion
over $B$, which is valid over a coefficient ring with zero divisors
because no leading coefficient is divided by a nonunit.

Two unchanged generators have zero S-polynomial. For two modified
generators the S-polynomial is
$W_{h_0}(x_jF_j-x_kF_k)$. Subtracting their identities
\eqref{eq:cyclic-identity} expresses it as a combination of modified
generators with strictly smaller $x,y$ degree than their least common
multiple. For $X$ and a modified generator the same identity gives
a multiple of another modified generator and the remainder
$-\sigma P W_{h_0}=0$. For $Y$ and a modified generator every tail
is divisible by $Y$. Finally, for a modified generator and an unchanged
cross generator with $h\ne h_0$, the least common multiple of $W_h$
and $W_{h_0}$ is $Y$, so every tail is again divisible by $Y$.
These exhaust the S-pairs. Division by a monic basis shows that the
original standard monomials form a $B$-basis of $B[x,y]/J$.
This proves flatness over every field. Setting any parameter to zero
makes $P$ identically zero, proving the proper-subcollection assertion.

The coefficient of $t_i$ in $F_{i+1}$ is
$-X/(x_ix_{i+1})$. Thus $\alpha_i$ is, up to sign, the singleton
hyperplane delta function at $\{i+1,h_0\}$; it is not constant and
is nonzero. Every pair belongs to a proper subcollection, so its
mixed primary obstruction is zero. Each one-parameter restriction is
also a polynomial deformation, so self-obstructions vanish directly,
including in characteristic two.

We next rule out arbitrary, possibly nonhomogeneous, lifts of the
specified jet. Set $A=k[t]/\mathfrak m^{m+1}$. In any flat lift,
choose lifts of the variables and of the displayed ideal generators.
The lifted variables generate the algebra since the base ideal is
nilpotent. Flatness identifies the reduction of its presentation kernel
with $I_{M_{m,n}}$; lifted circuit generators generate that kernel by
the nilpotent form of Nakayama's lemma.
They differ from \eqref{eq:cyclic-family} by parameter monomials of
total degree $m$ with arbitrary coefficients in $k[x,y]$.
Write $g_X$ and $g_{j,h}$ for their coefficients at $P$.
The kernel $\mathfrak m^m$ of $A\to k[t]/\mathfrak m^m$ is
annihilated by $\mathfrak m$. Flatness therefore makes
\[
\mathfrak m^m\otimes_k k[M_{m,n}]\longrightarrow\mathcal A
\]
injective, where $\mathcal A$ is the lifted algebra.
Multiply the modified relation by $x_j$, add $t_{j-1}$ times the
preceding modified relation, and subtract $W_{h_0}$ times the lifted
relation for $X$. By \eqref{eq:cyclic-identity}, its $P$ coefficient
must vanish in the special fiber:
\begin{equation}\label{eq:critical-socle-relations}
\sigma W_{h_0}+x_jg_{j,h_0}-W_{h_0}g_X=0.
\end{equation}
Likewise the unchanged cross relation for $h\ne h_0$ gives
$x_jg_{j,h}-W_hg_X=0$. Corrections at other parameter monomials
do not affect these $P$ coefficients, since a parameter times any
order-$m$ correction is zero in $A$.

Project the first equation to fine degree $\deg W_{h_0}$, and the
second to fine degree $\deg W_h$. No $x_jg_{j,h}$ can contribute,
since the $j$ coordinate of these degrees is zero. If $\lambda$ is
the constant term of $g_X$, the two projections require
$(\sigma-\lambda)W_{h_0}=0$ and $\lambda W_h=0$ in $k[M_{m,n}]$.
Every $W_h$ is a nonzero standard monomial, so $\lambda=\sigma$
and $\lambda=0$, a contradiction over every field. Lifting the
variables presents any abstract affine lift in this form;
thus the contradiction is not restricted to a polynomial ansatz or
to homogeneous correction terms.

It remains to justify uniqueness from the first-order classes in the
stated graded category. For a parameter monomial $t^{\mathbf u}$ of
total degree at least two, $T^1_{-\mathbf u}(M_{m,n})=0$. If $\mathbf u$
is not squarefree this follows from the support criterion; otherwise
its support has rank at least two in the simple dual $N_{m,n}$, and
Theorem~\ref{thm:rank-two-support} applies. Derivations of
$k[x,y]$ into $k[M_{m,n}]$ in degree $-\mathbf u$ are also zero.
Put $S=k[x,y]$ and $R=k[M_{m,n}]$. The presentation exact sequence
\[
\operatorname{Der}_k(S,R)\longrightarrow
\operatorname{Hom}_S(I_{M_{m,n}},R)\longrightarrow T^1(R)
\longrightarrow0
\]
therefore gives $\operatorname{Hom}_S(I_{M_{m,n}},R)_{-\mathbf u}=0$.
Indeed the image of a variable under a derivation would have degree
$\ee_v-\mathbf u$, with a negative coordinate when $|\mathbf u|\ge2$.
Representatives of the same first-order classes can first be aligned
by graded coordinate changes realizing their derivation differences.
Differences between two flat embedded lifts across an order-filtration
small extension lie in these spaces. Induction shows that all
coefficients of orders $2,\ldots,m-1$ are forced. There is no ambiguity
in an ideal-valued coefficient: subtracting a nonempty support from
the degree of a minimal circuit generator leaves either a negative
coordinate or a proper face of that circuit. After choosing the
same first-order representatives, this proves graded uniqueness.
The obstruction has parameter weight $\one_L$; its dimension is
computed in Proposition~\ref{prop:two-line-primary-quotient} below.
\end{proof}

\begin{example}[A six-element cubic obstruction]\label{ex:essential-cubic}
For $m=n=3$, the polynomials are
\[
F_0=x_1x_2-t_2x_1+t_2t_1,\quad
F_1=x_0x_2-t_0x_2+t_0t_2,\quad
F_2=x_0x_1-t_1x_0+t_1t_0.
\]
For example $x_0F_0+t_2F_2=X+t_0t_1t_2$.
Any two of the three classes integrate jointly, but the specified
second-order deformation has a nonzero cubic lifting obstruction.
The critical block $T^2_{-\one_L}$ has dimension two.
\end{example}

\begin{proposition}[The quotient missed by the primary cup]\label{prop:two-line-primary-quotient}
For $M=M_{m,n}$, let $\Omega$ be its full primary image. If
$B\subseteq L$ and $|B|\ge2$, then $\dim_kT^2_{-\one_B}(M)=n-1$;
if $B\subseteq H$ and $|B|\ge2$, the dimension is $m-1$. Moreover,
\begin{equation}\label{eq:two-line-primary-quotient}
\sum_{\varnothing\ne B\subseteq L\sqcup H}
\dim_k(T^2/\Omega)_{-\one_B}
=(n-1)\left(2^m-1-m-\binom m2\right)
 +(m-1)\left(2^n-1-n-\binom n2\right).
\end{equation}
\end{proposition}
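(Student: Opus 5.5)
For $B\subseteq L$ with $|B|\ge2$ I would compute $T^2_{-\one_B}(M)$ directly from Theorem~\ref{thm:rank-two-support} with $A=\varnothing$ and $N=N_{m,n}$. The first step is to check that $\rk_NB=2$. This holds because $B$ lies on the rank-two line $L$, and every subset of $L$ of size at least two spans $L$. Next I list the hyperplanes of $N$ avoiding $B$: the line $H$, and the cross pairs $\{j,h\}$ with $j\in L\setminus B$ and $h\in H$. The line $L$ itself is excluded because it contains $B$.

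For each such hyperplane I count the rank-one flats $G$ with $\rk_N(G\cup B)=3$. Since $N$ is simple, these flats are single points.
\begin{itemize}
\item For the hyperplane $H$, every point $h\in H$ qualifies, since $h$ together with two points of $B\subseteq L$ spans. So $|\mathcal G_B(H)|=n$, and this block contributes $n-1$.
\item For a cross pair $\{j,h\}$, the point $\{h\}$ qualifies, but $\{j\}\cup B\subseteq L$ has rank two. So there is exactly one vertex, and the block contributes $0$.
\end{itemize}
Summing with \eqref{eq:rank-two-flat-count} gives $\dim_kT^2_{-\one_B}(M)=n-1$. Interchanging the roles of $L$ and $H$ gives $m-1$ for $B\subseteq H$.

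For \eqref{eq:two-line-primary-quotient} I would classify all nonempty $B\subseteq L\sqcup H$ by Theorem~\ref{thm:rank-two-support}. Only supports with $\rk_NB=2$ can be nonzero, and these are of three kinds:
\begin{itemize}
\item subsets of $L$ of size at least two;
\item subsets of $H$ of size at least two;
\item cross pairs $\{j,h\}$.
\end{itemize}
Any $B$ meeting both lines with $|B|\ge3$ contains two points of one line and one of the other, so it spans rank three and its component is zero. Singletons contribute nothing.

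Because $N$ is simple, its parallel classes are singletons, so $p_N(B)=|B|$. By Theorem~\ref{thm:negative-primary-image} (equivalently Corollary~\ref{cor:primary-quotient}), $\Omega_{-\one_B}=T^2_{-\one_B}$ when $|B|=2$, and $\Omega_{-\one_B}=0$ when $|B|\ge3$. Hence every pair, including every cross pair, contributes nothing to the quotient. The surviving supports are the subsets of $L$ of size at least three, of which there are $2^m-1-m-\binom m2$, each of dimension $n-1$. Symmetrically, the subsets of $H$ of size at least three number $2^n-1-n-\binom n2$, each of dimension $m-1$. Summing gives the stated formula.

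The only step needing care is the hyperplane-by-hyperplane vertex count. In particular one must verify that the cross-pair blocks are singletons rather than empty, though either way they contribute zero. One must also confirm that the hyperplanes of the truncation are exactly $L$, $H$ and the cross pairs, as asserted after \eqref{eq:two-line-matroid}. Everything else is direct bookkeeping with the earlier theorems.
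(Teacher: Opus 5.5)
Your proposal is correct and takes essentially the same route as the paper: the opposite line contributes $n$ discrete vertices and each cross pair exactly one, Theorem~\ref{thm:rank-two-support} kills mixed supports of size at least three, and Theorem~\ref{thm:negative-primary-image} applies with $p_N(B)=|B|$ because the dual is simple. You then finish with the same count of subsets of each line having at least three elements.
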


\begin{proof}
For $B\subseteq L$ with $|B|\ge2$, the opposite hyperplane $H$
contributes its $n$ rank-one flats as discrete vertices. Each other
hyperplane avoiding $B$ is a cross pair and contributes exactly one
qualifying vertex. This gives dimension $n-1$; interchange the lines
for the second assertion. Every support of size at least three not
confined to one line spans $N_{m,n}$ and has zero $T^2$ by
Theorem~\ref{thm:rank-two-support}. Pair degrees are in $\Omega$,
whereas supports of size at least three are not, by
Theorem~\ref{thm:negative-primary-image}. Counting the remaining
subsets of each line proves \eqref{eq:two-line-primary-quotient}.
\end{proof}

With $n=3$, the quotient in \eqref{eq:two-line-primary-quotient} grows
exponentially with $m$, while the corank remains three. The cyclic
construction gives one actual order-$m$ obstruction. The next theorem
realizes every homogeneous class in this quotient by allowing a subset
of either line and weights in the cyclic polynomials.

\subsection{Realization of the purely negative quotient}\label{sec:negative-realization}

The dimension in \eqref{eq:two-line-primary-quotient} counts classes that
the primary cup product cannot reach. We next show that every homogeneous
class counted there nevertheless occurs as an actual higher obstruction.
The construction allows both the negative support and the obstruction
value to be prescribed.

Fix $B\subseteq L$ of size $r\ge3$, choose a cyclic order on $B$, and put
\[
Q=\prod_{i\in B}x_i,\qquad Z=\prod_{i\in L\setminus B}x_i,
\qquad P_B=\prod_{i\in B}t_i,\qquad \sigma=(-1)^{r-1}.
\]
Thus $X=ZQ$. For weights $u_{i,h}\in k$ set
$v_h=\prod_{i\in B}u_{i,h}$ and define
\begin{equation}\label{eq:weighted-cyclic-polynomials}
F_{j,h}=\sum_{a=0}^{r-1}(-1)^a
\left(\prod_{b=1}^a u_{j-b,h}t_{j-b}\right)
\left(\prod_{i\in B\setminus\{j,j-1,\ldots,j-a\}}x_i\right).
\end{equation}
All indices on $B$ in this formula are cyclic. Write
$C_{j,h}=(X/x_j)W_h$ for the original cross generators and consider
\begin{equation}\label{eq:weighted-cyclic-family}
J_{B,u}=(X,Y,ZF_{j,h}W_h:j\in B,\ h\in H,
                 C_{j,h}:j\notin B,\ h\in H).
\end{equation}
The parameters have degree $\deg t_i=\ee_i$.

\begin{theorem}[Realization in every negative component]\label{thm:negative-realization}
Let $R=k[M_{m,n}]$ over any field. For $B\subseteq L$ with $|B|=r\ge3$
there is an explicit linear isomorphism
\[
\theta_B:k^H/k\one\xrightarrow{\sim}T^2_{-\one_B}(R).
\]
Every nonzero $\xi$ in this component is the obstruction of a flat
fine-multigraded deformation over $k[t_i:i\in B]/(t)^r$ to lifting
over $k[t_i:i\in B]/(t)^{r+1}$. More precisely, its obstruction is
$\xi\otimes P_B$, with every other total-degree-$r$ parameter
coefficient zero. This specified jet has no lift even in the ungraded
category. Its $r$ first-order classes are nonzero; every proper
subcollection integrates over its polynomial parameter ring.
The symmetric statement holds for $B\subseteq H$.

Consequently every nonzero homogeneous class of the purely negative
quotient $T^2(R)/\Omega$ is realized in this way, although all primary
cup products among the selected tangent classes vanish.
\end{theorem}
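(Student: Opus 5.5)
The plan is to repeat the proof of Theorem~\ref{thm:cyclic-higher-obstruction}, now with one weighted cyclic family per $h\in H$, and to keep track of how the obstruction depends on the weights $u$.

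\textbf{Weighted identity.} Shifting the summation index in \eqref{eq:weighted-cyclic-polynomials} should give
\[
x_jF_{j,h}+u_{j-1,h}t_{j-1}F_{j-1,h}=Q+\sigma v_hP_B ,
\]
where $j-1$ denotes the cyclic predecessor of $j$ in $B$. Multiplying by $ZW_h$ then yields $X W_h+\sigma v_hP_BZW_h$.

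\textbf{The isomorphism $\theta_B$.} By Proposition~\ref{prop:two-line-primary-quotient}, the component $T^2_{-\one_B}$ is the $H$-block of \eqref{eq:rank-two-flat-count}. Its vertices are the rank-one flats $\{h\}$ of the hyperplane $H$, and the cross-pair blocks are one-dimensional, hence contribute zero modulo constants. So $k^H/k\one$ is the natural coordinate space. I would define $\theta_B(f)$ as the class of the Taylor-presentation cochain that sends the syzygy between $x_{j}\cdot C'_{j,h}$ and $W_h\cdot X$ to the value $f(h)\,ZW_h$. Bijectivity would be checked against the explicit relative complex of Theorem~\ref{thm:universal-graph}. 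I expect this matching of normalizations to be one of the fiddlier points.

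\textbf{Flatness and lower-order properties.} Over $B'=\ZZ[u][t]/(P_B)$, the monic Buchberger check from the cyclic proof carries over verbatim. Distinct $h$ interact only through $\operatorname{lcm}(W_h,W_{h'})=Y$, so every tail is divisible by $Y$. This gives flatness over $k[t]/(P_B)$, and hence over $k[t]/(t)^r$ since $P_B\in(t)^r$.

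Setting one parameter to zero kills $P_B$. This gives polynomial integration of proper subcollections and vanishing pairwise cups; alternatively, Theorem~\ref{thm:negative-primary-image} gives the cup statement directly. The first-order class of $t_i$ is the function $h\mapsto u_{i,h}$ on the singleton hyperplanes $\{i+1,h\}$, up to sign. It is nonzero provided, for each $i$, not all $u_{i,h}$ vanish.

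\textbf{Obstruction computation (main step).} This follows the socle argument of the cyclic proof. Take an arbitrary, possibly nonhomogeneous, lift over $k[t]/(t)^{r+1}$, and let $g_X$ and $g_{j,h}$ be the $P_B$-coefficients. Combine the relations as before: $x_j$ times the modified relation, plus $u t$ times the preceding one, minus $ZW_h$ times the lifted $X$-relation. Projecting to fine degree $\deg(ZW_h)$ gives
\[
\sigma v_h=\lambda \quad\text{for every } h\in H,
\]
where $\lambda$ is the constant term of $g_X$. The unmodified cross generators with $j\notin B$ impose nothing new, since their $j$-coordinate is nonzero.

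So a lift exists iff $v$ is constant on $H$, and the order-$r$ obstruction is the class of $h\mapsto\sigma v_h$ in $k^H/k\one$, tensored with $P_B$. All other degree-$r$ parameter coefficients vanish, because only $P_B$ appears in the identity.

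\textbf{Choice of weights.} Given $\xi=\theta_B(f)\neq0$, choose $u_{i,h}=1$ for $i\neq i_0$ and $u_{i_0,h}=\sigma f(h)+c$. The constant $c$ is picked so that $u_{i_0,\cdot}$ is not identically zero, which keeps every tangent class nonzero; adding a constant does not change the class modulo $k\one$. Then $v_h=\sigma f(h)+c$, and the obstruction is $\xi\otimes P_B$ up to the fixed sign of $\theta_B$, which I absorb into the definition.

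Two points need the most care: that the socle relation is the exact obstruction cocycle identified by $\theta_B$, and not merely a nonliftability test; and that it holds simultaneously for all $h$. The case $B\subseteq H$ is symmetric. The final claim follows from Corollary~\ref{cor:primary-quotient}: the purely negative quotient is concentrated in supports $B$ inside one line with $|B|\ge3$, and in those degrees $\Omega=0$.
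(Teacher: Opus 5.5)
Your flatness, proper-subcollection, tangent and ungraded nonliftability steps go through essentially as in the paper. The central assertion, however, is left unproved: that the obstruction of the weighted jet \emph{equals} a prescribed $\xi$ via an isomorphism $\theta_B$. The socle projection shows only that the obstruction is nonzero when $v$ is nonconstant. It does not say which element of $T^2_{-\one_B}(R)$ the obstruction is, so it cannot by itself show that every $\xi$ occurs. You defer exactly this point twice (``bijectivity would be checked against the relative complex'', ``the socle relation is the exact obstruction cocycle'').

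The missing step is to place your combination inside the fixed Taylor/Koszul presentation \eqref{eq:realization-T2-presentation}. Put $d_{j,h}=u_{j-1,h}t_{j-1}$. Up to sign, $x_j\widetilde D_{j,h}+d_{j,h}\widetilde D_{j-1,h}-W_h\widetilde X$ is the evaluation of the lift $W_he_X-x_je_{j,h}-d_{j,h}e_{j-1,h}$ of the Taylor relation of $(X,C_{j,h})$. Add the same-$h$ differences, the $Y$ and distinct-$h$ pairs corrected by multiples of $e_Y$, and the already exact remaining pairs. By flatness of the ideal and nilpotent Nakayama, these lifts generate the relation module over $k[t]/(t)^r$. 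The obstruction is therefore the class of their evaluations over $k[t]/(t)^{r+1}$, namely $\mu_v\otimes P_B$. Here $\mu_v$ takes the value $-\sigma b_jv_h$ on $(X,C_{j,h})$ and $\sigma(b_j-b_k)v_h$ on same-$h$ cross pairs, each times $\operatorname{lcm}(p,q)/Q$, and is zero elsewhere, with $b_j=1$ exactly for $j\in B$. You must also check the triangle and Koszul conditions, so that $\mu_v$ lies in $\operatorname{Hom}_R(K/K_0,R)$. Your flatness over $k[t]/(P_B)$ does show that only the $P_B$ coefficient can be nonzero, but its value must be read off in this presentation.

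Once $\mu_v$ is in place, you need neither the graph model nor any matching of normalizations. A generator cochain $g$ of degree $-\one_B$ has $g(C_{j,h})=0$ for $j\in B$, since that coordinate would be negative, and $g(X)=\lambda Z$. Hence $\delta g(W_he_X-x_je_{j,h})=\lambda ZW_h$ is independent of $h$, and $ZW_h\ne0$ in $R$. So $[\mu_v]=0$ forces $v$ to be constant. This is your socle projection in cochain form, and it gives injectivity on $k^H/k\one$. Conversely, for a constant $c$, $\mu_c=\delta g$ with $g(X)=-\sigma cZ$, $g(C_{k,h})=-\sigma c(Z/x_k)W_h$ for $k\notin B$, and $g$ zero elsewhere. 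The dimension $n-1$ from Proposition~\ref{prop:two-line-primary-quotient} then makes $\theta_B$ an isomorphism. The paper instead computes the whole quotient directly in the presentation.

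There are also two slips. Since $X=ZQ$, your combination must subtract $W_h\widetilde X$, not $ZW_h\widetilde X$. Correspondingly, $\lambda$ is the coefficient of $Z$ in $g_X$, not its constant term; the two agree only when $B=L$. Finally, the ``alternative'' via Theorem~\ref{thm:negative-primary-image} is wrong. The pair component $T^2_{-\ee_i-\ee_{i'}}$ is nonzero and lies entirely in $\Omega$, so that theorem says cup products fill it, not that your particular products vanish. Their vanishing comes only from polynomial integration of the two-parameter restrictions.
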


\begin{proof}
The weighted telescoping identity is
\begin{equation}\label{eq:weighted-cyclic-identity}
x_jF_{j,h}+u_{j-1,h}t_{j-1}F_{j-1,h}=Q+\sigma v_hP_B.
\end{equation}
Over $\ZZ[u,t]/(P_B)$, the generators in
\eqref{eq:weighted-cyclic-family} are a monic Gr\"obner basis for
any monomial order in $x,y$ refining total degree. Same-$h$ pairs
of modified generators subtract two identities
\eqref{eq:weighted-cyclic-identity}; the residual terms cancel.
A pair of $X$ and a modified generator has residual
$-\sigma v_hP_BZW_h=0$. A modified and an unchanged cross generator
with the same $h$ use the same identity and have residual
$\sigma v_hP_BZW_h=0$. In each case the predecessor terms have
smaller $x,y$ degree. For different $h$, every tail is divisible by
$Y$, as are the tails in pairs involving $Y$. Pairs of unchanged
monomial generators have zero S-polynomial. These exhaust the pairs,
so the original standard monomials give a free coefficient-module basis.
In particular the family is flat over $k[t]/(P_B)$, and setting any
parameter to zero gives a flat polynomial family.

We identify its actual obstruction in a fixed presentation. Put
$S=k[x,y]$, let $K$ be the relations among the minimal generators of
$I=I_{M_{m,n}}$, and let $K_0$ be their Koszul submodule. The
polynomial-quotient presentation gives
\begin{equation}\label{eq:realization-T2-presentation}
T^2(R)=\operatorname{coker}\left(
\operatorname{Hom}_S(S^{2+mn},R)\longrightarrow
\operatorname{Hom}_R(K/K_0,R)\right).
\end{equation}
The Taylor pair relations generate $K$, their triple relations give
the compatibility equations, and Koszul relations impose the additional
annihilation conditions. This low-degree cotangent-complex presentation
is valid for polynomial quotients over any field; no characteristic-zero
comparison or division by an integer is used. We use the monomial
coordinates of \cite[Section~2, Lemma~2]{AltmannChristophersen}.

Orient a Taylor relation as
$R_{p,q}=\operatorname{lcm}(p,q)e_p/p-
\operatorname{lcm}(p,q)e_q/q$. Put $a_j=1$ for $j\in B$ and zero
otherwise. In degree $-\one_B$, its residual coordinate is the
coefficient of $\operatorname{lcm}(p,q)/Q$. Define
\begin{align}\label{eq:weighted-obstruction-cochain}
\mu_v(X,C_{j,h})&=-\sigma a_jv_h,\notag\\
\mu_v(C_{j,h},C_{k,h})&=\sigma(a_j-a_k)v_h,
\end{align}
and set all other pair coordinates to zero, with antisymmetry.
Pairs with a negative exponent or nonface shifted support have zero
target and are omitted.

These are coordinates of a cochain in the fixed space
$\operatorname{Hom}_R(K/K_0,R)_{-\one_B}$. Indeed the only nonzero
pair targets are within the blocks $\{X,C_{j,h}:j\in L\}$ for fixed
$h$; a pair involving $Y$ or two distinct $h$ has shifted support
containing $Y$. In each block \eqref{eq:weighted-obstruction-cochain}
is the difference of vertex values $0$ at $X$ and $\sigma a_jv_h$
at $C_{j,h}$, so it satisfies every Taylor triangle equation.
A Koszul image on a pair $p,q$ has coefficient monomial $pq/Q$.
For $(X,C_{j,h})$ with $j\in B$, its support contains $C_{j,h}$.
For same-$h$ cross pairs with exactly one index in $B$, it likewise
contains the circuit indexed by that element of $B$. Thus the
nonzero coordinates annihilate the Koszul relations. The remaining
coordinates are zero.

To see that this cochain is the actual lifting obstruction, set
$A_{r-1}=k[t]/(t)^r$ and $A_r=k[t]/(t)^{r+1}$, and write
$d_{j,h}=u_{j-1,h}t_{j-1}$ for $j\in B$, zero otherwise. Lift the
original Taylor relations to
\begin{align*}
\widetilde R_{X,C_{j,h}}&=W_he_X-x_je_{j,h}-d_{j,h}e_{j-1,h},\\
\widetilde R_{C_{j,h},C_{k,h}}&=x_je_{j,h}-x_ke_{k,h}
                   +d_{j,h}e_{j-1,h}-d_{k,h}e_{k-1,h}.
\end{align*}
A predecessor term is included only when its index lies in $B$.
Evaluating at the deformed generators gives respectively
$-\sigma a_jv_hP_BZW_h$ and
$\sigma(a_j-a_k)v_hP_BZW_h$.
For different-$h$ pairs and pairs involving $Y$, the evaluated
Taylor relation is a multiple of $Y$ with coefficient specializing
to zero; subtract that coefficient times $e_Y$ to obtain an exact
relation. The $X,Y$ relation is already exact.

These lifted relations generate the relation module over $A_{r-1}$:
flatness of the quotient implies flatness of its ideal and relation
module in the free presentations, base change recovers $K$, and
nilpotent Nakayama applies to the lifted Taylor generators. The
small-extension obstruction recipe evaluates these relations over
$A_r$ and projects their failures to
$R\otimes_k((t)^r/(t)^{r+1})$. By the displayed evaluations its
class is exactly $[\mu_v]\otimes P_B$, with all other parameter
coefficients zero. Changing generator lifts changes this cochain by
the generator coboundaries in \eqref{eq:realization-T2-presentation}.

We can also read the whole target from this presentation. For fixed
$h$, the triangle equations make any cochain a difference of vertex
values. If $j,k\in B$, the support of $C_{j,h}C_{k,h}/Q$ is
$(L\setminus\{j,k\})\cup(H\setminus\{h\})$, a face: its complement
$\{j,k,h\}$ is a basis of $N_{m,n}$. The Koszul condition therefore
equates the vertex values at all $j\in B$. Generator cochains at
$C_{k,h}$ for $k\notin B$, with values scalar multiples of
$(Z/x_k)W_h$, remove the outside-$B$ vertex values independently.
The generator cochain at $X$, with value a scalar multiple of $Z$,
removes one common constant among the remaining $h$-values.
No other generator has a value in this degree. Hence the entire
quotient is $k^H/k\one$, parametrized linearly by $[\mu_v]$.
Explicitly, $\mu_c$ is the coboundary with values $-\sigma cZ$ at
$X$, $-\sigma c(Z/x_k)W_h$ at outside-$B$ cross generators, and
zero elsewhere; a coboundary on the remaining pairs forces all
$v_h$ equal. Thus $\theta_B([v])=[\mu_v]$ is an isomorphism.

Given $\xi\ne0$, choose $[v]=\theta_B^{-1}(\xi)$ and a representative
$v$. Set $u_{i_*,h}=v_h$ for one fixed $i_*\in B$ and set the other
weights to one. A nonconstant vector has a nonzero coordinate.
Every tangent class is nonzero: its value on $X$ is zero, whereas
its value at a successor cross generator with nonzero weight is
$-u_{i,h}XW_h/(x_ix_{i+1})\ne0$. The only polynomial derivation
in degree $-\ee_i$ is a scalar multiple of $\partial/\partial x_i$,
whose nonzero value on $X$ is a scalar multiple of $X/x_i$.
Thus the tangent cannot be a derivation coboundary. Polynomial
integration of every one- and two-parameter restriction proves the
vanishing of the diagonal and mixed primary obstructions in every
characteristic.

Since the obstruction class is nonzero, no lift of this specified
jet exists, regardless of grading. The direct socle argument is also
instructive: if $g_X,g_{j,h}$ are arbitrary coefficients of $P_B$ in
order-$r$ corrections, then for $j\in B$ flatness forces
\[
\sigma v_hZW_h+x_jg_{j,h}-W_hg_X=0.
\]
Projection to fine degree $\deg(ZW_h)$ kills the $x_j$ term and
requires the coefficient of $Z$ in $g_X$ to equal $\sigma v_h$
for every $h$, impossible for nonconstant $v$. This projection
does not require the hypothetical corrections to be homogeneous.
Finally Proposition~\ref{prop:two-line-primary-quotient} and
Theorem~\ref{thm:negative-primary-image} show that the supports just
treated, and their symmetric counterparts, are exactly the nonzero
degrees of the purely negative quotient.
\end{proof}

\begin{corollary}[Graded uniqueness of the realizing jets]\label{cor:realizing-jet-uniqueness}
For the parameters and first-order classes chosen in
Theorem~\ref{thm:negative-realization}, the jet over $k[t]/(t)^r$
is unique up to fine-graded isomorphism inducing the identity on the
special fiber. This does not assert uniqueness among ungraded jets
with the same first-order classes.
\end{corollary}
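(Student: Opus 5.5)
I would prove graded uniqueness by induction on the order. The argument mirrors the uniqueness paragraph in the proof of Theorem~\ref{thm:cyclic-higher-obstruction}, now applied to the weighted family $J_{B,u}$ over $A_{r-1}=k[t_i:i\in B]/(t)^r$.

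First fix the first-order data. Two fine-graded jets with the same first-order classes have first-order generator perturbations that differ, degree by degree in $-\ee_i$, by a derivation coboundary. Here the only derivations are scalar multiples of $\partial/\partial x_i$, as noted in the proof of Theorem~\ref{thm:negative-realization}. I would realize this difference by the graded coordinate change $x_i\mapsto x_i+c_it_i$. This change is fine-degree preserving, since $\deg t_i=\ee_i$, and it induces the identity on the special fiber. After it, both jets share literally the same first-order representatives.

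Next, proceed through the small extensions $k[t]/(t)^{s+1}\to k[t]/(t)^s$ for $2\le s\le r-1$, with kernel spanned by the monomials $t^{\mathbf u}$, $|\mathbf u|=s$. Fine homogeneity forces the order-$s$ correction to each circuit generator at $t^{\mathbf u}$ to have fine degree $\deg m-\mathbf u$. Suppose two flat embedded lifts agree modulo $(t)^s$. Their difference at $t^{\mathbf u}$ is then an element of $\operatorname{Hom}_S(I,R)_{-\mathbf u}$, read modulo the ideal. The key vanishing is $\operatorname{Hom}_S(I,R)_{-\mathbf u}=0$ for $|\mathbf u|\ge2$ with $\supp\mathbf u\subseteq B$. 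I would establish it in two cases:
\begin{enumerate}[label=\textup{(\roman*)}]
\item If $\mathbf u$ is not squarefree, the support criterion applies.
\item If $\mathbf u$ is squarefree, its support has rank at least two in the simple matroid $N_{m,n}$, so $T^1_{-\mathbf u}=0$ by Theorem~\ref{thm:rank-two-support}.
\end{enumerate}
Derivations in degree $-\mathbf u$ vanish because the image of each variable would have a negative coordinate. The presentation exact sequence then kills $\operatorname{Hom}_S(I,R)_{-\mathbf u}$. Moreover, no ideal-valued ambiguity arises in the embedded coefficient: subtracting a nonempty support from a squarefree circuit degree leaves either a negative coordinate or a proper face. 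Hence the correction at each order is uniquely forced, and induction yields equality of the embedded jets modulo $(t)^r$.

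I expect the main obstacle to be making the passage from abstract graded jets to embedded ones rigorous. One must show that any fine-graded flat deformation over $A_{r-1}$ with these tangent classes can be presented by homogeneous lifts of the variables and generators. I would lift variables homogeneously (possible since $\deg t_i=\ee_i$ and everything is graded), use nilpotent Nakayama as in Theorem~\ref{thm:cyclic-higher-obstruction} to lift the circuit generators, and invoke flatness to identify the reduced kernel with $I_{M_{m,n}}$. This shows that the isomorphism constructed above is fine-graded and restricts to the identity on $R$. The closing remark about ungraded jets needs no proof: the degree constraint is exactly what rules out nonhomogeneous corrections.
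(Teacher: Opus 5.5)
Your proposal is correct and follows the paper's proof essentially step for step. You align the first-order representatives by the graded coordinate changes $x_i\mapsto x_i+c_it_i$, then run the order-filtration induction using $\operatorname{Hom}_S(I,R)_{-\mathbf u}=0$ for $|\mathbf u|\ge2$, which you obtain exactly as the paper does: from the support criterion, Theorem~\ref{thm:rank-two-support} in the simple dual, and the vanishing of polynomial derivations of that degree.
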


\begin{proof}
For every parameter weight $\mathbf u$ with $|\mathbf u|\ge2$,
$T^1_{-\mathbf u}(R)=0$: nonsquarefree negative parts vanish by the
support criterion, and squarefree supports of size at least two
have rank at least two in the simple dual, so
Theorem~\ref{thm:rank-two-support} applies. The polynomial
derivations of this degree vanish as well. Align first-order
representatives by graded coordinate changes and apply the
order-filtration induction in the proof of
Theorem~\ref{thm:cyclic-higher-obstruction}; the differences at
each subsequent parameter weight lie in the zero space
$\operatorname{Hom}_S(I,R)_{-\mathbf u}$. This proves the stated
equivalence of the marked jets.
\end{proof}

The realizations are separate for each homogeneous degree. The theorem
does not realize arbitrary sums from different degrees in one
deformation, nor describe the full ungraded miniversal base.

\subsection{Realization in arbitrary homogeneous degrees}
\label{sec:positive-realization}

We retain the two-line matroid $M_{m,n}=N_{m,n}^*$ and its circuit generators
$X,Y,C_{j,h}$ from Section~\ref{sec:higher-obstructions}.
We now allow positive coordinates in the obstruction degree. The link
isomorphism \eqref{eq:link-formula}, originating in
\cite[Proposition~11]{AltmannChristophersen}, computes the component but
does not itself construct a deformation of the original ring. Multiplication
on the obstruction module has a simple description for the two-line family:
it discards precisely the opposite-line coordinates in the positive support.
The same operation on the cyclic construction realizes every surviving class.
The resulting families are formal; their flatness will be proved without a
polynomial monomial order.

\begin{lemma}[Lifting relations over an Artinian base]
\label{lem:artinian-relation-lifting}
Let $A$ be a local Artinian $k$-algebra with residue field $k$, and put
$S=A[z_1,\ldots,z_d]$. Suppose $F_1,\ldots,F_q\in S$ reduce to generators
$p_1,\ldots,p_q$ of $I\subseteq k[z]$. If generators of the relation module
among the $p_i$ lift to exact relations among the $F_i$, then
$S/(F_1,\ldots,F_q)$ is free over $A$ on arbitrary lifts of a $k$-basis of
$k[z]/I$.
\end{lemma}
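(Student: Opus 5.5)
The plan is the standard local flatness criterion over an Artinian base, run by induction on the length of $A$, as in the proof of Theorem~\ref{thm:arcs}. Put $J=(F_1,\ldots,F_q)\subseteq S$. The first step is to reduce modulo the maximal ideal $\mathfrak m_A$. Since $F_i\mapsto p_i$, we have
\[
J\otimes_Ak=J/\mathfrak m_AJ\twoheadrightarrow I,
\]
so $(S/J)\otimes_Ak\cong k[z]/I$. Nakayama's lemma, in its nilpotent form because $\mathfrak m_A$ is nilpotent, then shows that arbitrary lifts of a $k$-basis of $k[z]/I$ generate $S/J$ as an $A$-module. No finiteness is needed here, since $\mathfrak m_A^N=0$ lets one iterate the generation statement directly. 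What remains is to prove that these lifts are $A$-linearly independent, which is equivalent to flatness, and flatness over an Artinian local ring is equivalent to freeness.

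Next I would show $\operatorname{Tor}^A_1(S/J,k)=0$. From $0\to J\to S\to S/J\to0$ and the flatness of $S$ over $A$, this reduces to injectivity of $J\otimes_Ak\to S\otimes_Ak=k[z]$, that is, to showing that $J/\mathfrak m_AJ\to I$ is an isomorphism. Take an element of the kernel, written as $\sum_i a_iF_i$ with $a_i\in S$ and $\sum_i\bar a_ip_i=0$ in $k[z]$. Then $(\bar a_i)$ is a relation among the $p_i$, so it is a $k[z]$-combination of the given generating relations. By hypothesis each generating relation $\rho$ lifts to an exact relation $\tilde\rho$ with $\sum_i\tilde\rho_iF_i=0$. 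Subtracting the corresponding $S$-combination of the $\tilde\rho$ from $(a_i)$ does not change $\sum_ia_iF_i$, and it makes every $a_i\in\mathfrak m_AS$. Hence $\sum_ia_iF_i\in\mathfrak m_AJ$, as required. This bookkeeping step is the crux, and it is the one to write carefully: the exactness of the lifts over $A$ itself, rather than modulo a power of $\mathfrak m_A$, is used precisely here.

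Finally I would apply the local flatness criterion for a nilpotent ideal, also stated as \cite[Tag~051H]{Stacks}-type: a module $P$ over an Artinian local ring with $\operatorname{Tor}_1^A(P,k)=0$ is flat. A self-contained route is to filter $A$ by $\mathfrak m_A^j$, whose graded pieces are $k$-vector spaces, and to use the long exact sequences for $\operatorname{Tor}$ to get $\operatorname{Tor}_1^A(P,\mathfrak m_A^j/\mathfrak m_A^{j+1})=0$. Inducting downward from $\mathfrak m_A^N=0$ then gives $\operatorname{Tor}_1^A(P,A/\mathfrak m_A^j)=0$ for all $j$, and the multiplication maps $\mathfrak m_A^j\otimes P\to P$ are injective. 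For independence, suppose $\sum c_\beta\tilde b_\beta=0$ with not all $c_\beta=0$. Choose $j$ maximal with all $c_\beta\in\mathfrak m_A^j$, and reduce modulo $\mathfrak m_A^{j+1}$. This gives the relation
\[
\sum\bar c_\beta\otimes b_\beta=0\quad\text{in}\quad(\mathfrak m_A^j/\mathfrak m_A^{j+1})\otimes_k(k[z]/I),
\]
using the identification $\mathfrak m_A^jP/\mathfrak m_A^{j+1}P\cong(\mathfrak m_A^j/\mathfrak m_A^{j+1})\otimes_kP/\mathfrak m_AP$ supplied by the Tor vanishing. This contradicts the choice of $j$, so the lifts are independent. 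Beyond the relation-subtraction step there is no real obstacle, and the argument never uses a monomial order, in line with the remark preceding the lemma.
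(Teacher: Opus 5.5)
Your proof is correct. Its first two steps are the paper's: nilpotent Nakayama gives generation, and the relation-subtraction argument shows that $J/\mathfrak m_AJ\to k[z]$ is injective, so $\operatorname{Tor}_1^A(S/J,k)=0$ because $S$ is $A$-free.

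The two routes differ only at the end. The paper never establishes flatness as a separate step. It maps the free module $E$ on the chosen lifts onto $P=S/J$ and notes that $E/\mathfrak m_AE\to P/\mathfrak m_AP$ is an isomorphism. The Tor sequence of $0\to K\to E\to P\to 0$ then gives $K/\mathfrak m_AK\cong\operatorname{Tor}_1^A(P,k)=0$, so $K=\mathfrak m_AK=0$ by nilpotency.

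You instead upgrade $\operatorname{Tor}_1$-vanishing to flatness by d\'evissage. You then prove independence through the associated-graded identification $\mathfrak m_A^jP/\mathfrak m_A^{j+1}P\cong(\mathfrak m_A^j/\mathfrak m_A^{j+1})\otimes_k(P/\mathfrak m_AP)$. This works, including for an infinite basis. It is longer, though, and it proves more than the freeness statement requires. The paper's kernel argument reaches freeness on the given lifts in one step.

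One small correction to your d\'evissage. The vanishing you actually use is $\operatorname{Tor}_1^A(P,A/\mathfrak m_A^j)=0$, which is equivalent to injectivity of $\mathfrak m_A^j\otimes_AP\to P$. That comes from an upward induction starting at $j=1$, using $0\to\mathfrak m_A^j/\mathfrak m_A^{j+1}\to A/\mathfrak m_A^{j+1}\to A/\mathfrak m_A^j\to 0$. A downward induction from $\mathfrak m_A^N=0$, as you wrote it, yields $\operatorname{Tor}_1^A(P,\mathfrak m_A^j)=0$ instead. That group is $\operatorname{Tor}_2^A(P,A/\mathfrak m_A^j)$, not the one your independence step needs.
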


\begin{proof}
Write $\mathfrak m$ for the maximal ideal, $J=(F_i)$, and
$K_A=\ker(S^q\to J)$. Right exactness gives
$J/\mathfrak mJ=k[z]^q/\operatorname{im}(K_A\to k[z]^q)$.
The lifted relations imply that this image is the kernel of $k[z]^q\to I$.
Thus $J/\mathfrak mJ\to S/\mathfrak mS$ is injective, and the Tor sequence
gives $\operatorname{Tor}_1^A(S/J,k)=0$ since $S$ is $A$-free.
Let $E$ be the free $A$-module on chosen lifts of a residue-field basis of
$S/J$. The map $E\to S/J$ is surjective: its cokernel equals its product
with $\mathfrak m$, and $\mathfrak m$ is nilpotent. Its kernel $K$ lies in
$\mathfrak mE$ because reduction is an isomorphism. The Tor sequence now
identifies $K/\mathfrak mK$ with $\operatorname{Tor}_1^A(S/J,k)$, so
$K=\mathfrak mK=0$. No finite-dimensionality of the residue-field basis is
required.
\end{proof}

\begin{theorem}[Realization of the homogeneous primary quotient]
\label{thm:homogeneous-realization}
Let $R=k[M_{m,n}]$, $m,n\ge3$, over any field, and let
$\Omega=\operatorname{im}(T^1(R)\otimes_R T^1(R)\to T^2(R))$.
For every fine degree $c$ with $(T^2(R)/\Omega)_c\ne0$, write
$c=a-\one_B$, $a\ge0$, with disjoint supports. Then $B$ is contained
in one of the two dual lines, $r=|B|\ge3$, and $\Omega_c=0$, so the
quotient map canonically identifies this component with $T^2_c(R)$.
For every nonzero $\xi$ in it, there is a marked fine-graded flat deformation over
$k[t_i:i\in B]/(t)^r$ whose obstruction to lifting over
$k[t_i:i\in B]/(t)^{r+1}$ is $\xi\otimes[P_B]$ in
$T^2(R)\otimes_k J_r$, where $P_B=\prod_{i\in B}t_i$ and
$J_r=(t)^r/(t)^{r+1}$.
All other total-degree-$r$ obstruction coefficients are zero. This specified
jet has no lift even if grading is forgotten. Its $r$ tangent classes are
nonzero, and every proper subcollection integrates formally. All primary
products among these tangent classes vanish. A cyclic ordering of $B$ and
a distinguished $i_*\in B$ may be chosen arbitrarily; the parameter degrees
are $\deg t_i=\ee_i$ for $i\ne i_*$ and
$\deg t_{i_*}=\ee_{i_*}-a$. In particular $\deg P_B=-c$.

The realizing jet is unique up to marked fine-graded isomorphism for the
chosen parameter degrees and tangent classes. Here isomorphisms are over
the fixed parameter algebra and induce the identity on the special fiber.
A proper subcollection is obtained by setting all other parameters to zero.
The assertion is separate for
each homogeneous degree; it does not assert simultaneous realization of
inhomogeneous sums or describe the full ungraded miniversal base.
\end{theorem}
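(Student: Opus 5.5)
The plan is to reduce to the purely negative construction of Theorem~\ref{thm:negative-realization} and then reinsert the positive support through a reweighting of one parameter. First I identify the admissible degrees. By Corollary~\ref{cor:primary-quotient} and Theorem~\ref{thm:full-primary-image}, a nonzero quotient component needs $\rk_NB=2$ and $p_N(B)\ge3$, computed in the dual of the link $\lk_M A$, $A=\supp a$; the dual ranks and parallel classes are unchanged by passing to the link (proof of Theorem~\ref{thm:rank-two-support}). In the simple dual $N_{m,n}$ parallel classes are singletons. So $|B|\ge3$, and rank two forces $B$ into $L$ or $H$, since any three points not on one line span. By symmetry take $B\subseteq L$. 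Then $\Omega_c=0$ and $Q_c=T^2_c$. The support criterion forces $A\in M$ with $B\subseteq V(\lk_M A)$.

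Next I compute $T^2_c$ explicitly, following the stated principle that multiplication ``discards the opposite-line coordinates.'' Write $A=A_L\sqcup A_H$. I expect $T^2_c\cong k^{H\setminus A_H}/k\one$, nonzero only when $|H\setminus A_H|\ge2$. I also expect multiplication by $x^a$ from $T^2_{-\one_B}\cong k^H/k\one$ (via $\theta_B$) to be the coordinate projection, hence surjective. I would verify this with the hyperplane-graph formula applied to $(\lk_MA)^*$: deleting $A$ from $N$ keeps the line $H$ minus $A_H$ as the opposite hyperplane, and the cross pairs through $A$ disappear. Then I check in the presentation \eqref{eq:realization-T2-presentation} that $x^a\mu_v$ is again a cocycle whose coordinates at $h\in A_H$ are killed. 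Those coordinates carry $W_h$, and $x^aW_h$ is divisible by $Y$ when $h\in A_H$. The $L$-part of $a$ multiplies $Z W_h$ and stays a nonzero standard monomial because $A\cup$ stuff remains a face. Given $\xi\ne0$, I pick $v\in k^H$ projecting to it.

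For the construction, I take the family \eqref{eq:weighted-cyclic-family} with weights as in the negative case, and replace the parameter $t_{i_*}$ by $x^a t_{i_*}$. Equivalently, I use weight $u_{i_*,h}=v_hx^a$, now a polynomial rather than a scalar, and give $t_{i_*}$ degree $\ee_{i_*}-a$. The telescoping identity \eqref{eq:weighted-cyclic-identity} is formal in the weights, so it persists with residual $\sigma v_hx^aP_B$. The main obstacle is flatness, because the Gr\"obner argument fails: the modified tails $x^at_{i_*}\cdots$ need not have smaller degree in a total-degree order. I therefore work directly over $A_{r-1}=k[t]/(t)^r$ with Lemma~\ref{lem:artinian-relation-lifting}, lifting each Taylor pair relation exactly as displayed in the proof of Theorem~\ref{thm:negative-realization} (the $\widetilde R$). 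Their evaluations equal multiples of $P_B$, and $P_B=0$ in $A_{r-1}$. The $Y$-divisible residuals are absorbed by $e_Y$. Proper subcollections set $t_{i_*}$ or another $t_i$ to zero, which kills $P_B$ identically, so the same lifts are exact over the power-series ring; this gives formal integration.

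Finally, I evaluate the same lifted relations over $A_r$. The obstruction cochain is $x^a\mu_v\otimes P_B$ and no other weight-$r$ monomial appears, so its class is $\xi\otimes[P_B]$. Nonliftability in the ungraded category follows from the socle projection argument of Theorem~\ref{thm:cyclic-higher-obstruction}, projecting to degree $\deg(x^aZW_h)$ for $h\notin A_H$. Alternatively it follows from nonvanishing of the class alone. I check the tangent classes are nonzero using Lemma~\ref{lem:negative-generation} and the explicit successor coordinate. Primary products vanish because pairs integrate. For uniqueness I use Corollary~\ref{cor:realizing-jet-uniqueness}: for parameter weights of total order $\ge2$ the relevant degrees $-\mathbf u$ (now shifted by multiples of $a$ only through $t_{i_*}$, which appears at most once squarefree below order $r$ in relevant terms) still have $\operatorname{Hom}_S(I,R)_{-\mathbf u}=0$. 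I expect this vanishing check, and the degree bookkeeping with $\deg t_{i_*}=\ee_{i_*}-a$, to need a short case analysis via the support criterion.
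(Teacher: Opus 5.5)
Your route is the paper's own. It uses the same degree identification through the full primary image in the simple dual, and the same restriction map $[v]\mapsto[v|_{H_0}]$ with $H_0=H\setminus A$. The construction is identical: weight $u_{i_*,h}=v_hx^a$ with $\deg t_{i_*}=\ee_{i_*}-a$, flatness from Lemma~\ref{lem:artinian-relation-lifting} instead of a Gr\"obner basis, the evaluation $[x^a\mu_v]\otimes P_B$, the socle projection to $x^aZW_h$, and uniqueness from vanishing correction spaces.

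The step your plan does not actually establish is surjectivity of multiplication $x^a\colon T^2_{-\one_B}\to T^2_c$, which you need in order to choose $v$ with $[x^a\mu_v]=\xi$. The hyperplane-graph formula only gives $\dim_kT^2_c=|H_0|-1$, through a relative-cohomology isomorphism not expressed in presentation cochains. Checking that $x^a\mu_v$ is a cocycle whose $A_H$-coordinates die only shows the map factors through $k^{H_0}/k\one$. You still need $[x^a\mu_v]\ne0$ whenever $v|_{H_0}$ is nonconstant.

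The paper gets this by recomputing the degree-$c$ component inside the Taylor/Koszul presentation:
\begin{itemize}
\item the triangle equations make each surviving block a difference of vertex values;
\item the Koszul conditions identify the $B$-indexed vertices;
\item the only generator cochains in this degree, at $X$ and at the outside-$B$ cross generators, remove those vertices and one common constant.
\end{itemize}
Alternatively, your own socle argument closes the gap. For any $v$ nonconstant on $H_0$ it shows the jet built from $v$ has no lift, so its obstruction $[x^a\mu_v]\otimes P_B$ is nonzero. That gives injectivity on $k^{H_0}/k\one$, and the dimension count then gives surjectivity. So use the socle argument before choosing $v$, not only at the end.

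Two smaller points. In the uniqueness step, your parenthetical about $t_{i_*}$ appearing at most once is unjustified, since a competing jet may contain $t_{i_*}^2$. It is also unnecessary. For any $\mathbf u$ with $|\mathbf u|\ge2$, the correction has ring degree $u_{i_*}a-\sum_{i\in B}u_i\ee_i$ (not $-\mathbf u$). Its negative part is exactly $\mathbf u$, because $a$ vanishes on $B$. A nonsquarefree $\mathbf u$ is killed by the support criterion. A squarefree $\mathbf u$ meets two singleton dual parallel classes and is killed by Theorem~\ref{thm:rank-two-support}. No polynomial derivation has this degree.

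For the tangent classes, Lemma~\ref{lem:negative-generation} gives surjectivity, not nonvanishing, so you still need the derivation check. The only derivation of degree $a-\ee_{i_*}$ is a multiple of $x^a\partial/\partial x_{i_*}$. Its value on $X$ is nonzero in $R$, because the complement of its support contains $\{i_*\}\cup H_0$. Your tangent cochain vanishes on $X$ yet is nonzero at the successor cross generator for some $h\in H_0$ with $v_h\ne0$, so it is not a derivation coboundary.
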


\begin{proof}
Write $c=a-\one_B$ with $a\ge0$ and $\operatorname{supp}(a)\cap B=\varnothing$.
We first identify the obstruction component and the multiplication map
from its purely negative degree. We then insert the positive monomial
weight into the cyclic family and lift its defining relations to prove
flatness. The residual relation evaluations identify the prescribed
obstruction and rule out arbitrary ungraded lifts. Finally, vanishing of
the relevant correction spaces gives uniqueness in the marked
fine-graded category.
Theorems~\ref{thm:rank-two-support} and \ref{thm:full-primary-image}, with
the actual-vertex convention of \eqref{eq:link-formula}, imply that a nonzero quotient
component has $B$ contained in one dual line, with $r=|B|\ge3$.
Suppose $B\subseteq L$, write $A=\operatorname{supp}(a)$, and put $H_0=H\setminus A$.
If $|H_0|\ge2$, the dual of the actual-vertex link is the deletion $N\setminus A$.
It still has rank three, has no coloops, and consists of the lines
$L\setminus A,H_0$. The only contributing hyperplane is $H_0$, with its
singleton flats as vertices. Hence $T^2_c(R)=k^{H_0}/k\one$.
If $|H_0|=1$, its unique point is a coloop of $N\setminus A$; removing that
ghost leaves a rank-two line, whose blocks have at most one vertex and
contribute zero. If $H_0=\varnothing$, $A$ is a nonface and the component
is zero. Thus a nonzero target necessarily has $|H_0|\ge2$.

We need the multiplication map, not merely these dimensions. In the Taylor
presentation on the minimal monomial generating set, modulo Koszul
relations, use \eqref{eq:realization-T2-presentation} and orient
$R_{p,q}=\operatorname{lcm}(p,q)e_p/p-\operatorname{lcm}(p,q)e_q/q$.
Set $b_j=1$ for $j\in B$, zero otherwise, and $\sigma=(-1)^{r-1}$.
The negative cochain representing $v\in k^H$ has coordinates
\[
\mu_v(X,C_{j,h})=-\sigma b_jv_h,\qquad
\mu_v(C_{j,h},C_{k,h})=\sigma(b_j-b_k)v_h,
\]
and zero coordinates elsewhere, as in \eqref{eq:weighted-obstruction-cochain}.
Each scalar multiplies
$\operatorname{lcm}(p,q)/\prod_{i\in B}x_i$.
Multiplication by $x^a$ kills exactly the blocks $h\in A$, because their
monomials become divisible by $Y$. For $h\in H_0$ every same-block Taylor
pair target survives. Its complement contains $B\cup\{h\}$, which spans
$N$, while adding $A$ cannot remove those elements. Taylor triangles make
each surviving block a difference of vertex values; the Koszul conditions
equate all vertices indexed by $B$ since
$(L\setminus\{j,k\})\cup(H\setminus\{h\})$ remains a face after adding
$A$ for distinct $j,k\in B$. Outside-$B$ generator cochains remove their
vertices, and the cochain on $X$ removes one common constant. More explicitly
the vertices of each surviving block are $\{X,C_{j,h}:j\in L\}$; all their
pair and triple targets are nonzero. The cochain values at outside-$B$
generators are scalar multiples of $x^a(Z/x_k)W_h$ for $k\notin B$,
and the value at $X$ is a scalar multiple of $x^aZ$.
Their complements retain $B\cup\{h\}$ and $B\cup H_0$, respectively,
so these are nonzero values. There is no value at $Y$ or a $B$-indexed
cross generator because one negative $B$ coordinate remains. These are
precisely the same operations as in the negative presentation, multiplied
by $x^a$. Consequently the actual multiplication map is
\begin{equation}\label{eq:positive-restriction-map}
k^H/k\one\longrightarrow k^{H_0}/k\one,
\qquad [v]\longmapsto[v|_{H_0}].
\end{equation}
It is surjective. Since $\Omega_c=0$, choose $v$ with
$[x^a\mu_v]=\xi$; its restriction to $H_0$ is nonconstant.

Fix $i_*\in B$, choose a cyclic ordering, and interpret predecessors and
successors below in that ordering, not in the ground-set labels.
In \eqref{eq:weighted-cyclic-polynomials} set
$u_{i_*,h}=v_hx^a$, $u_{i,h}=1$ for $i\ne i_*$.
Put $Q=\prod_{i\in B}x_i$, $Z=\prod_{i\in L\setminus B}x_i$,
$P_B=\prod_{i\in B}t_i$, and explicitly put
\[
F_{j,h}=\sum_{q=0}^{r-1}(-1)^q
 \left(\prod_{b=1}^q u_{j-b,h}t_{j-b}\right)
 \left(\prod_{i\in B\setminus\{j,j-1,\ldots,j-q\}}x_i\right).
\]
Empty products are one. The constant term is $Q/x_j$ and the
$t_i$-linear term at the successor of $i$ is $-u_{i,h}Q/(x_ix_{i+1})$.
The earlier telescoping identity is a polynomial identity in arbitrary
commuting weights, so substitution of $v_hx^a$ is legitimate. It becomes
\begin{equation}\label{eq:positive-cyclic-identity}
x_jF_{j,h}+u_{j-1,h}t_{j-1}F_{j-1,h}
       =Q+\sigma v_hx^aP_B.
\end{equation}
Use generators $X,Y$, $D_{j,h}=ZF_{j,h}W_h$ for $j\in B$, and
$D_{j,h}=C_{j,h}$ for $j\notin B$. Give $t_i$ degree $\ee_i$ for
$i\ne i_*$ and give $t_{i_*}$ degree $\ee_{i_*}-a$.
Each deformed generator has the fine degree of its original monomial.

Write $k[t]=k[t_i:i\in B]$ and $(t)=(t_i:i\in B)$.
For every $\nu\ge1$, work over $A_\nu=k[t]/((t)^\nu,P_B)$.
All original Taylor pair relations lift exactly, as follows. Put
$d_{j,h}=u_{j-1,h}t_{j-1}$ for $j\in B$; omit predecessor terms entirely
when $j\notin B$.
The $X,D_{j,h}$ relation lifts to
$W_he_X-x_je_{j,h}-d_{j,h}e_{j-1,h}$; evaluation is
$-\sigma b_jv_hx^aP_BZW_h=0$.
For two cross generators in the same block, subtract the corresponding
identities: the lift is
$x_je_{j,h}-x_ke_{k,h}+d_{j,h}e_{j-1,h}-d_{k,h}e_{k-1,h}$
and its evaluation is
$\sigma(b_j-b_k)v_hx^aP_BZW_h=0$.
For a pair involving $Y$, $y_hD_{j,h}$ is divisible by $Y$;
replace the coefficient on $e_Y$ by $y_hD_{j,h}/Y$.
For cross generators in distinct blocks, the original Taylor multipliers
contain the missing $y_h$ and $y_l$. Both evaluated terms are therefore
divisible by $Y$, and their difference divided by $Y$ has zero parameter
constant term. Subtract this quotient times $e_Y$.
The $X,Y$ relation and pairs of unchanged monomials are exact already.
These lifts reduce to every original Taylor pair, which generate all first
syzygies. Lemma~\ref{lem:artinian-relation-lifting} proves $A_\nu$-flatness.
Their formulas are compatible for all $\nu$. The inverse system defines a
formal family over $k[[t]]/(P_B)$; setting any parameter to zero gives a formally
integrated proper subcollection. No polynomial flatness is inferred here.

Over the small extension $k[t]/(t)^{r+1}\to k[t]/(t)^r$, the evaluations
just computed give $[x^a\mu_v]\otimes P_B$. All other pair evaluations
are zero after the exact $Y$ corrections. Over the lower-order jet, flatness
of the quotient makes its ideal flat, and then the relation module in the
free presentation is flat. Reduction is exact and recovers the original
Taylor relations; nilpotent Nakayama shows that their lifts generate this
module. The fixed
small-extension presentation therefore gives precisely $\xi\otimes P_B$
and no other order-$r$ coefficient.

Each ordinary parameter direction has the nonzero tangent class calculated
in Theorem~\ref{thm:negative-realization}.
In the special direction it is multiplied by $x^a$, with weights $v_h$.
Choose $h\in H_0$ with $v_h\ne0$. At the successor cross generator its
value is $-v_hx^aXW_h/(x_{i_*}x_{i_*+1})\ne0$, since the complement of
its support contains the basis $\{i_*,i_*+1,h\}$ of $N$.
Its value on $X$ is zero. The only polynomial derivation of that degree
is a scalar multiple of $x^a\partial/\partial x_{i_*}$, whose value on
$X$ is nonzero: the complement of its support contains $\{i_*\}\cup H_0$,
of rank three. Thus the special tangent is not a derivation coboundary.
Formal one- and two-parameter integration kills the quadratic obstructions.
Their polarization is the cup product by Lemma~\ref{lem:graded-polarization},
also in characteristic two; self-cups vanish by the doubled fine degree.

Since the obstruction class is nonzero, the specified jet has no ungraded
lift. Equivalently, let $g_X,g_{j,h}$ be arbitrary $P_B$ coefficients of
prospective order-$r$ generator corrections. Order-$r$ changes of relation
coefficients evaluate to zero in $R$, and positive-order coefficients times
order-$r$ generator corrections have order at least $r+1$.
Thus arbitrary relation corrections cannot evade the following equation.
Projection of the lifted
$X,D_{j,h}$ relation to $R\otimes P_B$ forces
$\sigma v_hx^aZW_h+x_jg_{j,h}-W_hg_X=0$ for $j\in B$, $h\in H_0$.
The coefficient of $x^aZW_h$ in the $x_j$ term is zero because
$a_j=0$ and $j\notin\operatorname{supp}(ZW_h)$.
Hence the coefficient of $x^aZ$ in $g_X$ must equal $\sigma v_h$ for all
$h\in H_0$, contrary to nonconstancy.

Finally a correction at parameter monomial $t^{\mathbf u}$, $|\mathbf u|\ge2$,
has ring degree $-\sum_i u_i\ee_i+u_{i_*}a$. Its negative part is either
nonsquarefree or meets at least two distinct parallel classes of the
simple dual: its entries on $B$ are exactly $-u_i$, since $a$ vanishes
there. The support criterion in Section~\ref{sec:preliminaries} and
Theorem~\ref{thm:rank-two-support} give $T^1=0$ in these degrees.
A polynomial derivation has no coefficient monomial in such a degree,
because adding one $\ee_j$ cannot make all entries nonnegative.
The right-exact sequence
\[
\operatorname{Der}_k(k[x,y],R)\longrightarrow
\operatorname{Hom}_{k[x,y]}(I,R)\longrightarrow T^1(R)\longrightarrow0
\]
therefore gives a zero correction space.
Aligning first-order representatives by marked graded coordinate changes
and inducting through parameter orders $2,\ldots,r-1$ gives marked fine-graded uniqueness,
without asserting ungraded uniqueness. The case $B\subseteq H$ follows
by interchanging the two lines.
\end{proof}

\subsection{The module left after primary cup products}
\label{sec:primary-quotient-module}

Put $R=k[M_{m,n}]$ and $\mathcal Q=T^2(R)/\Omega$ as in
Theorem~\ref{thm:homogeneous-realization}.

The dimension of the purely negative part in
\eqref{eq:two-line-primary-quotient} grows exponentially, but this does not measure the number
of generators of the obstruction module. For the same two-line family,
all its components and multiplication maps fit an explicit presentation.
The additional information is multiplication between negative degrees,
not another dimension formula.
Put $S_L=k[x_i:i\in L]$ and $S_H=k[y_h:h\in H]$. Set
$S=S_L\otimes_k S_H=k[x_i,y_h]$ and $I=I_{M_{m,n}}$, so $R=S/I$.
Define
\[
J_L=(x_D: D\subseteq L,\ |D|=m-2),\qquad
K_L=(x_D: D\subseteq L,\ |D|=m-1),
\]
where $x_D=\prod_{i\in D}x_i$; define $J_H,K_H$ symmetrically.
Set
\begin{equation}\label{eq:coordinate-module}
U_H=\frac{\bigoplus_{h\in H} S_H e_h}
 {S_H\left(\sum_{h\in H}e_h\right)+\sum_{h\in H}S_Hy_he_h},
\end{equation}
with $\deg e_h=0$, and define $U_L$ in the same way.
Tensor products in the following theorem are over $k$.
For a graded module $V$, the notation $V\langle d\rangle$ means that every
degree is increased by $d$; in particular its displayed generators of
degree zero acquire degree $d$.

\begin{theorem}[Generators, relations, and support of the primary quotient]
\label{thm:primary-quotient-module}
For $m,n\ge3$ and every field $k$, there is a fine-graded $R$-module
isomorphism
\begin{equation}\label{eq:primary-quotient-module}
\mathcal Q\simeq
\left((S_L/J_L)\otimes U_H\right)\langle-\mathbf1_L\rangle
\ \oplus\
\left(U_L\otimes(S_H/J_H)\right)\langle-\mathbf1_H\rangle.
\end{equation}
The indicated $S$-modules are annihilated by $I$, so their $R$-module
structures are unambiguous. The quotient has exactly $m+n-2$ minimal
homogeneous generators: $n-1$ in degree $-\mathbf1_L$ and $m-1$ in degree
$-\mathbf1_H$. Its annihilator in $S$ is
\begin{equation}\label{eq:primary-quotient-annihilator}
\operatorname{Ann}_S \mathcal Q=(J_LS+K_HS)\cap(K_LS+J_HS).
\end{equation}
Consequently $\operatorname{Ann}_R \mathcal Q$ is this ideal modulo $I$.
The annihilator is radical. Every irreducible component of
$\operatorname{Supp}_R \mathcal Q$ has dimension $m+n-5$; in particular the support
of the primary quotient has codimension two in $\operatorname{Spec}R$.
\end{theorem}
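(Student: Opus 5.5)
The plan is to assemble $\mathcal Q$ degree by degree from the computations already made, and then to check that multiplication by the variables matches the two coordinate modules. First split $\mathcal Q$ according to the dual line containing $B$. By Theorem~\ref{thm:homogeneous-realization} and Corollary~\ref{cor:primary-quotient}, a nonzero component has degree $c=a-\one_B$ with $B\subseteq L$ (or $B\subseteq H$) and $|B|\ge3$. Moreover $\mathcal Q_c=T^2_c(R)\cong k^{H_0}/k\one$ with $H_0=H\setminus\supp a$, and this component is nonzero exactly when $|H_0|\ge2$. Multiplication by a monomial can never move a degree with $B\subseteq L$ to one with $B\subseteq H$, because a negative coordinate on $L$ would have to disappear while negative coordinates on $H$ appear. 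So $\mathcal Q=\mathcal Q_L\oplus\mathcal Q_H$ as $R$-modules, and by symmetry it suffices to treat $\mathcal Q_L$.

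Next I match degrees. A degree of $((S_L/J_L)\otimes U_H)\langle-\one_L\rangle$ has the form $-\one_L+\alpha+\beta$ with $\alpha\in\NN^L$, $\beta\in\NN^H$. Then $B=L\setminus\supp\alpha$, and $x^\alpha\notin J_L$ exactly when $|\supp\alpha|\le m-3$, that is, exactly when $|B|\ge3$. On the other factor, $y^\beta e_h=0$ in $U_H$ for $h\in\supp\beta$, and the relation $y^\beta\sum_he_h$ survives. Hence $(U_H)_\beta\cong k^{H\setminus\supp\beta}/k\one$, which is the component computed above with $H_0=H\setminus\supp\beta$. The degreewise dimensions therefore agree.

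I define the module map on generators. Send $e_h$ in degree $-\one_L$ to $\theta_L(\delta_h)$, using the isomorphism $\theta_L$ of Theorem~\ref{thm:negative-realization}. Well-definedness and bijectivity reduce to two multiplication rules in $\mathcal Q$.
\begin{enumerate}[label=\textup{(\roman*)}]
\item Multiplication by $y_h$ restricts $[v]$ to $[v|_{H\setminus\{h\}}]$. This is \eqref{eq:positive-restriction-map}, so $y_he_h\mapsto0$ and $\sum_he_h\mapsto0$.
\item For $i\in B$, multiplication by $x_i$ from $\mathcal Q_{a-\one_B}$ to $\mathcal Q_{a+\ee_i-\one_B}$ is the identity on $k^{H_0}/k\one$. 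It becomes zero in the quotient once $|B\setminus\{i\}|=2$, since those degrees lie in $\Omega$ or have empty target.
\end{enumerate}
Rule (ii) is the main obstacle; it is the ``Taylor coboundary identity'' announced in the introduction. The cochain $x_i\mu_v$ in degree $-\one_B+\ee_i$ is not literally the cochain $\mu_v$ for the support $B\setminus\{i\}$. Its pair values at $C_{i,h}$ differ, because $b_i$ drops from $1$ to $0$. I would write the difference explicitly as the generator coboundary of the vertex function supported at the cross generators $C_{i,h}$, with values $\sigma v_h(Z/x_i)\cdot(\text{monomial})$. I would then verify the triangle and Koszul coordinates directly in the presentation \eqref{eq:realization-T2-presentation}, as in the proof of Theorem~\ref{thm:negative-realization}. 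Granting (i) and (ii), the map is surjective, since it hits every nonzero component by the identification above. It is injective by the dimension count, so it is an isomorphism.

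The remaining assertions follow from this presentation. The minimal generators are $n-1$ in degree $-\one_L$ and $m-1$ in degree $-\one_H$. Indeed, $U_H$ is minimally generated by the $e_h$ modulo the single degree-zero relation $\sum_he_h$, and $S_L/J_L$ is cyclic. For the annihilator, $U_H$ is fine-graded, so its annihilator is a monomial ideal. A monomial $y^\beta$ kills $U_H$ iff $|H\setminus\supp\beta|\le1$, so $\operatorname{Ann}U_H=K_H$. For the tensor product of modules in disjoint variables, one of which is cyclic, the annihilator is $J_LS+K_HS$; this gives \eqref{eq:primary-quotient-annihilator}. Both summands are squarefree monomial ideals, so the annihilator is radical. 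A prime containing $J_L$ must contain three $x$-variables, and a prime containing $K_H$ must contain two $y$-variables. The minimal primes are thus generated by five variables, so each has dimension $m+n-5$. Since $\dim R=\rk M_{m,n}=m+n-3$, the support has codimension two.
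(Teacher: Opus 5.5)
Your proposal is correct and follows essentially the paper's route. You split by dual line, send $e_h$ to the class of the $\delta_h$-cochain, and kill the relations using \eqref{eq:positive-restriction-map} and the negative-coordinate Taylor coboundary identity. Surjectivity then comes from these multiplication maps and injectivity from the component dimensions, and generators, annihilator, radicality and codimension are read off the presentation.

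One small correction concerns the sign. Because $\sigma=(-1)^{|B|-1}$ flips when $|B|$ drops by one, the identity that actually holds is $x_i\mu^{(B)}_v+\mu^{(B\setminus\{i\})}_v=\delta f$, with your $\sigma$-weighted $f$ supported at the $C_{i,h}$. So in the $\theta$-coordinates multiplication by $x_i$ is $-\mathrm{id}$, not the identity; the paper avoids this by normalizing to $\nu_B=(-1)^{|B|-1}\mu_v$. The sign is harmless, since your argument only uses that these maps are bijective.
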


\begin{proof}
Use the Taylor presentation \eqref{eq:realization-T2-presentation},
with $R_{p,q}=\operatorname{lcm}(p,q)e_p/p-
\operatorname{lcm}(p,q)e_q/q$. Its coboundary is
$\delta f(R_{p,q})=(\operatorname{lcm}(p,q)/p)f(p)
 -(\operatorname{lcm}(p,q)/q)f(q)$.
Normalize the cocycle \eqref{eq:weighted-obstruction-cochain} by removing
its factor $(-1)^{|B|-1}$. The resulting $\nu_B(v)$ has scalars
\[
\nu_B(v)(X,C_{j,h})=-b_jv_h,\qquad
\nu_B(v)(C_{j,h},C_{k,h})=(b_j-b_k)v_h,
\quad b_j=\mathbf1_{j\in B},
\]
and zero elsewhere; each scalar multiplies $\operatorname{lcm}(p,q)/x_B$.

We first determine multiplication when a negative coordinate is removed;
this is the point not supplied by the component dimensions. Suppose
$B\subseteq L$, $|B|\ge3$, $i\in B$, and $B'=B\setminus\{i\}$.
In the fixed Taylor presentation let $f_i(v)$ be the generator cochain
of degree $-\mathbf1_{B'}$ given by
\[
f_i(v)(C_{i,h})=v_h C_{i,h}/x_{B'}\quad(h\in H),
\qquad f_i(v)(X)=f_i(v)(Y)=f_i(v)(C_{j,h})=0\quad(j\ne i).
\]
Its nonzero monomials are defined, since $i\notin B'$; their complements
are $B'\cup\{i,h\}$ and span $N$, so they survive in $R$.
For $|B'|\ge3$ the cocycle formulas give the identity
\begin{equation}\label{eq:negative-coordinate-boundary}
x_i\nu_B(v)-\nu_{B'}(v)=\delta f_i(v).
\end{equation}
Indeed, measured relative to $\operatorname{lcm}(X,C_{i,h})/x_{B'}$,
the change at $(X,C_{i,h})$ is $-v_h$, precisely the scalar
of the boundary $-x_if_i(v)(C_{i,h})$. On a same-$h$ pair it is the
difference of the two changed vertex scalars. Every other pair involving
one of the selected generators has its target divisible by $Y$ and hence
zero in $R$: this includes pairs with $Y$ and cross pairs in distinct
$h$ blocks. All remaining values on both sides are zero. Thus the actual
map on the coordinate quotient is $[v]\mapsto[v]$.
When $|B'|=2$, the target has zero primary quotient, so the map into $\mathcal Q$
is zero without any comparison of cocycle normalizations in that degree.
Multiplying these identities by a positive monomial preserves them,
including when some generator values vanish. Multiplication in a
same-line positive coordinate leaves the remaining coordinate quotient
unchanged. An opposite-line coordinate $y_h$ restricts the quotient on
$H_0$ to that on $H_0\setminus\{h\}$. If $h$ is already absent it leaves
the coordinates unchanged; if at most one remains, the target is zero.
Subsequent powers therefore act identically on the surviving coordinates. These
last assertions follow from the actual restriction map
\eqref{eq:positive-restriction-map}.

Send $e_h$ in the first summand of \eqref{eq:primary-quotient-module}
to $[\nu_L(\mathbf e_h)]\in\mathcal Q_{-\mathbf1_L}$.
The common-constant relation is zero, and $y_he_h$ maps to zero by
coordinate restriction. A generator $x_D$ of $J_L$ removes $m-2$
negative coordinates, leaving two; hence it maps every $e_h$ to zero.
This defines an $S$-linear map from the presented module, before any
dimension comparison. The symmetric construction defines the second map.

For $b\ge0$, the degree-$b$ component of $U_H$ is the coordinate space
on $H\setminus\operatorname{supp}(b)$ modulo one common constant, with
dimension $(|H\setminus\operatorname{supp}(b)|-1)_+$.
A degree $c=a-\mathbf1_B$ in the first shifted summand survives the
$J_L$ relation exactly when $B\subseteq L$ and $|B|\ge3$.
Indeed, for unshifted $L$-degree $u\ge0$, $B=\{i:u_i=0\}$ and
$x^u\notin J_L$ is equivalent to $|\operatorname{supp}(u)|\le m-3$.
The $U_H$ factor additionally requires $|H_0|\ge2$.
The actual maps just proved show that its image is all of $\mathcal Q_c$.
The component calculation in Theorem~\ref{thm:homogeneous-realization}
gives equality of dimensions and therefore
injectivity. The nonzero fine degrees of the two summands are disjoint,
so their sum is the required $S$-module isomorphism.

The ideal $I$ annihilates both presented summands directly. In the first,
$X$ and every $C_{j,h}$ have $L$ support of size at least $m-2$ and belong
to $J_LS$, while $Y$ kills $U_H$. In the second, $Y$ and every cross
generator belong to $J_HS$, while $X$ kills $U_L$.
Thus the isomorphism descends to $R$.
Modulo the homogeneous maximal ideal, $U_H$ has exactly its $n$ degree-zero
generators and the single relation $\sum e_h=0$. The first summand has
$n-1$ generators in its lowest degree $-\mathbf1_L$; the second contributes
$m-1$ in degree $-\mathbf1_H$. Graded Nakayama proves the stated minimality.

The annihilator of $U_H$ is fine homogeneous, hence monomial. A monomial
with support $D$ maps the degree-zero coordinate quotient onto the
quotient on $H\setminus D$. Its action is zero exactly when $|D|\ge n-1$;
the degree-zero generators generate the module. Therefore
$\operatorname{Ann}_{S_H}U_H=K_H$.
The same support test shows that the annihilator of the first tensor
summand is $J_LS+K_HS$: a monomial outside this ideal acts nontrivially
on a suitable degree-zero generator. The symmetric formula for the other
summand yields \eqref{eq:primary-quotient-annihilator}.
Both ideals in the intersection are generated by squarefree monomials
and are radical, as is their intersection. The maximal coordinate faces
of $V(J_LS+K_HS)$ have $m-3$ vertices in $L$ and $n-2$ in $H$; those of
$V(K_LS+J_HS)$ have $m-2$ in $L$ and $n-3$ in $H$. No maximal face of either
contains one from the other, and each has $m+n-5$ vertices. The support
is thus pure of that dimension. Finally $R$ is equidimensional of
dimension $\rk M=m+n-3$, proving the codimension assertion.
\end{proof}

If $m=3$, then $S_L/J_L=k$, and similarly for $n=3$.
The argument uses no division by $m,n$, or $2$, and includes characteristic two.
This presentation concerns the primary quotient, not the full $T^2$
module or a miniversal base. Its total $k$-dimension is infinite in this
family; the exponential dimension previously computed concerns only
the purely negative part. The module presentation and the homogeneous
realization theorem answer different questions: the former describes
how classes multiply, while the latter realizes each class separately.

\section{Finite certificates and computational checks}\label{sec:verification}

The preceding proofs give a full component calculation, a vanishing
classification, and actual obstructions of unbounded order. None of these
universal assertions is inferred from a finite sample. This section gives
the finite calculations for the Tutte-equivalent
pair, the compatibility table in the rank-three extremal proof, and the
small-ground-set minima. It also compares the graph formula with a
direct calculation from generators and relations. The general graph,
vanishing, and cup-product theorems are proved in
Sections~\ref{sec:graph-model}--\ref{sec:cup}.

The supplementary file \texttt{anc/README.md} identifies each checker,
its invocation and expected output fields. The accompanying
\texttt{anc/finite-certificates.json} records the earlier exact table,
coverage, polynomial and small-ground-set results. These recorded
certificates are distinct from a fresh execution of a checker.

\subsection{Checks of the cyclic construction and primary image}

The accompanying \nolinkurl{rank-two-support.py} compares
Theorem~\ref{thm:rank-two-support} with $1{,}119$ graph calculations,
$350$ direct presentation calculations over $\mathbb F_2$ and
$\mathbb F_{32003}$, and $363$ positive-support calculations.
The separate \nolinkurl{cup-image.py} checks $148$ presentation-level cup ranks,
including $30$ nonzero targets, against
Theorem~\ref{thm:negative-primary-image}. For the two-line matroids it
recovers the purely negative quotient dimensions $4,13,30,68$ in the
tested cases.

The all-degree image theorem is checked by \nolinkurl{full-primary-image.py}
using $2{,}574$ multiplication
maps on the minimal Taylor presentation, both before and after taking
the derivation cokernel, and $2{,}703$ cup-rank calculations with
squarefree positive support. The fields are
$\mathbb F_2,\mathbb F_3,\mathbb F_{32003}$. The cases include repeated
dual parallel classes, loops, and links with vertices removed after
contraction. All mathematical output fields agree on isolated replay.
These tests address the maps in
Lemma~\ref{lem:negative-generation} and
Theorem~\ref{thm:full-primary-image}; they do not replace their proofs.

The program \nolinkurl{arbitrary-order.py} uses integer polynomial division
to check the cyclic Gr\"obner construction and
its telescoping identity for $3\le m\le12$ and $n=3,4$, comprising
$17{,}740$ S-pair tests. The separate \nolinkurl{essential-cubic.py} checks
$220$ S-pairs and the contradictory socle relations for the cubic case.
Independent presentation calculations in the smaller cases verify the
critical $T^1$ vanishings and $T^2$ dimensions in two characteristics.
The four programs \nolinkurl{rank-two-support.py},
\nolinkurl{cup-image.py}, \nolinkurl{arbitrary-order.py}, and
\nolinkurl{essential-cubic.py} were each rerun in an isolated environment; all
mathematical result fields agreed on replay. These finite checks test
the construction, not the assertions for arbitrary $m$ or arbitrary fields,
which follow from the proofs in
Sections~\ref{sec:primary-image} and \ref{sec:higher-obstructions}.

The program \nolinkurl{weighted-realization.py} checks the weighted subset construction of
Theorem~\ref{thm:negative-realization} separately in $32$
cases with $3\le m\le6$, $n=3,4$, and both consecutive and
nonconsecutive subsets of size at least three. The tests comprise
$12{,}636$ integer S-pairs, $9{,}475$ Taylor triangle conditions and
$6{,}450$ Koszul conditions. Independent presentation calculations
in characteristics $2,3,32003$ verify that the displayed obstruction
cochains span the entire target modulo generator coboundaries.
The deterministic replay agrees on every mathematical result field.
The relation-level identification and arbitrary-field realization
theorem are proved above, not inferred from these cases.

For Theorem~\ref{thm:homogeneous-realization}, the separate checker
\texttt{positive-realization.py} tests $60$ cases in the two-line family,
including $51$ with positive support and $22$ with zero targets or
contraction ghosts; these categories overlap. It includes positive exponents one and two and weights
supported on both lines. There are $6{,}764$ integral Taylor relation
checks, $4{,}314$ exact corrections by multiples of $Y$, $3{,}754$ Taylor
triangle checks, $125$ tangent nontriviality checks, and $492$ higher-degree
correction vanishings. Presentation calculations in characteristics
$2,3,32003$ verify the restriction map and its image dimension. The
separate replay agrees on all mathematical fields. The arbitrary-weight
flatness assertion uses the Artinian relation-lifting proof, not a
Gr\"obner basis test after substitution of positive monomial weights.

For Theorem~\ref{thm:primary-quotient-module},
\texttt{primary-quotient-module.py} checks the actual Taylor/Koszul
presentation for $(m,n)=(3,3),(4,3),(4,4),(5,3)$, with both line
orientations, positive exponents one and two, and fields $2,3,32003$.
It computes $3{,}204$ degree/field cases and $24{,}972$ variable
multiplication maps. These include $35{,}058$ signed negative-coordinate
boundary identities and $5{,}424$ repeated-positive maps; $4{,}224$
squarefree support tests check annihilation and $360$ checks verify the
action of minimal circuit generators. The size-two endpoints use actual
cup vectors in the presentation. Every mathematical field agrees with
an isolated replay. The universal module isomorphism and its annihilator
are established by the preceding proof, not by these finite tests.

\subsection{Subset-rank certificate for the graphic pair}

For each of the two edge lists in Theorem~\ref{thm:Tutte-contrast},
enumerate its $2^{13}$ subsets $X$ and compute
$r(X)=7-\kappa(V,X)$, counting isolated vertices. The two lists have the
same numbers
\[
a_{r,s}=\#\{X:r(X)=r,\ |X|=s\}.
\]
For a fixed $r$, the following rows give the coefficients in increasing
$s$, starting at $s=r$:
\[
\begin{array}{c|l}
r&(a_{r,r},a_{r,r+1},\ldots)\\ \hline
0&(1)\\
1&(13)\\
2&(78,7)\\
3&(279,83,12,1)\\
4&(632,413,145,31,3)\\
5&(862,1025,662,281,78,13,1)\\
6&(545,1023,1003,637,273,77,13,1).
\end{array}
\]
Consequently their Tutte polynomials agree, since
$T(x,y)=\sum_{r,s}a_{r,s}(x-1)^{6-r}(y-1)^{s-r}$.
For each subset with $r(X)=|X|=5$, count the original edges joining its
two components. This gives exactly the two profiles in the proof of
Theorem~\ref{thm:Tutte-contrast}, hence $\Theta=12135$ and $12161$.

For each edge list, retain the rank of every labeled subset before
aggregating into $a_{r,s}$. These separate rank dictionaries give the dual rank function
$r^*(X)=|X|-6+r(E\setminus X)$. A subset is a dual flat precisely when
adjoining each omitted edge increases $r^*$. Enumerating the flats of
ranks five and six and applying \eqref{eq:pair-dim} gives the two values
$\mathfrak q=2009$ and $2165$ in
Proposition~\ref{prop:valuativity-boundary}. All these calculations use
integer ranks and counts.

\subsection{Partition and line-system calculations}

The twelve rows of \eqref{eq:rank-three-finite-table} are obtained by
listing integer partitions and evaluating the recurrence in its proof;
no matroid realizability assumption is made for the relaxed minima.
The accompanying coverage table checks the remaining cases with
$8\le n\le12$. The coefficient certificates in
Lemma~\ref{lem:rank-three-polynomial-gap-new} are exact polynomial
expansions. Direct re-expansion and a separate partition implementation
reproduce all displayed entries.

The small-ground-set rank-three minima can also be reproduced without
a database of matroids. On $[n]$, list all subsets of sizes $3,\ldots,n-1$.
Select a family in which no unordered pair lies in two selected subsets,
and add every unused pair as a two-point line. This construction gives
exactly the line systems of labeled simple rank-three matroids: distinct
lines intersect in at most one point, and every pair belongs to one line.
To enumerate each family once, order the candidate subsets and recursively
adjoin only later candidates whose pairs are unused. Evaluate
\eqref{eq:rank-three-formula} at every node, including the empty family.
The resulting counts are
\[
\begin{array}{c|r|r|r}
n&\text{line systems}&\min\dim_kT^2(M)_0&
\text{labeled minimizers}\\ \hline
5&31&0&31\\
6&352&54&30\\
7&8389&357&30.
\end{array}
\]
This is an exhaustive finite calculation, not an argument for the
unbounded rank-three theorem.

As additional arithmetic checks, all connected rank-two class partitions
on $5\le n\le20$ reproduce both stated minima, their equality cases, and
the second-profile gap for $\mathfrak q$. The Fano calculation gives
$126+231=357$. These checks test the formulas and boundary cases; the
general rank-two assertions rest on the merge and transfer arguments.

\subsection{Calculation from generators and relations}

The Lichtenbaum--Schlessinger presentation gives a second way to compute
$T^1$ and $T^2$. Circuit monomials generate the Stanley--Reisner ideal;
Taylor pair relations, relations among triples of generators, and the
trivial Koszul relations determine the cochain spaces and their
quotients. Computing their dimensions by exact row reduction provides
a comparison with the hyperplane-graph formula that does not use the
relative-cohomology description.

The accompanying program \texttt{anc/verify\_independent.py} carries out
this comparison for uniform, rank-two, graphic, direct-sum, and Fano
matroids, mostly over $\mathbb F_{32003}$ and for selected uniform and
rank-two examples over $\mathbb F_2$. The positive-support cases include
two-element links of $U_{3,6}$ and both link strands of the Fano matroid.
All $917$ checks agree with the stated formulas. The program lists the
matroids, fine degrees, and coefficient fields in its \texttt{main()}
function. These calculations supplement the proofs in
Sections~\ref{sec:graph-model}--\ref{sec:cup}.

\subsection{AI-assisted research workflow}

The systems named in the declaration below were used in developing the
mathematical arguments as well as preparing the text. Their research tasks
included deriving the graph and primary cup-image formulas, constructing
the cyclic families and quotient presentations, and generating and revising
the ancillary exact-checking programs. Subsequent AI-assisted checks
examined proof dependencies, field and support hypotheses, and reference
attributions. The proofs are given in the preceding sections; the programs,
inputs, and finite certificates are described above and supplied with the
paper. The finite checks and AI-assisted proof checks are distinct from
human evaluation of the arguments.

\section{Further questions}\label{sec:questions}

The calculations above separate three aspects of deformation theory:
the obstruction module, the image of the primary cup product, and the
classes that occur as higher lifting obstructions. Flat counts determine
the fine components of the first, while the cyclic construction realizes
obstructions outside the second at arbitrarily high order. For the dual
two-line family, the realization theorem and quotient presentation connect
these aspects explicitly. The following questions ask how far that
connection extends.

\begin{enumerate}[label=\textup{(\arabic*)}]
\item \emph{The module structure of $T^2$.}
Theorem~\ref{thm:universal-graph} describes the fine components of
$T^2(M)$ as vector spaces. Can the multiplication maps between these
components be described directly in terms of the hyperplane graphs?
The parallel-class maps in \eqref{eq:parallel-reduction-maps} give part
of this structure in negative directions, and
Theorem~\ref{thm:primary-quotient-module} determines the quotient by the
primary image for the dual two-line family. For general matroids, can
one obtain presentations, minimal generators, and annihilators from the
dual flats? Even in the two-line family, the extension
$0\to\Omega\to T^2\to T^2/\Omega\to0$ retains information not specified
by the quotient presentation.

\item \emph{Relations among cup products.}
Theorem~\ref{thm:full-primary-image} determines the entire primary
image, and Corollary~\ref{cor:primary-quotient} gives its purely
negative generation. Can the kernel of the cup map, and relations
among its homogeneous generators, be described from the dual flats?
Such a description would refine the image calculation to explicit
quadratic equations for a deformation base.

\item \emph{Higher-order lifts.}
Theorem~\ref{thm:cyclic-higher-obstruction} gives essential obstructions
of every order and rules out all ungraded lifts of the specified jets.
Theorem~\ref{thm:homogeneous-realization} realizes every nonzero homogeneous
class of the complete primary quotient for the dual two-line family.
Can the same be done for other matroids, or for inhomogeneous sums of
classes from several degrees within a single deformation?
Can the first unavoidable order be determined from their first-order
classes without imposing the fine grading, so that different lower-order
choices cannot postpone it? For two classes, does cup-product vanishing
still suffice for joint integration when positive and negative supports
cancel? Even under the support hypotheses of Theorem~\ref{thm:arcs},
its formal arc for the sum does not by itself answer the independent-parameter question.

\item \emph{Higher cotangent cohomology.}
In characteristic zero, the link-localization theorem of Ilten,
Meazzini, and Petracci \cite{IltenMeazziniPetracci} applies to
$T^i(M)$ for $i\ge3$. Can nonvanishing be detected in a link whose
rank is bounded in terms of $i$? Is the class of matroids with
$T^i(M)=0$ minor-closed, and, if so, does it have finitely many excluded
minors?

\item \emph{Valuative invariants.}
The cocircuit profile of Section~\ref{sec:valuative-profile} is
valuative, whereas the pure-pair rank $\mathfrak q$ is not.
Which other quantities obtained from the hyperplane graphs are
valuative under matroid subdivisions?
\end{enumerate}

\section*{Acknowledgements}

The author thanks the authors of \cite{CKNSV} for formulating the vanishing conjecture and for the combinatorial framework on which this work builds. The author acknowledges Gewu Intelligence Lab for providing the collaborative research environment in which this project was developed.

\section*{Funding}

This research did not receive any specific grant from funding agencies in the public, commercial, or not-for-profit sectors.

\section*{Declaration of competing interest}

The author declares no known competing financial interests or personal relationships that could have appeared to influence the work reported in this paper.

\section*{Data availability}

The verification programs and finite certificates described in Section~\ref{sec:verification} are archived at Zenodo \cite{WangZenodo} and are supplied as supplementary files with this article. That section lists the inputs of each program and the assertions it checks.

\section*{Declaration of generative AI and AI-assisted technologies in the manuscript preparation process}

During the preparation of this work the author used OpenAI \mbox{GPT-5.6} Sol, Anthropic Claude Fable~5, Grok~4.6, OpenAI \mbox{GPT-6} Astra, Anthropic Claude Fable~5.1, OpenAI \mbox{GPT-6.1} Sol, and Anthropic Claude Opus~5.5. At different stages, these systems were used to develop the mathematical arguments and the ancillary checking programs described in Section~\ref{sec:verification}, to check proofs and references, and to revise the exposition and the \LaTeX{} source. After using these tools, the author reviewed and edited the content as needed and takes full responsibility for the content of the published article.

\begingroup
\raggedright

\endgroup

\end{document}